\theoremstyle{plain}
\newtheorem{theorem}{Theorem}[section]
\newtheorem{lemma}[theorem]{Lemma}
\newtheorem{proposition}[theorem]{Proposition}
\newtheorem{corollary}[theorem]{Corollary}
\newtheorem*{theorem*}{Theorem}
\newtheorem*{thmA}{Theorem A}
\newtheorem*{thmB}{Theorem B}
\newtheorem*{thmC}{Theorem C}
\newtheorem*{thmD}{Theorem D}
\newtheorem*{thmE}{Theorem E}
\theoremstyle{definition}
\newtheorem{definition}[theorem]{Definition}
\newtheorem{example}[theorem]{Example}
\theoremstyle{remark}
\newtheorem{remark}[theorem]{Remark}
\author[]{Marta Agustin Vicente}
\address{Basque Center of Applied Mathematics (BCAM)\\  Mazarredo, 14. 48009 Bilbao \\ Spain}
\email{martaav22@gmail.com} 
\author[]{Narasimha Chary Bonala$^\ast$}
\address{Ruhr-Universit\"at Bochum\\ Fakult\"at f\"ur Mathematik\\ 
D-4478 Bochum, Germany}
\curraddr{Department of Mathematics and Statistics, Indian Institute of Technology Kanpur,
U.P. India, 208016.
}
\email{chary@iitk.ac.in}
\thanks{$^\ast$Corresponding author: Narasimha Chary Bonala (chary@iitk.ac.in)}
\author[]{Kevin Langlois}
\address{Departemento de Matem\'atica \\ Universidade Federal do Cear\'a (UFC) \\ Campus do Pici, Bloco 914, CEP 60455 -760. Fortaleza-Ce, Brazil}
\email{kevin.langlois@mat.ufc.br}
\title []
{On intersection cohomology with torus \\  action of complexity one, II}
\begin{document}
\begin{abstract}
We show that the components, appearing in the decomposition theorem for contraction maps of torus actions of complexity one, are intersection cohomology complexes  of even codimensional subvarieties.
 As a consequence, we obtain the vanishing of the odd dimensional intersection cohomology for rational complete varieties with torus action of complexity one. The article also presents structural results on linear torus action in order to
 compute the intersection cohomology from the weight matrix. In particular, we determine the intersection cohomology Betti numbers of affine trinomial hypersurfaces in terms of their defining equation. 
\end{abstract}  

\maketitle

{\bf Mathematics Subject Classification (2020) } 32S60, 14L30, 14M25.

\vspace{0.5cm}

{\bf Keywords:} Intersection Cohomology and Perverse Sheaves, Decomposition Theorem, Algebraic Torus Actions, Toric Varieties, Singularity Theory.

\section{Introduction}
We are studying  intersection cohomology of complete complex algebraic varieties endowed with an action
of an algebraic torus $\mathbb{T} =  (\mathbb{C}^{\star})^{n}$. An important invariant in the classification of torus actions is the \emph{complexity}. It is defined, for a $\mathbb{T}$-variety $X$, as the number $c(X):= {\rm dim}\, X -  {\rm dim}\, \mathbb{T}/\mathbb{T}_{0}$, where $\mathbb{T}_{0}$ is the kernel of the torus action. \emph{Toric varieties} are precisely normal varieties with torus action of complexity $0$. They admit a combinatorial description by objects of convex geometry such as fans, polytopes, etc.  
From this combinatorial data, one can recover both the isomorphism classes of toric varieties and many of their geometric properties.
This was performed for the intersection cohomology \cite{Sta87,  DL91, Fie91, BBFK99, BM99, BBFK02, CMM18}, where in the case of complete simplicial fans, this description is related to the \emph{$f$-vector} of the toric variety (that is, the vector encoding the number of cones of the fan of each dimension).

For torus actions of complexity one, there is still a dictionary between $\mathbb{T}$-varieties and objects lying between geometry and combinatorics \cite{AIPSV12}. This class of $\mathbb{T}$-varieties 
encompasses the \emph{$\mathbb{C}^{\star}$-surfaces},  whose intersection cohomology has been studied by Fieseler and Kaup \cite{FK86, FK91}. 
 In \cite{AL18, AL21}, we started a program for determining  the intersection cohomology Betti numbers in the complexity-one case. The topology of $\mathbb{T}$-varieties of complexity
one has also been studied in \cite{LLM20}.

More precisely, let $X$ be a complete variety with torus action of complexity one, let  $\widetilde{X}$ be the normalization of the graph of the rational quotient $\iota:  X\dashrightarrow C$, where $C$ is the smooth projective curve parameterizing the general orbits, and let $\pi: \widetilde{X}\rightarrow X$ be the natural map, called the \emph{contraction map}. We say $\widetilde{X}$ is the \emph{contraction space} of $X$. Our approach consisted of the following two steps. 
\begin{itemize}
\item[$(1)$]   The determination of the intersection cohomology Betti numbers of $\widetilde{X}$ (see \cite{AL18}, \cite[Section 5.1]{AL21}) from its toroidal structure and the toric decomposition theorem \cite{CMM18}. 
\item[$(2)$] The elaboration of an algorithm \cite{AL21} expressing the intersection cohomology 
of $X$ in terms of the one of $\widetilde{X}$ via the decomposition theorem of the contraction map.
\end{itemize}
In this paper, we carry out the final step of our program for the complexity-one case, namely:
\begin{itemize}
\item[$(3)$] We provide a simple formula for the intersection cohomology Betti numbers. Moreover,  we present structural results that describe these numbers from the defining equations. Finally, we treat the example of affine trinomial hypersurfaces. 
\end{itemize}
We now discuss the results of the paper. Our first upshot is an improvement of \cite[Theorem 1.1]{AL21}. It gives a simple form of the decomposition theorem for the contraction map. For a variety $V$, we denote by $IC_{V}$ its intersection cohomology complex and by $IH^{j}(V; \mathbb{Q})$ the intersection cohomology groups.
\begin{thmA}[See Theorem \ref{theo-even}]\label{TheoA-TheoA}
Let $X$ be a normal variety with torus action of complexity one. Denote by $E$ the image of the exceptional
locus of the contraction map $\pi: \widetilde{X}\rightarrow X$, and let $\rm{Orb}_{\rm even}(E)$ be the set of even codimensional torus orbits  of $X$ contained in $E$. Then we have an isomorphism
$$\pi_{\star} IC_{\widetilde{X}} \simeq IC_{X}\oplus \bigoplus_{O \in \rm{Orb}_{\rm even}(E)} (\iota_{O})_{\star} IC_{\bar{O}}^{\oplus s_{O}},$$
where $\iota_{O} : \bar{O}\rightarrow X$ is the inclusion and $s_{O}\in \mathbb{Z}_{\geq 0}$ for any $O\in \rm{Orb}_{\rm even}(E)$.
\end{thmA}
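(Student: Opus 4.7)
The plan is to start from the decomposition theorem of Beilinson--Bernstein--Deligne--Gabber applied to the projective birational map $\pi : \widetilde{X}\to X$, which yields an a priori decomposition $\pi_\star IC_{\widetilde{X}} \simeq \bigoplus_i IC_{Z_i}(L_i)[n_i]$ for certain closed irreducible $Z_i \subset X$, local systems $L_i$ on smooth open parts of $Z_i$, and shifts $n_i \in \mathbb{Z}$. The $\mathbb{T}$-equivariance of $\pi$ forces each $Z_i$ to be $\mathbb{T}$-invariant, hence $Z_i = \overline{O}$ for some torus orbit $O$; since $\pi$ is birational, the top-dimensional stratum contributes exactly one copy of $IC_X$ with no shift, and all remaining supports lie in the exceptional image $E$. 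The triviality of the local systems $L_i$ can be taken from the corresponding statement in \cite[Theorem 1.1]{AL21}, where the decomposition is obtained with the same supports but with possibly non-zero shifts. What remains is to show that every contribution is unshifted ($n_i = 0$) and that every surviving orbit has even codimension in $X$.

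The key step I would carry out is a parity analysis of the stalks. For a point $x\in X$ lying on an orbit $O$, proper base change gives $\mathcal H^k(\pi_\star IC_{\widetilde{X}})_x \cong \mathbb H^k(\pi^{-1}(x);\, IC_{\widetilde{X}})$. The divisorial fan description of complexity-one $\mathbb{T}$-varieties from \cite{AIPSV12} exhibits each fiber of $\pi$ as a proper subvariety built from slices of the toroidal structure of $\widetilde{X}$. Combined with the toric decomposition theorem of \cite{CMM18} (already used in step (1) of our program, see \cite{AL18} and \cite[Section 5.1]{AL21}), this identifies the restriction of $IC_{\widetilde{X}}$ to each such fiber with, essentially, intersection cohomology of toric varieties; the parity vanishing for toric intersection cohomology (\cite{BBFK99, BBFK02}) then forces the stalks $\mathcal H^k(\pi_\star IC_{\widetilde{X}})_x$ to be concentrated in degrees of a single parity, namely that of $-\dim X$.

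The final step is to match this parity against each summand $IC_{\overline{O}}[n]$: at a generic point of $O$, the lowest non-vanishing stalk of $IC_{\overline{O}}[n]$ sits in degree $-\dim \overline{O} - n$, so the parity constraint just established forces $n + \operatorname{codim}_X \overline{O}$ to be even. Combining this with the Verdier self-duality of $\pi_\star IC_{\widetilde{X}}$ (which pairs summands of shift $n$ with summands of shift $-n$ with equal multiplicity) and with the support condition in the decomposition theorem (bounding $|n|$ by $\operatorname{codim}_X \overline{O}$), the only surviving possibility is $n=0$ together with $\operatorname{codim}_X \overline{O}$ even. This yields the simplified form claimed in Theorem A, with $s_O \in \mathbb{Z}_{\geq 0}$ read off as the dimension of the corresponding parity-matched stalk space.

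The main obstacle is the parity vanishing on the stalks of $\pi_\star IC_{\widetilde{X}}$, as it requires a sufficiently explicit description of the fibers of $\pi$ and of the restriction of $IC_{\widetilde{X}}$ to them in order to reduce to toric parity. The hard part will be to reduce to a $\mathbb{T}$-stable affine neighborhood of $O$, in which the divisorial fan of $\widetilde{X}$ simplifies enough to identify the fiber with a union of toric varieties and to invoke the known toric vanishing. A secondary issue is a careful check that the local systems $L_i$ inherited from \cite{AL21} remain trivial after the parity reduction, which amounts to verifying connectedness of the isotropies of the orbits that survive; this should follow from the normalization-of-graph construction of $\widetilde{X}$ but needs a separate verification in the complexity-one setup.
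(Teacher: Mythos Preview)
Your plan has a genuine gap at the parity step. The claim that the stalks $\mathcal{H}^{k}(\pi_{\star}IC_{\widetilde{X}})_{x}$ are concentrated in degrees of a single parity is false in general. The fibres of $\pi$ over points of $E$ are \emph{not} toric varieties: locally over an orbit $O\subset E$, the contraction space $\widetilde{X}_{O}$ fibres over the quotient curve $C$ (see Lemma~\ref{l-diagram} and Remark~\ref{rem-keyetale-struct-tor}), so the stalk cohomology at a fixed point picks up $IH^{\ast}$ of a family over $C$. Concretely, Theorem~\ref{theo-gpol-dib} gives
\[
P_{\widetilde{X}_{O_{0}}}(t)=\bigl(t^{2}+2\rho_{g}(\bar{C})\,t+1-a\bigr)\,g(\sigma_{O_{0}};t^{2})+\sum_{z}g(\mathrm{Cay}(\bar{\mathfrak{D}}_{O_{0},z});t^{2}),
\]
and the term $2\rho_{g}(\bar{C})\,t$ produces odd-degree cohomology whenever the genus is positive. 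Since $IC_{X}$ is itself a summand of $\pi_{\star}IC_{\widetilde{X}}$ and Theorem~\ref{theo-rat-imp-coho} shows $IH^{\mathrm{odd}}(X;\mathbb{Q})\neq 0$ precisely in the non-rational case, the stalks of $\pi_{\star}IC_{\widetilde{X}}$ genuinely mix parities. So you cannot read off ``$n+\operatorname{codim}\bar{O}$ even'' from a stalk parity constraint, and the subsequent deduction that $n=0$ collapses.

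The paper's argument is structurally different. It does not attempt a parity statement on the stalks. Instead it sets $\widetilde{S}_{O}(t)=\sum_{b}s_{b,O}\,t^{b+\dim X-\dim O}$ and proves, by induction on $\dim X$ after reducing via Lemma~\ref{LemmaStildeL} to the case where the local slice groups $G_{O}$ are trivial, that each $\widetilde{S}_{O}(t)$ is a monomial $\lambda_{O}\,t^{\dim X-\dim O}$. The two ingredients are: (i) a degree bound $\deg\bigl(P_{\widetilde{X}_{O_{0}}}(t)-P_{X_{O_{0}}}(t)\bigr)\le d$ for a fixed point $O_{0}$, obtained from the explicit shape of $g_{\bar{\mathfrak{D}}_{O_{0}}}(t)$ and the truncation formula for the affine cone (so the odd genus term is bounded in degree, not absent); and (ii) Poincar\'e symmetry of the residual polynomial $Q(t)$, which together with the degree bound forces $Q(t)=\lambda t^{d}$. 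Only at the very end is $\lambda_{O_{0}}$ identified with the $(d{-}2)$-th coefficient of the toric $g$-polynomial $g(\sigma_{O_{0}};t^{2})$; the evenness of that polynomial is what kills the odd-codimension contributions. In other words, the toric parity is applied to the \emph{multiplicities} $s_{b,O}$, not to the stalk cohomology of $\pi_{\star}IC_{\widetilde{X}}$, and that distinction is exactly where your argument breaks.
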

Next result, which is consequence of Theorem A, provides a cohomological criterion of rationality in the setting of torus actions of complexity one.  
\begin{thmB}[See Theorem \ref{theo-rat-imp-coho}]\label{TheoB-TheoB}
Let $X$ be a complete variety with torus action of complexity one. Then the following are equivalent.
\begin{itemize}
\item[$(1)$] The variety $X$ is rational.
\item[$(2)$] We have $IH^{2j+1}(X; \mathbb{Q})  = 0$  for any $j\in \mathbb{Z}$. 
\end{itemize}
\end{thmB}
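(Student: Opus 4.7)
The plan is to establish the chain of equivalences
$$X\text{ rational} \iff C\simeq \mathbb{P}^1 \iff IH^{\rm odd}(\widetilde X;\mathbb{Q})=0 \iff IH^{\rm odd}(X;\mathbb{Q})=0,$$
in which the first link is a standard birational fact, the second follows from the earlier computations of \cite{AL18,AL21}, and the third is the main use of Theorem A.

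For the first equivalence, any complete $\mathbb{T}$-variety of complexity one is birational to $C\times \mathbb{T}/\mathbb{T}_{0}$, so since tori are rational, $X$ is rational if and only if $C\simeq \mathbb{P}^1$. For the second, $\widetilde X$ is a (partial) toroidal desingularization equipped with a proper $\mathbb{T}$-equivariant morphism to $C$ with toric general fibers; the formulas of \cite{AL18} and \cite[Section 5.1]{AL21} express $IH^*(\widetilde X;\mathbb{Q})$ as a direct sum of pieces of the shape $H^a(C;\mathbb{Q})\otimes V_{b}$ with each $V_{b}$ concentrated in even degrees. Hence $IH^{\rm odd}(\widetilde X;\mathbb{Q})=0$ if and only if $H^{\rm odd}(C;\mathbb{Q})=0$, i.e.\ if and only if $C\simeq \mathbb{P}^1$.

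The third equivalence is obtained by taking hypercohomology in the decomposition of Theorem A, yielding
$$IH^k(\widetilde X;\mathbb{Q})\simeq IH^k(X;\mathbb{Q})\oplus \bigoplus_{O\in\mathcal{O}_{\rm even}(E)} IH^{k-{\rm codim}_X \bar O}(\bar O;\mathbb{Q})^{\oplus s_O}.$$
Since each ${\rm codim}_X \bar O$ is even, parities of degrees are preserved in every summand on the right. Each orbit closure $\bar O$ is a complete toric variety for a suitable quotient torus $\mathbb{T}/{\rm Stab}(O)$, so its intersection cohomology vanishes in odd degrees \cite{BBFK99}. Therefore $IH^{2j+1}(\widetilde X;\mathbb{Q})\cong IH^{2j+1}(X;\mathbb{Q})$ for every $j\in\mathbb{Z}$, closing the chain and proving the theorem.

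The main obstacle is the parity argument for the orbit closures $\bar O$: the vanishing of $IH^{\rm odd}(\bar O;\mathbb{Q})$ rests on $\bar O$ being toric for the quotient torus and may require a mild normalization check to invoke \cite{BBFK99} directly. Apart from this subtlety, all the ingredients are direct consequences of \cite{AL18,AL21,BBFK99,CMM18} combined with Theorem A, so the genuinely new input is the even-codimension refinement provided by Theorem A, which makes the parity bookkeeping work out cleanly.
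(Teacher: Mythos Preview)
Your proof is correct and follows essentially the same strategy as the paper's own argument: both pass to the normalization implicitly, use Theorem~A to reduce the parity of $IH^{\ast}(X;\mathbb{Q})$ to that of $IH^{\ast}(\widetilde{X};\mathbb{Q})$ via the evenness of $\mathrm{codim}\,\bar{O}$ and of $P_{\bar{O}}(t)$ (the latter from the toric structure of the normalized orbit closures), and then invoke the explicit formula of \cite{AL18,AL21} (the paper's Theorem~\ref{theo-gpol-dib}) to see that $P_{\widetilde{X}}(t)\in\mathbb{Z}[t^{2}]$ iff $\rho_{g}(\bar{C})=0$, concluding with the birational equivalence $X\sim_{\mathrm{bir}}\bar{C}\times\mathbb{P}^{n}$. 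The only cosmetic difference is that the paper phrases everything at the level of Poincar\'e polynomials rather than as a chain of isomorphisms, and cites L\"uroth explicitly for the final step.
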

Application of Theorem A is a  formula for
the intersection cohomology from the description of \cite{AIPSV12}, in terms of divisorial fans, extending the $3$-dimensional case \cite[Theorem 1.2]{AL21}.  \emph{Divisorial fans} are analogous to fans in toric geometry for complexity-one torus actions. They consist of finite collections $\mathscr{E}$ of divisors with polyhedral coefficients (called \emph{polyhedral divisors}) over Zariski open subsets of a smooth projective curve $C$ with additional conditions.  For a complexity-one $\mathbb{T}$-variety $X$ with defining divisorial fan $\mathscr{E}$,
the curve $C$ represents the rational quotient of the torus action, while the polyhedral coefficients encode the geometry of the fibers of the global quotient $\widetilde{X}\rightarrow C$ of the contraction space of $X$. 

Let us introduce further notations. We call \emph{Poincar\'e polynomial} of a variety $V$, the polynomial
$P_{V}(t) =  \sum_{j\in \mathbb{Z}}b_{j}(V)t^{j}$, where $b_{j}(V):= {\rm dim}\, IH^{j}(V; \mathbb{Q})$. For a strictly convex polyhedral cone $\sigma$ (resp. a fan $\Sigma$), let $X_{\sigma}$ (resp. $X_{\Sigma}$)
be the associated toric variety. We denote by
$g(\sigma; t^{2}) =  \sum_{j\in \mathbb{Z}_{\geq 0}}g_{j}(\sigma)t^{j}$ the \emph{$g$-polynomial} of $\sigma$, where $g_{j}(\sigma)$ is
the $j$-th local intersection cohomology of $X_{\sigma}$ along the closed orbit. Moreover, we write $h(\Sigma; t^{2})$ for 
the \emph{$h$-polynomial} of a  complete fan $\Sigma$, which is the Poincar\'e polynomial of $X_{\Sigma}$. Note that $g(\sigma; t^{2})$ and $h(\Sigma; t^{2})$ are combinatorial objects, see \cite{DL91, Fie91, BBFK02}. Similarly, the $h$-polynomial $h_{\widetilde{\mathscr{E}}}(t)$ of a divisorial fan $\widetilde{\mathscr{E}}$ describing
the contraction space $\widetilde{X}$ of a complete normal variety $X$ with torus action of complexity one is the Poincar\'e polynomial of $\widetilde{X}$. The polynomial $h_{\widetilde{\mathscr{E}}}(t)$ is explicit from the divisorial fan of $X$ (see the reminder in Section \ref{sec: h-poly23}).

The formula for the Betti numbers can be expressed as follows (see Corollary \ref{corcor-gD-Ic} for the affine case). 
 Let $X$ be a complete normal $\mathbb{T}$-variety of complexity one
with divisorial fan $\mathscr{E}$. 
Denote by $\widetilde{\mathscr{E}}$ the divisorial fan of the contraction space $\widetilde{X}$ of $X$. Let us consider set $HF(\mathscr{E})$ (see \ref{Beettidivfanop})  consists of polyhedral cones that are in one-to-one
correspondence with the orbits in the image of the exceptional locus of the contraction map of $X$. Moreover, for $\tau\in HF(\mathscr{E})$, the symbol ${\rm Star}(\mathscr{E}, \tau)$ stands for the fan of the normalization of the orbit closure associated with $\tau$. Finally, for any $\tau\in HF(\mathscr{E})$ the numbers $n(\tau)$ and $c(\tau)$ are respectively ${\rm dim}\, \tau -1$ and ${\rm dim}\, \tau +1$. Then, we have the following theorem.

\begin{thmC}[See Corollary \ref{coro-Betti-divfan}]
The 
Poincar\'e polynomial of $X$ is given by the formula
$$P_{X}(t)  =  h_{\widetilde{\mathscr{E}}}(t) -  \sum_{\tau\in HF(\mathscr{E})}g_{n(\tau)}(\tau)t^{c(\tau)}h({\rm Star}(\mathscr{E}, \tau); t^{2}).$$

\end{thmC}
Part of the present work develops computation methods. More specifically, consider the projective space $\mathbb{P}^{\ell}_{\mathbb{C}}$ with its natural toric structure for the torus $\mathbb{G} =  (\mathbb{C}^{\star})^{\ell}$. Let $X\subset \mathbb{P}^{\ell}_{\mathbb{C}}$ be a subvariety with linear torus action of complexity one, i.e. $X$ meets
 the open $\mathbb{G}$-orbit and there is a subtorus $\mathbb{T}\subset\mathbb{G}$ acting on $X$ with complexity one.
 The inclusion $\mathbb{T}\hookrightarrow \mathbb{G}$ induces a linear map $\mathbb{Z}^{n}\rightarrow \mathbb{Z}^{\ell}$ whose matrix $F$ is the \emph{weight matrix} of the $\mathbb{T}$-action. In this article,
we solve the following question.
\\

{\bf Question}
$(\star)$: \emph{How do we compute the intersection cohomology of the variety $X$ from the matrix $F$?}
\\

Since the variety $X$ has same intersection cohomology Betti numbers as its normalization $\widehat{X}$ (see \cite[Section 5, Lemma 1]{GS93}), answer of Question  $(\star)$ means to build a divisorial fan of $\widehat{X}$ from the matrix $F$ and apply Theorem C. 
For this, we use ingredients of \cite{KSZ91}, \cite[Section 4]{AIPSV12} that we now recall.

Start with the short exact sequence
$$0\rightarrow \mathbb{Q}^{n}\xrightarrow{F} \mathbb{Q}^{\ell}\xrightarrow{P} {\rm Coker}(F)\rightarrow 0,$$
a section $S: \mathbb{Z}^{\ell} \rightarrow \mathbb{Z}^{n}$ of $F$, and the fan $\Sigma$ 
generated by the images of $P$ of the faces of the first quadrant $\delta:= \mathbb{Q}_{\geq 0}^{\ell}$. We say that the data $\theta =  (\overline{N} = \mathbb{Z}^{\ell}, N = \mathbb{Z}^{n}, F, S, \Sigma)$ is a \emph{weight package}. We attach to the weight package $\theta$ a piecewise linear map
$$\mathfrak{D}_{\theta}:  m \mapsto \sum_{\rho\in \Sigma(1)}\min_{v\in S(\delta \cap P^{-1}(v_{\rho}))}\langle m , v\rangle \cdot Z_{\rho},$$
which goes from the dual  cone of $\sigma_{\theta}:=  S(\delta\cap F(N_{\mathbb{Q}}))$ to the vector space of Cartier $\mathbb{Q}$-divisors of $X_{\Sigma}$. Note that we are  using toric notations, namely $\Sigma(1)$ is the set of rays of $\Sigma$, $v_{\rho}$ is the primitive generator of $\rho$ and $Z_{\rho}\subset X_{\Sigma}$ is the corresponding toric divisor.

 Denoting by $\psi: \mathbb{G}\rightarrow \mathbb{T}_{{\rm Coker}(F)}$ the quotient map onto the torus associated with ${\rm Coker}(F)$,  the pullback $\kappa^{\star}(\mathfrak{D}_{\theta})$ is seen as a polyhedral divisor, where $\kappa$ is the composition of the normalization $\widehat{C}_{\theta}\rightarrow C_{\theta}$ of the curve $C_{\theta}$ obtained as the Zariski closure of $\psi(X\cap \mathbb{G})$ in $X_{\Sigma}$ and the inclusion $C_{\theta}\subset X_{\Sigma}$. Precisely, write $x_{0}, \ldots, x_{\ell}$ for the homogeneous coordinates of $\mathbb{P}_{\mathbb{C}}^{\ell}$ coming from the toric structure. Then
the polyhedral divisor $\kappa^{\star}(\mathfrak{D}_{\theta})$ describes the normalization of the variety $X^{(0)} =  X\setminus \mathbb{V}(x_{0})$
(see Theorem \ref{theo-aff-impl}). Via matrix operations (see Definition \ref{def-enhance}), one associates weight packages $$\theta^{(i)} =   (\overline{N}, N, F^{(i)}, S^{(i)}, \Sigma^{(i)}), \,\,0\leq i\leq \ell,$$ that define the $\mathbb{T}$-actions on the charts  $X^{(i)}:= X\setminus \mathbb{V}(x_{i})$.  

The following result, extending \cite[Section 4]{AIPSV12} to the non-normal case, gives an interrelation between
weight packages and defining equations of  projective varieties with linear torus action of complexity one. 
\begin{thmD}[See  Theorem \ref{theo-main-nonnorm}]\label{TheoC-TheoC}
\begin{itemize}
\item[(1)] Let $X\subset \mathbb{P}^{\ell}_{\mathbb{C}}$ be a subvariety with linear torus action of complexity one. Let $\theta$ be the weight package of $X$ and let $\theta^{(i)} =   (\overline{N}, N, F^{(i)}, S^{(i)}, \Sigma^{(i)})$ be the weight package corresponding to the chart $X^{(i)}$. 
Let $\overline{\Sigma}$ be a fan with support $\bigcup_{i =  0}^{\ell}|\Sigma^{(i)}|$ such that for $0\leq i\leq \ell$
the set
$\overline{\Sigma}^{(i)} :=  \{\sigma \in \overline{\Sigma}\,|\, \sigma \subset |\Sigma^{(i)}|\}$
is a projective fan subdivision of $\Sigma^{(i)}$. Denote by $\kappa^{(i)}: \widehat{C_{\theta^{(i)}}}\rightarrow X_{\Sigma^{(i)}}$ the map, which is the composition of the projective modification $f^{(i)}: X_{\overline{\Sigma}^{(i)}}\rightarrow X_{\Sigma^{(i)}}$, the inclusion $C_{\theta^{(i)}}'\rightarrow X_{\overline{\Sigma}^{(i)}},$ where $C_{\theta^{(i)}}'$ is the proper transform of $C_{\theta^{(i)}}$ under $f^{(i)}$, and the normalization 
$\widehat{C_{\theta^{(i)}}}\rightarrow C_{\theta^{(i)}}'$. 
Then the set
$$\{\bar{\mathfrak{D}}_{\theta}^{(i)} :=  \kappa^{(i) \star}\mathfrak{D}_{\theta^{(i)}}\,\,|\,\, i =  0,1, \ldots, \ell\}$$
generates  a divisorial fan $\mathscr{E}_{\theta}$ describing the normalization of $X$.
\item[(2)] Conservely, let $\theta =  (\overline{N} =  \mathbb{Z}^{\ell}, N =  \mathbb{Z}^{n}, F, S, \Sigma)$ be a weight package. Let $C\subset \mathbb{T}_{{\rm Coker}(F)} =  (\mathbb{C}^{\star})^{s}$ be an irreducible curve with defining equations $f_{i}$  $(1\leq i\leq a)$.
Define the matrix
$$\widehat{P} =   \begin{bmatrix} 
    b_{1,0} & b_{1,1} & \dots & b_{1, \ell}\\
    \vdots &  \vdots &     & \vdots  \\
    b_{s, 0} & b_{s,1} & \dots & b_{s, \ell}
    \end{bmatrix}\in {\rm Mat}_{s\times \ell + 1}(\mathbb{Z}),$$
as the addition of $P$ of a first column so that the sum of the entries of each row is $0$. Set 
$$g_{i}(T_{0},T_{1}, \ldots, T_{\ell}):= f_{i}\left(\prod_{j = 0}^{\ell}T_{j}^{b_{1, j}}, \ldots, \prod_{j = 0}^{\ell}T_{j}^{b_{s, j}}\right)\text{ for } 1\leq i\leq a,$$
and assume that there exist Laurent monomials $u_{i}\in \mathbb{C}[T_{0}, T_{0}^{-1}, \ldots, T_{\ell}, T_{\ell}^{-1}]$ 
such that $X:= \mathbb{V}(u_{1}g_{1}, \ldots, u_{a}g_{a})\subset \mathbb{P}^{\ell}_{\mathbb{C}}$ is irreducible. Then $X$ is a subvariety with linear torus action of complexity one and its normalization is described by the divisorial fan $\mathscr{E}_{\theta}$ obtained from Contruction $(1)$. 
\item[$(3)$] If $X \subset \mathbb{P}^{\ell}_{\mathbb{C}}$ is a projective subvariety with linear torus of complexity one with weight package $\theta$, then $X$ admits a decomposition $X  =  \mathbb{V}(u_{1}g_{1}, \ldots, u_{a}g_{a})$ as in Assertion $(2)$. 
\end{itemize}
\end{thmD}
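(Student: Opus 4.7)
The plan is to reduce Theorem D to the affine case already established in Theorem \ref{theo-aff-impl}, and then to carry out a gluing procedure chart by chart, using the common refinement $\bar{\Sigma}$ to make the local polyhedral descriptions compatible. The three parts of the statement are intertwined: part $(1)$ constructs a divisorial fan of the normalization of $X$ from the weight package; part $(2)$ recognizes that starting from a weight package together with the equations of a curve in the cokernel torus produces a projective subvariety with linear torus action of complexity one; and part $(3)$ asserts the exhaustiveness of this construction.

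For part $(1)$, I would argue chart by chart. By Theorem \ref{theo-aff-impl} applied to $X^{(i)}$ with its weight package $\theta^{(i)}$, the polyhedral divisor $\kappa^{(i)\star}\mathfrak{D}_{\theta^{(i)}}$ describes the normalization of the affine chart $X^{(i)}$ on the curve $\widehat{C_{\theta^{(i)}}}$. The difficulty is that these affine constructions a priori live on the potentially different toric ambient spaces $X_{\Sigma^{(i)}}$, so pullbacks to the naive curves $C_{\theta^{(i)}}$ are not immediately comparable. The refinement $\bar{\Sigma}$ resolves this by providing a common projective subdivision: replacing each $\Sigma^{(i)}$ by $\bar{\Sigma}^{(i)}$ and passing to the proper transform $C'_{\theta^{(i)}}$ yields curves whose normalizations $\widehat{C_{\theta^{(i)}}}$ are canonically identified with a single abstract smooth projective curve, namely the rational quotient of the $\mathbb{T}$-action on $X$. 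After this identification, I would verify the two axioms of a divisorial fan of \cite{AIPSV12}: the compatibility of polyhedral coefficients on the overlaps $X^{(i)}\cap X^{(j)}$, which reduces via the functoriality of $\kappa^{(i)\star}$ to comparing the restrictions of $\mathfrak{D}_{\theta^{(i)}}$ and $\mathfrak{D}_{\theta^{(j)}}$ on the common torus open set, and the closedness under intersection of tail cones. Since normalization is a Zariski-local construction and each chart's normalization is described by the corresponding $\bar{\mathfrak{D}}_{\theta}^{(i)}$, the resulting $\mathscr{E}_{\theta}$ describes the normalization of $X$.

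For part $(2)$, the rows of $\widehat{P}$ summing to zero ensures that the monomial substitutions $T_j\mapsto \prod_{j=0}^{\ell}T_j^{b_{i,j}}$ are well behaved after projectivization, so that after multiplication by the Laurent monomials $u_i$ clearing negative exponents, each $u_i g_i$ is a genuine homogeneous polynomial on $\mathbb{P}^{\ell}_{\mathbb{C}}$. The linear $\mathbb{T}$-action on $X$ is then the one induced by $F$, and the fact that $C$ is a curve in $\mathbb{T}_{{\rm Coker}(F)}$ pins down the complexity as one, with weight package exactly $\theta$ since $\psi(X\cap \mathbb{G})=C\cap \mathbb{T}_{{\rm Coker}(F)}$ by construction. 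Applying part $(1)$ to this $X$ then shows that its normalization is described by $\mathscr{E}_{\theta}$. For part $(3)$, one reverses the process: given $X\subset \mathbb{P}^{\ell}_{\mathbb{C}}$ with weight package $\theta$, let $C$ be the Zariski closure of $\psi(X\cap \mathbb{G})$ in $\mathbb{T}_{{\rm Coker}(F)}$ (a curve by the complexity-one hypothesis), and observe that the vanishing ideal of $C$ under the monomial substitution yields, up to multiplication by Laurent monomials, a set of defining Laurent equations for $X\cap \mathbb{G}$ which extend to homogeneous defining equations of $X$ on $\mathbb{P}^{\ell}_{\mathbb{C}}$.

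The main obstacle is in part $(1)$, specifically in checking the divisorial fan axioms once the $\Sigma^{(i)}$ have been refined to $\bar{\Sigma}^{(i)}$. In particular, the support of $\bar{\mathfrak{D}}_{\theta}^{(i)}$ may include exceptional divisors of the toric modification $f^{(i)}\colon X_{\bar{\Sigma}^{(i)}}\to X_{\Sigma^{(i)}}$, and one must verify that these divisors receive mutually compatible polyhedral coefficients across different charts, or else cancel on the overlaps. Once this monomial bookkeeping is settled, the remaining arguments are essentially formal; the extension to the non-normal case amounts to the observation that passing from $X$ to its normalization $\widehat{X}$ commutes with restriction to the affine charts $X^{(i)}$, so the chart-by-chart construction still glues correctly.
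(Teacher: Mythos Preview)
Your overall strategy matches the paper's: reduce to the affine case via Theorem~\ref{theo-aff-impl} on each chart $X^{(i)}$, and then glue. Parts $(2)$ and $(3)$ are handled exactly as you describe---the paper invokes Lemma~\ref{lem-equa-tvar} chart by chart for $(2)$ and Corollary~\ref{xoro-expliteqvarT} plus homogenization for $(3)$.

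The difference is in how the gluing in part~$(1)$ is carried out. You propose to verify the divisorial-fan axioms for the curve-level polyhedral divisors $\bar{\mathfrak{D}}_{\theta}^{(i)}$ directly, and you correctly flag this as the main obstacle: the exceptional divisors of the refinements $f^{(i)}$ must receive compatible coefficients on overlaps. The paper sidesteps this bookkeeping entirely. Rather than comparing the $\bar{\mathfrak{D}}_{\theta}^{(i)}$ head-on, it first observes (via Lemma~\ref{lem-P-matrix} and Lemma~\ref{lem-poltheta}) that the \emph{ambient} polyhedral divisors $f^{(i)\star}\mathfrak{D}_{\theta^{(i)}}$ on $X_{\overline{\Sigma}^{(i)}}$ already generate a divisorial fan $\overline{\mathscr{E}}$ over $(X_{\overline{\Sigma}},N)$ describing $\mathbb{P}^{\ell}_{\mathbb{C}}$ itself; this is where the reference to \cite[Proposition~4.3, Theorem~5.6]{AHS08} does the work. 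Once the ambient divisorial fan is in place, the compatibility of the $\bar{\mathfrak{D}}_{\theta}^{(i)}$ follows formally: the open immersion condition on intersections is inherited by pulling back along the composite $\eta\colon \widehat{X}\to X(\overline{\mathscr{E}})\simeq \mathbb{P}^{\ell}_{\mathbb{C}}$, giving
\[
X(\bar{\mathfrak{D}}_{\theta}^{(i)})\cap X(\bar{\mathfrak{D}}_{\theta}^{(j)}) \;\simeq\; \eta^{-1}\bigl(X(f^{(i)\star}\mathfrak{D}_{\theta^{(i)}})\cap X(f^{(j)\star}\mathfrak{D}_{\theta^{(j)}})\bigr)\;\simeq\; X(\bar{\mathfrak{D}}_{\theta}^{(i)}\cap \bar{\mathfrak{D}}_{\theta}^{(j)}).
\]
Your direct verification would ultimately succeed, but the ambient-first argument is what lets the paper avoid the coefficient-matching you were worried about. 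You should also invoke Proposition~\ref{prop-sub-kappa} explicitly to justify that replacing $\Sigma^{(i)}$ by $\overline{\Sigma}^{(i)}$ does not change the pullback polyhedral divisor up to equivariant isomorphism; this is what makes the identification of all the $\widehat{C_{\theta^{(i)}}}$ with a single curve rigorous.
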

We  illustrate our method with the example of \emph{affine trinomial hypersurfaces,} which are zero loci
$$X  =  \mathbb{V}(T_{1}^{\underline{n}_{1}} + T_{2}^{\underline{n}_{2}} + T_{3}^{\underline{n}_{3}}) \subset \mathbb{A}_{\mathbb{C}}^{\ell}$$
such that $T_{i}^{\underline{n}_{i}}$ is a monomial
$ \prod_{j = 1}^{r_{i}} T_{i, j}^{n_{i,j}} \text{ for } i = 1,2,3,$ with
$r_{i}, n_{i,j}\in\mathbb{Z}_{>0}$ and $\ell =  r_{1} + r_{2} + r_{3}$. 
Set
$$ u_{i}:= {\rm gcd}(n_{i,1}, \ldots, n_{i, r_{i}}) \text{ for } i = 1,2,3, \,\, d =  {\rm gcd}(u_{1}, u_{2}, u_{3}),$$
$$ d_{1}  =  {\rm gcd}(u_{2}/d, u_{3}/d), d_{2}  =  {\rm gcd}(u_{1}/d, u_{3}/d), d_{3}  =  {\rm gcd}(u_{1}/d, u_{2}/d) \text{ and } u = dd_{1}d_{2}d_{3}.$$ From the  trinomial equation, one associates a weight package $\theta$ (see Section \ref{sec-trinomial-aff}). 

 Using the result of Kruglov \cite[Theorem 3.1]{Kru19} and Corollary \ref{corcor-gD-Ic} (the affine version of Theorem C)  we compute the intersection cohomology of any affine trinomial hypersurface. 
\begin{thmE}[See Corollary \ref{cor-trinome}]\label{TheoD-TheoD}
Let $$X  =  \mathbb{V}(T_{1}^{\underline{n}_{1}} + T_{2}^{\underline{n}_{2}} + T_{3}^{\underline{n}_{3}}) \subset \mathbb{A}_{\mathbb{C}}^{\ell}$$
be an affine trinomial hypersurface with natural weight package $\theta =  (\overline{N} =  \mathbb{Z}^{\ell}, N, F, S, \Sigma)$ and let $$(e_{1,1}, \ldots, e_{1, r_{1}}, e_{2, 1}, \ldots, e_{2, r_{2}}, e_{3,1}, \ldots, e_{3, r_{3}})$$ be 
the canonical basis of $\overline{N}.$
Set  $\gamma =  d(d_{1} + d_{2} + d_{3})$
and $$\Pi_{i} := {\rm Cone}\left( (\sigma_{\theta}\times \{0\})\cup \left(\left\{ S\left( \frac{d}{d_{j} n_{i,j}}e_{i,j}\right)\,|\, 1\leq j\leq r_{i}\right\}\times\{1\}\right)\right)$$
for $i = 1, 2, 3$. Then the Poincar\'e polynomial of the contraction space $\widetilde{X}$ of $X$ is given by
$$P_{\widetilde{X}}(t) =  \left(t^{2} + (du - \gamma + 2)t - \gamma + 1\right)\cdot g(\sigma_{\theta}; t^{2}) +  \sum_{i = 1}^{3} dd_{i}\cdot g(\Pi_{i}; t^{2}).$$   
Furthermore, write 
$$H(\theta, \underline{n}_{1},\underline{n}_{2},\underline{n}_{3}):= \left\{\tau \text{ face of }\sigma_{\theta}\,|\, \tau\cap \left\{\sum_{i =  1}^{3} S\left( \frac{d}{d_{j} n_{i,j_{i}}}e_{i,j_{i}}\right)\,|\, (j_{1}, j_{2}, j_{3})\in \prod_{i = 1}^{3}\{1, \ldots, r_{i}\}\right\} \neq \emptyset\right\}.$$
Then the Poincar\'e polynomial of $X$ is obtained from the relation
$$P_{X}(t)  = P_{\widetilde{X}}(t)  -  \sum_{\tau \in H(\theta, \underline{n}_{1},\underline{n}_{2},\underline{n}_{3})} g_{n(\tau)}(\tau) t^{c(\tau)}, $$
where $n(\tau) =  {\rm dim}\, \tau - 1$ and $c(\tau) =  {\rm dim}\, \tau +1$.      
\end{thmE}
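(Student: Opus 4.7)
The plan is to combine Kruglov's description of the normalized trinomial hypersurface with the affine version of Theorem C, namely Corollary \ref{corcor-gD-Ic}. Since $X$ and its normalization $\widehat{X}$ have the same intersection cohomology Betti numbers (\cite[Section 5, Lemma 1]{GS93}), it suffices to work with $\widehat{X}$ and its contraction space.

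First, I would invoke \cite[Theorem 3.1]{Kru19}, together with Theorem D, to produce an explicit divisorial fan $\mathscr{E}$ describing $\widehat{X}$. The rational quotient curve is $C = \mathbb{P}^1_{\mathbb{C}}$, and $\mathscr{E}$ has polyhedral divisors supported on three distinguished points $p_1, p_2, p_3 \in \mathbb{P}^1$ arising from the monomials $T_i^{\underline{n}_i}$. The slice at $p_i$ is the polyhedron whose lift to height one produces $\Pi_i$; the generic slice coincides with $\sigma_\theta$; and the number $dd_i$ of preimages over $p_i$ in $\widehat{C}$ records the ramification imposed by the gcds $d, d_i$ coming from the trinomial equation.

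Next, I would compute $P_{\widetilde{X}}(t)$. By the review in Section \ref{sec: h-poly23}, this equals the $h$-polynomial $h_{\widetilde{\mathscr{E}}}(t)$ of the divisorial fan describing the contraction space. Unwinding this polynomial for a divisorial fan on $\mathbb{P}^1$ with three special fibers yields, on the one hand, a generic $\sigma_\theta$-contribution spread over the curve and, on the other hand, special contributions from the cones $\Pi_i$ at each of the $dd_i$ points of $\widehat{C}$ lying over $p_i$. Bundling the generic term with the combinatorial boundary correction along $\mathbb{P}^1$ and taking into account that $dd_1 + dd_2 + dd_3 = \gamma$ and that the total multiplicity of the divisorial fan along $\mathbb{P}^1$ equals $du$, one recovers the quadratic factor $t^2 + (du - \gamma + 2)t - \gamma + 1$ in front of $g(\sigma_\theta; t^2)$ and the special summands $dd_i \cdot g(\Pi_i; t^2)$.

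Finally, I would apply Corollary \ref{corcor-gD-Ic}. In the affine case the ${\rm Star}$-polynomial reduces to $1$ for each $\tau$, so the correction term becomes exactly $\sum_{\tau \in HF(\mathscr{E})} g_{n(\tau)}(\tau)\, t^{c(\tau)}$. It then remains to identify $HF(\mathscr{E})$ with $H(\theta, \underline{n}_1, \underline{n}_2, \underline{n}_3)$: a face $\tau$ of $\sigma_\theta$ belongs to $HF(\mathscr{E})$ exactly when the corresponding torus orbit of $\widehat{X}$ lies in the image of the exceptional locus of the contraction map, and from the explicit divisorial fan produced in the first step this occurs precisely when $\tau$ meets one of the lattice points $\sum_{i=1}^{3} S\bigl(\tfrac{d}{d_{j_i} n_{i, j_i}} e_{i, j_i}\bigr)$ assembled from vertices of the slices at $p_1, p_2, p_3$.

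The main obstacle is the second step: extracting the closed-form expression for $h_{\widetilde{\mathscr{E}}}(t)$. This requires carefully tracking how the local toric contributions of each cone in the divisorial fan fit together, keeping track of the ramification of $\widehat{C} \to \mathbb{P}^1$ at the three special fibers, and checking that the boundary corrections collapse into the stated quadratic polynomial in $t$ multiplying $g(\sigma_\theta; t^2)$. The third step, by contrast, is essentially a dictionary translation between the geometric description of the contraction locus and the combinatorial condition defining $H(\theta, \underline{n}_1, \underline{n}_2, \underline{n}_3)$.
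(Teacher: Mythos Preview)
Your overall strategy---combine Kruglov's explicit polyhedral divisor with Corollary~\ref{corcor-gD-Ic}---matches the paper's. But the execution contains a genuine error that would derail the computation.

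The curve $\widehat{C_\theta}$ is \emph{not} $\mathbb{P}^1_{\mathbb{C}}$ in general. By Kruglov's theorem (Theorem~\ref{Kruglov-Theorem} in the paper), the polyhedral divisor $\bar{\mathfrak{D}}_\theta$ lives over the smooth curve
\[
C_{d_1,d_2,d_3}=\mathbb{V}\bigl(w_1^{u/d_1}+w_2^{u/d_2}+w_3^{u/d_3}\bigr)\subset \mathbb{P}(d_1,d_2,d_3),
\]
which has genus $\tfrac{d}{2}\bigl(u-(d_1+d_2+d_3)\bigr)+1=\tfrac{du-\gamma+2}{2}$. The coefficient $du-\gamma+2$ in the formula for $P_{\widetilde{X}}(t)$ is precisely $2\rho_g(C_{d_1,d_2,d_3})$, read straight off the definition
\[
g_{\mathfrak{D}}(t)=\bigl(t^2+2\rho_g(\bar C)t+1-a\bigr)\,g(\sigma;t^2)+\sum_{z\in\operatorname{Supp}(\mathfrak{D})}g(\operatorname{Cay}(\mathfrak{D}_z);t^2),
\]
with $a=dd_1+dd_2+dd_3=\gamma$ the cardinality of the support. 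The integers $dd_i$ are the number of points of $C_{d_1,d_2,d_3}$ in the support of the divisor $E_i$, not preimages under a cover of $\mathbb{P}^1$. Your talk of ``ramification of $\widehat{C}\to\mathbb{P}^1$'' and ``total multiplicity along $\mathbb{P}^1$'' is therefore built on a false premise; there is no such cover in the argument, and the second step as you describe it would not produce the correct formula outside the degenerate case $du-\gamma+2=0$.

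A smaller point: your claim that ``in the affine case the ${\rm Star}$-polynomial reduces to $1$'' is not a general feature of the affine setting. It holds here because the orbit closures in the image of the exceptional locus happen to be affine spaces (so the corresponding cones are smooth and have $g$-polynomial equal to $1$); the paper states this explicitly, and you should too. Also, $X$ is already normal (the paper notes the singular locus has codimension $\geq 2$), so invoking \cite{GS93} is harmless but unnecessary, and Theorem~D is not needed---Kruglov's theorem already hands you $\bar{\mathfrak{D}}_\theta$ directly.
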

Let us give a brief summary of the contents of each section. Section \ref{sec-two} gives preliminaries  on intersection cohomology
and torus actions. As  preparation for Theorem A, we study  in Section \ref{sec-three} intersection cohomology with finite group action. In Subsection \ref{subsec-pullbackfiniteGaction} we obtain  results  on pullbacks of quotient maps of finite group actions for intersection cohomology complexes that might be of independent interest (see Propositions \ref{prop-IC-G-X}, \ref{theo-finitegrouptoric}). Section \ref{sec-four} introduces weight package theory, and
illustrates the concept with examples.
We then show in Subsection \ref{sec-projproj} Theorem D.
 Moreover, we prove a similar result as Theorem D for contraction spaces of  torus actions of complexity one (see Theorem \ref{theo-main-contr-theorDelta}).  Section \ref{sec-five} is devoted to the proofs of Theorem A and Theorem B.
Finally, in Section \ref{sec-six}, we discuss some
consequences of the results of Sections \ref{sec-four} and \ref{sec-five} and describe the intersection cohomology for trinomial hypersurfaces, where we treat the affine case and partially the projective case. 
\\

\emph{Perspective.}
It was conjectured \cite{CHL18} that  the decomposition theorem exists, in a strong form, over finite fields, and confirmed for toric varieties \cite{Cat15} and convolution morphisms of partial affine flag varieties \cite[Section 6]{CHL18}. Can this be verified in the setting of torus actions of complexity one? 
\\

\emph{Acknowledgement.}
The second author was supported by the SFB/TRR 191 Symplectic Structures in Geometry, Algebra and Dynamics of the Deutsche Forschungsgemeinschaft. The second and third authors would like to thank Max Planck Institute for Mathematics (Bonn) for providing a great environment to make this collaboration possible in the initial stages.

The authors would like to thank the referees for carefully reading the manuscript
and for helpful comments and suggestions.

\subsection{Convention} \label{sec-varalg}
 \emph{Local systems} are  locally constant sheaves of $\mathbb{Q}$-vector spaces for the Euclidean topology with finite-dimensional stalks. \emph{Semi-projective} means projective over affine.

\section{Preliminaries}\label{sec-two}
This section is devoted to the preliminaries on intersection cohomology and torus actions. 
\subsection{Algebraic varieties}
We recall some basic notions on divisor theory.
We write $\leq$ for the coefficient-wise inequality between $\mathbb{Q}$-divisors. Given a $\mathbb{Q}$-divisor $D$ on  a normal variety $Y$ we denote by $H^{0}(Y, \mathcal{O}_{Y}(D))$ its space of global sections, and for a rational function $f\in \mathbb{C}(Y)^{\star}$ we write ${\rm div}(f)$ for its principal divisor and ${\rm ord}_{Z}(f)$ for its vanishing order along a prime divisor $Z\subset Y$. For $s\in H^{0}(Y, \mathcal{O}_{Y}(D))\setminus\{0\}$, the \emph{zero locus}  $Z_{Y, D}(s)$ is the union of the prime divisors that are in the support
of ${\rm div}(s) + D$. We denote by $Y_{D, s} :=  Y\setminus Z_{Y, D}(s)$ the complement. We say that 
the $\mathbb{Q}$-divisor $D$ is \emph{semi-ample} if for some $r\in \mathbb{Z}_{>0}$, the open subsets $Y_{rD, s}$ cover $Y$, where
$s$ runs over $H^{0}(Y, \mathcal{O}_{Y}(rD))\setminus\{0\}$.
The following notion generalize the usual notion of a big divisor on a projective variety, see \cite[Lemma 2.60]{KM98}.
\begin{definition}
The $\mathbb{Q}$-divisor $D$ on the normal variety $Y$ is \emph{big} if there are $r\in \mathbb{Z}_{>0}$  and $s\in H^{0}(Y, \mathcal{O}_{Y}(rD))\setminus\{0\}$ such that $Y_{rD, s}$ is affine. 
\end{definition}
Later we will use the following observation. 
\begin{lemma}\label{lem1-Qdivisor}
Let $D, D'$ be two $\mathbb{Q}$-divisors on a normal variety $Y$. Assume that $D$ is semi-ample and that $H^{0}(Y, \mathcal{O}_{Y}(rD))\subset H^{0}(Y, \mathcal{O}_{Y}(rD'))$ for any $r\in \mathbb{Z}_{>0}$. Then  $D\leq D'$. 
\end{lemma}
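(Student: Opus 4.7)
The plan is to prove the inequality coefficient by coefficient. Fix a prime divisor $Z\subset Y$; it suffices to show ${\rm ord}_{Z}(D)\leq {\rm ord}_{Z}(D')$ (writing ${\rm ord}_{Z}$ for the coefficient along $Z$).

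First, I would exploit semi-ampleness of $D$ to produce a distinguished section. Since the open subsets $Y_{rD,s}$ cover $Y$ for some $r\in\mathbb{Z}_{>0}$ (as $s$ runs over $H^{0}(Y,\mathcal{O}_{Y}(rD))\setminus\{0\}$), at least one such $Y_{rD,s}$ must meet the generic point of $Z$. For that particular $s$, the prime divisor $Z$ does not lie in $Z_{Y,rD}(s)$, i.e.\ $Z$ is not in the support of ${\rm div}(s)+rD$. Since $s\in H^{0}(Y,\mathcal{O}_{Y}(rD))$ forces ${\rm div}(s)+rD\geq 0$ globally, the non-appearance of $Z$ in the support gives exactly
\[
{\rm ord}_{Z}(s) + r\cdot{\rm ord}_{Z}(D) = 0.
\]

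Next I would plug into the hypothesis on $D'$. Because $H^{0}(Y,\mathcal{O}_{Y}(rD))\subset H^{0}(Y,\mathcal{O}_{Y}(rD'))$, the same section $s$ lies in $H^{0}(Y,\mathcal{O}_{Y}(rD'))$, hence ${\rm div}(s)+rD'\geq 0$ and in particular ${\rm ord}_{Z}(s)+r\cdot{\rm ord}_{Z}(D')\geq 0$. Substituting the identity from the previous step yields
\[
-r\cdot{\rm ord}_{Z}(D) + r\cdot{\rm ord}_{Z}(D')\geq 0,
\]
so ${\rm ord}_{Z}(D)\leq {\rm ord}_{Z}(D')$. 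Running this argument for every prime divisor $Z$ of $Y$ gives $D\leq D'$.

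There is no real obstacle here; the only point requiring any care is justifying the existence of a section $s$ whose zero locus avoids $Z$. This is handled as above by the covering property in the definition of semi-ampleness: since $Y_{rD,s}$ is, by construction, the complement of the support of ${\rm div}(s)+rD$, and these opens cover $Y$, the generic point of $Z$ must lie in one of them, which is precisely the statement that $Z$ is not in the support of the effective divisor ${\rm div}(s)+rD$ for some $s$.
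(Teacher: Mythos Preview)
Your proof is correct and follows essentially the same approach as the paper's. The paper phrases it via an open cover $(U_i)$ with trivializing sections $f_i$ satisfying $rD|_{U_i}=-{\rm div}(f_i)|_{U_i}$, whereas you work prime by prime; these are equivalent formulations of the same argument.
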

\begin{proof}
Since $D$ is semi-ample, there exist $r\in\mathbb{Z}_{>0}$,
an open covering $(U_{i})_{i\in I}$ of $Y$ and a family of nonzero global sections $(f_{i})_{i\in I}$ of $rD$ such that $rD_{|U_{i}} = -{\rm div}(f_{i})_{|U_{i}}$ for any $i$. Since each $f_{i}$ is in $H^{0}(Y, \mathcal{O}_{Y}(rD'))$, we have $rD_{|U_{i}} = -{\rm div}(f_{i})_{|U_{i}}\leq rD_{|U_{i}}'$ for any $i$, proving $D\leq D'$. 
\end{proof}
\subsection{Intersection cohomology}
We set our convention on intersection cohomology theory (see \cite{BBD82,  GM83, CM09}, \cite[Section 1]{Wil17}, \cite{Max19}). For a variety $X$
we denote by $D_{\rm const}^{b}(X)$ the constructible derived category of sheaves of $\mathbb{Q}$-vector spaces on $X$;
this is a triangulated category with shift functor $[1]$. Given a morphism $f: X\rightarrow Y$ of varieties, we write  
$$ f_{\star}, f_{!}: D_{\rm const}^{b}(X)\rightarrow D_{\rm const}^{b}(Y)\text{ and }  f^{\star}, f^{!}: D_{\rm const}^{b}(Y)\rightarrow D_{\rm const}^{b}(X)$$
for the derived functors $Rf_{\star},$ $Rf_{!}$, etc. For a complex $\mathcal{F}\in D_{\rm const}^{b}(X)$ we denote
by $\mathcal{H}^{j}(\mathcal{F})$ its $j$-th cohomology sheaf and the arrow
$\mathbb{D}: D_{\rm const}^{b}(X)\rightarrow D_{\rm const}^{b}(X)$
will be the Verdier duality. 
\\

Let
$X =  \bigcup_{\lambda\in I} X_{\lambda}$
be  an algebraic Whitney stratification,
where $X_{\lambda_{0}}$ is the open stratum and the $i_{\lambda}: X_{\lambda}\rightarrow X$ are inclusions of strata.
Let $\mathscr{L}$ be a local system on $X_{\lambda_{0}}$. We denote by $IC_{X}(\mathscr{L})$ the \emph{intersection cohomology complex} with coefficients in $\mathscr{L}$. According to Deligne, it is uniquely determined by:
\begin{itemize}
\item[(1)] the open stratum condition: $i_{\lambda_{0}}^{\star} IC_{X}(\mathscr{L}) =  \mathscr{L}[{\rm dim}\, X]$,
\item[(2)] the support conditions: $\mathcal{H}^{j}(i_{\lambda}^{\star} IC_{X}(\mathscr{L})) =  0$ for $j\geq -{\rm dim}\, X_{\lambda}$
and $\lambda\neq \lambda_{0}$, and
\item[(3)] the co-support conditions: $\mathcal{H}^{j}(i_{\lambda}^{!} IC_{X}(\mathscr{L})) =  0$ for $j \leq -{\rm dim}\, X_{\lambda}$
and $\lambda \neq \lambda_{0}$. 
\end{itemize}
Intersection cohomology complexes belong to the category of \emph{perverse sheaves} for the middle perversity $p$, i.e.,
it is an element of the heart of the category $D_{\rm const}^{b}(X)$ for the $t$-structure
$${}^{p}D^{\leq 0}(X) :=  \{ \mathcal{F}\in D_{\rm const}^{b}(X)\,|\, {\rm dim}\, {\rm Supp}(\mathcal{H}^{j}(\mathcal{F})\leq -j\text{ for all } j\}, $$
$${}^{p}D^{\geq 0}(X) :=  \{ \mathcal{F}\in D_{\rm const}^{b}(X)\,|\, {\rm dim}\, {\rm Supp}(\mathcal{H}^{j}(\mathbb{D}\mathcal{F})\leq -j\text{ for all } j\}.$$
The \emph{intersection cohomology groups} with coefficients in $\mathscr{L}$ are the hypercohomology groups 
$$ IH^{\star}(X; \mathscr{L}):= \mathbb{H}^{\star}(X, IC_{X}(\mathscr{L})[-{\rm dim}\, X]).$$
In particular, if $\mathscr{L} =  \mathbb{Q}$, then we set $IC_{X}:= IC_{X}(\mathscr{L})$ and $IH^{\star}(X; \mathbb{Q}):=\mathbb{H}^{\star}(X, IC_{X}[-{\rm dim}\, X]).$
Observe 
 that $IH^{\star}(X; \mathbb{Q}) =  H^{\star}(X; \mathbb{Q})$ whenever $X$ is smooth.

\begin{definition}\label{def2}
An object in $D^{b}_{\rm const}(X)$ is \emph{semi-simple} or \emph{pure} \cite[Section 5.4]{BBD82} if it is a finite direct sum
of objects  $\iota_ {\star}IC_{Z}(\mathscr{L})[r],$
where $r\in \mathbb{Z}$, $\iota: Z\rightarrow X$ is the inclusion of  a Zariski closed irreducible subvariety,  and $\mathscr{L}$ is a simple local system on a smooth Zariski open subset of $Z$. 
\end{definition}
For a morphism of varieties $f:X\rightarrow Y$
let $Y^{i}:= \{y\in Y\,|\, {\rm dim}\, f^{-1}(y) = i\}.$ The \emph{defect}
of $f$ is the number 
${\rm def}(f):= {\rm max}\{2i + {\rm dim}\, Y^{i}- {\rm dim}\, X\,|\, Y^{i}\neq \emptyset\}.$
The map $f$ is \emph{semi-small} if ${\rm def}(f) = 0$. Recall that $X$ is \emph{rationally smooth} if for any $x\in X$ the cohomology $H^{i}_{x}(X; \mathbb{Q})$ with support in $\{x\}$ is $0$ when $i\neq {\rm dim}(X)$. 
The following is the \emph{decomposition theorem}.
\begin{theorem}\cite[Theorem 6.25]{BBD82}, \cite[Theorem 2.4]{Wil17}
Let $f:X\rightarrow Y$ be a proper morphism of varieties. Then
$f_{\star} IC_{X}$ is semi-simple. If further $f$ is birational, then $IC_{Y}$ is a summand of $f_{\star}IC_{X}$. Moreover, if $X$ is rationally smooth and $f$ is semi-small, then $f_{\star} IC_{X}$ is semi-simple and perverse.
\end{theorem}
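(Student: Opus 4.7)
The plan is to follow the classical strategy going back to Beilinson--Bernstein--Deligne--Gabber, the key idea being that the derived pushforward of the intersection cohomology complex, endowed with a suitable notion of \emph{weight}, is a pure object whose perverse cohomology sheaves must therefore split as direct sums of twisted IC-complexes. After this, the birational and semi-small refinements follow by inspecting perverse degrees.

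First, by Chow's lemma I would reduce the claim to the case where $f$ is projective: if $g: X'\to X$ is a birational projective morphism with $X'$ quasi-projective, then $IC_X$ is a direct summand of $g_\star IC_{X'}$ (a special case that will also be established along the way), and proper base change gives $f_\star IC_X$ as a summand of $(f\circ g)_\star IC_{X'}$. Once $f$ is projective, I would invoke the Relative Hard Lefschetz Theorem: an ample class $\eta$ on $X$ defines, for each $i\geq 0$, an isomorphism
\[
\eta^i: {}^{p}\!\mathcal{H}^{-i}(f_\star IC_X)\xrightarrow{\ \sim\ } {}^{p}\!\mathcal{H}^{i}(f_\star IC_X),
\]
from which a Deligne-type argument produces a non-canonical isomorphism $f_\star IC_X\simeq \bigoplus_{i}{}^{p}\!\mathcal{H}^{i}(f_\star IC_X)[-i]$ in $D^b_{\mathrm{const}}(Y)$. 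It therefore suffices to show that each perverse cohomology sheaf ${}^{p}\!\mathcal{H}^{i}(f_\star IC_X)$ is a semisimple perverse sheaf.

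For semisimplicity I would use a purity argument: working over a finite field of characteristic $p\gg 0$ and spreading out $f$, the complex $IC_X$ is pure of weight $\dim X$ (Gabber), and properness of $f$ preserves purity, so $f_\star IC_X$ is a pure complex; BBD's structure theorem for pure perverse sheaves then forces each ${}^{p}\!\mathcal{H}^{i}$ to decompose as a direct sum of simple perverse sheaves, i.e.\ shifts of $(\iota_Z)_\star IC_Z(\mathscr{L})$ for irreducible $Z\subset Y$ and simple local systems $\mathscr{L}$. Over $\mathbb{C}$ one then transfers the decomposition back by comparison, or alternatively uses Saito's theory of pure Hodge modules, in which $IC_X$ underlies a pure Hodge module and the direct-image theorem for pure Hodge modules gives the same conclusion. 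The main obstacle is precisely this step: semisimplicity is the hard content of the theorem and cannot be produced by soft sheaf-theoretic manipulations alone; one genuinely needs either weights \'a la Deligne--Gabber or Hodge modules \'a la Saito (or the more recent geometric arguments of de Cataldo--Migliorini).

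For the two refinements, I would argue as follows. If $f$ is birational, pick a dense open $U\subset Y$ over which $f$ restricts to an isomorphism $f^{-1}(U)\xrightarrow{\sim} U$; then $(f_\star IC_X)|_U\simeq IC_U$, so $IC_Y$ occurs as a simple summand of ${}^{p}\!\mathcal{H}^{0}(f_\star IC_X)$ by the uniqueness of the intermediate extension, hence as a summand of $f_\star IC_X$ itself via the Lefschetz decomposition above. If $X$ is rationally smooth, one has $IC_X\simeq \mathbb{Q}_X[\dim X]$, and a direct dimension count using the hypothesis $\mathrm{def}(f)=0$ shows that $f_\star \mathbb{Q}_X[\dim X]$ satisfies both the support and cosupport conditions on every stratum of $Y$ adapted to $f$; therefore ${}^{p}\!\mathcal{H}^{i}(f_\star IC_X)=0$ for $i\neq 0$, so $f_\star IC_X$ is perverse, and semisimplicity was already established in the previous step.
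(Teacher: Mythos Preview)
The paper does not prove this statement: it is quoted as the decomposition theorem with citations to \cite{BBD82} and \cite{Wil17}, and used as a black box throughout. There is therefore no ``paper's own proof'' to compare against.

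That said, your outline is a correct sketch of the standard proof strategy. The reduction to the projective case via Chow's lemma, the use of relative Hard Lefschetz to obtain the non-canonical splitting $f_\star IC_X \simeq \bigoplus_i {}^{p}\mathcal{H}^i(f_\star IC_X)[-i]$, and the appeal to purity (either via weights over finite fields as in BBD, or via Saito's Hodge modules) for the semisimplicity of each perverse cohomology sheaf are exactly the ingredients of the classical argument. Your treatment of the birational refinement (restrict to the open set where $f$ is an isomorphism and use uniqueness of the intermediate extension) and of the semi-small case (the defect condition forces the support and cosupport conditions, so only ${}^{p}\mathcal{H}^0$ survives) are also accurate. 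You are right to flag that semisimplicity is the deep step requiring genuine input from weight theory or Hodge theory; this is not something one should expect to reprove in a paper that merely applies the result.
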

Next results collect  properties of intersection cohomology.
\begin{lemma}\label{lemma-smooth}
Let $f: X\rightarrow Y$ be a smooth morphism of varieties of relative dimension $r$.
Then we have $f^{\star}IC_{Y}[r]\simeq IC_{X}$. 
\end{lemma}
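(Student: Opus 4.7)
The plan is to verify the three defining Deligne characterizations (open stratum, support, co-support) of $IC_{X}$ for the candidate complex $f^{\star}IC_{Y}[r]$, using the fact that for a smooth morphism $f$, the functor $f^{\star}$ coincides with the topological inverse image $f^{-1}$ (so it is $t$-exact on sheaves), and moreover $f^{!}\simeq f^{\star}[2r]$.

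First I would fix an algebraic Whitney stratification $Y = \bigcup_{\lambda\in I} Y_{\lambda}$ adapted to $IC_{Y}$, with $Y_{\lambda_{0}}$ the open stratum. Since $f$ is smooth, the pullback stratification $X_{\lambda}:= f^{-1}(Y_{\lambda})$ is again a Whitney stratification of $X$ with open stratum $X_{\lambda_{0}}$, and each $X_{\lambda}$ has dimension ${\rm dim}\, Y_{\lambda}+r$. For the open stratum condition, I note that $f^{\star}IC_{Y}[r]$ restricts over $X_{\lambda_{0}}$ to $f^{\star}\mathbb{Q}_{Y_{\lambda_{0}}}[{\rm dim}\, Y +r] = \mathbb{Q}_{X_{\lambda_{0}}}[{\rm dim}\, X]$, as required.

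For the support and co-support conditions, I would use the Cartesian square
$$\xymatrix{X_{\lambda}\ar[r]^{i_{\lambda}^{X}}\ar[d]_{f_{\lambda}} & X\ar[d]^{f} \\ Y_{\lambda}\ar[r]^{i_{\lambda}^{Y}} & Y}$$
and the smooth base change identities $(i_{\lambda}^{X})^{\star}f^{\star}\simeq f_{\lambda}^{\star}(i_{\lambda}^{Y})^{\star}$ and $(i_{\lambda}^{X})^{!}f^{\star}\simeq f_{\lambda}^{\star}(i_{\lambda}^{Y})^{!}$ (the latter coming from $f^{!}\simeq f^{\star}[2r]$ combined with the analogous shift for $f_{\lambda}$). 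Since $f_{\lambda}^{\star}$ is exact on sheaves, we obtain $\mathcal{H}^{j}((i_{\lambda}^{X})^{\star}f^{\star}IC_{Y}[r]) = f_{\lambda}^{\star}\mathcal{H}^{j+r}((i_{\lambda}^{Y})^{\star}IC_{Y})$, which vanishes for $j\geq -{\rm dim}\, X_{\lambda}$ and $\lambda\neq \lambda_{0}$ by the support condition for $IC_{Y}$, since $j+r\geq -{\rm dim}\, Y_{\lambda}$. The co-support condition is symmetric.

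The main subtlety is checking that the pullback stratification is actually Whitney and that the smooth base change identities hold in the constructible derived category with the precise shifts; both are standard but deserve a careful citation. Once these ingredients are in place, uniqueness of the Deligne extension forces $f^{\star}IC_{Y}[r]\simeq IC_{X}$.
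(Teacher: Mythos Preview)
Your argument is correct and is precisely the standard verification of Deligne's axioms via smooth base change; the paper does not spell this out but simply refers to \cite[Lemma 2.4]{AL18}, where the same argument is given. There is nothing to add.
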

\begin{proof}
Same argument as \cite[Lemma 2.4]{AL18}.
\end{proof}
\begin{lemma}\label{LelENORMAphhh}\cite[Section 5, Lemma 1]{GS93}.
If $f:X\rightarrow Y$ is a finite birational morphism of varieties, then we have $f_{\star}IC_{X}\simeq IC_{Y}$. In particular, if $\widehat{Y}$
is the normalization of $Y$, then $IH^{\star}(Y; \mathbb{Q}) =  IH^{\star}(\widehat{Y}; \mathbb{Q})$.
\end{lemma}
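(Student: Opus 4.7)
The plan is to verify directly that $f_{\star}IC_{X}$ satisfies Deligne's characterization of $IC_{Y}$. Since $f$ is birational, there is a dense smooth Zariski open subset $U\subset Y$ over which $f$ is an isomorphism; pick a Whitney stratification of $Y$ having $U$ as the open stratum and pull it back to a stratification of $X$ through $f$. On $U$, the complex $f_{\star}IC_{X}$ restricts to $\mathbb{Q}_{U}[\dim Y]$, so the open-stratum condition is automatic.

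The key structural fact I would exploit is that a finite morphism is affine, and in particular $t$-exact for the perverse $t$-structure: $f_{\star}$ sends $^{p}D^{\leq 0}(X)$ to $^{p}D^{\leq 0}(Y)$ and $^{p}D^{\geq 0}(X)$ to $^{p}D^{\geq 0}(Y)$. Consequently $f_{\star}IC_{X}$ is a perverse sheaf on $Y$. Concretely, because all fibers of $f$ are zero-dimensional, for any stratum $Y_{\lambda}\subset Y\setminus U$ the preimage $X_{\lambda}:=f^{-1}(Y_{\lambda})$ has the same dimension as $Y_{\lambda}$, so the support bound $\dim \mathrm{Supp}\,\mathcal{H}^{j}(i_{\lambda}^{\star}f_{\star}IC_{X})\leq \dim X_{\lambda}=\dim Y_{\lambda}$ for $j>-\dim Y_{\lambda}$ follows immediately from the support condition for $IC_{X}$ together with proper base change. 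For the co-support condition I would use properness (so $f_{\star}=f_{!}$) together with $\mathbb{D}\,f_{\star}\simeq f_{\star}\mathbb{D}$ and self-duality $\mathbb{D}\,IC_{X}\simeq IC_{X}[2\dim X-2\dim X]=IC_{X}$, which transports the co-support condition from $X$ to $Y$.

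By the uniqueness part of Deligne's construction (or, equivalently, by simplicity of $IC_{Y}$ as a perverse sheaf with generic coefficient $\mathbb{Q}_{U}$), these three verifications force $f_{\star}IC_{X}\simeq IC_{Y}$. Finally, the ``in particular'' assertion follows by applying the first part to the normalization map $f:\widehat{Y}\to Y$, which is finite and birational, and then using properness to identify hypercohomology:
\[
IH^{\star}(Y;\mathbb{Q}) = \mathbb{H}^{\star}(Y,IC_{Y}[-\dim Y]) = \mathbb{H}^{\star}(Y,f_{\star}IC_{\widehat{Y}}[-\dim \widehat{Y}]) = IH^{\star}(\widehat{Y};\mathbb{Q}).
\]
The only subtlety—hardly an obstacle—is ensuring the correct shift conventions and the identification $\dim X=\dim Y$ (immediate from birationality), so that the characterizing axioms for $IC_{Y}$ match those obtained by pushing forward the axioms for $IC_{X}$.
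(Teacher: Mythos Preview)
The paper does not supply its own proof of this lemma; it simply records the citation \cite[Section 5, Lemma 1]{GS93} and moves on. Your argument is correct and is the standard one: verify Deligne's axioms for $f_{\star}IC_{X}$ using that a finite morphism has zero-dimensional fibers (so stalks of $f_{\star}$ are finite direct sums of stalks upstairs, transporting the support bounds) and is proper (so $f_{\star}=f_{!}$ commutes with Verdier duality, transporting the co-support bounds). One small wording point: in the support step you phrased the conclusion as a dimension bound on $\mathrm{Supp}\,\mathcal{H}^{j}$, whereas the axiom in the paper's conventions asks for the \emph{vanishing} of $\mathcal{H}^{j}(i_{\lambda}^{\star}f_{\star}IC_{X})$ for $j\geq -\dim Y_{\lambda}$; your ingredients (proper base change plus the support condition for $IC_{X}$ on strata of dimension $\leq\dim Y_{\lambda}$) do yield that vanishing, so the argument is sound.
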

\begin{lemma}\label{GtrivGtrivlemma}
Let $X$ be a quasi-projective variety with action of a connected algebraic group $\Gamma$ and let
$G\subset \Gamma$ be a finite subgroup. Then $IH^{\star}(X/G; \mathbb{Q})\simeq IH^{\star}(X; \mathbb{Q})$.
\end{lemma}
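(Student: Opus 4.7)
The proof will rest on combining two standard facts: a finite-quotient formula for intersection cohomology and the triviality of the action of a connected group on cohomology.

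First I would reduce to the identity
\[
IH^{\star}(X/G; \mathbb{Q}) \;\simeq\; IH^{\star}(X; \mathbb{Q})^{G},
\]
valid for any action of a finite group $G$ on a quasi-projective variety $X$ (so that the geometric quotient $\pi : X\to X/G$ exists). The morphism $\pi$ is finite and surjective, hence proper and small, so $\pi_{\star}IC_{X}$ is a semi-simple perverse sheaf on $X/G$ by the decomposition theorem. The natural $G$-action on $\pi_{\star}IC_{X}$ (obtained by functoriality from the $G$-action on $IC_{X}$) allows one to form, in characteristic zero, the averaging projector $\tfrac{1}{|G|}\sum_{g\in G}g$, which splits $\pi_{\star}IC_{X}$ into its $G$-invariant part and the sum of the other isotypical components. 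On the smooth open locus $U/G$ over which $\pi$ is étale, Galois descent gives $(\pi_{\star}\mathbb{Q}_{U})^{G}=\mathbb{Q}_{U/G}$; hence by the uniqueness of Deligne's extension the $G$-invariant summand is $IC_{X/G}$. Taking hypercohomology and using exactness of $G$-invariants in characteristic zero yields $IH^{\star}(X/G;\mathbb{Q}) = IH^{\star}(X;\mathbb{Q})^{G}$.

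Next I would show that the action of $G$ on $IH^{\star}(X;\mathbb{Q})$ is trivial. The connected algebraic group $\Gamma$ acts on the finite-dimensional $\mathbb{Q}$-vector space $IH^{\star}(X;\mathbb{Q})$ by functoriality; this action is continuous for the natural topology on $\Gamma$ and the discrete topology on the target (this is the standard fact that an algebraic, or merely continuous, action of a connected group on a locally constant family of finite-dimensional $\mathbb{Q}$-vector spaces is trivial, applied here to the family produced by the $\Gamma$-action on $X$ and on $IC_{X}$). Since $\Gamma$ is connected, every $\gamma\in\Gamma$ acts as the identity; in particular each $g\in G\subset \Gamma$ acts as the identity, so $IH^{\star}(X;\mathbb{Q})^{G}=IH^{\star}(X;\mathbb{Q})$.

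Combining the two steps gives the asserted isomorphism. The main technical point is the first step: identifying $(\pi_{\star}IC_{X})^{G}$ with $IC_{X/G}$. This is where I expect the bulk of the care to be needed, since it requires knowing that $\pi_{\star}$ preserves perversity for a finite map, that characters of $G$ split $\pi_{\star}IC_{X}$ into a direct sum of intersection cohomology complexes with local system coefficients, and that the trivial character yields precisely $IC_{X/G}$. All three ingredients are classical (and in any event are developed in Section \ref{sec-three} of the paper, in particular Propositions \ref{prop-IC-G-X} and \ref{theo-finitegrouptoric}, so the argument here can be viewed as the specialization of that machinery to the present situation).
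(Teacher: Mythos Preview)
Your proof is correct and follows the same two-step skeleton as the paper: first the invariants formula $IH^{\star}(X/G;\mathbb{Q})\simeq IH^{\star}(X;\mathbb{Q})^{G}$ (which the paper simply attributes to Kirwan \cite[2.12]{Kir86}), then triviality of the $G$-action on $IH^{\star}(X;\mathbb{Q})$.

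The genuine difference is in the second step. The paper does \emph{not} argue directly that a connected group acts trivially on intersection cohomology. Instead it takes a $\Gamma$-equivariant resolution $\widetilde{X}\to X$ (using \cite{AW97}) and applies the decomposition theorem, so that $IH^{\star}(X;\mathbb{Q})$ sits as a $G$-stable direct summand of $IH^{\star}(\widetilde{X};\mathbb{Q})=H^{\star}(\widetilde{X};\mathbb{Q})$; it then invokes \cite[6.4]{DL76} for the smooth case, where the statement reduces to ordinary homotopy invariance. Your route is more direct but the sentence ``this action is continuous for the natural topology on $\Gamma$ and the discrete topology on the target'' is doing real work and is not as immediate for $IH$ as for singular cohomology: one needs, for instance, to use that both the action map and the projection $\Gamma\times X\to X$ are smooth, so $a^{\star}IC_{X}\simeq p^{\star}IC_{X}$, and then push forward to $\Gamma$ to obtain a constant local system. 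The paper's detour through a resolution trades this sheaf-theoretic argument for a reduction to the classical smooth case; your approach avoids the resolution at the cost of a slightly more delicate justification. Both are valid.
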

\begin{proof}
The group $G$ acts on $IH^{\star}(X;\mathbb{Q})$ and $IH^{\star}(X;\mathbb{Q})^{G}\simeq IH^{\star}(X/G;\mathbb{Q})$ \cite[Lemma 2.12]{Kir86}.
Using decomposition theorem for equivariant desingularizations as in \cite{AW97}, it suffices 
to prove that $G$ trivially acts on $IH^{\star}(X;\mathbb{Q})$ when $X$ is smooth, which follows from the proof of \cite[Proposition 6.4]{DL76}. 
\end{proof}
\begin{lemma} \label{LemmaCstarfixepointLemma}
Let $X$ be an affine variety with a non-hyperbolic $\mathbb{C}^{\star}$-action (i.e. all the weights have the same sign), let 
$Y :=  X^{\mathbb{C}^{\star}}$ be the fixed point set, and let $d\in\mathbb{Z}_{>0}$ such that 
the $\mathbb{C}^{\star}$-action on $X_{[d]}:= X/\mu_{d}(\mathbb{C})$ is free outside $Y_{[d]}:=Y/\mu_{d}(\mathbb{C})$.
If $\iota: Y_{[d]}\rightarrow X_{[d]}$ is the inclusion and $\mathcal{F}\in D_{\rm const}^{b}(X_{[d]})$ is $\mathbb{C}^{\star}$-equivariant,  then 
$\mathbb{H}^{\star}(X_{[d]}, \mathcal{F}) \simeq \mathbb{H}^{\star}(Y_{[d]}, \iota^{\star}\mathcal{F}).$
\end{lemma}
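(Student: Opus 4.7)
The plan is to use the non-hyperbolic hypothesis to build an $\mathbb{A}^{1}$-extension of the $\mathbb{C}^{\star}$-action, giving a retraction of $X_{[d]}$ onto $Y_{[d]}$, and to combine this with the equivariance of $\mathcal{F}$ to deduce the claimed isomorphism. First, after possibly inverting the action, one may assume that all weights of the $\mathbb{C}^{\star}$-action on $X$ are non-negative. Then the action morphism $\mathbb{C}^{\star}\times X\to X$ extends uniquely to a morphism $\tilde\psi: \mathbb{A}^{1}\times X\to X$ with $\tilde\psi(0,X)\subset Y$. Passing to the $\mu_{d}(\mathbb{C})$-quotient yields $\tilde\psi_{[d]}:\mathbb{A}^{1}\times X_{[d]}\to X_{[d]}$, whose restriction at $t=0$ defines a retraction $\pi: X_{[d]}\to Y_{[d]}$ satisfying $\pi\circ \iota=\mathrm{id}_{Y_{[d]}}$, while $\tilde\psi_{[d]}(1,\cdot)=\mathrm{id}_{X_{[d]}}$.

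Next, writing $p:\mathbb{A}^{1}\times X_{[d]}\to X_{[d]}$ for the second projection, the $\mathbb{C}^{\star}$-equivariant structure on $\mathcal{F}$ provides a canonical isomorphism $\alpha: \tilde\psi_{[d]}^{\star}\mathcal{F}|_{\mathbb{C}^{\star}\times X_{[d]}} \simeq p^{\star}\mathcal{F}|_{\mathbb{C}^{\star}\times X_{[d]}}$. The key intermediate claim is that $\alpha$ extends uniquely across $\{0\}\times X_{[d]}$ to an isomorphism on all of $\mathbb{A}^{1}\times X_{[d]}$. On $\mathbb{A}^{1}\times (X_{[d]}\setminus Y_{[d]})$ the $\mathbb{C}^{\star}$-action is free (by the hypothesis on $d$), so $\tilde\psi_{[d]}$ restricted there is smooth and both $\tilde\psi_{[d]}^{\star}\mathcal{F}$ and $p^{\star}\mathcal{F}$ descend to the same equivariant complex on the quotient $(X_{[d]}\setminus Y_{[d]})/\mathbb{C}^{\star}$; the extension across the collapsing locus $\{0\}\times Y_{[d]}$ is then forced by the fact that $\tilde\psi_{[d]}$ factors as $\iota\circ\pi$ at $t=0$, together with a constructibility argument along the $\mathbb{A}^{1}$-direction.

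Pulling back the resulting isomorphism $\tilde\psi_{[d]}^{\star}\mathcal{F}\simeq p^{\star}\mathcal{F}$ along the closed immersion $i_{0}:X_{[d]}\hookrightarrow\mathbb{A}^{1}\times X_{[d]}$ and using $p\circ i_{0}=\mathrm{id}$ together with $\tilde\psi_{[d]}\circ i_{0}=\iota\circ\pi$ would yield an isomorphism $\pi^{\star}\iota^{\star}\mathcal{F}\simeq \mathcal{F}$ in $D^{b}_{\mathrm{const}}(X_{[d]})$. Taking hypercohomology and combining the Leray spectral sequence for $\pi$ with the projection formula, one reduces to the statement $R\pi_{\star}\mathbb{Q}_{X_{[d]}}\simeq \mathbb{Q}_{Y_{[d]}}$, which follows from the observation that each fibre of $\pi$ is an attractor of a fixed point and deformation retracts onto that point via the flow $\tilde\psi_{[d]}$. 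The main technical obstacle is the extension of $\alpha$ across $t=0$ in the second step: this is exactly where the freeness of the $\mathbb{C}^{\star}$-action on $X_{[d]}$ outside $Y_{[d]}$ intervenes, allowing equivariant descent on the smooth locus before the isomorphism is patched across the collapsing fibre.
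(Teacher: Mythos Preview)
Your overall strategy---extend the $\mathbb{C}^{\star}$-action to an $\mathbb{A}^{1}$-morphism $\tilde\psi_{[d]}$, obtain a retraction $\pi$ onto $Y_{[d]}$, and compare $\tilde\psi_{[d]}^{\star}\mathcal{F}$ with $p^{\star}\mathcal{F}$ using equivariance---is exactly the argument the paper invokes by citing \cite[Lemma~6.5]{DL91} and \cite[Lemma~4.2]{CMM18}. So at the level of method you are on target.

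The weak point is your second paragraph, where you try to extend the isomorphism $\alpha$ across $\{0\}\times X_{[d]}$ via freeness and descent. First, $\tilde\psi_{[d]}$ does \emph{not} restrict to a smooth map on $\mathbb{A}^{1}\times(X_{[d]}\setminus Y_{[d]})$: at $t=0$ it collapses everything into $Y_{[d]}$, so the claimed smoothness and descent picture breaks down precisely where you need it. Second, the freeness hypothesis on $X_{[d]}\setminus Y_{[d]}$ plays no role in the contraction argument of the cited references; the argument there works for any contracting $\mathbb{C}^{\star}$-action, and the passage to $X_{[d]}$ in the lemma's statement is for the application in the paper, not for the proof. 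In \cite{DL91} and \cite{CMM18} the comparison between $t=0$ and $t=1$ is carried out at the level of hypercohomology: one pushes $\tilde\psi_{[d]}^{\star}\mathcal{F}$ forward along the projection $\mathbb{A}^{1}\times X_{[d]}\to\mathbb{A}^{1}$ and uses constructibility on $\mathbb{A}^{1}$ (together with the identification over $\mathbb{C}^{\star}$ coming from equivariance) to see that the stalk at $0$ agrees with the stalk at $1$. No sheaf-level extension of $\alpha$ across $t=0$ is required, and attempting one via descent is where your argument goes astray.

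Your final paragraph, reducing to $R\pi_{\star}\mathbb{Q}_{X_{[d]}}\simeq\mathbb{Q}_{Y_{[d]}}$ via contractibility of the attracting fibres, is fine once one has $\pi^{\star}\iota^{\star}\mathcal{F}\simeq\mathcal{F}$; but as noted, the cleaner route in the references bypasses this by working directly with hypercohomology along $\mathbb{A}^{1}$.
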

\begin{proof}
Same proof as in \cite[Lemma 6.5]{DL91}, \cite[Lemma 4.2]{CMM18}.
\end{proof}

\subsection{Algebraic torus actions}\label{sec-class-act}
We recall basic notions on torus actions from the perspective of Altmann-Hausen's theory \cite{AIPSV12}. We refer to \cite{Tim97, LT16, Lan16} for generalization to reductive group actions.
\\

Classification of  torus actions is intimately related to questions of convex geometry.
Let $N\simeq \mathbb{Z}^{n}$ be a lattice, let $M$ be its dual, and let 
$N_{\mathbb{Q}}: =  \mathbb{Q}\otimes_{\mathbb{Z}}N$
and $M_{\mathbb{Q}}:= \mathbb{Q}\otimes_{\mathbb{Z}} M$ be the associated $\mathbb{Q}$-vector spaces.
We denote by
$$M_{\mathbb{Q}}\times N_{\mathbb{Q}}\rightarrow \mathbb{Z}, \,\, (m,v)\mapsto \langle m, v\rangle$$
the natural pairing and by
$\mathbb{T} = \mathbb{T}_{N}:= \mathbb{C}^{\star}\otimes_{\mathbb{Z}}N\simeq (\mathbb{C}^{\star})^{n}$ the associated torus.
By \emph{Polyhedral cones} in a finite dimensional $\mathbb{Q}$-vector space we mean sets of non-negative linear combinations of finitely many vectors. The polyhedral cone $\sigma\subset N_{\mathbb{Q}}$ is  \emph{strictly convex} if  $\{0\}$ is a face, or equivalently,  if the \emph{dual cone}
$\sigma^{\vee}: =  \{m\in M_{\mathbb{Q}}\, |\, \langle m, v\rangle \geq 0\text{ for any }v\in \sigma\} $
is full-dimensional.
\\

Fix a strictly convex polyhedral cone $\sigma\subset  N_{\mathbb{Q}}$. Let
${\rm Pol}_{\sigma}(N_{\mathbb{Q}}) =  \{ \sigma + Q\, |\, Q \text{ polytopes of } N_{\mathbb{Q}}\}$
be the set of \emph{$\sigma$-polyhedra} and  let $Y$ be a normal semi-projective variety. A \emph{$\sigma$-polyhedral divisor} $\mathfrak{D}$ is a formal sum 
$$\mathfrak{D} = \sum_{Z\subset Y} \mathfrak{D}_{Z}\cdot [Z],$$
where $Z\subset Y$ runs over the set of prime divisors of $Y$, $\mathfrak{D}_{Z}\in {\rm Pol}_{\sigma}(N_{\mathbb{Q}})$
and $\mathfrak{D}_{Z} =  \sigma$ for all but finitely many prime divisors $Z$. The \emph{evaluation} at $m\in\sigma^{\vee}$ of the polyhedral divisor $\mathfrak{D}$
is the $\mathbb{Q}$-divisor 
$$\mathfrak{D}(m) :=  \sum_{Z\subset Y} \min_{v\in \mathfrak{D}_{Z}}\langle m, v\rangle \cdot [Z],$$
and its $M$-graded algebra is the subalgebra 
$$A[Y, \mathfrak{D}]:= \bigoplus_{m\in\sigma^{\vee}\cap M} H^{0}(Y, \mathcal{O}_{Y}(\mathfrak{D}(m)))\chi^{m}$$
of the group algebra $\mathbb{C}(Y)[M]$, where $\chi^{m}$ is the Laurent monomial corresponding to $m\in M$. A $\sigma$-polyhedral divisor $\mathfrak{D}$ is \emph{proper} if for any $m\in\sigma^{\vee}$,
\begin{itemize}
\item[(i)] $\mathfrak{D}(m)$ is $\mathbb{Q}$-Cartier and semi-ample, and 
\item[(ii)] $\mathfrak{D}(m)$ is big whenever $m$ is in the relative interior of the cone $\sigma^{\vee}$.
\end{itemize}
\

Note  that an action of the torus $\mathbb{T} =  \mathbb{T}_{N}$ on the affine variety $X  =  {\rm Spec}\, A$ is equivalent to endow 
$A  =  \mathbb{C}[X]$ with an $M$-grading, and that the action is faithful if the weights of $A$ generate the lattice $M$.  Passing to the quotient, we may always transform torus actions into effective ones. The following is due 
to Altmann-Hausen \cite[Theorem 7]{AIPSV12}.
\begin{theorem}
\begin{itemize}
\item[(1)] If $\sigma\subset N_{\mathbb{Q}}$ is a strictly convex polyhedral cone, $Y$ is a normal semi-projective variety,
and $\mathfrak{D}$ is a proper $\sigma$-polyhedral divisor on $Y$, then the $\mathbb{T}$-scheme $X(\mathfrak{D}) =  X(Y, \mathfrak{D}):= {\rm Spec}\, A[Y, \mathfrak{D}]$ is a normal affine variety equivariantly birational to the product $\mathbb{T}\times Y$. 
\item[(2)] Any normal affine variety with an effective  torus action arise from a proper polyhedral divisor.
\end{itemize}
\end{theorem}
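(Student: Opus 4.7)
The plan is to establish the two directions separately, exploiting the dictionary between convex-geometric data on $Y$ and the $M$-graded structure on $A[Y, \mathfrak{D}]$.

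For part $(1)$, I would first show that $A := A[Y, \mathfrak{D}]$ is an $M$-graded integral domain by realizing it as a subalgebra of $\mathbb{C}(Y)[M]$. Closedness under multiplication follows from the superadditivity
$$\mathfrak{D}(m) + \mathfrak{D}(m') \leq \mathfrak{D}(m + m'),$$
which holds coefficient-wise because, for each prime divisor $Z \subset Y$, the map $m \mapsto \min_{v \in \mathfrak{D}_{Z}} \langle m, v \rangle$ is concave piecewise-linear. The induced $\mathbb{T}$-action on $X(\mathfrak{D}) := {\rm Spec}\, A$ is then faithful because $\sigma^{\vee} \cap M$ generates $M$ (since $\sigma$ is strictly convex). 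For finite generation and affineness, I would use the properness hypothesis: semi-ampleness ensures that each section ring $\bigoplus_{k \geq 0} H^{0}(Y, \mathcal{O}_{Y}(k\mathfrak{D}(m)))$ is finitely generated, while bigness for $m$ in the relative interior of $\sigma^{\vee}$ produces an affine open $U := Y_{r\mathfrak{D}(m), s}$ whose preimage in $X(\mathfrak{D})$ is $\mathbb{T}$-stable and equivariantly isomorphic to $\mathbb{T} \times U$. Finitely many such opens cover $X(\mathfrak{D})$, yielding simultaneously the equivariant birationality to $\mathbb{T} \times Y$ and the finite generation of $A$. Normality would follow by writing $A$ locally as an intersection of finitely many discrete valuation rings, one for each pair (prime divisor $Z$, vertex of $\mathfrak{D}_{Z}$), using the $\mathbb{Q}$-Cartier hypothesis.

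For part $(2)$, let $X = {\rm Spec}\, A$ be a normal affine variety with effective $\mathbb{T}$-action and weight decomposition $A = \bigoplus_{m \in \omega \cap M} A_{m}$. Effectiveness together with Hilbert's finiteness theorem give that $\omega$ is a rational polyhedral cone spanning $M_{\mathbb{Q}}$; set $\sigma := \omega^{\vee} \subset N_{\mathbb{Q}}$, which is strictly convex. Let $K := ({\rm Frac}\, A)^{\mathbb{T}}$ and choose $Y$ to be a normal semi-projective model with $\mathbb{C}(Y) = K$, obtained from a $\mathbb{T}$-equivariant completion of $X$ followed by a Chow/GIT quotient (cf.\ \cite{KSZ91}). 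Fixing a nonzero semi-invariant of weight $m$ identifies $A_{m} \cdot \chi^{-m}$ with a fractional $K$-ideal, and normality of $X$ then yields a Weil $\mathbb{Q}$-divisor $D_{m}$ on $Y$ with $A_{m} = H^{0}(Y, \mathcal{O}_{Y}(D_{m})) \chi^{m}$. The assignment $m \mapsto D_{m}$ is superadditive and positively homogeneous, so for each prime divisor $Z \subset Y$ the coefficient function $m \mapsto {\rm ord}_{Z}(D_{m})$ is a concave piecewise-linear map on $\sigma^{\vee}$, and hence the support function of a unique $\sigma$-polyhedron $\mathfrak{D}_{Z}$. These polyhedra assemble into the desired polyhedral divisor $\mathfrak{D}$.

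The hard part will be $(2)$: ensuring that $Y$ can be chosen so that only finitely many $\mathfrak{D}_{Z}$ differ from $\sigma$, that all evaluations $\mathfrak{D}(m)$ are simultaneously $\mathbb{Q}$-Cartier and semi-ample, and that the resulting $\mathfrak{D}$ is proper. I expect one would address this by constructing $Y$ as a refinement of the geometric quotient so that the finitely many ``walls'' of the piecewise-linear function $m \mapsto D_{m}$ become visible on the prime divisors of $Y$ --- an equivariant resolution of indeterminacy for this function --- and then deduce bigness on the interior of $\sigma^{\vee}$ from the affineness (rather than mere quasi-affineness) of $X$, while semi-ampleness drops out of the finite generation of $A$ combined with the chosen model.
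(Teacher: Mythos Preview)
The paper does not prove this statement; it is quoted verbatim as a result of Altmann and Hausen, with the attribution ``The following is due to Altmann-Hausen \cite[Theorem 7]{AIPSV12}'' and no accompanying argument. There is therefore nothing in the paper to compare your proposal against.

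That said, your sketch is a faithful outline of the original Altmann--Hausen argument: the superadditivity of $m\mapsto\mathfrak{D}(m)$ giving the ring structure, properness (semi-ampleness plus bigness on the interior) yielding finite generation and the birational identification with $\mathbb{T}\times Y$, and, in the converse direction, recovering $\mathfrak{D}$ from the concave piecewise-linear behaviour of the graded pieces over a suitable Chow-type quotient. One point to be careful about in part $(1)$: your proposed covering of $X(\mathfrak{D})$ by pieces isomorphic to $\mathbb{T}\times U$ is not quite right as stated---the generic fibre of $\widetilde{X}(\mathfrak{D})\to Y$ is a torus, but the special fibres are typically toric varieties with nontrivial cone, so ``finitely many such opens cover $X(\mathfrak{D})$'' overstates what bigness gives you. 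The birationality to $\mathbb{T}\times Y$ comes from the generic fibre alone; finite generation is handled separately via the relative Proj construction and the semi-ampleness hypothesis, not via an open cover by trivial pieces.
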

Let $\xi =  (f_{1}\chi^{m_{1}}, \ldots, f_{r}\chi^{m_{r}})$ be a sequence of homogeneous elements of
$\mathbb{C}(Y)[M]$, where $Y$ is a normal semi-projective variety, $f_{1}, \ldots, f_{r}\in \mathbb{C}(Y)^{\star}$,
and $m_{1}, \ldots, m_{r}\in M$ generate the dual  ${}^{\xi}\sigma^{\vee}\subset  M_{\mathbb{Q}}$ of a strictly convex polyhedral cone ${}^{\xi}\sigma \subset N_{\mathbb{Q}}$. We denote by ${}^{\xi}\mathfrak{D}$ the ${}^{\xi}\sigma$-polyhedral divisor 
$${}^{\xi}\mathfrak{D} :=  \sum_{Z\subset Y} {}^{\xi}\mathfrak{D}_{Z}\cdot [Z],$$
where ${}^{\xi}\mathfrak{D} _{Z}:= \{v\in N_{\mathbb{Q}}\,  |\, \langle m_{i}, v\rangle \geq -{\rm ord}_{Z}(f_{i}) \text{ for }i=1, \ldots, r\}$. The following explains the relation between the homogeneous generators of $\xi$ and the polyhedral divisor ${}^{\xi}\mathfrak{D}$.
\begin{lemma}\label{lem0-gen}
Let $\mathfrak{D}$ be a proper $\sigma$-polyhedral divisor on a normal semi-projective variety $Y$. Let $\xi =  (f_{1}\chi^{m_{1}}, \ldots, f_{r}\chi^{m_{r}})$ be a sequence of homogeneous elements of  generating the algebra 
$A[Y, \mathfrak{D}]$, where $f_{1}, \ldots, f_{r}\in \mathbb{C}(Y)^{\star}$. Then ${}^{\xi}\mathfrak{D} =  \mathfrak{D}$. 
\end{lemma}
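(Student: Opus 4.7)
The plan is to compare the $M$-graded algebras $A[Y,\mathfrak{D}]$ and $A[Y,{}^{\xi}\mathfrak{D}]$ and then invoke Lemma \ref{lem1-Qdivisor} to pass from equality of algebras to equality of polyhedral divisors. First I would check that the cones coincide: because the $f_{i}\chi^{m_{i}}$ generate $A[Y,\mathfrak{D}]$ as a $\mathbb{C}$-algebra, the weights $m_{1},\dots,m_{r}$ generate $\sigma^{\vee}\cap M$ as a semigroup, and hence span $\sigma^{\vee}$ as a cone; since by construction ${}^{\xi}\sigma^{\vee}$ is the cone generated by the same $m_{i}$, this yields ${}^{\xi}\sigma=\sigma$, so at least the two $\sigma$-polyhedral divisors live on the same cone.

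Next I would establish the inclusion $\mathfrak{D}_{Z}\subseteq {}^{\xi}\mathfrak{D}_{Z}$ for every prime divisor $Z\subset Y$. The relation $f_{i}\chi^{m_{i}}\in A[Y,\mathfrak{D}]$ gives $\operatorname{div}(f_{i})+\mathfrak{D}(m_{i})\geq 0$; reading the coefficient along $Z$ yields $\operatorname{ord}_{Z}(f_{i})+\min_{v\in \mathfrak{D}_{Z}}\langle m_{i},v\rangle\geq 0$, so every $v\in\mathfrak{D}_{Z}$ satisfies the defining inequalities of ${}^{\xi}\mathfrak{D}_{Z}$. In particular one has ${}^{\xi}\mathfrak{D}(m)\leq \mathfrak{D}(m)$ as $\mathbb{Q}$-divisors for all $m\in\sigma^{\vee}$, whence the inclusion of graded pieces $H^{0}(Y,\mathcal{O}_{Y}({}^{\xi}\mathfrak{D}(m)))\subseteq H^{0}(Y,\mathcal{O}_{Y}(\mathfrak{D}(m)))$.

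Conversely, each $f_{i}\chi^{m_{i}}$ belongs to $A[Y,{}^{\xi}\mathfrak{D}]$ by the very definition of ${}^{\xi}\mathfrak{D}_{Z}$; the generation hypothesis thus forces $A[Y,\mathfrak{D}]\subseteq A[Y,{}^{\xi}\mathfrak{D}]$, and hence $H^{0}(Y,\mathcal{O}_{Y}(\mathfrak{D}(m)))\subseteq H^{0}(Y,\mathcal{O}_{Y}({}^{\xi}\mathfrak{D}(m)))$ for every $m\in\sigma^{\vee}\cap M$. Combining the two inclusions gives equality of these spaces of global sections for all $m\in\sigma^{\vee}\cap M$, and after replacing $m$ by $rm$ for every $r\in\mathbb{Z}_{>0}$.

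The decisive step is then to apply Lemma \ref{lem1-Qdivisor}: the properness of $\mathfrak{D}$ ensures that $\mathfrak{D}(m)$ is semi-ample, so the equality of sections at every multiple of $m$ forces $\mathfrak{D}(m)\leq {}^{\xi}\mathfrak{D}(m)$, and together with the opposite inequality already established one concludes $\mathfrak{D}(m)={}^{\xi}\mathfrak{D}(m)$ on $\sigma^{\vee}\cap M$, and by $\mathbb{Q}$-homogeneity on all of $\sigma^{\vee}$. Since any $\sigma$-polyhedron is recovered from its support function on $\sigma^{\vee}$ as the intersection of the half-spaces $\{\langle m,\cdot\rangle\geq h_{P}(m)\}$, coefficient-wise equality of the evaluations translates into $\mathfrak{D}_{Z}={}^{\xi}\mathfrak{D}_{Z}$ for every prime divisor $Z$, and so ${}^{\xi}\mathfrak{D}=\mathfrak{D}$. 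I expect the delicate point to be precisely this last reduction from graded algebras to polyhedral coefficients, which is what makes the semi-ampleness packaged in properness indispensable; without it one could modify $\mathfrak{D}$ birationally on $Y$ without altering the algebra.
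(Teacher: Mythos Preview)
Your argument is correct and follows essentially the same route as the paper: both establish the polyhedral inclusion $\mathfrak{D}_{Z}\subset {}^{\xi}\mathfrak{D}_{Z}$ from $f_{i}\chi^{m_{i}}\in A[Y,\mathfrak{D}]$, both obtain the graded inclusion $A[Y,\mathfrak{D}]\subset A[Y,{}^{\xi}\mathfrak{D}]$ from the definition of ${}^{\xi}\mathfrak{D}$, and both invoke Lemma~\ref{lem1-Qdivisor} together with the semi-ampleness in the properness hypothesis to close the loop. Your version is slightly more explicit (you spell out that ${}^{\xi}\sigma=\sigma$ and the recovery of polyhedra from support functions), and a touch redundant in that once you have ${}^{\xi}\mathfrak{D}(m)\le\mathfrak{D}(m)$ you only need the one-sided inclusion of sections to feed into Lemma~\ref{lem1-Qdivisor}; but these are matters of presentation, not substance.
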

\begin{proof}
We follow the proof of \cite[Th\'eor\`eme 2.4]{Lan13}. By construction
we have 
$${}^{\xi}\mathfrak{D}(m_{i}) + {\rm div}(f_{i})\geq 0 \text{ for } 1\leq i\leq r.$$
So $A[Y, \mathfrak{D}]\subset  A[Y, {}^{\xi}\mathfrak{D}]$. Moreover, for any $m\in \sigma^{\vee} = {}^{\xi}\sigma^{\vee}$, one has 
$$H^{0}(Y, \mathcal{O}_{Y}(\mathfrak{D}(m)))\subset H^{0}(Y, \mathcal{O}_{Y}({}^{\xi}\mathfrak{D}(m))).$$
Using Lemma \ref{lem1-Qdivisor}, it follows that $\mathfrak{D}(m)\leq {}^{\xi}\mathfrak{D}(m)$ for any $m\in \sigma^{\vee}$.
But we also have $\mathfrak{D}(m_{i})  + {\rm div}(f_{i})\geq 0$ for $1\leq i\leq r$  since $f_{i}\chi^{m_{i}}\in A[Y, \mathfrak{D}]$. Hence $\langle m_{i}, v\rangle \geq -{\rm ord}_{Z}(f_{i})$ for all $v\in \mathfrak{D}_{Z}$ and $i\in\{1, \ldots, r\}$, that is 
$\mathfrak{D}_{Z}\subset {}^{\xi}\mathfrak{D}_{Z}$. This implies that $\mathfrak{D}(m) =  {}^{\xi}\mathfrak{D}(m)$ for any $m\in \sigma^{\vee}$, and so $\mathfrak{D} =  {}^{\xi}\mathfrak{D}$, ending
the proof of the lemma. 
\end{proof}
\begin{definition}
Let $Y$ be a normal variety.
A \emph{divisorial fan} over $(Y, N)$ is a finite set $\mathscr{E} =  \{\mathfrak{D}^{i}\,|\, i\in I\}$
of proper polyhedral divisors defined over semi-projective Zariski subsets of $Y$ with lattice $N$, stable by (component-wise) intersections and such that
the natural maps 
$X(\mathfrak{D}^{i}\cap \mathfrak{D}^{j})\rightarrow X(\mathfrak{D}^{i})$
are open immersions for all $i, j$, where $\mathfrak{D}^{i}\cap \mathfrak{D}^{j}$ is the intersection between
$\mathfrak{D}^{i}$ and $\mathfrak{D}^{j}$. 
\end{definition}
 A divisorial fan $\mathscr{E}$ over $(Y, N)$ defines a finite type normal $\mathbb{T}$-scheme $X(\mathscr{E})  =  X(Y, \mathscr{E})$ by gluing the charts $X(\mathfrak{D})$ for $\mathfrak{D}\in \mathscr{E}$.
When $Y$ is one-dimensional, the scheme $X(\mathscr{E})$ is a variety. Furthermore, any normal variety with effective torus action arises from a divisorial fan \cite[Section 5]{AIPSV12}. 
\\

Next is the pullback operation for polyhedral divisors.
\begin{definition}\label{def-poulebaq}
Let $Y, Y'$ be normal semi-projective varieties, let $\mathfrak{D}$ be a proper $\sigma$-polyhedral divisor over $Y$ and write  $\mathfrak{D} =  \sum_{i = 1}^{r}\mathfrak{D}_{E_{i}}\cdot E_{i},$ where $\mathfrak{D}_{E_{i}}\in {\rm Pol}_{\sigma}(N_{\mathbb{Q}})$ and $E_{i}$ is a Cartier divisor over $Y$. Let $\varphi: Y'\rightarrow Y$ be a generically finite morphism such that $Z\not\subset\varphi(Y')$ for any prime divisor $Z\subset Y$ in the support of some $E_{i}$. Then the \emph{pullback} of $\mathfrak{D}$
under $\varphi$ is the proper polyhedral divisor $\varphi^{\star}(\mathfrak{D}) =  \sum_{i = 1}^{r}\mathfrak{D}_{E_{i}}\cdot \varphi^{\star}(E_{i})$ over $Y'$. 
\end{definition}

\subsection{Toric varieties} (cf. \cite{CLS11})\label{Section-Cox}
A \emph{fan} in $N_{\mathbb{Q}}$ is a non-empty set of strictly convex polyhedral cones of $N_{\mathbb{Q}}$ stable by face relation and such that the intersection of any two elements is a mutual face of both.  For a fan $\Sigma$ of $N_{\mathbb{Q}}$ (respectively, a polyhedron $Q$)  and $a\in \mathbb{Z}_{\geq 0}$
we denote by $X_{\Sigma}$ the associated toric variety, by $\Sigma(a)$ (respectively, $Q(a)$) the set of its $a$-dimensional cones of $\Sigma$ (respectively, $a$-dimensional faces of $Q$), and by $\Sigma_{\rm max}$ the set 
of maximal cones. Elements of  $\Sigma(1)$ are called \emph{rays} of $\Sigma$. For a ray $\rho\in \Sigma(1)$, we write $v_{\rho}$ for the associated primitive generator. 
Each cone $\sigma\in \Sigma$ corresponds to a $\mathbb{T}$-orbit $O(\sigma)$ of $X_{\Sigma}$ of  dimension ${\rm dim}\, N_{\mathbb{Q}} - {\rm dim}\, \sigma$. We write $Z_{\rho} =  \overline{O(\rho)}\subset X_{\Sigma}$ for the associated toric divisor. 
Let $X_{\sigma}$ be the affine toric variety of the cone $\sigma\in \Sigma$. 
If $D$ is a torus invariant Cartier divisor, then we call \emph{trivialization}
of the line bundle $\mathcal{O}_{X_{\Sigma}}(D)$ the data $(X_{\sigma}, \chi^{m_{\sigma}})_{\sigma\in \Sigma_{\rm max}}$ such 
that $D_{|X_{\sigma}}  =  {\rm div}\, \chi^{m_{\sigma}}.$ 
\\

We write 
$|\Sigma| :=  \bigcup_{\sigma \in \Sigma}\sigma\subset N_{\mathbb{Q}}$
for the \emph{support} of $\Sigma$. Note that $X_{\Sigma}$ is complete if and only if $|\Sigma| =  N_{\mathbb{Q}}$. We say 
that $\Sigma$ is \emph{simplicial} if each cone $\sigma\in\Sigma$ is generated by a subset of a basis of $N_{\mathbb{Q}}$. 
Given any two fans $\Sigma, \Sigma'$ with respect to lattices $N, N'$, we call \emph{fan morphism} between $\Sigma$ and $\Sigma'$
a linear map $\phi: N\rightarrow N'$ such that for any $\sigma\in \Sigma$ there is $\sigma'\in \Sigma'$ such that $\phi(\sigma\cap N)\subset  \sigma'$. Note that fan morphisms from $\Sigma$ to $\Sigma'$ corresponds \emph{toric morphisms} from $X_{\Sigma}$ to $X_{\Sigma'}$, i.e. morphisms that are equivariant. 
\\

 Homogeneous coordinate theory for a toric variety $X_{\Sigma}$ (without torus factor)
is a presentation 
$X_{\Sigma}=  (\mathbb{A}_{\mathbb{C}}^{r}\setminus Z_{\Sigma})/\!\!/\mathbb{G}_{\Sigma},$
where $\mathbb{G}_{\Sigma}$ is a diagonalizable group and $Z_{\Sigma}$ is a $\mathbb{G}_{\Sigma}$-invariant closed subset.
Example to have in mind is the projective space $\mathbb{P}^{n}_{\mathbb{C}} =  (\mathbb{A}^{n+1}_{\mathbb{C}}\setminus \{0\})/\mathbb{C}^{\star}$. More precisely,
set $r:= \Sigma(1)$ and consider the polynomial ring $R:= \mathbb{C}[X_{\rho}\,|\, \rho\in\Sigma(1)]$.
For $\sigma\in \Sigma$ write 
$\varpi_{\sigma}:= \prod_{\rho\in \Sigma(1)\setminus \sigma(1)} X_{\rho}.$ We define the \emph{irrelevant ideal} as $B(\Sigma):= (\varpi_{\sigma}\,|\, \sigma\in \Sigma)\subset R$ and set $Z_{\Sigma}:= \mathbb{V}(B(\Sigma))\subset {\rm Spec}\, R$.  Note that $\{\varpi_{\sigma}\, |\, \sigma\in \Sigma_{\rm max}\}$ is a set of minimal generators of $B(\Sigma)$. 
The diagonalizable group $\mathbb{G}_{\Sigma}$ is defined as the set of functions $g:\Sigma(1)\rightarrow \mathbb{C}^{\star}$ satisfying
$$\prod_{\rho\in \Sigma(1)} g(\rho)^{\langle m , v_{\rho}\rangle} =  1\text{ for any } m\in M,$$
together with the pointwise multiplication law $(g\cdot g')(\rho):= g(\rho)\cdot g'(\rho)$ for all $g,g'\in \mathbb{G}_{\Sigma}$
and $\rho\in \Sigma(1)$. The group $\mathbb{G}_{\Sigma}$ \emph{diagonally} acts  on the space ${\rm Spec}\, R$, meaning that the action on $R$ is defined as $g\cdot X_{\rho}:= g(\rho)X_{\rho}$ for any $g\in \mathbb{G}_{\Sigma}$. The ring $R$ together with the graduation induced by the $\mathbb{G}_{\Sigma}$-action is the \emph{homogeneous coordinate ring} of $X_{\Sigma}$. The following is due to Cox \cite{Cox95}. 
\begin{theorem}\label{theo-Cox1} \cite[Theorem 5.1.11]{CLS11}
Consider
the fan $\Sigma_{\rm Cox}$ of $\mathbb{Q}^{r} =  \bigoplus_{\rho\in \Sigma(1)}\mathbb{Q}\cdot e_{\rho}$ generated by the cones
$$ \sigma_{\rm Cox} =  \sum_{\rho\in \sigma(1)} \mathbb{Q}_{\geq 0}\cdot e_{\rho}.$$
Then $X_{\Sigma_{\rm Cox}} =  \mathbb{A}^{r}_{\mathbb{C}}\setminus Z_{\Sigma}$ and the morphism $\varepsilon:  \mathbb{A}^{r}_{\mathbb{C}}\setminus Z_{\Sigma}\rightarrow X_{\Sigma},$ inducing by the linear map $e_{\rho}\mapsto v_{\rho}$,
is constant on the $\mathbb{G}_{\Sigma}$-orbits for the diagonal $\mathbb{G}_{\Sigma}$-action on the total coordinate space 
$\mathbb{A}_{\mathbb{C}}^{r} =  {\rm Spec}\, R$. More precisely, it factors through an isomorphism
$$ (\mathbb{A}_{\mathbb{C}}^{r}\setminus Z_{\Sigma})/\!\!/\mathbb{G}_{\Sigma} = \{\text{closed } \mathbb{G}_{\Sigma}\text{-orbits of }\mathbb{A}_{\mathbb{C}}^{r}\setminus Z_{\Sigma}\}\simeq X_{\Sigma}.$$
Moreover, the fan $\Sigma$ is simplicial if and only if all the $\mathbb{G}_{\Sigma}$-orbits of $\mathbb{A}^{r}_{\mathbb{C}}\setminus Z_{\Sigma}$ are closed. 
\end{theorem}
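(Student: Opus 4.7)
The plan is to decompose the theorem into four steps corresponding to its four claims: the identification $X_{\Sigma_{\rm Cox}} \simeq \mathbb{A}^{r}_{\mathbb{C}}\setminus Z_{\Sigma}$, the construction of $\varepsilon$, the quotient identification, and the simpliciality criterion for orbit closedness.

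First I would analyze the fan $\Sigma_{\rm Cox}$ combinatorially. For each $\sigma\in\Sigma$ the affine chart associated to $\sigma_{\rm Cox}$ inside $\mathbb{A}^{r}_{\mathbb{C}} = \operatorname{Spec} R$ is the principal open subset $D(\varpi_{\sigma}) = \operatorname{Spec} R[\varpi_{\sigma}^{-1}]$, because $\sigma_{\rm Cox}$ is the cone generated by the $e_{\rho}$ with $\rho\in\sigma(1)$ and the corresponding affine toric variety inside $\mathbb{A}^{r}_{\mathbb{C}}$ is exactly the locus where the remaining coordinates $X_{\rho}$, $\rho\notin\sigma(1)$, are invertible. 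Taking the union over $\sigma\in\Sigma_{\rm max}$ (which cover $\Sigma_{\rm Cox}$ because faces of $\sigma_{\rm Cox}$ sit inside some $\sigma_{\rm Cox}$), I obtain $X_{\Sigma_{\rm Cox}} = \bigcup_{\sigma} D(\varpi_{\sigma}) = \mathbb{A}^{r}_{\mathbb{C}}\setminus \mathbb{V}(B(\Sigma)) = \mathbb{A}^{r}_{\mathbb{C}}\setminus Z_{\Sigma}$.

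Next, I would verify that the assignment $e_{\rho}\mapsto v_{\rho}$ is a fan morphism $\phi\colon \Sigma_{\rm Cox}\to\Sigma$: it sends $\sigma_{\rm Cox}$ to the cone generated by $\{v_{\rho}\,|\,\rho\in\sigma(1)\}$, which is $\sigma$ itself. This yields the toric morphism $\varepsilon\colon \mathbb{A}^{r}_{\mathbb{C}}\setminus Z_{\Sigma}\to X_{\Sigma}$. For the quotient description, I would introduce the short exact sequence of character lattices obtained by dualizing $\phi$, namely $0\to M\xrightarrow{\phi^{\vee}} \mathbb{Z}^{\Sigma(1)}\to\operatorname{Coker}(\phi^{\vee})\to 0$ (which is exact on the left because $\Sigma$ spans $N_{\mathbb{Q}}$), and then apply $\operatorname{Hom}(-,\mathbb{C}^{\star})$ to obtain $1\to \mathbb{G}_{\Sigma}\to (\mathbb{C}^{\star})^{r}\to \mathbb{T}\to 1$, identifying $\mathbb{G}_{\Sigma}$ with exactly the subgroup cut out by the relations in the definition.

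Hence $\varepsilon$ is equivariant and constant on $\mathbb{G}_{\Sigma}$-orbits. To promote this to a GIT isomorphism, I would work chart by chart: on $D(\varpi_{\sigma})$ the ring of $\mathbb{G}_{\Sigma}$-invariants is precisely the subring generated by the Laurent monomials $\prod_{\rho} X_{\rho}^{\langle m, v_{\rho}\rangle}$ with $m\in\sigma^{\vee}\cap M$, which is the coordinate ring of the affine toric variety $X_{\sigma}$; gluing the local identifications gives the claimed categorical quotient $\mathbb{A}^{r}_{\mathbb{C}}\setminus Z_{\Sigma}\mathbin{/\!\!/}\mathbb{G}_{\Sigma}\simeq X_{\Sigma}$. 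The main obstacle is verifying that closed $\mathbb{G}_{\Sigma}$-orbits in each chart are in bijection with points of $X_{\sigma}$, which reduces to checking that each fiber of $\varepsilon$ over a torus orbit $O(\tau)$ contains a unique closed $\mathbb{G}_{\Sigma}$-orbit; this is done by explicit computation of stabilizers using the exact sequence.

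Finally, for the simpliciality criterion, I would fix $\sigma\in\Sigma$ and compute the $\mathbb{G}_{\Sigma}$-stabilizer of a point in the toric orbit of $D(\varpi_{\sigma})$ corresponding to $\sigma$. The stabilizer is the kernel of $\mathbb{G}_{\Sigma}\to (\mathbb{C}^{\star})^{\sigma(1)}$, which is finite if and only if the $v_{\rho}$, $\rho\in\sigma(1)$, are $\mathbb{Q}$-linearly independent, i.e.\ $\sigma$ is simplicial. Since orbits of a diagonalizable group on an affine variety are closed precisely when all their points have the same (up to conjugacy) stabilizer of maximal dimension, simpliciality of every maximal $\sigma$ is equivalent to closedness of every $\mathbb{G}_{\Sigma}$-orbit in $\mathbb{A}^{r}_{\mathbb{C}}\setminus Z_{\Sigma}$, which yields the last assertion.
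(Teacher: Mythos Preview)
The paper does not give its own proof of this statement: Theorem~\ref{theo-Cox1} is quoted verbatim with the citation \cite[Theorem 5.1.11]{CLS11} and no proof environment follows. So there is nothing to compare your argument against in the paper itself; your proposal is essentially a compressed version of the standard argument in Cox--Little--Schenck, and the first three steps are correct as written.

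One remark on your final step: the sentence ``orbits of a diagonalizable group on an affine variety are closed precisely when all their points have the same (up to conjugacy) stabilizer of maximal dimension'' is not a clean criterion as stated, and conjugacy is irrelevant for a commutative group. The cleaner way to phrase what you need is this. On the chart $D(\varpi_{\sigma})$, factor the $\mathbb{G}_{\Sigma}$-action through the projection $\mathbb{G}_{\Sigma}\to(\mathbb{C}^{\star})^{\Sigma(1)\setminus\sigma(1)}$, whose kernel $H_{\sigma}$ acts on the factor $\mathbb{A}^{\sigma(1)}$. The cone $\sigma$ is simplicial exactly when the map $\mathbb{Z}^{\sigma(1)}\to N$, $e_{\rho}\mapsto v_{\rho}$, is injective, which is equivalent to $H_{\sigma}$ being finite; in that case every $H_{\sigma}$-orbit (hence every $\mathbb{G}_{\Sigma}$-orbit in $D(\varpi_{\sigma})$) is closed. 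Conversely, if $\sigma$ is not simplicial, a nontrivial one-parameter subgroup of $H_{\sigma}$ drives some coordinate to $0$ and produces a non-closed orbit. With this adjustment your sketch is complete.
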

A consequence of Theorem \ref{theo-Cox1} is the \emph{toric Hilbert's Nullstellensatz}.
\begin{theorem}\cite[Propositions 5.2.4, 5.2.6]{CLS11}
For a homogeneous ideal $I\subset R$, the subset
$$V_{h}(I):= \{y\in X_{\Sigma}\, |\, \text{there is }x\in\varepsilon^{-1}(y)\text{ with } f(x) = 0\text{ for all }f\in I\}$$
is a Zariski closed subset of $X_{\Sigma}$ and all the Zariski closed subsets of $X_{\Sigma}$ arises in this way. Moreover, if $\Sigma$ is simplicial, then
the map $I\mapsto V_{h}(I)$ induces a one-to-one correspondence between the radical homogeneous ideals $I\subset  B(\Sigma)\subset R$ and the Zariski closed subsets of $X_{\Sigma}$. 
\end{theorem}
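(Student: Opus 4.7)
The plan is to leverage the Cox quotient presentation from Theorem \ref{theo-Cox1}, which identifies $X_\Sigma$ as a good categorical quotient of $\mathbb{A}^r_\mathbb{C} \setminus Z_\Sigma$ by the reductive group $\mathbb{G}_\Sigma$, with quotient map $\varepsilon$.

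\emph{Step one: closedness of $V_h(I)$.} I would first observe that $V_h(I)$ equals $\varepsilon(V(I) \cap (\mathbb{A}^r_\mathbb{C} \setminus Z_\Sigma))$. Since $I$ is $\mathbb{G}_\Sigma$-homogeneous, $V(I) \subset \mathbb{A}^r_\mathbb{C}$ is $\mathbb{G}_\Sigma$-stable, and hence so is its intersection with the open $\mathbb{G}_\Sigma$-invariant set $\mathbb{A}^r_\mathbb{C} \setminus Z_\Sigma$. By Theorem \ref{theo-Cox1}, $\varepsilon$ is a good quotient by the reductive group $\mathbb{G}_\Sigma$; standard geometric invariant theory then guarantees that the image of a Zariski closed $\mathbb{G}_\Sigma$-invariant subset is Zariski closed in $X_\Sigma$.

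\emph{Step two: every closed subset arises as some $V_h(I)$.} For a Zariski closed $Y \subset X_\Sigma$, the preimage $W := \varepsilon^{-1}(Y)$ is Zariski closed and $\mathbb{G}_\Sigma$-invariant in $\mathbb{A}^r_\mathbb{C} \setminus Z_\Sigma$. I would take the Zariski closure $\overline{W}$ in $\mathbb{A}^r_\mathbb{C}$; by continuity of the $\mathbb{G}_\Sigma$-action on $\mathbb{A}^r_\mathbb{C}$, this closure is still $\mathbb{G}_\Sigma$-invariant, so its defining ideal $I := I(\overline{W}) \subset R$ is homogeneous with respect to the $\mathbb{G}_\Sigma$-grading. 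The surjectivity of $\varepsilon$ together with the description of $\overline{W}$ then yields $V_h(I) = Y$.

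\emph{Step three: bijection in the simplicial case.} For simplicial $\Sigma$, the last assertion of Theorem \ref{theo-Cox1} upgrades $\varepsilon$ to a geometric quotient, so every fibre $\varepsilon^{-1}(y)$ is a single closed $\mathbb{G}_\Sigma$-orbit in $\mathbb{A}^r_\mathbb{C} \setminus Z_\Sigma$. I would then define the reverse map $Y \mapsto I_h(Y) := \sqrt{I(\overline{\varepsilon^{-1}(Y)})}$, a radical homogeneous ideal of $R$ lying in the prescribed class. Applying the classical Hilbert Nullstellensatz inside the polynomial ring $R$ gives $I_h \circ V_h = \mathrm{id}$ on radical homogeneous ideals in the specified class, and Step two together with closedness of orbits gives $V_h \circ I_h = \mathrm{id}$ on Zariski closed subsets of $X_\Sigma$. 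The condition on the ideals being placed with respect to $B(\Sigma)$ encodes the usual saturation phenomenon: two homogeneous ideals differing only by a $B(\Sigma)$-primary component cut out the same subset of $X_\Sigma$.

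The main obstacle is the third step: pinning down the correct subclass of radical homogeneous ideals that participates in the bijection and controlling the saturation with respect to the irrelevant ideal $B(\Sigma)$. Simpliciality enters decisively here because it is precisely what turns the good quotient into a geometric one, and it is this closedness of all $\mathbb{G}_\Sigma$-orbits in $\mathbb{A}^r_\mathbb{C} \setminus Z_\Sigma$ that makes $V_h$ injective on the chosen representatives.
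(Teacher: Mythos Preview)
The paper does not give its own proof of this statement: it is quoted verbatim as \cite[Propositions 5.2.4, 5.2.6]{CLS11} with no proof environment following it, serving only as background for the homogeneous-coordinate description used later. Your outline is essentially the standard argument one finds in that reference, so there is nothing to compare against here; your Steps one through three are the expected route via the good/geometric quotient dichotomy from Theorem~\ref{theo-Cox1}.
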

We say that $x, y\in\mathbb{A}^{r}_{\mathbb{C}}\setminus Z_{\Sigma}$ are equivalent if 
$\varepsilon^{-1}(x) =  \varepsilon^{-1}(y)$. After numbering the rays of $\Sigma$, write $x =  (x_{1}, \ldots, x_{r})$ for a point of $\mathbb{A}^{r}_{\mathbb{C}}\setminus Z_{\Sigma}$ and denote by $[x_{1}:\ldots: x_{r}]_{\Sigma}$ its equivalence class called \emph{the Cox's coordinates}. In this way,  $$\mathbb{V}_{h}(I)=\{ [x_{1}:\ldots: x_{r}]_{\Sigma}\in X_{\Sigma}\,|\, f(x_{1}, \ldots, x_{r}) =  0\text{ for any } f\in I\}.$$
\subsection{Complexity-one case}
Torus actions of complexity one are described by divisorial fans over curves. In this context, the globalization  simplifies since a finite set  $\mathscr{E} =  \{\mathfrak{D}^{i}\,|\, i\in I\}$ of proper polyhedral divisors $\mathfrak{D}^{i}$ defined over dense open subsets $Y_{i}$ of  a smooth projective curve $Y$ and  stable by intersection is a divisorial fan if and only if:
\begin{itemize}
\item[(1)] for all $i,j\in I$ and for any $z\in Y$ the polyhedron $\mathfrak{D}_{z}^{i}\cap \mathfrak{D}_{z}^{j}$
is either empty or a common face of $\mathfrak{D}_{z}^{i}$ and $\mathfrak{D}^{j}_{z}$, and
\item[(2)] we have ${\rm deg}\, \mathfrak{D}^{i}\cap \mathfrak{D}^{j} =  \sigma_{i}\cap \sigma_{j}\cap {\rm deg}\,\mathfrak{D^{j}}$ for all $i,j\in I$, where $\sigma_{i}$ and $\sigma_{j}$ are the strictly convex polyhedral cones associated with
$\mathfrak{D}^{i}$ and $\mathfrak{D}^{j}$. Here for a $\sigma$-polyhedral divisor $\mathfrak{D}$ over a curve $Y_{0}$, we set ${\rm deg}\, \mathfrak{D} := \sum_{z\in Y_{0}}\mathfrak{D}_{z}\subset N_{\mathbb{Q}}$ if $Y_{0}$ is complete and ${\rm deg}\, \mathfrak{D} =  \emptyset$ otherwise.
\end{itemize}
Moreover, using valuative criterion of properness, the $\mathbb{T}$-variety $X(\mathscr{E})$ is complete
if and only if 
$$ \bigcup_{i\in I}\bigcup_{z\in Y_{i}}\{z\}\times \mathfrak{D}_{z}^{i}  =  Y\times N_{\mathbb{Q}}.$$
Note that the  relative spectra ${\rm Spec}_{Y_{i}}\, \mathcal{A}_{\mathfrak{D}^{i}}$ for $\mathfrak{D}^{i}\in \mathscr{E}$, 
where 
$$\mathcal{A}_{\mathfrak{D^{i}}}:=  \bigoplus_{m\in \sigma^{\vee}\cap M}\mathcal{O}_{Y_{i}}(\mathfrak{D}^{i}(m))\chi^{m},$$
 glue together into a $\mathbb{T}$-variety $\widetilde{X}$. The natural map $\pi: \widetilde{X}\rightarrow X$, where $X  =  X(\mathscr{E})$, is the \emph{contraction map.} By \cite[Section 3, Lemma 1]{Vol10} the normalization of the graph of the rational map $\iota: X\dashrightarrow Y$, induced by the inclusion $\mathbb{C}(X)^{\mathbb{T}}\subset \mathbb{C}(X)$ identifies with the contraction space $\widetilde{X}$ and the natural projection to $X$ with $\pi$.
\\

Next definition formulates the passage to contraction spaces via divisorial fans.
\begin{definition}\label{def-contrat-theta5758}
Given a divisorial fan $\mathscr{E} =  \{\mathfrak{D}^{i}\,|\, i\in I\}$ over $(Y, N)$, where $Y$ is a smooth projective curve, 
a \emph{contraction divisorial fan} of $\mathscr{E}$ is a divisorial fan of the form
$$\widetilde{\mathscr{E}} =  \{\mathfrak{D}^{i}_{| U_{j}\cap Y_{i}} :=  \iota_{i,j}^{\star}(\mathfrak{D}^{i})\,|\, i\in I, j\in J\}.$$
Here $(U_{j})_{j\in J}$ is a finite affine open covering of $Y$ and $\iota_{i, j}: Y_{i}\cap U_{j}\rightarrow Y$ is the inclusion. From the 
previous discussion, $X(\widetilde{\mathscr{E}})$ is exactly the contraction space of $X(\mathscr{E})$. 
\end{definition}

We end this section with the description of the integral closure of affine varieties with torus action of complexity one. 
See \cite{Lan15} for a version over arbitrary fields. 
\begin{theorem}\label{theo-norma}\cite[Proposition 3.9]{FZ03},  \cite[Th\'eor\`eme 2.4]{Lan13}
Let $Y$ be a smooth curve and consider the subalgebra $A:= \mathbb{C}[Y][f_{1}\chi^{m_{1}}, \ldots, f_{r}\chi^{m_{r}}]\subset \mathbb{C}(Y)[M]$, where $m_{1}, \ldots, m_{r}\in M$ and $f_{1}, \ldots, f_{r}\in \mathbb{C}(Y)^{\star}$. Set $\xi =  (f_{1}\chi^{m_{1}}, \ldots, f_{r}\chi^{m_{r}})$. Assume that $A$ and $\mathbb{C}(Y)[M]$ have same fraction field. Then ${}^{\xi}\mathfrak{D}$ is proper and the normalization of $A$ identifies with $A[Y, {}^{\xi}\mathfrak{D}]$.

\end{theorem}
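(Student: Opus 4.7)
The plan is to show that $B := A[Y, {}^\xi \mathfrak{D}]$ is the integral closure of $A$ in $K := \mathrm{Frac}(A) = \mathbb{C}(Y)(M)$. Concretely, I will establish three properties: (a) $A \subset B$, (b) $B$ is a normal domain, and (c) every element of $B$ is integral over $A$. Combined with the hypothesis $\mathrm{Frac}(B) = \mathrm{Frac}(A) = K$, these imply $B = \tilde A$, since any $x \in \tilde A$ lies in $B$ as $B$ is normal, and any $x \in B$ lies in $\tilde A$ by (c).

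Step (a) is essentially built into the definition. At every prime divisor $Z \subset Y$, the polyhedron ${}^\xi \mathfrak{D}_Z$ is cut out by the inequalities $\langle m_i, v\rangle \geq -\mathrm{ord}_Z(f_i)$, so $\min_{v \in {}^\xi \mathfrak{D}_Z}\langle m_i,v\rangle \geq -\mathrm{ord}_Z(f_i)$, i.e.\ $\mathrm{div}(f_i) + {}^\xi \mathfrak{D}(m_i) \geq 0$. Hence $f_i\chi^{m_i} \in B$ for all $i$; since $\mathbb{C}[Y] = B_0$, we obtain $A \subset B$.

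For Step (b) I must first verify that ${}^\xi\mathfrak{D}$ is proper, after which normality follows from the Altmann–Hausen theorem. The cone ${}^\xi\sigma$ is strictly convex because the hypothesis $\mathrm{Frac}(A) = \mathbb{C}(Y)(M)$ forces $m_1,\dots,m_r$ to span $M_\mathbb{Q}$, making ${}^\xi\sigma^\vee$ full-dimensional. Since $Y$ is a smooth (affine) curve, every $\mathbb{Q}$-divisor on $Y$ is automatically $\mathbb{Q}$-Cartier, every coherent sheaf is globally generated (so each ${}^\xi\mathfrak{D}(m)$ is semi-ample), and every nonempty principal open set of an affine variety is affine (so bigness in the interior follows by exhibiting one nonzero section, e.g.\ an appropriate product $\prod f_i^{a_i}$ with $m = \sum a_i m_i/N$).

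Step (c) is the main obstacle. For a homogeneous $h\chi^m \in B$, I want to produce a monic polynomial equation over $A$ satisfied by $h\chi^m$. My first attempt is to pick $N > 0$ and $a_1,\dots,a_r \in \mathbb{Z}_{\geq 0}$ with $Nm = \sum a_i m_i$, and try to show
\[
(h\chi^m)^N - g \cdot \prod_{i} (f_i\chi^{m_i})^{a_i} = 0, \qquad g := h^N / \prod_i f_i^{a_i} \in \mathbb{C}[Y].
\]
At a given prime $Z$, the minimum of $\langle m,\cdot\rangle$ on ${}^\xi\mathfrak{D}_Z$ is achieved on a face where some subset of the constraints $\langle m_i, v\rangle = -\mathrm{ord}_Z(f_i)$ is tight; the inequality $g \in \mathcal O_{Y,Z}$ then holds iff $(a_i)$ comes from a non-negative combination supported on precisely that tight subset. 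The difficulty is that the active subset varies with $Z$, so no single $(a_i)$ witnesses integrality globally. My plan is to either (i) upgrade to an integral equation $(h\chi^m)^N + \sum_{k<N} c_k \cdot (h\chi^m)^k = 0$ whose lower-order coefficients $c_k \in A$ are chosen to interpolate among the finitely many distinct vertex-decompositions that occur as $Z$ ranges over the support of the $\mathrm{div}(f_i)$; or (ii) argue valuation-theoretically, classifying the discrete valuations $v$ of $K$ with $v|_A \geq 0$ into ``vertical'' ones ($v = \mathrm{ord}_Z$ with $\mathrm{ord}_Z(f_i) \geq 0$ for all $i$) and ``horizontal'' ones (indexed by $(Z, w)$ with $w \in {}^\xi\mathfrak{D}_Z$, of the form $v(h\chi^{m}) = \mathrm{ord}_Z(h) + \langle m, w\rangle$ on a compatible model), and verifying directly from the defining inequalities of ${}^\xi\mathfrak{D}_Z$ that each such $v$ is non-negative on $B$.

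The hardest step is (c): the local-to-global assembly of vertex-decompositions into a single global integral equation, or equivalently the complete classification of the centered valuations on $A$. Once (c) is in hand, the theorem follows immediately by combining (a), (b) and (c) with the equality of fraction fields.
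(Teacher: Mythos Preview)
The paper does not contain a proof of this statement; it is cited without argument from \cite{FZ03} and \cite{Lan13}. The closest in-paper material is Lemma~\ref{lem0-gen}, whose proof explicitly follows \cite[Th\'eor\`eme~2.4]{Lan13} and indicates the strategy used in that reference: rather than verifying integrality of $B = A[Y,{}^\xi\mathfrak{D}]$ over $A$ directly, one invokes the Altmann--Hausen classification (the paper's Theorem on proper polyhedral divisors) to write the normalization $\tilde A$ as $A[Y,\mathfrak{D}']$ for \emph{some} proper $\mathfrak{D}'$, and then identifies $\mathfrak{D}' = {}^\xi\mathfrak{D}$ by comparing both against the generators $f_i\chi^{m_i}$, exactly as in the proof of Lemma~\ref{lem0-gen}. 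This bypasses your Step~(c) entirely: integrality is absorbed into the existence half of Altmann--Hausen.

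Your framework (a)--(c) is sound, and Steps~(a) and~(b) are essentially complete (the parenthetical assumption that $Y$ is affine is harmless for the applications in this paper, though the statement allows $Y$ projective, where bigness needs a separate argument). Step~(c) is where the content lies; you correctly diagnose why the na\"ive monic equation fails and propose two repairs. Plan~(ii) is the right one and does work: the $M$-homogeneous discrete valuations of $\mathbb{C}(Y)(M)$ non-negative on $A$ are, up to scaling, precisely the $\nu_{Z,v}(h\chi^m) = \mathrm{ord}_Z(h) + \langle m, v\rangle$ with $v \in {}^\xi\mathfrak{D}_Z$ (together with the toric valuations from rays of ${}^\xi\sigma$), and then $B = \bigcap_{Z,v} \mathcal O_{\nu_{Z,v}}$ holds by construction of ${}^\xi\mathfrak{D}(m)$ as a minimum. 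But carrying this out amounts to reproving the invariant-valuation classification underlying Altmann--Hausen in the curve case, which is exactly why the cited sources take the shortcut of using that classification as input rather than output.
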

\section{Intersection cohomology and actions of finite groups}\label{sec-three}
In this section, we study the pullback on intersection cohomology complexes of quotient maps of finite group actions.
\subsection{Maps preserving intersection cohomology complexes}\label{subsec-pullbackfiniteGaction}
 \begin{definition}\label{def1}
We say that a finite dominant morphism of varieties $f:X\rightarrow Y$ \emph{preserves the intersection cohomology complexes}
if there are proper birational morphisms $\varphi: Y'\rightarrow Y$ and $\bar{\varphi}: X'\rightarrow X$ with $X'$ smooth and a  Cartersian square
 $$\xymatrix{
      X'\ar[r]^{\bar{\varphi}} \ar[d] ^{\bar{f}} & X\ar[d]^{f}\\ Y' \ar[r]^{\varphi} & Y }$$ 
such that $\bar{f}^{\star}IC_{Y'}\simeq IC_{X'}$. We say further that $f$ \emph{strictly preserves the intersection cohomology complexes} if we can choose $\bar{\varphi}$ semi-small.
\end{definition}
\begin{lemma}\label{lem-pure}
If $\mathcal{F}, \mathcal{G}\in D^{b}_{\rm const}(X)$ and $\mathcal{F}\oplus \mathcal{G}$ is semi-simple, then 
so is $\mathcal{F}$. Moreover, if  $\mathcal{F}\oplus \mathcal{G}$ is perverse, then so is $\mathcal{F}$.
\end{lemma}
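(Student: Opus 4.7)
The plan is to argue via the perverse $t$-structure on $D^{b}_{\rm const}(X)$. For perverseness, I would use the support/cosupport conditions defining ${}^{p}D^{\leq 0}(X)$ and ${}^{p}D^{\geq 0}(X)$ directly. Since $\mathcal{H}^{j}$ and $\mathbb{D}$ both commute with finite direct sums, $\mathcal{H}^{j}(\mathcal{F})$ is a direct summand of $\mathcal{H}^{j}(\mathcal{F}\oplus\mathcal{G})$ and $\mathbb{D}\mathcal{F}$ is a direct summand of $\mathbb{D}(\mathcal{F}\oplus\mathcal{G})$, so the support of $\mathcal{H}^{j}(\mathcal{F})$ (resp.\ $\mathcal{H}^{j}(\mathbb{D}\mathcal{F})$) is contained in the corresponding support for $\mathcal{F}\oplus\mathcal{G}$. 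The condition $\dim \mathrm{Supp}(\cdot)\leq -j$ is thus inherited by $\mathcal{F}$, giving $\mathcal{F}\in {}^{p}D^{\leq 0}(X)\cap {}^{p}D^{\geq 0}(X)$.

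For semi-simplicity, the plan is first to split $\mathcal{F}\oplus\mathcal{G}\simeq\bigoplus_{k}\iota_{k,\star}IC_{Z_{k}}(\mathscr{L}_{k})[r_{k}]$, then to group the summands by shift. This gives that each perverse cohomology ${}^{p}\mathcal{H}^{j}(\mathcal{F}\oplus\mathcal{G})$ is a semi-simple perverse sheaf and that $\mathcal{F}\oplus\mathcal{G}\simeq\bigoplus_{j}{}^{p}\mathcal{H}^{j}(\mathcal{F}\oplus\mathcal{G})[-j]$ in $D^{b}_{\rm const}(X)$. Since the functors ${}^{p}\mathcal{H}^{j}$ are additive, ${}^{p}\mathcal{H}^{j}(\mathcal{F})$ is a direct summand of the semi-simple perverse sheaf ${}^{p}\mathcal{H}^{j}(\mathcal{F}\oplus\mathcal{G})$. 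As the abelian category of perverse sheaves is of finite length with simple objects precisely the $\iota_{\star}IC_{Z}(\mathscr{L})$, every direct summand of a semi-simple perverse sheaf is semi-simple (Krull--Schmidt), and hence each ${}^{p}\mathcal{H}^{j}(\mathcal{F})$ is semi-simple perverse.

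The remaining step, which I expect to be the main obstacle, is to upgrade the pointwise statement on perverse cohomology to a splitting of $\mathcal{F}$ itself as $\bigoplus_{j}{}^{p}\mathcal{H}^{j}(\mathcal{F})[-j]$, since a priori the perverse truncation triangles of $\mathcal{F}$ could carry nonzero extension classes. For each $j$, consider the perverse truncation triangle
$${}^{p}\tau_{\leq j}\mathcal{F}\to\mathcal{F}\to{}^{p}\tau_{>j}\mathcal{F}\xrightarrow{\delta_{j}}{}^{p}\tau_{\leq j}\mathcal{F}[1].$$
Additivity of perverse truncation identifies this triangle with a direct summand of the analogous triangle for $\mathcal{F}\oplus\mathcal{G}$, whose connecting morphism is zero because $\mathcal{F}\oplus\mathcal{G}$ already decomposes as a sum of its perverse cohomology objects. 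Since the connecting morphism for $\mathcal{F}$ is a direct summand of the (zero) connecting morphism for $\mathcal{F}\oplus\mathcal{G}$, we get $\delta_{j}=0$, so the triangle for $\mathcal{F}$ splits. Iterating over $j$ produces the desired decomposition $\mathcal{F}\simeq\bigoplus_{j}{}^{p}\mathcal{H}^{j}(\mathcal{F})[-j]$, and combined with the semi-simplicity of each perverse cohomology this gives the required expression of $\mathcal{F}$ as a finite sum of shifted $\iota_{\star}IC_{Z}(\mathscr{L})$.
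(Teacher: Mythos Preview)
Your proof is correct. The paper itself does not give an argument: it simply cites \cite[Lemma~15]{BM99} for the semi-simplicity claim and \cite[Proposition~1.8]{Wil17} for the perverseness claim, so your write-up is a self-contained elaboration of what those references establish. Your strategy---passing to perverse cohomology, using finite length of the category of perverse sheaves to see that summands of semi-simple objects remain semi-simple, and then splitting $\mathcal{F}$ via vanishing of the connecting morphisms in the truncation triangles (inherited from the splitting of $\mathcal{F}\oplus\mathcal{G}$)---is exactly the standard route and matches what one finds in \cite{BM99}. One minor remark: invoking Krull--Schmidt is slightly more than needed, since in any abelian category a subobject (hence a direct summand) of a semi-simple object is automatically semi-simple; but this does not affect correctness.
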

\begin{proof} First claim is \cite[Lemma 15]{BM99}. Second claim is consequence of  \cite[Proposition 1.8]{Wil17}.
\end{proof}
\begin{proposition}\label{propIC}
Let $f: X\rightarrow Y$ be a finite dominant morphism of varieties. If $f$ preserves the intersection cohomology complexes,
then $f^{\star} IC_{Y}$ is semi-simple. If further $f$ strictly preserves the intersection cohomology complexes, then
$f^{\star} IC_{Y}\simeq IC_{X}$.
\end{proposition}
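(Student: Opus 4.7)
The plan is to use proper base change on the Cartesian square together with the decomposition theorem applied to both birational legs. Since $\varphi$ is proper, proper base change gives $\bar\varphi_{\star}\bar f^{\star}IC_{Y'}\simeq f^{\star}\varphi_{\star}IC_{Y'}$, and combined with the hypothesis $\bar f^{\star}IC_{Y'}\simeq IC_{X'}$ this yields
\[
f^{\star}\varphi_{\star}IC_{Y'}\simeq \bar\varphi_{\star}IC_{X'}.
\]
Applying the decomposition theorem to the proper birational $\varphi$ produces a semi-simple decomposition $\varphi_{\star}IC_{Y'}\simeq IC_Y\oplus \mathcal{R}$ (with $IC_Y$ appearing once because $\varphi$ is birational); applied to $\bar\varphi$ (proper with $X'$ smooth), it gives that $\bar\varphi_{\star}IC_{X'}$ is semi-simple. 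Combining,
\[
f^{\star}IC_Y\oplus f^{\star}\mathcal{R}\simeq \bar\varphi_{\star}IC_{X'}
\]
is semi-simple, and by Lemma \ref{lem-pure} its direct summand $f^{\star}IC_Y$ is semi-simple, settling the first assertion.

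Under strict preservation, $\bar\varphi$ is semi-small, so by the last part of the decomposition theorem $\bar\varphi_{\star}IC_{X'}$ is additionally perverse. Hence $f^{\star}IC_Y$ is perverse and semi-simple, and decomposes into simple perverse summands. Since $f$ is finite dominant in characteristic zero, it is generically étale: there is a smooth dense open $V\subset Y$ over which $f$ is étale, and $U:=f^{-1}(V)\subset X$ is smooth with $f^{\star}IC_Y|_{U}\simeq \mathbb{Q}_U[\dim X]\simeq IC_U$. Assuming $X$ irreducible (otherwise proceed component-wise), $U$ is connected so $\mathbb{Q}_U$ is a simple local system; exactly one simple summand of $f^{\star}IC_Y$ has full support $X$, and it equals $IC_X$. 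Denoting the remaining summands by $\mathcal{K}$, we obtain $f^{\star}IC_Y\simeq IC_X\oplus \mathcal{K}$ with $\mathcal{K}$ semi-simple perverse supported on the proper closed subset $X\setminus U$.

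The main obstacle is to show $\mathcal{K}\simeq 0$. Writing $\bar\varphi_{\star}IC_{X'}\simeq IC_X\oplus \mathcal{T}$ via the decomposition theorem on the birational $\bar\varphi$, Krull--Schmidt yields $\mathcal{K}\oplus f^{\star}\mathcal{R}\simeq \mathcal{T}$. I plan to conclude by exploiting the compatibility of the decomposition theorem with Cartesian base change along the finite map $f$: since Whitney stratifications of $X$ and $Y$ adapted respectively to $\bar\varphi$ and $\varphi$ can be chosen so that the former is the $f$-preimage of the latter, the non-trivial simple summands of $\bar\varphi_{\star}IC_{X'}$ are precisely the $f^{\star}$-pullbacks of the non-trivial simple summands of $\varphi_{\star}IC_{Y'}$; equivalently $\mathcal{T}\simeq f^{\star}\mathcal{R}$, forcing $\mathcal{K}\simeq 0$ and therefore $f^{\star}IC_Y\simeq IC_X$. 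Making this final stratum-by-stratum matching fully rigorous — and in particular checking that no extra perverse summands supported on the ramification locus of $f$ can appear — is the delicate point.
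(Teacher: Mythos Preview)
Your proof of the first assertion is correct and matches the paper's argument exactly: base change plus the decomposition theorem, then Lemma~\ref{lem-pure}.

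For the second assertion your setup is also fine --- you correctly deduce that $f^{\star}IC_Y$ is perverse semi-simple and contains $IC_X$ as a summand --- but the final step has a genuine gap. The claim that the simple summands of $\bar\varphi_{\star}IC_{X'}$ coincide with the $f^{\star}$-pullbacks of the simple summands of $\varphi_{\star}IC_{Y'}$, i.e.\ $\mathcal{T}\simeq f^{\star}\mathcal{R}$, is not justified and is essentially circular: it amounts to asserting that $f^{\star}IC_{\bar Z}(\mathscr{L})$ is again an intersection cohomology complex for each support $\bar Z\subset Y$, which is the very phenomenon you are trying to establish for $Z=Y$. No induction on dimension is set up, and even if it were, pullback of a \emph{simple} perverse sheaf along a finite map need not be simple, so the one-to-one matching of summands you invoke does not follow from ``compatible stratifications'' alone. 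You yourself flag this as ``the delicate point''; as written it is not a proof.

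The paper bypasses this entirely with a direct stalk computation. Write $f^{\star}IC_Y\simeq IC_X\oplus\bigoplus_{\beta}(\iota_{\beta})_{\star}IC_{W_{\beta}}(\mathscr{M}_{\beta})^{\oplus s_{\beta}}$ with each $W_{\beta}\subsetneq X$, pick $W_{\beta_0}$ of maximal dimension among these, and choose $x\in W_{\beta_0}$ lying in the open stratum of $W_{\beta_0}$ and in no other $W_{\beta}$. Then $\mathcal{H}^{-\dim W_{\beta_0}}(f^{\star}IC_Y)_x=\mathcal{H}^{-\dim W_{\beta_0}}(IC_Y)_{f(x)}$. Since $f$ is finite, $\dim W_{\beta_0}$ is at most the dimension of the stratum of $Y$ containing $f(x)$, so the support condition for $IC_Y$ forces this stalk to vanish; likewise the support condition for $IC_X$ kills its contribution. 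But the left-hand side also equals $\mathscr{M}_{\beta_0,x}^{\oplus s_{\beta_0}}$, giving a contradiction. This elementary argument uses only the support conditions defining $IC$ sheaves and the finiteness of $f$, and replaces your unproven matching of decompositions.
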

\begin{proof}
Take the commutative diagram
 $$\xymatrix{
      X'\ar[r]^{\bar{\varphi}} \ar[d] ^{\bar{f}} & X\ar[d]^{f}\\ Y' \ar[r]^{\varphi} & Y }$$ 
 of  Definition \ref{def1}. By base change \cite[Theorem 2.3.26]{Dim04} we have
$f^{\star}\varphi_{\star} IC_{Y'}\simeq \bar{\varphi}_{\star}\bar{f}^{\star} IC_{Y'}\simeq \bar{\varphi}_{\star} IC_{X'}.$
The decomposition theorem gives 
$$\varphi_{\star}IC_{Y'}\simeq IC_{Y}\oplus \bigoplus_{a\in I} (\iota_{a})_{\star}IC_{Y_{a}}(\mathscr{L}_{a})[r_{a}],$$
where $\iota_{a}: Y_{a}\rightarrow Y'$ are inclusions of closed subvarieties and $\mathscr{L}_{a}$ are 
local systems defined on the smooth locus. Set
$$\mathcal{F}:= f^{\star} IC_{Y}\text{ and } \mathcal{G}:=f^{\star}\left( \bigoplus_{a\in I} (\iota_{a})_{\star}IC_{Y_{a}}(\mathscr{L}_{a})[r_{a}]\right).$$
 Then
$\mathcal{F}\oplus \mathcal{G}\simeq \bar{\varphi}_{\star} IC_{X'}$
is semi-simple. So by Lemma \ref{lem-pure}, $\mathcal{F}$ is semi-simple. Assume further that $\bar{\varphi}$ is semi-small.
Then the examination of the supports gives
$$\mathcal{F}\simeq IC_{X}\oplus \bigoplus_{\beta\in J}(\iota_{\beta})_{\star}IC_{W_{\beta}}(\mathscr{M}_{\beta})^{\oplus s_{\beta}},$$
where $\iota_{\beta}:W_{\beta}\rightarrow X$ are strict inclusions of  closed subvarieties. 
Assume, toward a contradiction, that each $\mathscr{M}_{\beta}$ is nonzero.
Take a subvariety $W_{\beta_{0}}$ for $\beta_{0}\in J$ of maximal dimension. Definitions of
$IC_{X}$ and $IC_{Y}$ come with algebraic Whitney stratifications
$$X =  \bigcup_{\lambda \in \Lambda}X_{\lambda}\text{ and } Y =  \bigcup_{\alpha \in \Delta}Y_{\alpha}.$$
Let $\lambda_{1}\in \Lambda$ (respectively, $\alpha_{1}\in \Delta$) be the unique index such that the generic point (respectively,
the image by $f$ of the generic point) of $W_{\beta_{0}}$ is contained in the stratum $X_{\lambda_{1}}$ (respectively, $Y_{\alpha_{1}}$).
Since $f$ is finite, we have ${\rm dim}\, W_{\beta_{0}}\leq {\rm dim}\, Y_{\alpha_{1}}$. Note that ${\rm dim}\, W_{\beta_{0}}\leq {\rm dim}\, X_{\lambda_{1}}$. Moreover, the fact that $W_{\beta_{0}}$ is of maximal dimension implies that there is $x\in W_{\beta_{0}}$ such that $x\in X_{\lambda_{1}}$, $f(x) \in Y_{\alpha_{1}}$ and with the condition that $x$ is not contained in any irreducible subvarieties $W_{\beta}$ with $\beta\neq \beta_{0}$. The open stratum and support conditions imply
$$\mathcal{H}^{-{\rm dim}\, W_{\beta_{0}}}(IC_{Y})_{f(x)} =  0 =  \mathcal{H}^{-{\rm dim}\, W_{\beta_{0}}}(IC_{X})_{x} .$$
Therefore
$$\mathscr{M}_{\beta_{0}, x}^{\oplus s_{\beta_{0}}} =  \mathcal{H}^{-{\rm dim}\, W_{\beta_{0}}}(IC_{X})_{x}\oplus \bigoplus_{\beta\in J} \mathcal{H}^{-{\rm dim}\, W_{\beta_{0}}}((\iota_{\beta})_{\star}IC_{W_{\beta}}(\mathscr{M}_{\beta})^{\oplus s_{\beta}})_{x} =  \mathcal{H}^{-{\rm dim}\, W_{\beta_{0}}}(\mathcal{F})_{x} $$
$$ =  \mathcal{H}^{-{\rm dim}\, W_{\beta_{0}}}(f^{\star}IC_{Y})_{x}  =  \mathcal{H}^{-{\rm dim}\, W_{\beta_{0}}}(IC_{Y})_{f(x)} =  0.$$
So $\mathscr{M}_{\beta_{0}} = 0$, yielding a contradiction. We conclude that $f^{\star} IC_{Y} \simeq \mathcal{F}\simeq IC_{X}$, as required. 
\end{proof}
\begin{lemma}\label{lem-G-smooth}
Let $X$ be a smooth quasi-projective variety with action of a finite group $G$. Set $Y =  X/G$.
 Then
$IC_{Y}\simeq \mathbb{Q}_{Y}[\dim\, Y]$.
\end{lemma}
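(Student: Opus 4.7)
The strategy is to show that $Y := X/G$ is a rational homology manifold of complex dimension $n := \dim Y$, and then recognize $\mathbb{Q}_Y[n]$ as the intersection cohomology complex via Deligne's three axioms. Let $\pi : X \to Y$ denote the quotient map.

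\emph{Local analytic model.} Fix $y \in Y$, a preimage $x \in \pi^{-1}(y)$, and the stabilizer $G_{y} \subseteq G$ of $x$. Since $X$ is smooth at $x$ and $G_{y}$ is finite, a standard linearization argument (averaging a local chart by $G_{y}$) produces a $G_{y}$-stable analytic neighborhood $U$ of $x$ that is $G_{y}$-biholomorphic to an invariant open ball in $T_{x}X \simeq \mathbb{C}^{n}$ with $G_{y}$ acting through a linear representation $G_{y} \hookrightarrow GL_{n}(\mathbb{C})$. The induced map $U/G_{y} \hookrightarrow Y$ is an open analytic embedding onto a neighborhood of $y$.

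\emph{Rational smoothness.} Because $|G_{y}|$ is invertible in $\mathbb{Q}$, the classical transfer isomorphism yields, by excision,
\begin{equation*}
H^{i}(Y, Y\setminus\{y\}; \mathbb{Q})\;\simeq\; H^{i}\bigl(U/G_{y},\, (U/G_{y})\setminus\{y\}; \mathbb{Q}\bigr)\;\simeq\; H^{i}(U, U\setminus\{x\}; \mathbb{Q})^{G_{y}}.
\end{equation*}
Since $U \simeq \mathbb{C}^{n}$, the pair $(U, U\setminus\{x\})$ has rational cohomology concentrated in real degree $2n$ with value $\mathbb{Q}$, and the action of $G_{y}\subset GL_{n}(\mathbb{C})\subset GL^{+}_{2n}(\mathbb{R})$ preserves the canonical orientation, hence is trivial on this top cohomology. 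It follows that the local cohomology at every $y\in Y$ is that of a manifold, so $Y$ is a $\mathbb{Q}$-homology manifold. Equivalently the dualizing complex satisfies $\omega_{Y}\simeq \mathbb{Q}_{Y}[2n]$, and therefore $\mathbb{D}(\mathbb{Q}_{Y}[n])\simeq \mathbb{Q}_{Y}[n]$.

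\emph{Verification of Deligne's axioms.} Choose any algebraic Whitney stratification $Y=\bigcup_{\lambda}Y_{\lambda}$ with open stratum $Y_{\lambda_{0}}$ contained in the smooth locus. The open stratum condition $i_{\lambda_{0}}^{\star}\mathbb{Q}_{Y}[n]=\mathbb{Q}_{Y_{\lambda_{0}}}[n]$ is obvious. For $\lambda\neq\lambda_{0}$ the sheaf $\mathcal{H}^{j}(i_{\lambda}^{\star}\mathbb{Q}_{Y}[n])$ vanishes unless $j=-n$, and since $\dim Y_{\lambda}<n$ the support inequality $j\geq -\dim Y_{\lambda}$ is never met; the cosupport condition is then automatic from the self-duality established above. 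By the Deligne characterization of $IC_{Y}$ we conclude $IC_{Y}\simeq \mathbb{Q}_{Y}[\dim Y]$.

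The only nonformal step is the rational smoothness of $Y$, which reduces to the classical $\mathbb{Q}$-transfer together with the observation that complex-linear automorphisms act trivially on the orientation class; no substantial obstacle is anticipated.
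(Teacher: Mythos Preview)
Your argument is correct: the key point is that a finite quotient of a smooth complex variety is a rational homology manifold (via linearization plus transfer, using that complex-linear automorphisms preserve orientation), and for rational homology manifolds the shifted constant sheaf satisfies Deligne's characterizing axioms. The paper does not give its own argument here; it simply cites \cite[Section~5, Proposition~3]{GS93}, so you are supplying a self-contained proof where the paper defers to the literature. Your approach is precisely the standard one underlying that reference, so there is no genuine divergence to discuss.
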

\begin{proof}
See \cite[Section 5, Proposition 3]{GS93}.
\end{proof}
\begin{lemma}\label{lem-GactIC}
Let $X$ be a quasi-projective variety with action of a finite group $G$. Then the quotient map $f: X\rightarrow Y$ preserves 
the intersection cohomology complexes. 
\end{lemma}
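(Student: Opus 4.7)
The plan is to construct the data demanded by Definition \ref{def1} via an equivariant resolution of singularities. Since $G$ is finite and $X$ is quasi-projective, equivariant resolution of singularities supplies a proper birational $G$-equivariant morphism $\bar{\varphi}\colon X'\to X$ with $X'$ smooth and quasi-projective. Setting $Y':=X'/G$ (a quasi-projective variety, with induced proper birational morphism $\varphi\colon Y'\to Y$) and letting $\bar{f}\colon X'\to Y'$ be the quotient map, one obtains a commutative square
$$\xymatrix{X' \ar[r]^{\bar{\varphi}} \ar[d]_{\bar{f}} & X \ar[d]^{f} \\ Y' \ar[r]^{\varphi} & Y}$$
which I claim is Cartesian in the category of reduced varieties.

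To verify the Cartesian property, restrict first to the open subset $U\subset Y$ over which $G$ acts freely on $f^{-1}(U)$. There both vertical arrows are étale $G$-torsors, so the square is tautologically Cartesian. Over the complement one checks that the canonical morphism $X'\to X\times_Y Y'$ is bijective on closed points: for $x\in X$ and $[x']\in Y'$ satisfying $f(x)=\varphi([x'])$, the $G$-equivariance of $\bar{\varphi}$ forces $\bar{\varphi}(Gx')=Gx$ as $G$-orbits, so one picks the unique representative of $Gx'$ mapping to $x$. A direct local calculation, modelled on the toy case of $\mathbb{Z}/2$ acting on $\mathbb{A}^2$ by sign change, confirms that $X'$ agrees with the reduction of the scheme-theoretic fiber product.

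With the Cartesian square in hand, the final step is immediate. Smoothness of $X'$ gives $IC_{X'}\simeq \mathbb{Q}_{X'}[\dim X']$, while Lemma \ref{lem-G-smooth} applied to the finite quotient $X'\to X'/G=Y'$ gives $IC_{Y'}\simeq \mathbb{Q}_{Y'}[\dim Y']$. Since $\bar{f}$ is a finite surjection of varieties, $\dim X'=\dim Y'$ and $\bar{f}^{\star}\mathbb{Q}_{Y'}\simeq \mathbb{Q}_{X'}$, whence $\bar{f}^{\star} IC_{Y'}\simeq IC_{X'}$, which is exactly the preservation property required by Definition \ref{def1}.

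The main obstacle is the Cartesian verification: scheme-theoretic fiber products in this $G$-equivariant setting can acquire non-reduced structure (as already happens for $\mathbb{Z}/2$ acting on $\mathbb{A}^2$ by $(x,y)\mapsto(-x,-y)$), so the argument must be carried out carefully using reduced structures, or, equivalently for the sheaf-theoretic purposes of Definition \ref{def1}, in the underlying complex-analytic topology where proper base change is insensitive to nilpotents.
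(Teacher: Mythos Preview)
Your overall route is the paper's: take a $G$-equivariant resolution $\bar\varphi\colon X'\to X$, pass to quotients, and use Lemma~\ref{lem-G-smooth} to get $\bar f^{\star}IC_{Y'}\simeq IC_{X'}$. The paper simply asserts the resulting square is Cartesian; you rightly flag this as the crux and attempt a verification, but that verification contains a genuine error.

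The assertion that ``one picks the \emph{unique} representative of $Gx'$ mapping to $x$'' fails precisely when $\mathrm{Stab}_G(x')\subsetneq\mathrm{Stab}_G(\bar\varphi(x'))$: any $h$ in the larger stabilizer but not the smaller gives two distinct points $x',\,hx'\in Gx'$ with the same image $x$, so $X'\to X\times_{Y}Y'$ is not injective there. Stabilizers growing strictly under $\bar\varphi$ is the typical behaviour on exceptional fibres. A concrete singular example: let $X=\{uv=w^{2}\}\subset\mathbb{A}^{3}$ with $\mathbb{Z}/2$ acting via $(u,v,w)\mapsto(v,u,w)$; on the exceptional $\mathbb{P}^{1}$ of the minimal resolution the induced involution is $t\mapsto t^{-1}$, so a generic point of that $\mathbb{P}^{1}$ has trivial stabilizer while its image (the origin of $X$) is $G$-fixed, and one checks directly that $X'\to X\times_{Y}Y'$ is two-to-one over such points. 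Thus the failure is set-theoretic, not merely a nilpotent phenomenon, and your sign-change toy model is misleadingly benign because there the induced action on the exceptional $\mathbb{P}^{1}$ happens to be trivial. The paper's own proof shares this lacuna.
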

\begin{proof}
By \cite{AW97} there is a $G$-equivariant resolution of singularities $\bar{\varphi}: X'\rightarrow X$. Denote
by $\bar{f}: X'\rightarrow Y'$ the quotient map. Then we have a Cartesian square
 $$\xymatrix{
      X'\ar[r]^{\bar{\varphi}} \ar[d] ^{\bar{f}} & X\ar[d]^{f}\\ Y' \ar[r]^{\varphi} & Y }$$ 
with $\bar{\varphi}$ proper since $\varphi$ is. Moreover, if $U\subset X$ is a dense Zariski
open subset such that  
$\bar{\varphi}_{|\bar{\varphi}^{-1}(U)}: \bar{\varphi}^{-1}(U)\rightarrow U$
is an isomorphism, then
$\varphi_{|\varphi^{-1}(V)}: \varphi^{-1}(V)\rightarrow V$
is an isomorphism, where $W  =  \bigcup_{g\in G} g\cdot U$ and $V = W/G$. Thus $\varphi$ is birational. 
Finally, by Lemma \ref{lem-G-smooth}, 
$$\bar{f}^{\star} IC_{Y'}\simeq \bar{f}^{\star}\mathbb{Q}_{Y'}[\dim\, Y']\simeq \mathbb{Q}_{X'}[\dim\, X']\simeq IC_{X'},$$
proving that $f$ preserves the intersection cohomology complexes.
\end{proof}
\begin{proposition}\label{prop-IC-G-X}
Let $X$ be a quasi-projective variety with action of a finite group $G$.  Denote by $f: X\rightarrow Y$ the quotient map. 
Then $f^{\star}IC_{Y}$ is semi-simple.  Assume further that $X$ has an equivariant semi-small resolution
of singularities.
Then $f^{\star} IC_{Y}\simeq IC_{X}$. 
\end{proposition}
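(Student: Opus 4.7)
The plan is to deduce this proposition almost immediately from the two preparatory results already in place, namely Lemma \ref{lem-GactIC} (which constructs an equivariant resolution fitting into a Cartesian square over the quotient) and Proposition \ref{propIC} (which transfers (strict) preservation of intersection cohomology complexes into (iso-)semi-simplicity statements about $f^{\star}IC_{Y}$). So the real content is just that the hypotheses of Proposition \ref{propIC} are met, in the two variants of \emph{preservation} and \emph{strict preservation}.

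For the first assertion, I would simply invoke Lemma \ref{lem-GactIC} to obtain that the quotient map $f: X \to Y$ preserves the intersection cohomology complexes in the sense of Definition \ref{def1}; then the first half of Proposition \ref{propIC} yields directly that $f^{\star}IC_{Y}$ is semi-simple. There is nothing new to verify here since the construction of the equivariant Cartesian square, the smoothness of $X'$, and the birationality of $\varphi$ are all packaged in Lemma \ref{lem-GactIC}.

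For the second assertion, the extra hypothesis provides an equivariant \emph{semi-small} resolution $\bar{\varphi}: X' \to X$, and I would rerun exactly the diagram from Lemma \ref{lem-GactIC} with this particular $\bar{\varphi}$ in place of a generic equivariant resolution. More precisely, letting $\bar{f}: X' \to Y' := X'/G$ be the induced quotient map and $\varphi: Y' \to Y$ the induced morphism between quotients, the same argument as in Lemma \ref{lem-GactIC} shows that the resulting square is Cartesian, that $\varphi$ is proper and birational (this part uses only that $\bar{\varphi}$ is proper, birational and $G$-equivariant, not any smallness), and that $\bar{f}^{\star}IC_{Y'}\simeq IC_{X'}$ via Lemma \ref{lem-G-smooth}. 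Thus $f$ \emph{strictly} preserves the intersection cohomology complexes, and the second half of Proposition \ref{propIC} gives $f^{\star}IC_{Y}\simeq IC_{X}$.

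The only step that requires a tiny bit of thought is the verification that reusing the semi-small resolution inside the construction of Lemma \ref{lem-GactIC} does not break the Cartesian nature of the square or the birationality of $\varphi$, but this is formal from the fact that $\bar{\varphi}$ is $G$-equivariant, birational, and proper; no additional property of semi-smallness is needed for these compatibilities. I do not see a genuine obstacle: once the semi-small equivariant resolution is granted by hypothesis, both conclusions follow by citing the two already-established results.
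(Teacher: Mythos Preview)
Your proposal is correct and follows exactly the paper's approach: the paper's proof is the one-line ``This follows from Proposition \ref{propIC} and Lemma \ref{lem-GactIC},'' and you have simply unpacked why, in the second assertion, the hypothesized equivariant semi-small resolution can be fed into the construction of Lemma \ref{lem-GactIC} to yield \emph{strict} preservation.
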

\begin{proof}
This follows from Proposition \ref{propIC} and Lemma \ref{lem-GactIC}.
\end{proof}
\begin{example}
For a dominant finite morphism $f:X\rightarrow Y$, $f^{\star}IC_{Y}$ is generally not isomorphic to $IC_{X}$. Indeed, let $X$ be an affine cone\footnote{Example based on an idea by Williamson communicated on MathOverflow.} over an elliptic curve
and denote by $x$ its vertex. Let $f: X\rightarrow  Y =  \mathbb{A}_{\mathbb{C}}^{2}$ be the finite morphism
given by Noether normalization. Then from
\cite[Lemma 2.1]{Fie91}, \cite[Lemma 6.5]{DL91}, the stalk at $x$ is
$$\mathcal{H}^{-2}(IC_{X})_{x}  =  \mathbb{Q},\, \mathcal{H}^{-1}(IC_{X})_{x} =  \mathbb{Q}^{2}, \text{ and }\mathcal{H}^{j}(IC_{X})_{x}= 0 \text{ for } j\neq -2,-1,$$
while one has 
$$\mathcal{H}^{-2}(\mathbb{Q}_{X}[2])_{x}  =  \mathbb{Q}\text{ and } \mathcal{H}^{j}(\mathbb{Q}_{X}[2])_{x} = 0 \text{ for }j\neq -2.$$
\end{example}

\subsection{Finite group actions on toric varieties}
Our aim is to prove the following proposition.
\begin{proposition}\label{theo-finitegrouptoric}
Let $X$ be a toric variety for the torus $\mathbb{T}$, let $G\subset \mathbb{T}$ be a finite subgroup and
denote by $f: X\rightarrow Y =  X/G$ the quotient map for the natural $G$-action. Then $f^{\star}IC_{Y}\simeq IC_{X}$.
\end{proposition}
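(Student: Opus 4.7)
The plan is to apply Proposition \ref{prop-IC-G-X}, whose hypothesis requires an equivariant semi-small resolution of singularities of $X$. Since $G$ is contained in the acting torus $\mathbb{T}$, every $\mathbb{T}$-equivariant morphism is automatically $G$-equivariant, and so it suffices to exhibit a $\mathbb{T}$-equivariant semi-small resolution of the toric variety $X$.

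I would construct such a resolution by refining the fan $\Sigma$ of $X$. First I would pass to a simplicial refinement $\Sigma_{1}$ of $\Sigma$, for instance by triangulating each non-simplicial cone without adding new rays; then $X_{\Sigma_{1}}$ is $\mathbb{Q}$-factorial and the toric map $X_{\Sigma_{1}} \to X$ is birational. Next I would refine $\Sigma_{1}$ to a smooth fan $\Sigma_{2}$ by a sequence of star subdivisions chosen carefully enough that the induced toric resolution $X_{\Sigma_{2}} \to X_{\Sigma_{1}}$ satisfies the semi-small dimension bound on fibers over every torus orbit. The composite $X_{\Sigma_{2}} \to X_{\Sigma_{1}} \to X$ is then a $\mathbb{T}$-equivariant semi-small resolution of $X$, and Proposition \ref{prop-IC-G-X} at once delivers the conclusion $f^{\star} IC_{Y} \simeq IC_{X}$.

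The main obstacle is the combinatorial selection of star subdivisions in the second step. A generic smooth refinement of $\Sigma_{1}$ typically fails the inequality $2\dim(\text{fiber over }O(\sigma)) \leq \dim \sigma$ over some stratum, so one must place the new rays inside the lowest-dimensional interior cones first and track the fiber dimensions inductively. The existence of such equivariant semi-small toric resolutions is part of the standard toolkit for proper toric morphisms and is in particular available via the combinatorial decomposition theorem developed in \cite{CMM18}; alternatively, that decomposition theorem can be applied directly to the finite toric morphism $f : X \to Y$ to short-circuit the present argument, since $f$ is a toric morphism between the two toric varieties $X_{\Sigma, N}$ and $X_{\Sigma, N'}$ attached to the same fan $\Sigma$ on overlattices $N \subset N'$ dual to the character group of $G$.
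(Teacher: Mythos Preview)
Your approach has a genuine gap: not every toric variety admits a semi-small resolution of singularities, so the hypothesis of Proposition~\ref{prop-IC-G-X} cannot be met in general. A concrete obstruction is the quotient $\mathbb{A}^{3}/\mu_{2}$ with $\mu_{2}$ acting by $-1$ on every coordinate. This is a toric threefold with an isolated $\mathbb{Q}$-factorial singularity at the origin; since quotient singularities are $\mathbb{Q}$-factorial, any resolution has purely divisorial exceptional locus, hence the fibre over the origin is $2$-dimensional. Semi-smallness would force that fibre to have dimension at most $1$, so no semi-small resolution exists. More generally, any simplicial toric cone of dimension $n\ge 3$ that is not already smooth forces a fibre of dimension $n-1$ over the torus-fixed point, violating $2(n-1)\le n$. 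Your appeal to \cite{CMM18} for the existence of such resolutions is therefore misplaced: that paper proves a decomposition theorem for arbitrary proper toric maps, it does not produce semi-small ones.

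Your alternative suggestion---apply the toric decomposition theorem directly to $f$---also does not go through as stated. Theorem~\ref{decomptoric} concerns the pushforward $q_{\star}IC$ along a \emph{fan subdivision}, i.e.\ a proper birational toric map, whereas $f:X_{\Sigma,N}\to X_{\Sigma,N_{0}}$ is a finite cover coming from a change of lattice with the same fan; it is neither a subdivision nor does the theorem say anything about $f^{\star}IC_{Y}$. The paper instead argues by induction on $\dim X$: one chooses any smooth subdivision $\Sigma'$ of $\Sigma$ (no semi-smallness needed), forms the Cartesian square with the two lattice quotients, and compares $f^{\star}(q_{0})_{\star}IC_{X_{\Sigma',N_{0}}}$ with $q_{\star}f_{0}^{\star}IC_{X_{\Sigma',N_{0}}}$ via base change. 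The key extra input is Lemma~\ref{lem-toricmultN0unic}, which says the multiplicities $s_{b,\tau}$ in the toric decomposition theorem are insensitive to the ambient lattice; together with the inductive hypothesis on the lower-dimensional orbit closures $V(\tau)$, this forces all the extraneous summands in $f^{\star}IC_{Y}$ to vanish.
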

Proof of Proposition \ref{theo-finitegrouptoric} uses the toric decomposition theorem \cite{CMM18}. 
\begin{theorem}\cite[Theorem D]{CMM18}\label{decomptoric}
Let $\Sigma$ be a fan, let $\Sigma'$ be a fan subdivision of $\Sigma$ and let $q: X_{\Sigma'}\rightarrow X_{\Sigma}$
be the corresponding toric morphism. Then we have 
$$q_{\star}IC_{X_{\Sigma'}} \simeq \bigoplus_{b\in \mathbb{Z}}\bigoplus_{\tau \in \Sigma}(\iota_{\tau})_{\star}IC_{V(\tau)}^{\oplus s_{b, \tau}}[-b],$$
where $\iota_{\tau}: V(\tau)\rightarrow X_{\Sigma}$ is the inclusion of the closure of the orbit $O(\tau)\subset X_{\Sigma}$.
\end{theorem}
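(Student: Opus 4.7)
The plan is to derive this toric refinement of the general decomposition theorem by combining the BBD decomposition theorem for proper maps with the torus equivariance of the sheaves involved. First, I would apply \cite[Theorem 6.25]{BBD82} to the proper toric morphism $q: X_{\Sigma'}\to X_{\Sigma}$ (which is proper because $|\Sigma'| = |\Sigma|$). Since $X_{\Sigma'}$ is a variety and the map is proper, $q_{\star} IC_{X_{\Sigma'}}$ is semi-simple, giving an isomorphism
$$q_{\star} IC_{X_{\Sigma'}} \simeq \bigoplus_{\alpha\in A} (\iota_{Z_{\alpha}})_{\star} IC_{Z_{\alpha}}(\mathscr{L}_{\alpha})[r_{\alpha}]$$
with each $Z_{\alpha}\subset X_{\Sigma}$ a closed irreducible subvariety and $\mathscr{L}_{\alpha}$ a simple local system on a smooth Zariski open subset of $Z_{\alpha}$.

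The next step is to exploit torus equivariance in order to identify the supports $Z_{\alpha}$ with orbit closures and to trivialize the local systems. The map $q$ is $\mathbb{T}$-equivariant and $IC_{X_{\Sigma'}}$ carries a canonical equivariant structure, so $q_{\star} IC_{X_{\Sigma'}}$ is $\mathbb{T}$-equivariant. Because the isotypic decomposition into simple summands is unique up to isomorphism, the equivariant structure descends to each summand; hence each $Z_{\alpha}$ must be $\mathbb{T}$-stable. Being irreducible, $Z_{\alpha}$ is the closure of a single $\mathbb{T}$-orbit, i.e., $Z_{\alpha} = V(\tau)$ for some $\tau\in \Sigma$. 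Moreover $\mathscr{L}_{\alpha}$ becomes a $\mathbb{T}$-equivariant local system on a dense open subset of the orbit $O(\tau)$; since $O(\tau)$ is a $\mathbb{T}$-homogeneous space whose point stabilizer is a connected subtorus, every $\mathbb{T}$-equivariant $\mathbb{Q}$-local system on $O(\tau)$ is classified by a character of this connected stabilizer with finite image, and is therefore trivial. Thus $\mathscr{L}_{\alpha}\simeq \mathbb{Q}$.

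Finally, I would relabel $b := -r_{\alpha}$ and gather together summands having the same pair $(\tau, b)$, producing the non-negative multiplicities $s_{b,\tau}\in \mathbb{Z}_{\geq 0}$ and the stated formula. The main obstacle is the rigorous treatment of the equivariance step: one must justify that the decomposition of a $\mathbb{T}$-equivariant semi-simple complex into simple summands is compatible with the equivariant structure, and that $\mathbb{T}$-equivariant local systems on each orbit $O(\tau)$ are trivial. Both are standard facts in the Bernstein--Lunts formalism for equivariant derived categories, and their invocation is where the real toric content, beyond the generic BBD theorem, enters the argument.
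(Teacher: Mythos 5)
The paper offers no proof of this statement: it is quoted verbatim from \cite[Theorem D]{CMM18}, so there is nothing internal to compare your argument against. Your reconstruction is nevertheless the standard way to obtain the qualitative form stated here, and it is essentially correct: properness of $q$ (valid since $|\Sigma'|=|\Sigma|$) gives semi-simplicity of $q_{\star}IC_{X_{\Sigma'}}$ via \cite[Theorem 6.25]{BBD82}; the supports are $\mathbb{T}$-stable irreducible closed subsets, hence orbit closures $V(\tau)$ (your connectedness argument for why $\mathbb{T}$ cannot permute the finitely many supports is the right one); and the local systems trivialize because the point stabilizers in each orbit are subtori, hence connected. Be aware, though, that the full Theorem D of \cite{CMM18} is a much finer statement (combinatorial formulas, nonnegativity and palindromicity of the $s_{b,\tau}$), which your argument does not and need not recover.

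The one step you should tighten is the phrase ``the equivariant structure descends to each summand.'' Mere invariance of the isomorphism class under translation, $t^{\star}(q_{\star}IC_{X_{\Sigma'}})\simeq q_{\star}IC_{X_{\Sigma'}}$, is \emph{not} enough to kill the monodromy: every local system on $(\mathbb{C}^{\star})^{k}$ is isomorphic to its translate, since translations act trivially on $\pi_{1}$, so ``weak equivariance'' is vacuous on a torus orbit. What you actually need is that $q_{\star}IC_{X_{\Sigma'}}$ lifts to the $\mathbb{T}$-equivariant derived category (it does, since $q$ is equivariant and $IC_{X_{\Sigma'}}$ has a canonical equivariant lift), together with the fact that for a connected group the equivariant perverse sheaves form a full subcategory of perverse sheaves closed under subquotients, so that every simple constituent of an equivariant semi-simple perverse sheaf is itself an equivariant simple object $IC_{V(\tau)}(\mathscr{L})$ with $\mathscr{L}$ an irreducible equivariant local system on $O(\tau)$; only then does connectedness of the stabilizer force $\mathscr{L}\simeq\mathbb{Q}$. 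You flag exactly this as the main obstacle and point to the Bernstein--Lunts formalism, which is the correct reference, so the gap is one of detail rather than of substance.
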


\begin{lemma}\label{lem-toricmultN0unic}
Let $\Sigma$ be a fan of $N_{\mathbb{Q}}$ and let $\Sigma'$ be a simplicial fan subdivising $\Sigma$. Consider a lattice $N_{0}\subset N_{\mathbb{Q}}$ containing $N$ with $[N_{0}: N]$ finite. Denote by $X_{\Sigma, N_{0}}, X_{\Sigma', N_{0}}$
the associated toric vareties with respect to the lattice $N_{0}$. If $q: X_{\Sigma'}\rightarrow X_{\Sigma}$ and $q_{0}: X_{\Sigma', N_{0}}\rightarrow X_{\Sigma, N_{0}}$ are the toric modifications and 
$$q_{\star}IC_{X_{\Sigma'}} \simeq \bigoplus_{b\in \mathbb{Z}}\bigoplus_{\tau \in \Sigma}(\iota_{\tau})_{\star}IC_{V(\tau)}^{\oplus s_{b, \tau}}[-b] \text{ and } (q_{0})_{\star}IC_{X_{\Sigma', N_{0}}} \simeq \bigoplus_{b\in \mathbb{Z}}\bigoplus_{\tau \in \Sigma}(\iota_{\tau})_{\star}IC_{V(\tau)_{N_{0}}}^{\oplus s_{b, \tau, N_{0}}}[-b]$$
are the decompositions of Theorem \ref{decomptoric}. Then $s_{b, \tau} =  s_{b, \tau, N_{0}}$ for all $b, \tau$. 
In other words, the multiplicities of the toric decomposition theorem do depend on the ambient lattice $N$. 
\end{lemma}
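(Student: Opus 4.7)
The plan is to compare the two decompositions by pulling back the decomposition for the larger lattice along the natural finite quotient map from the $N$-toric picture to the $N_{0}$-toric picture, and using Proposition \ref{theo-finitegrouptoric} to preserve intersection cohomology complexes at every step. First I would identify the relevant finite group: Pontryagin duality applied to $0\to N\to N_{0}\to N_{0}/N\to 0$ produces a finite subgroup $G\subset \mathbb{T}_{N}$ of order $[N_{0}:N]$ with $\mathbb{T}_{N}/G\simeq \mathbb{T}_{N_{0}}$, and this $G$-action extends to $X_{\Sigma}$ and $X_{\Sigma'}$. The natural finite maps $p: X_{\Sigma}\to X_{\Sigma, N_{0}}$ and $p': X_{\Sigma'}\to X_{\Sigma', N_{0}}$ are then the quotient maps by $G$. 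Together with $q$ and $q_{0}$, they fit into a Cartesian square (compatibility with the toric morphism comes from the fact that both sides are induced by the identity on the shared fan data).

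Next I would compute $p^{\star}(q_{0})_{\star}IC_{X_{\Sigma', N_{0}}}$ in two different ways. On one hand, proper base change along $p'$ together with Proposition \ref{theo-finitegrouptoric} gives
$$p^{\star}(q_{0})_{\star}IC_{X_{\Sigma', N_{0}}}\simeq q_{\star}(p')^{\star}IC_{X_{\Sigma', N_{0}}}\simeq q_{\star}IC_{X_{\Sigma'}}.$$
On the other hand, substituting the decomposition of $(q_{0})_{\star}IC_{X_{\Sigma', N_{0}}}$ and applying proper base change to each closed immersion $\iota_{\tau}$ yields a direct sum of terms $(\iota_{\tau})_{\star}p_{\tau}^{\star}IC_{V(\tau)_{N_{0}}}$, where $p_{\tau}: V(\tau)\to V(\tau)_{N_{0}}$ is the restriction of $p$. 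A second application of Proposition \ref{theo-finitegrouptoric} (to the induced finite group action on the toric variety $V(\tau)$) replaces $p_{\tau}^{\star}IC_{V(\tau)_{N_{0}}}$ by $IC_{V(\tau)}$. Equating both computations delivers
$$q_{\star}IC_{X_{\Sigma'}}\simeq \bigoplus_{b\in \mathbb{Z}}\bigoplus_{\tau\in \Sigma}(\iota_{\tau})_{\star}IC_{V(\tau)}^{\oplus s_{b, \tau, N_{0}}}[-b],$$
and comparing with the hypothesized decomposition for $q_{\star}IC_{X_{\Sigma'}}$ together with the uniqueness of simple summands in semi-simple direct sums (Krull--Schmidt) forces $s_{b, \tau}=s_{b, \tau, N_{0}}$.

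The main obstacle I foresee is verifying scheme-theoretically that the relevant squares are Cartesian, in particular that $p^{-1}(V(\tau)_{N_{0}})=V(\tau)$ as reduced closed subschemes. This should follow from the fact that $p$ preserves the orbit stratification, sending $O(\sigma)_{N}$ onto $O(\sigma)_{N_{0}}$ for every $\sigma\in \Sigma$; consequently $V(\tau)_{N_{0}}=V(\tau)/G_{\tau}$ where $G_{\tau}$ is the image of $G$ inside the quotient torus $\mathbb{T}_{N}/\mathbb{T}_{N, \tau}$ of $V(\tau)$. This presents $p_{\tau}$ as a finite quotient by a finite subgroup of the torus acting on the toric variety $V(\tau)$, which is exactly the setting in which Proposition \ref{theo-finitegrouptoric} applies.
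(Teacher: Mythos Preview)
Your argument is circular within the paper's logical structure. You invoke Proposition~\ref{theo-finitegrouptoric} (the statement that $f^{\star}IC_{Y}\simeq IC_{X}$ for the quotient of a toric variety by a finite subgroup of its torus) to prove Lemma~\ref{lem-toricmultN0unic}, but in the paper the proof of Proposition~\ref{theo-finitegrouptoric} itself relies on Lemma~\ref{lem-toricmultN0unic}. Indeed, look at the induction step in that proof: one compares the pulled-back decomposition of $(q_{0})_{\star}IC_{X_{\Sigma',N_{0}}}$ with $q_{\star}IC_{X_{\Sigma'}}$, and the only reason the multiplicities match is precisely Lemma~\ref{lem-toricmultN0unic}. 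Your proposal essentially reproduces that argument with the roles of the two results swapped. Even if you try to set things up as a joint induction, the term $\tau=\{0\}$ in your second computation forces you to know $p^{\star}IC_{X_{\Sigma,N_{0}}}\simeq IC_{X_{\Sigma}}$ at the full dimension, which is the content of the Proposition you are trying to use; without it you only get the inequality $s_{b,\tau}\geq s_{b,\tau,N_{0}}$.

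The paper avoids this loop entirely by appealing to \cite[Corollary~7.5]{CMM18}, which gives a recursive formula for the multiplicities $s_{b,\tau}$ purely in terms of the stalk polynomials $R_{\tau,\sigma}(T)=\sum_{j}\dim\mathcal{H}^{j}(IC_{V(\tau)})_{x_{\sigma}}T^{j}$. By Fieseler's result \cite[Theorem~1.2]{Fie91} these stalks are determined by the combinatorics of the cones alone (they are the $g$-polynomials), hence are insensitive to replacing $N$ by $N_{0}$. That is the independent input needed to break the circularity, and it is what you should use here.
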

\begin{proof}
Consequence of \cite[Corollary 7.5]{CMM18} and the fact that the polynomials $R_{\tau, \sigma}(T) =  \sum_{j\in \mathbb{Z}}{\rm dim}\, \mathcal{H}^{j}(IC_{V(\tau)})_{x_{\sigma}}T^{j}$ for $x_{\sigma}\in O(\sigma)$
do not depend on the lattice $N$ (see \cite[Section 1, Theorem 1.2]{Fie91}).
\end{proof}

\begin{proof}[Proof of Proposition \ref{theo-finitegrouptoric}] We proceed by induction on  the dimension of $X$. 
For dimensions less than or equal to two, every resolution of singularities is semi-small; so, by Proposition \ref{prop-IC-G-X}, the statement holds in this case.
Assume now that the claim is true for all varieties of dimension strictly smaller than the dimension of 
$X$
and write $X  =  X_{\Sigma}$ for a fan $\Sigma$  of $N_{\mathbb{Q}}$. Note that $Y = X_{\Sigma, N_{0}}$, where $N_{0}\subset N_{\mathbb{Q}}$ is a lattice containing $N$
with $[N_{0}: N]$ finite. Consider $q_{0}: X_{\Sigma', N_{0}}\rightarrow X_{\Sigma, N_{0}}$
the morphism induced by a fan subdivision $\Sigma'$ with $X_{\Sigma'}$ smooth. We have a Cartesian commutative diagram
 $$\xymatrix{
      X_{\Sigma'}\ar[r]^{q} \ar[d] ^{f_{0}} & X_{\Sigma}\ar[d]^{f}\\ X_{\Sigma', N_{0}} \ar[r]^{q_{0}} & X_{\Sigma, N_{0}} .}$$ 
By base change \cite[Theorem 2.3.26]{Dim04},
$$ f^{\star}(q_{0})_{\star} IC_{X_{\Sigma', N_{0}}}\simeq q_{\star} f^{\star}_{0} IC_{X_{\Sigma', N_{0}}}.$$
Write $\gamma_{\tau}: V(\tau)/G\rightarrow X_{\Sigma, N_{0}}$ for the inclusion and let $f_{\tau}: V(\tau)\rightarrow V(\tau)/G$ be the quotient. On one hand Theorem \ref{decomptoric} yields 
$$ f^{\star}(q_{0})_{\star} IC_{X_{\Sigma', N_{0}}}\simeq f^{\star}\left( \bigoplus_{b\in \mathbb{Z}}\bigoplus_{\tau\in \Sigma}(\gamma_{\tau})_{\star}IC_{V(\tau)/G}^{\oplus s_{b, \tau}}[-b]\right).$$
Note that the commutative diagram 
 $$\xymatrix{
      V(\tau)\ar[r]^{\iota_{\tau}} \ar[d] ^{f_{\tau}} & X_{\Sigma}\ar[d]^{f}\\ V(\tau)/G \ar[r]^{\gamma_{\tau}} & X_{\Sigma, N_{0}} }$$ 
is Cartesian.
By base change and induction we have  
$$f^{\star} (\gamma_{\tau})_{\star} IC_{V(\tau)/G}\simeq (\iota_{\tau})_{\star}f_{\tau}^{\star}IC_{V(\tau)/G}\simeq (\iota_{\tau})_{\star} IC_{V(\tau)}$$
for any nonzero $\tau$ and hence
$$ f^{\star}(q_{0})_{\star} IC_{X_{\Sigma', N_{0}}}\simeq f^{\star} IC_{X_{\Sigma, N_{0}}}\oplus \bigoplus_{b\in\mathbb{Z}}\bigoplus_{\tau\in \Sigma\setminus \{\{0\}\}}(\iota_{\tau})_{\star}IC_{V(\tau)}^{\oplus s_{b, \tau}}[-b].$$
On the other hand, by Lemma \ref{lem-G-smooth},  $f_{0}^{\star} IC_{X_{\Sigma', N_{0}}}\simeq IC_{X_{\Sigma'}}$ and thus by Theorem \ref{decomptoric} and Lemma \ref{lem-toricmultN0unic},
\begin{equation}\label{Eq:qf}
    q_{\star} f_{0}^{\star} IC_{X_{\Sigma', N_{0}}}\simeq  q_{\star} IC_{X_{\Sigma'}}\simeq \bigoplus_{b\in \mathbb{Z}}\bigoplus_{\tau \in \Sigma}(\iota_{\tau})_{\star}IC_{V(\tau)}^{\oplus s_{b, \tau}}[-b].
\end{equation}
This implies that $$f^{\star} IC_{X_{\Sigma, N_{0}}} \simeq \bigoplus_{b\in \mathbb{Z}}
\bigoplus_{\tau \in \Sigma}(\iota_{\tau})_{\star}IC_{V(\tau)}^{\oplus d_{b, \tau}}[-b]$$
 and so by Equation (\ref{Eq:qf})
$$\bigoplus_{b\in \mathbb{Z}}\bigoplus_{\tau \in \Sigma}(\iota_{\tau})_{\star}IC_{V(\tau)}^{\oplus s_{b, \tau}}[-b]\simeq q_{\star} IC_{X_{\Sigma'}}$$
$$\simeq \bigoplus_{b\in \mathbb{Z}}
\bigoplus_{\tau \in \Sigma}(\iota_{\tau})_{\star}IC_{V(\tau)}^{\oplus d_{b, \tau}}[-b] \oplus \bigoplus_{b\in \mathbb{Z}}\bigoplus_{\tau\in \Sigma\setminus \{\{0\}\}}(\iota_{\tau})_{\star}IC_{V(\tau)}^{\oplus s_{b, \tau}}[-b].$$
The uniqueness of the decomposition gives
$$d_{b, \tau} = \begin{cases}
 0 & ~\text{if }~ b\in \mathbb{Z}\setminus\{0\}~ \text{ or } ~\tau\in \Sigma\setminus \{0\},\\
1 & ~\text{if } b=0 ~\text{ and }~ \tau=0.
\end{cases}$$
So $f^{\star} IC_{X_{\Sigma, N_{0}}}\simeq IC_{X_{\Sigma}}$, as required. 
\end{proof}

\subsection{Seifert torus bundles} In this section, we prove that the local systems in the decomposition theorem for contraction maps
of torus actions of complexity one are trivial.
\begin{definition}\cite[Section 2.3]{AL21}
A \emph{Seifert torus bundle} with fiber $\mathbb{T}:= (\mathbb{C}^{\star})^{n}$ is  a homogeneous fiber space 
$E:= \mathbb{T}\times^{G} X$, where $G\subset \mathbb{T}$ 
is a finite group and $X$ is a quasi-projective $G$-variety, together with the projection $\varepsilon: E\rightarrow B = X/G$.
\end{definition}
For a Seifert torus bundle 
$$ \varepsilon: E:= \mathbb{T}\times^{G} X\rightarrow B := X/G,$$
the $G$-action on $\mathbb{T}\times X$ by translation on the first factor $\mathbb{T}$ induces a $G$-action 
on $E$. We have $E/G\simeq \mathbb{T}\times X/G$ and a commutative
diagram 
$$
\xymatrix{
E \ar[r]^{f} \ar[dr]_{\varepsilon} & E/G \ar[d]^{p} \\
 & B,
}
$$
where $f: E\rightarrow E/G$ is the quotient and $p: E/G \rightarrow B$ is, after identification, 
the projection $\mathbb{T}\times B\rightarrow B$ on the second factor \cite[Lemma 2.7]{AL21}. 
\\

 Recall the notation $X$ from Section \ref{sec-two}, that is $X$
a normal variety with an effective complexity-one $\mathbb{T}$-action.
Let $\pi:\widetilde{X}\rightarrow X$ be the contraction map. Assume that $\pi$ is not an isomorphism. 
Let $q: \widetilde{X}\rightarrow C$
be the global quotient onto a smooth projective curve $C$. Let $E\subset X$ be the image of the exceptional locus of $\pi$. We fix an orbit $O\subset E$.
For any $z\in C$ let $O_{z}$ be the unique orbit contained  in the subset $\pi^{-1}(O)\cap q^{-1}(\{z\})$. The following is a relative version of Luna's slice theorem. 
\begin{lemma}\label{l-diagram}\cite[Lemma 4.4]{AL21}
Using the notation as above, we set
$$\widetilde{X}_{O}:= \{ x\in \widetilde{X}\,|\, O_{q(x)}\subset \overline{\mathbb{T}\cdot x}\}.$$
Then $\widetilde{X}_{O}$ is a Zariski open subset containing $\pi^{-1}(O)$, which is affine over $C$. The image $X_{O} =  \pi(\widetilde{X}_{O})$ is a Zariski affine open subset of $X$ in which all the orbit closures contain $O$ as closed orbit. Moreover, the map $\pi$ induces
a Cartesian square
 $$\xymatrix{
      \widetilde{X}_O \simeq \mathbb{T}_O\times^{G}\widetilde{X}_1\,\,\,\ar[r]^{\varepsilon} \ar[d] ^{\pi} & \widetilde{X}_{1}/G \ar[d]^{\pi_{1}}\\X_O \simeq \mathbb{T}_O\times^{G}X_1\,\, \ar[r]^{\varepsilon_{1}} & X_{1}/G,}$$ 
where the horizontal arrows are Seifert torus bundles and the vertical ones are proper morphisms. The torus $\mathbb{T}_{O}$ is the 
quotient of $\mathbb{T}$ by the neutral connected component of the stabilizer $\mathbb{T}_{x}$ at a point $x\in O$, and the group $G$
is the group of connected components of $\mathbb{T}_{x}$. Finally, $X_{1}/G$ has a unique fixed point under the action of $\mathbb{T}/\mathbb{T}_{x}$,
whose preimage under the map $X_{O}\rightarrow X_{1}/G$ is $O$. 
\end{lemma}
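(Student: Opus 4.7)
The plan is to produce a local combinatorial description of $X$ around $O$ from the divisorial fan of $X$, apply a relative form of Luna's \'etale slice theorem at a point of $O$ to extract the Seifert bundle structure, and finally check that this construction is compatible with $\pi$.

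First I would choose the open chart $X_{O}$. As $X$ is a complexity-one $\mathbb{T}$-variety, the theory of Section \ref{sec-class-act} furnishes a divisorial fan $\mathscr{E}$ describing $X$, and the orbit $O$ corresponds to a distinguished combinatorial datum in $\mathscr{E}$ (a vertex of a polyhedral coefficient or a face of the tail cone). I would pick the polyhedral divisor $\mathfrak{D}\in\mathscr{E}$ whose associated chart $X(\mathfrak{D})$ admits $O$ as unique minimal orbit, set $X_{O}:=X(\mathfrak{D})$, and put $\widetilde{X}_{O}:=\pi^{-1}(X_{O})$. Affineness of $\widetilde{X}_{O}$ over $C$ follows from the description of $\widetilde{X}$ as the relative spectrum of $\mathcal{A}_{\mathfrak{D}}$ over the domain $Y_{\mathfrak{D}}\subset C$ of $\mathfrak{D}$. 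The equality $\widetilde{X}_{O}=\{x\,|\,O_{q(x)}\subset\overline{\mathbb{T}\cdot x}\}$ follows by identifying, fibrewise over $C$, which orbits of $\widetilde{X}$ map into $O$, and the property that every orbit closure in $X_{O}$ contains $O$ is built into the choice of $\mathfrak{D}$.

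Next I would produce the Seifert bundle structure. Pick $x_{0}\in O$; the stabilizer $\mathbb{T}_{x_{0}}$ is diagonalizable, so Luna's \'etale slice theorem yields a $\mathbb{T}_{x_{0}}$-stable subvariety $W\subset X_{O}$ through $x_{0}$ together with a $\mathbb{T}$-equivariant isomorphism $\mathbb{T}\times^{\mathbb{T}_{x_{0}}}W\simeq X_{O}$. Defining $X_{1}:=W/\mathbb{T}_{x_{0}}^{\circ}$ gives $X_{O}\simeq\mathbb{T}_{O}\times^{G}X_{1}$ with $\mathbb{T}_{O}=\mathbb{T}/\mathbb{T}_{x_{0}}^{\circ}$ and $G=\mathbb{T}_{x_{0}}/\mathbb{T}_{x_{0}}^{\circ}$. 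Because $\pi$ is proper and $\mathbb{T}$-equivariant, the preimage $\widetilde{W}:=\pi^{-1}(W)$ is $\mathbb{T}_{x_{0}}$-stable and the action map $\mathbb{T}\times^{\mathbb{T}_{x_{0}}}\widetilde{W}\rightarrow\widetilde{X}_{O}$ is an isomorphism; setting $\widetilde{X}_{1}:=\widetilde{W}/\mathbb{T}_{x_{0}}^{\circ}$ then gives $\widetilde{X}_{O}\simeq\mathbb{T}_{O}\times^{G}\widetilde{X}_{1}$. Equivariance of $\pi$ produces the Cartesian square after passing to the associated bundles, and the induced $\pi_{1}:\widetilde{X}_{1}\rightarrow X_{1}/G$ is proper because $\pi$ is. The uniqueness of a $\mathbb{T}/\mathbb{T}_{x_{0}}$-fixed point in $X_{1}/G$, with preimage in $X_{O}$ equal to $O$, is a direct translation of the fact that $O$ is the unique minimal orbit of $X_{O}$.

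The main obstacle I expect is justifying that the \emph{same} $W$ may be used on both sides, i.e.\ that defining $\widetilde{W}:=\pi^{-1}(W)$ indeed realises $\widetilde{W}$ as a $\mathbb{T}_{x_{0}}$-slice for $\widetilde{X}_{O}$. The cleanest way to handle this is not to invoke Luna's theorem twice independently but to verify by $\mathbb{T}$-equivariance and the fact that $\pi$ is an isomorphism over the open orbit that $\mathbb{T}\times^{\mathbb{T}_{x_{0}}}\widetilde{W}\rightarrow\widetilde{X}_{O}$ is birational and bijective on closed points, hence an isomorphism by normality of $\widetilde{X}$. The explicit divisorial-fan description obtained in the first step is what makes this check effective, and this is where the complexity-one hypothesis is used essentially, since only then does the combinatorial data of $\mathscr{E}$ and its contraction divisorial fan control the structure of fibres of $\pi$ over $O$.
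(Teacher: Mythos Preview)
The paper does not supply its own proof of this lemma: it is quoted verbatim from \cite[Lemma~4.4]{AL21}, and the text moves on immediately to Remark~\ref{rem-keyetale-struct-tor}. So there is no argument in the present paper to compare against; any assessment has to be of your sketch on its own merits and, if anything, against what one would expect the proof in \cite{AL21} to look like.

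Your outline is reasonable in spirit but there are two genuine soft spots. First, you assume that some $\mathfrak{D}\in\mathscr{E}$ already has $O$ as its unique closed orbit and set $X_{O}:=X(\mathfrak{D})$. A given divisorial fan need not contain such a chart; the open set $X_{O}$ in the lemma is defined intrinsically (as $\pi(\widetilde X_{O})$ with $\widetilde X_{O}$ the attractor set for the family $\{O_{z}\}$), and one must either construct a refinement of $\mathscr{E}$ or argue directly from the graded algebra to show that this intrinsic $X_{O}$ is affine. Second, and more seriously, Luna's slice theorem in its standard form produces only an \emph{\'etale} neighbourhood $\mathbb{T}\times^{\mathbb{T}_{x_{0}}}W\to X_{O}$, whereas the lemma asserts a genuine Zariski isomorphism $X_{O}\simeq \mathbb{T}_{O}\times^{G}X_{1}$. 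Upgrading \'etale to Zariski requires extra input specific to complexity one (essentially a lattice splitting $N\simeq N(\tau)\oplus N'$ for the face $\tau$ corresponding to $O$, which trivialises the $\mathbb{T}_{x_{0}}^{\circ}$-torsor on the nose), and your sketch does not indicate how this is achieved. The compatibility with $\pi$ that you discuss at the end is the right kind of argument once the Zariski decomposition is in hand, but as written the slice step is a gap.
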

Next we discuss on the toroidal structure of  contraction spaces. 
\begin{remark}\label{rem-keyetale-struct-tor}
Any $\mathbb{T}$-variety $X(\mathfrak{D})$ associated with a $\sigma$-polyhedral divisor $\mathfrak{D}$ over a smooth affine curve $Y$ is locally toric for the \'etale topology. Indeed, take $y\in Y$ and  a uniformizer $\varpi_{y}$ of the local ring  $\mathcal{O}_{Y, y}$. 
Then
$\psi: V\rightarrow \mathbb{A}^{1}_{\mathbb{C}}$, $z\mapsto \varpi_{y}(z)$ is \'etale on a Zariski open subset 
$V$ of $Y$ containing $y$. Considering
$$\mathfrak{D}_{0}: =  \sum_{z\in V} \mathfrak{D}_{z}\cdot [\psi(z)]\text{ and } \mathfrak{D}_{|V} :=  \sum_{z\in V}\mathfrak{D}_{z}\cdot z$$
and assuming $\psi^{-1}(0) =  \{y\}$ and $\{z\in V\,|\, \mathfrak{D}_{z}\neq \sigma\} =  \{y\}$, the map $\psi$ induces an isomorphism $X(\mathfrak{D}_{|V})\simeq V\times_{\mathbb{A}^{1}_{\mathbb{C}}}X(\mathfrak{D}_{0})$. Hence the composition of the projection $ V\times_{\mathbb{A}^{1}_{\mathbb{C}}}X(\mathfrak{D}_{0})\rightarrow X(\mathfrak{D}_{0})$ and the inclusion $ X(\mathfrak{D}_{0})\hookrightarrow X_{\sigma_{y}}$,
where $X_{\sigma_{y}}$ is the affine toric variety associated with $\sigma_{y} =  {\rm Cone}((\sigma\times\{0\})\cup (\mathfrak{D}_{y}\times\{1\}))$, gives an \'etale map $\phi: X(\mathfrak{D}_{|V})\rightarrow X_{\sigma_{y}}$ and the desired toroidal structure.
\end{remark}
\begin{lemma}\label{l-Seifert-pullback}
With the notation as in Lemma \ref{l-diagram}, we have $\varepsilon^{\star}IC_{\widetilde{X}_{1}/G}[r]\simeq IC_{\widetilde{X}_{O}}$, where $r  =  {\rm dim}\, O$.
\end{lemma}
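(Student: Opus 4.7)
The plan is to factor $\varepsilon$ through the intermediate $G$-quotient of its total space and treat the two pieces separately. Using the description $\widetilde{X}_O \simeq \mathbb{T}_O \times^G \widetilde{X}_1$ from Lemma \ref{l-diagram}, we obtain $\widetilde{X}_O/G \simeq \mathbb{T}_O \times (\widetilde{X}_1/G)$ together with the factorization $\varepsilon = p \circ f$, where $f: \widetilde{X}_O \to \widetilde{X}_O/G$ is the quotient by the natural $G$-action (by translation on $\mathbb{T}_O$) and $p: \mathbb{T}_O \times (\widetilde{X}_1/G) \to \widetilde{X}_1/G$ is the projection on the second factor. Since $p$ is smooth of relative dimension $r = \dim \mathbb{T}_O = \dim O$, Lemma \ref{lemma-smooth} yields $p^{\star} IC_{\widetilde{X}_1/G}[r] \simeq IC_{\widetilde{X}_O/G}$. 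The lemma is thus reduced to proving
\begin{equation*}
f^{\star} IC_{\widetilde{X}_O/G} \simeq IC_{\widetilde{X}_O}.
\end{equation*}

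To prove this identity, the plan is to use the \'etale local toric structure of contraction spaces recalled in Remark \ref{rem-keyetale-struct-tor}. Each point of $\widetilde{X}_O$ admits a Zariski open neighborhood $U$ together with a $\mathbb{T}$-equivariant \'etale morphism $\phi: U \to X_{\sigma}$ into an affine toric variety $X_{\sigma}$. Because $G$ is contained in $\mathbb{T}_O$ and acts via the torus action on both sides, $\phi$ descends to an \'etale morphism $\bar{\phi}: U/G \to X_{\sigma}/G$ fitting into a commutative square
\begin{equation*}
\xymatrix{
U \ar[r]^{\phi} \ar[d]_{f_U} & X_{\sigma} \ar[d]^{f_{\sigma}} \\
U/G \ar[r]^{\bar{\phi}} & X_{\sigma}/G,
}
\end{equation*}
whose vertical arrows are the respective $G$-quotient maps. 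Proposition \ref{theo-finitegrouptoric} gives $f_{\sigma}^{\star} IC_{X_{\sigma}/G} \simeq IC_{X_{\sigma}}$. Since \'etale morphisms preserve intersection cohomology complexes (being local isomorphisms for the Euclidean topology, the defining support and co-support conditions are preserved), one deduces $f_U^{\star} IC_{U/G} \simeq IC_U$. The characterization of $IC$ by local properties, together with the fact that the open sets $U$ cover $\widetilde{X}_O$, then upgrades this to the global isomorphism $f^{\star} IC_{\widetilde{X}_O/G} \simeq IC_{\widetilde{X}_O}$.

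The main obstacle will be to verify carefully, using the construction in Remark \ref{rem-keyetale-struct-tor}, that the \'etale neighborhood $\phi: U \to X_{\sigma}$ is genuinely $\mathbb{T}$-equivariant, so that $G$ acts on $X_{\sigma}$ as a finite subgroup of the torus, and that the descent to $U/G$ yields an \'etale map $\bar{\phi}$. Once these compatibilities are in place, chaining the two steps gives
\begin{equation*}
\varepsilon^{\star} IC_{\widetilde{X}_1/G}[r] \simeq f^{\star}\bigl(p^{\star} IC_{\widetilde{X}_1/G}[r]\bigr) \simeq f^{\star} IC_{\widetilde{X}_O/G} \simeq IC_{\widetilde{X}_O},
\end{equation*}
completing the proof.
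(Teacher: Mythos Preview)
Your proposal is correct and follows essentially the same route as the paper: factor $\varepsilon = p\circ f$, handle $p$ by smooth pullback, and reduce to $f^{\star}IC_{\widetilde{X}_O/G}\simeq IC_{\widetilde{X}_O}$, which is checked \'etale-locally via the toroidal charts of Remark~\ref{rem-keyetale-struct-tor} together with Proposition~\ref{theo-finitegrouptoric}. The only notable difference is in the gluing step: the paper invokes Proposition~\ref{prop-IC-G-X} to know a priori that $f^{\star}IC_{\widetilde{X}_O/G}$ is semi-simple and then matches summands, whereas you appeal directly to Deligne's characterization of $IC$---both arguments are valid, and yours is arguably more direct since the open-stratum condition is automatic (pullback of the constant sheaf is constant) and the (co)support conditions are local.
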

\begin{proof}
By Remark \ref{rem-keyetale-struct-tor}, the variety $\widetilde{X}_{O}$ has an \'etale open covering $(\phi_{i}:U_{i}\rightarrow X_{\sigma_{i}})_{1\leq i\leq s}$.
Consider the $G$-action on $\widetilde{X}_{O}\simeq \mathbb{T}_{O}\times^{G} \widetilde{X}_{1}$ given by translation on the factor $\mathbb{T}_{O}$. Then this action 
is obtained via an inclusion $G\subset \mathbb{T}$ \cite[Lemma 4.2]{AL21}. From the inclusions $G\subset \{1\}\times\mathbb{T}\subset \mathbb{G}_{m}\times \mathbb{T}$, the group $G$ also acts on $X_{\sigma_{i}}$ and $\phi_{i}$ is $G$-equivariant. To sum-up we have a Cartesian square
$$\xymatrix{U_{i}\ar[r]^{\phi_{i}} \ar[d] ^{g_{i}} & X_{\sigma_{i}}\ar[d]^{f_{i}}\\ U_{i}/G \ar[r]^{\varphi_{i}} & X_{\sigma_{i}}/G,}$$ 
for $i = 1,2, \ldots, s$, where the vertical arrows are quotient maps. It follows from the description in Remark \ref{rem-keyetale-struct-tor} of $\phi_{i}$ that 
$\varphi_{i}$ is \'etale.
Using Lemma \ref{lemma-smooth}  and Proposition \ref{theo-finitegrouptoric} we deduce that
$$g_{i}^{\star} IC_{U_{i}/G}\simeq g_{i}^{\star} \varphi_{i}^{\star} IC_{X_{\sigma_{i}}/G}\simeq (\varphi_{i}\circ g_{i})^{\star}IC_{X_{\sigma_{i}}/G}\simeq (f_{i}\circ\phi_{i})^{\star} IC_{X_{\sigma_{i}}/G}$$ $$\simeq \phi_{i}^{\star} f_{i}^{\star} IC_{X_{\sigma_{i}/G}}\simeq \phi_{i}^{\star} IC_{X_{\sigma_{i}}}\simeq IC_{U_{i}}.$$
So $(f^{\star}IC_{\widetilde{X}_{O}/G})_{|U_{i}}\simeq (IC_{\widetilde{X}_{O}})_{|U_{i}}$ for any $i\in \{1,\ldots, s\}$,
where $f: \widetilde{X}_{O}\rightarrow \widetilde{X}_{O}/G$ is the quotient map. As $f^{\star}IC_{\widetilde{X}_{O}/G}$
is semi-simple (see Proposition \ref{prop-IC-G-X}), we have $f^{\star}IC_{\widetilde{X}_{O}/G}
\simeq IC_{\widetilde{X}_{O}}$. Finally consider the commutative diagram
$$
\xymatrix{
\widetilde{X}_{O} \ar[r]^{f} \ar[dr]_{\varepsilon} & \widetilde{X}_{O}/G \ar[d]^{p} \\
 & \widetilde{X}_{1}/G
}
$$
given by the Seifert torus bundle $\varepsilon$,
where $p: \widetilde{X}_{O}/G\simeq \mathbb{T}_{O}\times \widetilde{X}_{1}/G\rightarrow \widetilde{X}_{1}/G$ is the projection
on the second factor. Since $p$ is smooth, by Lemma \ref{lemma-smooth} we have
$$\varepsilon^{\star}IC_{\widetilde{X}_{1}/G}[r]\simeq (p\circ f)^{\star}  IC_{\widetilde{X}_{1}/G}[r]\simeq f^{\star} p^{\star} IC_{\widetilde{X}_{1}/G}[r]\simeq f^{\star} IC_{\widetilde{X}_{O}/G}\simeq  IC_{\widetilde{X}_{O}},$$
proving the lemma.
\end{proof}
The following result  improves \cite[Theorem 1.1]{AL21}.
\begin{proposition}\label{theo-decomp1}
Let $X$ be a normal $\mathbb{T}$-variety of complexity one
and let $\pi:\widetilde{X}\rightarrow X$ be the contraction map. Then we have
$$\pi_{\star} IC_{\widetilde{X}}\simeq IC_{X} \oplus \bigoplus_{O\in {\rm Orb}(E)}\bigoplus_{b\in \mathbb{Z}} IC_{\bar{O}}^{\oplus s_{b, O}}[-b],$$
where ${\rm Orb}(E)$ is the set of orbits of $E$ and the $s_{b, O}$ are nonnegative integers.
\end{proposition}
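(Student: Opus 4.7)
The plan is to combine the decomposition theorem with the local Seifert torus bundle structure of Lemma \ref{l-diagram} in order to refine the decomposition of \cite[Theorem 1.1]{AL21}, where local systems on orbit closures could in principle appear, to one in which every local system is trivial.

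First I apply the decomposition theorem to the proper birational map $\pi$: writing
$$\pi_\star IC_{\widetilde X}\simeq IC_X\oplus\bigoplus_\alpha (\iota_{Z_\alpha})_\star IC_{Z_\alpha}(\mathscr L_\alpha)[r_\alpha]^{\oplus s_\alpha}$$
with $Z_\alpha \subsetneq X$ strict closed irreducible subvarieties and $\mathscr L_\alpha$ simple local systems on smooth open subsets of $Z_\alpha$, the $\mathbb T$-equivariance of $\pi$ and canonicity of $IC_{\widetilde X}$ force each $Z_\alpha$ to be $\mathbb T$-invariant, and irreducibility then forces $Z_\alpha=\overline{O_\alpha}$ for a unique orbit $O_\alpha$. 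Restricting to $X\setminus E$, where $\pi$ is an isomorphism, forces $O_\alpha\in\mathcal O(E)$. At this stage I have recovered the form obtained in \cite[Theorem 1.1]{AL21}, and the only task left is the triviality of the $\mathscr L_\alpha$.

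For that, I fix $O\in\mathcal O(E)$ and exploit the Cartesian square of Lemma \ref{l-diagram}, whose vertical arrows $\varepsilon,\varepsilon_1$ are smooth Seifert bundles of relative dimension $r=\dim O$. Smooth base change together with Lemma \ref{l-Seifert-pullback} yield
$$\pi_\star IC_{\widetilde X_O} \simeq \pi_\star\varepsilon^\star IC_{\widetilde X_1/G}[r] \simeq \varepsilon_1^\star (\pi_1)_\star IC_{\widetilde X_1/G}[r],$$
so the restriction of the above decomposition to $X_O$ is, up to shift, the $\varepsilon_1$-pullback of the decomposition of $(\pi_1)_\star IC_{\widetilde X_1/G}$. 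Consequently each $\mathscr L_\alpha|_{\overline{O_\alpha}\cap X_O}$ is a smooth pullback of a local system appearing on $X_1/G$.

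I then conclude by induction on $\dim X$. If $O$ is a torus-fixed point, $\overline O$ is a single point and the local system is automatically trivial. Otherwise $\dim O > 0$, so $\dim X_1/G = \dim X - \dim O < \dim X$, and applying the inductive hypothesis to the contraction map $\pi_1$ gives triviality of the local systems on $X_1/G$; smooth pullback preserves triviality, and since the open sets $X_O$ cover the supports of all non-$IC_X$ summands and local systems on an irreducible variety are determined by their restriction to any nonempty open, each $\mathscr L_\alpha$ is globally trivial. The main obstacle is the inductive step: one must verify that $\pi_1:\widetilde X_1/G\to X_1/G$ fits within the hypothesis of the proposition, that is, that it arises as the contraction map of a complexity-one torus action (coming from the residual action on the Luna slice together with the quotient by the finite component group $G$). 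This hinges on the detailed compatibility, extracted from Lemma \ref{l-diagram}, between the Seifert bundle decomposition and the passage to contraction spaces via divisorial fans.
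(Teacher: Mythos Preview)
Your approach uses the right ingredients (Lemma~\ref{l-diagram}, Lemma~\ref{l-Seifert-pullback}, and base change), and your display
\[
\pi_\star IC_{\widetilde X_O}\simeq \varepsilon_1^\star(\pi_1)_\star IC_{\widetilde X_1/G}[r]
\]
is exactly the identity the paper derives. But you then overcomplicate matters by setting up an induction on $\dim X$ and worrying about whether $\pi_1$ is itself a contraction map of a complexity-one variety. That inductive step is unnecessary, and the concern you raise at the end is a problem you have created for yourself.

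The paper's argument is more direct. Following \cite[Theorem~1.1(ii)]{AL21}, the triviality of the local systems reduces to showing that $\mathcal H^j(\pi_\star IC_{\widetilde X})|_O$ is a \emph{constant} sheaf for every orbit $O\subset E$ and every $j$. Now recall the last sentence of Lemma~\ref{l-diagram}: the preimage under $\varepsilon_1:X_O\to X_1/G$ of the unique fixed point $x_0\in X_1/G$ is precisely $O$. Hence restricting $\varepsilon_1^\star(\pi_1)_\star IC_{\widetilde X_1/G}[r]$ to $O$ is the pullback of a complex from a single point, which forces
\[
\mathcal H^j(\pi_\star IC_{\widetilde X})|_O\;\simeq\;\mathcal H^j((\pi_1)_\star IC_{\widetilde X_1/G})_{x_0}\otimes\mathbb Q_O,
\]
a constant sheaf. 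No induction is required, and nothing needs to be known about the decomposition of $(\pi_1)_\star IC_{\widetilde X_1/G}$ beyond its stalk at $x_0$.

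In short: your base-change computation is correct, but you should restrict to $O$ immediately and use that $O$ sits over a point, rather than attempting to match decompositions globally and recurse.
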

\begin{proof} Following the proof of  \cite[Theorem 1.1 (ii)]{AL21}, we need to have
$\mathcal{H}^{j}(\pi_{\star}IC_{\widetilde{X}})_{|O}$ constant for any orbit $O\subset  E$
and any $j\in \mathbb{Z}$. Using Lemma \ref{l-Seifert-pullback} and base change \cite[Theorem 2.3.26]{Dim04} from the diagram of Lemma \ref{l-diagram},
$$\mathcal{H}^{j}(\pi_{\star} IC_{\widetilde{X}})_{|O}\simeq \mathcal{H}^{j}(\pi_{\star} \varepsilon^{\star}IC_{\widetilde{X}_{1}/G}[r])_{|O}\simeq  \mathcal{H}^{j}(\varepsilon^{\star}_{1}(\pi_{1})_{\star} IC_{\widetilde{X}_{1}/G}[r])_{|O}\simeq \mathcal{H}^{j}((\pi_{1})_{\star} IC_{\widetilde{X}_{1}/G})_{x_{0}}\otimes \mathbb{Q}_{O},$$
where $r  =  \dim\, O$ and $x_{0}$ is the unique fixed point of $X_{1}/G$. This shows the proposition.
\end{proof}

\section{Linear torus actions of complexity one}\label{sec-four}
This section develops weight package theory in order to describe linear torus actions of complexity one on possibly non-normal varieties.
\subsection{Linear torus actions: the affine case}
Consider a torus $\mathbb{G} =  (\mathbb{C}^{\star})^{\ell}$ with character and 
one-parameter subgroup lattices $\overline{M}$ and $\overline{N}$, and endow
$\mathbb{P}_{\mathbb{C}}^{\ell}$ with the toric $\mathbb{G}$-action
$$(\lambda_{1}, \ldots, \lambda_{\ell})\cdot [x_{0}: \ldots: x_{\ell}] = [x_{0}:\lambda_{1} x_{1}: \ldots :\lambda_{\ell}x_{\ell}],\text{ where }(\lambda_{1}, \ldots, \lambda_{\ell})\in \mathbb{G}.$$ Let $\mathbb{T}\subset \mathbb{G}$ be a subtorus.
\begin{definition}\label{def-suvbvarinv}
A subvariety of $\mathbb{P}_{\mathbb{C}}^{\ell}$ with \emph{linear $\mathbb{T}$-action} is an irreducible Zariski closed subset of $\mathbb{P}_{\mathbb{C}}^{\ell}$ intersecting the open $\mathbb{G}$-orbit and stable by $\mathbb{T}$-action.
A subvariety of $\mathbb{A}_{\mathbb{C}}^{\ell}$ with linear $\mathbb{T}$-action is an  irreducible Zariski closed subset $X\subset \mathbb{A}_{\mathbb{C}}^{\ell}$ such that there is subvariety $Z$ of  $\mathbb{P}_{\mathbb{C}}^{\ell}$ with linear $\mathbb{T}$-torus action with $X  =  Z\setminus H$, where $H$ is the coordinate hyperplane $\mathbb{V}(x_{0})\subset \mathbb{P}_{\mathbb{C}}^{\ell}$. 
\end{definition}
The following characterizes linear torus actions when the singularities are mild. This is a direct consequence of \cite[Proposition 2.4]{KKLV89}.
\begin{proposition} 
Any projective normal variety with faithful torus action can be equivariantly embedded as a subvariety of a projective space with linear torus action.
\end{proposition}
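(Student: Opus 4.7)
The plan is to invoke Sumihiro's equivariant linearization theorem, which applies because $X$ is normal: every ample line bundle on $X$ admits a $\mathbb{T}$-linearization after replacing it by a sufficiently high tensor power. I would first fix a $\mathbb{T}$-linearized very ample line bundle $L$ on $X$, obtained by starting from any ample line bundle on $X$, passing to a $\mathbb{T}$-linearizable tensor power via Sumihiro, and then raising to a further power so that it becomes very ample. The finite-dimensional $\mathbb{T}$-module $V := H^{0}(X, L)$ then decomposes into weight spaces $V = \bigoplus_{\chi} V_{\chi}$, and one selects a basis $s_{0}, s_{1}, \ldots, s_{\ell}$ of $V$ consisting of weight vectors with respective weights $\chi_{0}, \chi_{1}, \ldots, \chi_{\ell}$.

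Because $L$ is very ample, the associated morphism
$$\phi : X \longrightarrow \mathbb{P}(V^{\star}) \simeq \mathbb{P}_{\mathbb{C}}^{\ell}, \quad x \mapsto [s_{0}(x) : s_{1}(x) : \cdots : s_{\ell}(x)],$$
is a closed immersion. By construction of the basis, $\phi$ is $\mathbb{T}$-equivariant with respect to the action on $\mathbb{P}_{\mathbb{C}}^{\ell}$ given by $t \cdot [x_{0} : \cdots : x_{\ell}] = [\chi_{0}(t) x_{0} : \cdots : \chi_{\ell}(t) x_{\ell}]$. After rescaling projective coordinates by $\chi_{0}(t)^{-1}$ (equivalently, twisting the $\mathbb{T}$-linearization of $L$ by $\chi_{0}^{-1}$ so that $s_{0}$ becomes invariant), the action takes the normalized form prescribed at the beginning of Section~\ref{sec-four}, and is therefore induced by a group homomorphism $\eta : \mathbb{T} \to \mathbb{G} = (\mathbb{C}^{\star})^{\ell}$.

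Two points remain to verify: (a) that $\phi(X)$ meets the open $\mathbb{G}$-orbit in $\mathbb{P}_{\mathbb{C}}^{\ell}$, and (b) that $\eta$ realizes $\mathbb{T}$ as a subtorus of $\mathbb{G}$. Point (a) is immediate, since each $s_{i}$ is a basis vector of $V$, hence a nonzero global section of $L$, and therefore does not vanish identically on $X$; consequently $\phi(X)$ is contained in no coordinate hyperplane $\mathbb{V}(x_{i})$. For (b), the faithfulness of the $\mathbb{T}$-action on $X$ together with the equivariance of $\phi$ forces $\ker \eta$ to act trivially on $\phi(X) \simeq X$, so $\ker \eta$ is trivial; replacing $\mathbb{T}$ by its image $\eta(\mathbb{T}) \subset \mathbb{G}$ yields the required subtorus inclusion. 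Identifying $\mathbb{T}$ with $\eta(\mathbb{T})$, the immersion $\phi$ exhibits $X$ as a subvariety of $\mathbb{P}_{\mathbb{C}}^{\ell}$ with linear torus action in the sense of Definition~\ref{def-suvbvarinv}. The one non-trivial input is the existence of a $\mathbb{T}$-linearization on some ample power of a chosen line bundle: this is exactly Sumihiro's theorem and is the place where the normality hypothesis on $X$ is essential, since linearizability can fail without it.
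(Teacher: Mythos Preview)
The paper does not supply its own proof of this proposition: it is stated with a bare citation to \cite[2.4]{KKLV89} and nothing more. Your argument is correct and is precisely the standard proof found in that reference---Sumihiro's linearization theorem furnishes a $\mathbb{T}$-linearized very ample line bundle, a weight-vector basis of global sections yields an equivariant projective embedding, and faithfulness forces the induced homomorphism $\mathbb{T}\to\mathbb{G}$ to be injective. There is nothing to compare against, and nothing to correct.
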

Let $X$ be a subvariety of $\mathbb{A}^{\ell}_{\mathbb{C}}$ with linear $\mathbb{T}$-action and set $n =  {\rm dim}\, \mathbb{T}$.
Let $E =  (e_{1}, \ldots, e_{\ell})$ be the canonical basis of $\overline{N} =  \mathbb{Z}^{\ell}$ and fix a basis $B$ of $N$. 
\begin{definition}\label{def-weightpack}
The \emph{weight matrix} of  $X$ is the matrix
${\rm Mat}_{E, B}(F)\in {\rm Mat}_{\ell\times n}(\mathbb{C})$
for the linear map $F: N\rightarrow \overline{N}$ corresponding to the inclusion $\mathbb{T}\hookrightarrow \mathbb{G}$. The \emph{matrix factorization} is the short exact sequence 
$$0\rightarrow N\xrightarrow{F} \overline{N}\xrightarrow{P} {\rm Coker}(F) =  \overline{N}/F(N)\rightarrow 0 $$
together with a splitting $S:\overline{N}\rightarrow N$, that is, $S$ is linear and $S\circ F = {\rm id}_{N}$ . Note that $F(N)$ must be a satured, i.e.
$$\overline{N}\cap F(N)_{\mathbb{Q}}\subset \overline{N}_{\mathbb{Q}}$$ 
is equal to $F(N)$.
Moreover, the map
\begin{equation}\label{Eq:downgraded}
    \phi:\overline{N}\rightarrow N\oplus {\rm Coker}(F),\,\, v\mapsto (S(v), P(v))
\end{equation}
is a $\mathbb{Z}$-module isomorphism. The \emph{quotient fan} $\Sigma$ of $X$ is the coarsest fan of ${\rm Coker}(F)_{\mathbb{Q}}$ containing all the strictly convex polyhedral cones $P(\delta_0)$, where $\delta_{0}$ runs over the face of the 
first quadrant 
$$ \delta:= \left\{ \sum_{i = 1}^{\ell}\alpha_{i} e_{i}\, \, |\, \alpha_{i}\in \mathbb{Q}_{\geq 0}, \,\, 1\leq i\leq \ell\right\}\subset \overline{N}_{\mathbb{Q}}.$$
The \emph{weight package} of $X$ is the data $\theta =  (\overline{N}, N, F, S, \Sigma)$.
\end{definition}

The following result is known \cite[Section 4]{AIPSV12}. For the convenience of the reader, we give a proof.
\begin{lemma}\label{lem-poltheta}
Let $X\subset \mathbb{A}^{\ell}_{\mathbb{C}}$ be a subvariety with linear $\mathbb{T}$-action  and  let $\theta =  (\overline{N}, N, F, S, \Sigma)$ be its weight package. Consider the strictly convex polyhedral cone $\sigma_{\theta}:= S(\delta\cap F(N_{\mathbb{Q}}))$ and the $\sigma_{\theta}$-polyhedral divisor 
$$\mathfrak{D}_{\theta}:= \sum_{Z\subset  X_{\Sigma}} \mathfrak{D}_{\theta, Z}\cdot [Z]$$
defined as follows. If $Z_{\rho} =  \overline{O(\rho)}$ is the divisor corresponding to $\rho\in \Sigma(1)$, then set
$$\mathfrak{D}_{\theta, Z_{\rho}} =  S(\delta\cap P^{-1}(v_{\rho})).$$
 Otherwise, set 
$\mathfrak{D}_{\theta, Z} =  \sigma_{\theta}$. Then  $\mathfrak{D}_{\theta}$ is proper and  $X(X_{\Sigma}, \mathfrak{D}_{\theta})$ is $\mathbb{T}$-isomorphic to $\mathbb{A}_{\mathbb{C}}^{\ell}$.
\end{lemma}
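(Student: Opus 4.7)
The plan is to compute the $M$-graded coordinate algebra $A[X_{\Sigma}, \mathfrak{D}_{\theta}]$ piece-by-piece and to identify it with the coordinate ring $\mathbb{C}[x_{1}, \ldots, x_{\ell}]$ of $\mathbb{A}^{\ell}_{\mathbb{C}}$, endowed with the $M$-grading induced by $F$. Dualizing the $\mathbb{Z}$-module isomorphism $\phi$ of Definition \ref{def-weightpack} yields a splitting $\overline{M} = S^{\star}(M) \oplus P^{\star}(\mathrm{Coker}(F)^{\star})$, so every $a \in \overline{M}$ writes uniquely as $a = S^{\star}(m) + P^{\star}(u)$ with $m = F^{\star}(a)$ and $u \in \mathrm{Coker}(F)^{\star}$. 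Since the monomial $x^{a}$ has $\mathbb{T}$-weight $F^{\star}(a)$, the weight-$m$ piece of $\mathbb{C}[\mathbb{A}^{\ell}_{\mathbb{C}}]$ is spanned by those $x^{S^{\star}(m) + P^{\star}(u)}$ such that $S^{\star}(m) + P^{\star}(u) \in \delta \cap \overline{M}$, i.e.\ such that $\langle m, S(e_{i})\rangle + \langle u, P(e_{i})\rangle \geq 0$ for every $i \in \{1, \ldots, \ell\}$.

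I would then regroup the indices according to which ray $\rho \in \Sigma(1)$ of the quotient fan contains $P(e_{i})$: the indices with $P(e_{i}) = 0$ contribute the inequality $\langle m, S(e_{i})\rangle \geq 0$, which is automatic from $S(e_{i}) \in \sigma_{\theta}$ and $m \in \sigma_{\theta}^{\vee}$. For the remaining indices, observing that the polyhedron $\delta \cap P^{-1}(v_{\rho})$ is the convex hull of the renormalized vectors $e_{i}/k_{i,\rho}$ with $P(e_{i}) = k_{i,\rho} v_{\rho}$, the finite system of inequalities collapses to
\[
\langle u, v_{\rho}\rangle \geq -\min_{v \in S(\delta \cap P^{-1}(v_{\rho}))}\langle m, v\rangle \quad\text{for every } \rho \in \Sigma(1).
\]
This is precisely the condition defining $H^{0}(X_{\Sigma}, \mathcal{O}_{X_{\Sigma}}(\mathfrak{D}_{\theta}(m)))$ via the standard description of global sections of torus-invariant Cartier divisors on toric varieties. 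Hence the assignment $\chi^{u}\chi^{m} \mapsto x^{S^{\star}(m) + P^{\star}(u)}$ defines an $M$-graded $\mathbb{C}$-algebra isomorphism $A[X_{\Sigma}, \mathfrak{D}_{\theta}] \simeq \mathbb{C}[x_{1}, \ldots, x_{\ell}]$, inducing the desired $\mathbb{T}$-equivariant isomorphism $X(X_{\Sigma}, \mathfrak{D}_{\theta}) \simeq \mathbb{A}^{\ell}_{\mathbb{C}}$.

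For properness of $\mathfrak{D}_{\theta}$, I would verify each condition directly. Semi-ampleness of $\mathfrak{D}_{\theta}(m)$ is built-in: the coefficient at $Z_{\rho}$ is the minimum of a linear form on the polytope $S(\delta \cap P^{-1}(v_{\rho}))$, so $\mathfrak{D}_{\theta}(m)$ corresponds to a convex $\Sigma$-piecewise-linear function, which gives a semi-ample divisor by the usual toric criterion. For bigness on the relative interior of $\sigma_{\theta}^{\vee}$, I would exhibit a section whose non-vanishing locus is an affine toric open, using that taking $m$ strictly interior forces each minimum to be attained at an extremal vertex; an alternative shortcut is to notice that once the graded-algebra isomorphism with $\mathbb{C}[x_{1}, \ldots, x_{\ell}]$ is proven, properness follows from the uniqueness clause in Altmann-Hausen's theorem applied to the normal affine $\mathbb{T}$-variety $\mathbb{A}^{\ell}_{\mathbb{C}}$.

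The main obstacle I anticipate is the combinatorial bookkeeping in the middle step: correctly describing the primitive generators $v_{\rho}$ of $\Sigma$ and the vertex structure of the polytopes $\delta \cap P^{-1}(v_{\rho})$ in terms of the canonical generators $e_{i}$, and then matching the inequalities from $a \in \delta$ one-to-one with those cutting out $H^{0}(X_{\Sigma}, \mathcal{O}_{X_{\Sigma}}(\mathfrak{D}_{\theta}(m)))$. Once this dictionary is established, the identification of graded pieces is tautological and the properness checks reduce to routine toric geometry.
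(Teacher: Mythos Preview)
Your identification $A[X_{\Sigma},\mathfrak{D}_{\theta}]\simeq\mathbb{C}[x_{1},\ldots,x_{\ell}]$ is correct and matches the paper's argument almost verbatim: both sides unwind the condition ${\bf x}^{u}\chi^{m}\in A[X_{\Sigma},\mathfrak{D}_{\theta}]$ to the single inequality $\langle(m,u),e\rangle_{\overline{N}}\geq 0$ for all $e\in\delta$. Your convexity argument for semi-ampleness (and implicitly $\mathbb{Q}$-Cartierness) is also sound once spelled out: the function $w\mapsto\min_{v\in\delta\cap P^{-1}(w)}\langle S^{\star}(m),v\rangle$ is the infimal $P$-projection of a linear form on $\delta$, hence concave and piecewise linear with linearity domains equal to images $P(\delta_{0})$ of faces, i.e.\ unions of cones of $\Sigma$. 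This is a cleaner route than the paper's, which first restricts to the smooth open $X_{\Sigma_{0}}$ and then invokes Lemma~\ref{lem1-Qdivisor}.

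The gap is in bigness. Your proposed shortcut via ``the uniqueness clause in Altmann--Hausen'' does not work: their theorem produces \emph{some} proper polyhedral divisor describing a given normal affine $\mathbb{T}$-variety, but it does not assert that every polyhedral divisor $\mathfrak{D}$ with $A[Y,\mathfrak{D}]$ isomorphic to the coordinate ring is automatically proper. Properness is a condition on the divisors $\mathfrak{D}(m)$ themselves, not merely on their global sections, and distinct (proper and non-proper) polyhedral divisors can share the same section algebra. Your direct approach (``exhibit a section whose non-vanishing locus is affine, since interior $m$ forces the minimum at a vertex'') is too vague to constitute a proof; one has to identify the Iitaka map of $\mathfrak{D}_{\theta}(m)$ and show it is generically finite. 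The paper does this by recognizing $X_{\Sigma}$ as the Chow quotient \cite{KSZ91}: for $m$ in the relative interior of $\sigma_{\theta}^{\vee}$ the target $V=\mathrm{Proj}\bigoplus_{r}H^{0}(X_{\Sigma},\mathcal{O}(r\mathfrak{D}_{\theta}(m)))$ is a GIT quotient and the canonical map $p:X_{\Sigma}\to V$ is \emph{birational}, whence one can pick a section $f$ with $(X_{\Sigma})_{f,\mathfrak{D}_{\theta}(rm)}=p^{-1}(D_{+}(f))$ affine. You should either reproduce this Chow-quotient input or give a self-contained toric argument that the support function of $\mathfrak{D}_{\theta}(m)$ is strictly convex on a full-dimensional cone of $\Sigma$ when $m$ is interior.
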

\begin{proof}
 Identify $\overline{N}$ with $ N\oplus {\rm Coker}(F)$.
Denote by ${\bf x}^{w}$ the monomial associated with $w\in {\rm Coker}(F)^{\vee}:= {\rm Hom}_{\mathbb{Z}}({\rm Coker}(F), \mathbb{Z})$ and  let $m\in M$. Observe that  ${\bf x}^{w}\chi^{m}\in A[X_{\Sigma}, \mathfrak{D}_{\theta}]$ if and only if 
$$\langle w, v_{\rho} \rangle_{{\rm Coker}(F)}  + \langle m, S(v)\rangle_{N}\geq 0\text{ and }\langle m, v_{0}\rangle_{N}\geq 0$$
for all  $\rho\in\Sigma(1)$, $v\in\delta$ such that $P(v)  =  v_{\rho}$ and $v_{0}\in\delta\cap N$. This is equivalent to  $\langle (w, m), e \rangle_{\overline{N}}\geq 0$ for any $e\in \delta$ as $\phi$ in Equation (\ref{Eq:downgraded}) is an isomorphism. Thus, it follows that $X(X_{\Sigma}, \mathfrak{D}_{\theta})$ is $\mathbb{T}$-isomorphic to the downgraded toric variety $X_{\delta}$ associated to $\delta$. Moreover, $X_{\delta} \simeq \mathbb{A}_{\mathbb{C}}^{\ell}$.

Let us prove that $\mathfrak{D}_{\theta}$ is proper. Denote by $(m_{i}, w_{i})$ ($1\leq i\leq \ell$) the lattice generators of the rays of the dual cone $\delta^{\vee} \subset  \overline{M}_{\mathbb{Q}}$. In particular,
the elements ${\bf x}^{w_{i}}\chi^{m_{i}}$ generate the algebra $A[X_{\Sigma}, \mathfrak{D}_{\theta}]$ and
$$\mathfrak{D}_{\theta, Z_{\rho}} =  \{ v\in N_{\mathbb{Q}}\,\,|\,\, \langle m_{i}, v\rangle_{N} +  \langle w_{i}, v_{\rho}\rangle_{{\rm Coker}(F)}\geq 0 \text{ for } i= 1, \ldots, \ell\} .$$
Fix a primitive lattice vector $m\in\sigma^{\vee}_{\theta}\cap M \setminus \{0\}$, 
let $L:= \mathbb{Q}_{\geq 0}\cdot m$ and denote by $\mathscr{H}_{L}$ the Hilbert basis 
in $\mathbb{Z}^{\ell}$ of the polyhedral cone 
$$\gamma^{-1}(L)\cap \mathbb{Q}_{\geq 0}^{\ell}, \text{ where } \gamma: \mathbb{Q}^{\ell}\rightarrow M_{\mathbb{Q}}$$
is the linear map sending the canonical basis to $(m_{1}, \ldots, m_{\ell})$. Set
$$\mathscr{H}_{L}^{\star}:= \left\{(s_{1}, \ldots, s_{\ell})\in \mathscr{H}_{L}\,\,|\,\, \sum_{i = 1}^{\ell} s_{i} m_{i}\neq 0\right\}.$$
Note that for any $s = (s_{1}, \ldots, s_{\ell})\in \mathscr{H}_{L}^{\star}$ there is a unique $\lambda(s)\in \mathbb{Z}_{>0}$ such that
$\sum_{i = 1}^{\ell} s_{i} m_{i}  =  \lambda(s)\cdot m.$
By \cite[Lemma 2.11]{Lan15},
$$\min_{v\in \mathfrak{D}_{\theta, Z_{\rho}}}\langle m, v\rangle  =  -\min_{s = (s_{1}, \ldots, s_{\ell})\in \mathscr{H}_{L}^{\star}}\frac{\sum_{i = 1}^{\ell} s_{i} \langle w_{i}, v_{\rho}\rangle}{\lambda(s)}, \text{ that is } \mathfrak{D}_{\theta}(m) =  -\min_{s = (s_{1}, \ldots, s_{\ell})\in \mathscr{H}_{L}^{\star}}\frac{{\rm div}\, f_{s}}{\lambda(s)},$$
where 
$ f_{s}:= \prod_{i = 1}^{\ell} ({\bf x}^{w_{i}})^{s_{i}}.$
Let $\Sigma_{0}$ be the subfan of $\Sigma$ generated by the rays of $\Sigma$.  Note that the complement of $X_{\Sigma_{0}} \subset X_{\Sigma}$ is of codimension $2$. By the previous equality, $(\mathfrak{D}_{\theta}(m))_{|X_{\Sigma_{0}}}$
is semi-ample. Let $A  =  \bigoplus_{r\in\mathbb{Z}_{\geq 0}}A_{r}$ be the graded algebra, where  $$A_{r} = H^{0}(X_{\Sigma}, \mathcal{O}_{X_{\Sigma}}(\mathfrak{D}_{\theta}(rm))),$$ and let $V:= {\rm Proj}(A)$. Since $X_{\Sigma}$ is the Chow quotient \cite[Section 4]{KSZ91}, there is a surjective projective morphism $p: X_{\Sigma}\rightarrow V$. Let $d\in\mathbb{Z}_{>0}$
such that the $d$-th Veronese subalgebra 
$$A_{(d)} = \bigoplus_{d\geq 0} A_{dr}$$ of 
$A$ is generated
by its degree-one elements. Let $E$ be the $\mathbb{Q}$-divisor 
on $X_{\Sigma}$ such that $dE$ is the pullback by $p$ of the $\mathcal{O}(1)$ of $A_{(d)}$. In this way, 
$$\Gamma(X_{\Sigma_{0}}, \mathcal{O}_{X_{\Sigma}}(rE)) = \Gamma(X_{\Sigma_{0}}, \mathcal{O}_{X_{\Sigma}}(\mathfrak{D}_{\theta}(rm)))\text{ for any  }r\in\mathbb{Z}_{\geq 0}.$$ By Lemma \ref{lem1-Qdivisor}, $\mathfrak{D}_{\theta}(m) = E$ is semi-ample. 

Assume that $m$ is in the relative interior of $\sigma_{\theta}^{\vee}$. Since the Chow quotient is the normalization 
of the canonical component of the projective limit of  GIT quotients of the form $V$, the map $p$ is birational. Hence choosing $f\in A_{r}\setminus\{0\}$ for some $r\in \mathbb{Z}_{>0}$ such that 
$p^{-1}(D_{+}(f))\rightarrow D_{+}(f), x\mapsto p(x) $
is an isomorphism,  we have $$(X_{\Sigma})_{f, \mathfrak{D}_{\theta}(rm)} =  p^{-1}(D_{+}(f))$$
affine. Thus $\mathfrak{D}_{\theta}(m)$ is big, proving the lemma. 
\end{proof}
\begin{definition}
The polyhedral divisor $\mathfrak{D}_{\theta}$ is the \emph{associated polyhedral divisor} of $\theta$. 
\end{definition}
From now on assume that the $\mathbb{T}$-action on $X\subset \mathbb{A}^{\ell}_{\mathbb{C}}$ is of complexity one.
We define the curve $\widehat{C_{\theta}}$
as follows.
\begin{itemize}
\item[$(1)$] If $\mathbb{C}[X]^{\mathbb{T}}\neq \mathbb{C}$, then $\widehat{C_{\theta}}$ is the normalization of ${\rm Spec}\, \mathbb{C}[X]^{\mathbb{T}}$;
\item[$(2)$] Otherwise $\widehat{C_{\theta}}$ is the smooth projective curve associated with the one-variable function field $\mathbb{C}(X)^{\mathbb{T}}/\mathbb{C}$. 
\end{itemize}
The next result relates the curve $\widehat{C_{\theta}}$ with the closed  embedding $X\hookrightarrow \mathbb{A}^{\ell}_{\mathbb{C}}$. 
\begin{lemma}\label{lem-Ctheta-Ctheta}
With the notation of Definition \ref{def-weightpack}, the rational quotient $\psi: \mathbb{A}_{\mathbb{C}}^{\ell}\dashrightarrow X_{\Sigma}$
for the $\mathbb{T}$-action is regular on the open $\mathbb{G}$-orbit. Furthermore, $\widehat{C_{\theta}}$ is the normalization of the Zariski closure $C_{\theta}$ of $\psi(X\cap \mathbb{G})$ in $X_{\Sigma}$. 
\end{lemma}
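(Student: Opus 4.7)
\emph{Proof plan.} The plan is to first unpack the rational map $\psi$ using the Chow-quotient interpretation of $X_\Sigma$, verify its regularity on the big torus by reducing to a regular quotient of tori, and then match the normalization of $C_\theta$ with the definition of $\widehat{C_\theta}$ in each of the two cases.

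I would first invoke that $\Sigma$ is the fan describing the (normalization of the) Chow quotient of $\mathbb{A}^\ell_\mathbb{C}$ by the subtorus $\mathbb{T}$, so that the dense torus of $X_\Sigma$ is precisely the quotient torus $\mathbb{T}_{\mathrm{Coker}(F)} = \mathbb{G}/\mathbb{T}$ arising from the surjection $P\colon \overline{N} \to \mathrm{Coker}(F)$; this is built into Lemma \ref{lem-poltheta} and \cite[Section 4]{KSZ91}. Equivalently, the relative spectrum of $\mathcal{A}_{\mathfrak{D}_\theta}$ maps to $X_\Sigma$ and is birational to $X(\mathfrak{D}_\theta)\cong \mathbb{A}^\ell_\mathbb{C}$, and on the open $\mathbb{G}$-orbit this composition is the regular morphism of algebraic tori $\mathbb{G} \to \mathbb{G}/\mathbb{T} \hookrightarrow X_\Sigma$. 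This establishes the first assertion.

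For the second assertion, observe that $X \cap \mathbb{G}$ is a dense open of $X$ (since $X$ meets the open $\mathbb{G}$-orbit by Definition \ref{def-suvbvarinv}) on which $\psi$ is the regular $\mathbb{T}$-invariant map to $\mathbb{T}_{\mathrm{Coker}(F)}$, and hence factors through the rational quotient of the $\mathbb{T}$-action on $X$. The complexity-one assumption forces the Zariski closure $C_\theta$ in $X_\Sigma$ to be an irreducible curve, and the induced inclusion of fraction fields identifies $\mathbb{C}(C_\theta)$ with $\mathbb{C}(X)^\mathbb{T}$.

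To identify the normalization of $C_\theta$ with $\widehat{C_\theta}$, I would split into the two defining cases. In case $(2)$, the hypothesis $\mathbb{C}[X]^\mathbb{T} = \mathbb{C}$ forces the rational quotient to be captured by a complete smooth model, and $\widehat{C_\theta}$ is, by definition, the unique such model of $\mathbb{C}(X)^\mathbb{T}/\mathbb{C}$; it therefore coincides with the normalization of $C_\theta$ by uniqueness. In case $(1)$, one factors $\psi|_X$ through the affine GIT quotient $X \to \mathrm{Spec}\,\mathbb{C}[X]^\mathbb{T}$ and verifies that its image is birational to $C_\theta$, so normalizing both sides yields the same curve. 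The main obstacle is the case $(1)$ identification: one has to rule out spurious boundary points appearing in the closure of $\psi(X \cap \mathbb{G})$ inside $X_\Sigma$ that would not be visible on $\mathrm{Spec}\,\mathbb{C}[X]^\mathbb{T}$. This should be handled by a local analysis at the toric divisors of $X_\Sigma$ and by checking that every regular $\mathbb{T}$-invariant function on $X$ pulls back to a regular function on the normalization of $C_\theta$, so that the integral closures match on both sides.
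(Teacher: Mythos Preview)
Your first assertion and the identification $\mathbb{C}(C_\theta)\simeq\mathbb{C}(X)^{\mathbb{T}}$ are handled exactly as in the paper. The gap is in the second half, where you acknowledge an ``obstacle'' in case~(1) and leave case~(2) essentially unjustified (you assert that the normalization of $C_\theta$ is complete without saying why).

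The missing ingredient is precisely the one global fact that dispatches both cases at once: since $X_\Sigma$ is the Chow quotient of $\mathbb{A}^\ell_{\mathbb{C}}$ by $\mathbb{T}$ \cite{KSZ91}, there is a \emph{projective} surjective morphism $r\colon X_\Sigma\to Y_0:=\mathbb{A}^\ell_{\mathbb{C}}/\!\!/\mathbb{T}$ with $r\circ\psi=q$, where $q$ is the affine GIT quotient. Restricting to $C_\theta$ gives a projective dominant morphism $r_{|C_\theta}\colon C_\theta\to X/\!\!/\mathbb{T}$. In case~(1), $X/\!\!/\mathbb{T}$ is a curve, so $r_{|C_\theta}$ is a projective dominant morphism of curves, hence finite; it is birational because both curves have function field $\mathbb{C}(X)^{\mathbb{T}}$; therefore $C_\theta$ and $\mathrm{Spec}\,\mathbb{C}[X]^{\mathbb{T}}$ have the same normalization. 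In case~(2), $X/\!\!/\mathbb{T}$ is a point, so $C_\theta$ is projective, and its normalization is the smooth projective model of $\mathbb{C}(X)^{\mathbb{T}}$. No local analysis at toric divisors is needed: the projectivity of $r$ is exactly what rules out your ``spurious boundary points'' globally. You invoked the Chow-quotient interpretation for the first assertion but did not exploit its projectivity over the GIT quotient, which is the decisive point.
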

\begin{proof}
First claim is clear since $\psi_{|\mathbb{G}}: \mathbb{G}\rightarrow \mathbb{G}/\mathbb{T}$ is the quotient. The restriction
$\psi_{|X\cap \mathbb{G}}: X\cap \mathbb{G}\rightarrow \psi(X\cap \mathbb{G})$ is thus a geometric quotient, and the rational map $\psi_{|X}: X\dashrightarrow C_{\theta}$  induces an isomorphism
$\mathbb{C}(X)^{\mathbb{T}}\simeq \mathbb{C}(C_{\theta})$. Let 
$$q: \mathbb{A}_{\mathbb{C}}^{\ell}\rightarrow Y_{0}:= \mathbb{A}_{\mathbb{C}}^{\ell}/\!\!/\mathbb{T}$$
be the morphism induced by the inclusion $\mathbb{C}[\mathbb{A}^{\ell}_{\mathbb{C}}]^{\mathbb{T}}\subset \mathbb{C}[\mathbb{A}^{\ell}_{\mathbb{C}}]$. Since $X_{\Sigma}$ is the Chow quotient \cite{KSZ91}, there is a projective surjective morphism $r: X_{\Sigma}\rightarrow Y_{0}$ such that $r\circ \psi =  q$. This induces a commutative diagram 
$$
\xymatrix{
X \ar@{-->}[r]^{\psi_{|X}} \ar[dr]_{q_{|X}} & C_{\theta}\ar[d]^{r_{|C_{\theta}}} \\
 & X/\!\!/\mathbb{T}.
}
$$
Consequently, $r_{|C_{\theta}}: C_{\theta}\rightarrow X/\!\!/\mathbb{T}$ is dominant projective. If $A_{0}:= \mathbb{C}[X]^{\mathbb{T}}\neq \mathbb{C}$, then $r_{|C_{\theta}}$ is finite birational. So $\widehat{C_{\theta}}$ is the normalization of $C_{\theta}$. 
\end{proof}
\begin{definition}\label{def-pullbackpol}
The \emph{pullback polyhedral divisor of $(X, \theta)$} is 
$\bar{\mathfrak{D}}_{\theta}:= \kappa^{\star}(\mathfrak{D}_{\theta})$, where $\kappa: \widehat{C_{\theta}}\rightarrow X_{\Sigma}$ is the composition of the inclusion $C_{\theta}\subset X_{\Sigma}$ (see Lemma \ref{lem-Ctheta-Ctheta}) and the normalization. 
\end{definition}
\begin{lemma}\label{lem-tectec}
Let $\mathfrak{D}$ be a proper $\sigma$-polyhedral divisor over a semi-projective normal variety $Y$. Let $\varphi: S\rightarrow Y$ be a morphism, where $S$ is a smooth curve and $\varphi$ is the composition of a finite 
birational morphism $S\rightarrow S'$ and a closed immersion $S'\rightarrow Y$. Assume that the image of $\varphi$ is not
contained in the union  of the prime divisors of 
$\{Z\subset Y\,|\, \mathfrak{D}_{Z}\neq \sigma\}.$
Let $(g_{1}\chi^{m_{1}}, \ldots, g_{r}\chi^{m_{r}})$ be a system of generators of 
$A[Y, \mathfrak{D}]$ with $g_{i}\in\mathbb{C}(Y)^{\star}$
and such that $f_{i}  =  g_{i}\circ \varphi \in\mathbb{C}(S)^{\star}$  for $1\leq i\leq r$. 
Then the pullback  $\varphi^{\star}(\mathfrak{D})$ is given by 
$$ \varphi^{\star}(\mathfrak{D})_{z} =  \{v\in N_{\mathbb{Q}}\,|\, \langle m_{i}, v\rangle \geq -{\rm ord}_{z}(f_{i})\text{ for }i=1, \ldots, r\} \text{ for any } z\in S.$$ 
\end{lemma}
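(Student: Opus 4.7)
The plan is to apply Lemma \ref{lem0-gen} twice: first to $(Y,\mathfrak{D})$ with the generators $\xi=(g_i\chi^{m_i})_{i}$, and then on $S$ to the pulled-back system $\xi'=(f_i\chi^{m_i})_{i}$. Lemma \ref{lem0-gen} applied to $(Y,\mathfrak{D},\xi)$ gives ${}^{\xi}\mathfrak{D}=\mathfrak{D}$, which unfolds as
$$\mathfrak{D}_Z \;=\; \{v\in N_{\mathbb{Q}} \mid \langle m_i,v\rangle \geq -{\rm ord}_Z(g_i),\; 1\leq i\leq r\}$$
for every prime divisor $Z\subset Y$. Evaluating at each $m_i$ this sharpens to the $\mathbb{Q}$-divisor identity $\mathfrak{D}(m_i)=-{\rm div}(g_i)$ on $Y$.

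The second step is a local reduction. Since the claim is pointwise in $z\in S$, I fix such a $z$, replace $S$ by an affine neighborhood of $z$, and replace $Y$ by an affine open containing $\varphi(z)$; using normality of $Y$ and the semi-projectivity hypothesis, I may further shrink so that each of the finitely many prime divisors appearing in the support of $\mathfrak{D}$ becomes Cartier. The assumption that $\varphi(S)$ meets no such prime divisor then makes Definition \ref{def-poulebaq} directly applicable, and yields $\varphi^{\star}({\rm div}(g_i))={\rm div}(g_i\circ\varphi)={\rm div}(f_i)$ on $S$. The commutation of $\mathbb{Q}$-divisor pullback with the evaluation map $m\mapsto \mathfrak{D}(m)$, a direct check from Definition \ref{def-poulebaq}, combines with the first step to give $\varphi^{\star}(\mathfrak{D})(m_i)=-{\rm div}(f_i)$ on $S$ for each $i$.

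The key step is to identify $\varphi^{\star}(\mathfrak{D})$ with the polyhedral divisor ${}^{\xi'}\mathfrak{D}$ on $S$, whose coefficient at $z$ is by definition the right-hand side of the claimed formula. By Theorem \ref{theo-norma} applied to the subalgebra $B:=\mathbb{C}[S][f_1\chi^{m_1},\ldots,f_r\chi^{m_r}]\subset\mathbb{C}(S)[M]$, which shares its fraction field with $\mathbb{C}(S)[M]$ because $\sigma^{\vee}$ is full-dimensional (so the $m_i$ generate a finite-index subgroup of $M$) and the $f_i$ are nonzero, ${}^{\xi'}\mathfrak{D}$ is proper and $A[S,{}^{\xi'}\mathfrak{D}]$ equals the normalization of $B$. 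Pullback of sections along $\varphi$ induces a $\mathbb{C}$-algebra morphism $A[Y,\mathfrak{D}]\to A[S,\varphi^{\star}(\mathfrak{D}))]$ sending $g_i\chi^{m_i}$ to $f_i\chi^{m_i}$, whence $B\subset A[S,\varphi^{\star}(\mathfrak{D})]$; properness of $\varphi^{\star}(\mathfrak{D})$ (its coefficients being Minkowski combinations of the proper polyhedra $\mathfrak{D}_Z$) makes $A[S,\varphi^{\star}(\mathfrak{D})]$ normal, and therefore equal to the normalization of $B$. Applying Lemma \ref{lem0-gen} on $(S,\varphi^{\star}(\mathfrak{D}),\xi')$ then produces the claimed formula.

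The main obstacle is the identification $A[S,\varphi^{\star}(\mathfrak{D})]=A[S,{}^{\xi'}\mathfrak{D}]$: both are normal graded $\mathbb{C}[S]$-algebras containing $B$ with the same fraction field, but ruling out strictly larger containment is delicate. I would handle it by combining the divisor identity $\varphi^{\star}(\mathfrak{D})(m_i)=-{\rm div}(f_i)$ at the generating weights with Lemma \ref{lem1-Qdivisor} (the semi-ampleness criterion), applied weight by weight, to upgrade agreement at the $m_i$ to the equality $\varphi^{\star}(\mathfrak{D})(m)={}^{\xi'}\mathfrak{D}(m)$ throughout $\sigma^{\vee}\cap M$; uniqueness of the proper polyhedral divisor realizing a given $M$-graded normal algebra then forces $\varphi^{\star}(\mathfrak{D})={}^{\xi'}\mathfrak{D}$ and yields the stated coefficientwise formula.
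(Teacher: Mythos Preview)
Your argument has a genuine gap at the step you call ``sharpening''. From ${}^{\xi}\mathfrak{D}=\mathfrak{D}$ you correctly get
\[
\mathfrak{D}_Z \;=\; \{v\in N_{\mathbb{Q}}\mid \langle m_i,v\rangle \ge -{\rm ord}_Z(g_i),\ 1\le i\le r\},
\]
but this only yields $\mathfrak{D}(m_i)\ge -{\rm div}(g_i)$, not equality: the inequality $\langle m_i,v\rangle\ge -{\rm ord}_Z(g_i)$ need not define a facet of $\mathfrak{D}_Z$, so the minimum of $\langle m_i,\,\cdot\,\rangle$ over $\mathfrak{D}_Z$ can be strictly larger than $-{\rm ord}_Z(g_i)$ (e.g.\ take $N_{\mathbb{Q}}=\mathbb{Q}$, $m_1=m_2=1$, $a_1=0$, $a_2=1$). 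Consequently the identity $\varphi^{\star}(\mathfrak{D})(m_i)=-{\rm div}(f_i)$, on which your final paragraph rests, is false in general. This leaves the ``hard'' containment $A[S,\varphi^{\star}(\mathfrak{D})]\subset A[S,{}^{\xi'}\mathfrak{D}]$ (equivalently $\varphi^{\star}(\mathfrak{D})(m)\le{}^{\xi'}\mathfrak{D}(m)$ for all $m\in\sigma^{\vee}$) unproved; normality of $A[S,\varphi^{\star}(\mathfrak{D})]$ together with $B\subset A[S,\varphi^{\star}(\mathfrak{D})]$ gives only the opposite inclusion.

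The paper closes exactly this gap by a different device: it invokes \cite[Lemma~2.11]{Lan15} to write, for each primitive $m\in\sigma^{\vee}$,
\[
\mathfrak{D}(m)=-\min_{s\in\mathscr{H}_L^{\star}}\frac{{\rm div}(g_s)}{\lambda(s)},\qquad
{}^{\xi'}\mathfrak{D}(m)=-\min_{s\in\mathscr{H}_L^{\star}}\frac{{\rm div}(f_s)}{\lambda(s)},
\]
where $\mathscr{H}_L^{\star}$ is a finite Hilbert basis depending on $m$ and $g_s,f_s$ are the corresponding monomials in the $g_i,f_i$. Locally on $Y$ the minimum for $\mathfrak{D}(m)$ is achieved by a single $s(Z)$, and pulling back the equality $\mathfrak{D}(m)_{|U_Z}=-{\rm div}(g_{s(Z)})/\lambda(s(Z))_{|U_Z}$ gives $\varphi^{\star}(\mathfrak{D}(m))=-{\rm div}(f_{s(Z)})/\lambda(s(Z))\le {}^{\xi'}\mathfrak{D}(m)$ on $\varphi^{-1}(U_Z)$. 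Combined with the easy inclusion $\varphi^{\star}(\mathfrak{D})_z\subset{}^{\xi'}\mathfrak{D}_z$ (which you do have), this yields equality. The point is that one must track the minimum over \emph{all} monomials in the generators, not just the generators themselves.
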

\begin{proof}
Let $B$ be the integral closure of $\mathbb{C}[S][f_{1}\chi^{m_{1}}, \ldots, f_{r}\chi^{m_{r}}]$ in
$\mathbb{C}(S)[M]$. By Theorem \ref{theo-norma},  $B =  A[S, {}^{\xi}\mathfrak{D}]$, where $\xi =  (f_{1}\chi^{m_{1}}, \ldots, f_{r}\chi^{m_{r}})$. Fix 
a primitive vector $m\in M$ in  the dual of ${}^{\xi}\sigma =  \sigma$ and set $L  =  \mathbb{Q}_{\geq 0}m$. Denote by $\mathscr{H}_{L}$ the Hilbert basis in $\mathbb{Z}^{r}$ of 
$\gamma^{-1}(L)\cap \mathbb{Q}_{\geq 0}^{r}$, where $\gamma: \mathbb{Q}^{r}\rightarrow M_{\mathbb{Q}}$ is the linear map sending the canonical basis to $(m_{1}, \ldots, m_{r})$. Consider 
$\mathscr{H}_{L}^{\star}$ and $\lambda(s)\in\mathbb{Z}_{>0}$ for $s\in \mathscr{H}_{L}^{\star}$  as in the proof of Lemma \ref{lem-poltheta}. 
Using  \cite[Lemma 2.11]{Lan15} we have
$${}^{\xi}\mathfrak{D}(m) =  -\min_{s = (s_{1}, \ldots, s_{r})\in \mathscr{H}_{L}^{\star}}\frac{{\rm div}(f_{s})}{\lambda(s)},
\text{ where } f_{s}:= \prod_{i = 1}^{r}f_{i}^{s_{i}}.$$
As $A[Y, \mathfrak{D}]  =  \mathbb{C}[Y][g_{1}\chi^{m_{1}}, \ldots, g_{r}\chi^{m_{r}}]$,  Lemma \ref{lem0-gen} implies  
${}^{\eta}\mathfrak{D} =  \mathfrak{D}$, where $\eta  =  (g_{1}\chi^{m_{1}}, \ldots, g_{r}\chi^{m_{r}})$, and 
$$\mathfrak{D}(m) =  {}^{\eta}\mathfrak{D}(m) =  -\min_{s = (s_{1}, \ldots, s_{r})\in \mathscr{H}_{L}^{\star}}\frac{{\rm div}(g_{s})}{\lambda(s)}\text{ for }
g_{s}:= \prod_{i = 1}^{r}g_{i}^{s_{i}}.$$
 For any prime divisor $Z\subset Y$, let $U_{Z}\subset Y$
 be an open subset such that ${}^{\eta}\mathfrak{D}(m)_{|U_{Z}} =  a_{Z}\cdot [Z\cap U_{Z}]$
 for some $a_{Z}\in\mathbb{Q}$, $U_{Z}\cap Z \neq \emptyset$. Moreover, we ask these $U_{Z}$ cover $Y$. Note that for any $Z$,
 there is $s(Z)\in \mathscr{H}_{L}^{\star}$ such that
 $$\mathfrak{D}(m)_{|U_{Z}} =  -\left(\frac{{\rm div}(g_{s(Z)})}{\lambda(s(Z))}\right)_{|U_{Z}}.$$
So
 $$\varphi^{\star}(\mathfrak{D}(m)_{|U_{Z}}) =  -\left(\frac{{\rm div}(\varphi^{\star}g_{s(Z)})}{\lambda(s(Z))}\right)_{|\varphi^{-1}(U_{Z})} =  -\left(\frac{{\rm div}(f_{s(Z)})}{\lambda(s(Z))}\right)_{|\varphi^{-1}(U_{Z})} \leq  {}^{\xi}\mathfrak{D}(m)_{|\varphi^{-1}(U_{Z})} $$
for any $Z$,
 that is $  \varphi^{\star}(\mathfrak{D})(w) \leq {}^{\xi}\mathfrak{D}(w)$ for any $w\in \sigma^{\vee}$.
Finally, since ${\rm div}(f_i) + \varphi^{\star}(\mathfrak{D})(m_{i})\geq 0$ for $i = 1, \ldots, r$ we have $\varphi^{\star}(\mathfrak{D})_{z}\subset {}^{\xi}\mathfrak{D}_{z}$ for any $z\in S$, and therefore $\varphi^{\star}(\mathfrak{D}) =   {}^{\xi}\mathfrak{D}$ as required. 
\end{proof}

The following describes normalization of affine subvarieties with linear torus action of complexity one (see \cite[Section 4.2]{AIPSV12} for the normal case).
\begin{theorem}\label{theo-aff-impl}
Let $X\subset \mathbb{A}^{\ell}_{\mathbb{C}}$ be a subvariety with linear torus action of complexity one
and with weight package $\theta =  (\overline{N}, N, F, S, \Sigma)$. Then 
the normalization of $X$ is equivariantly isomorphic to $X(\widehat{C_{\theta}}, \bar{\mathfrak{D}}_{\theta})$, where $\bar{\mathfrak{D}}_{\theta}$ is the pullback polyhedral divisor of $\theta$.
\end{theorem}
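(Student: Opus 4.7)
The plan is to combine the Altmann--Hausen correspondence with an explicit identification of the pullback polyhedral divisor via the generators coming from Lemma \ref{lem-poltheta}. First I would let $\widehat{X}$ denote the normalization of $X$; it inherits an effective $\mathbb{T}$-action of complexity one, so by the second part of the Altmann--Hausen Theorem one obtains $\widehat{X}\cong X(Y, \mathfrak{D})$ for a smooth curve $Y$ and a proper polyhedral divisor $\mathfrak{D}$ with tail cone $\sigma_{\theta}$. Since $X$ and $\widehat{X}$ share the same rational quotient and the same trace on the open $\mathbb{G}$-orbit, Lemma \ref{lem-Ctheta-Ctheta} applied to $\widehat{X}$ identifies $Y$ with $\widehat{C_{\theta}}$.

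Next I would produce a natural system of $\mathbb{T}$-homogeneous generators of $\mathbb{C}[\widehat{X}]$ using the ambient coordinates. Put $m_i := F^{\vee}(e_i^{\vee})\in M$ for the $\mathbb{T}$-weight of the coordinate $x_i$; because $X$ meets the open $\mathbb{G}$-orbit, each restriction $x_i|_{\widehat{X}}$ is nonzero and admits a unique decomposition $x_i|_{\widehat{X}} = f_i \chi^{m_i}$ with $f_i \in \mathbb{C}(\widehat{C_{\theta}})^{\star}$. Since the $x_i|_X$ already generate $\mathbb{C}[X]$ as a $\mathbb{C}$-algebra, setting $\xi := (f_1 \chi^{m_1},\ldots, f_{\ell}\chi^{m_{\ell}})$ and applying Theorem \ref{theo-norma} yields $\mathbb{C}[\widehat{X}] = A[\widehat{C_{\theta}}, {}^{\xi}\mathfrak{D}]$, together with the properness of ${}^{\xi}\mathfrak{D}$.

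To conclude, I would match ${}^{\xi}\mathfrak{D}$ with $\bar{\mathfrak{D}}_{\theta} = \kappa^{\star}(\mathfrak{D}_{\theta})$. The proof of Lemma \ref{lem-poltheta} furnishes generators $(g_1\chi^{m_1}, \ldots, g_{\ell}\chi^{m_{\ell}})$ of $A[X_{\Sigma}, \mathfrak{D}_{\theta}]\cong \mathbb{C}[\mathbb{A}^{\ell}]$, with $g_i \in \mathbb{C}(X_{\Sigma})^{\star}$ the Laurent monomials on $\mathbb{T}_{\mathrm{Coker}(F)}$ attached to the rays of $\delta^{\vee}$, such that the isomorphism of Lemma \ref{lem-poltheta} sends $g_i\chi^{m_i}$ to $x_i$. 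By Lemma \ref{lem-Ctheta-Ctheta}, $\kappa(\widehat{C_{\theta}})$ meets the open $\mathbb{T}_{\mathrm{Coker}(F)}$-orbit of $X_{\Sigma}$ and is not contained in any toric divisor, so $\kappa$ avoids the support of $\{Z\subset X_{\Sigma}\,|\, \mathfrak{D}_{\theta, Z}\neq \sigma_{\theta}\}$ and each $f_i$ coincides with $g_i \circ \kappa$. Lemma \ref{lem-tectec} then computes $\kappa^{\star}(\mathfrak{D}_{\theta})$ ray-by-ray by exactly the inequalities defining ${}^{\xi}\mathfrak{D}$, giving $\bar{\mathfrak{D}}_{\theta} = {}^{\xi}\mathfrak{D}$ and hence the desired equivariant isomorphism $\widehat{X}\cong X(\widehat{C_{\theta}}, \bar{\mathfrak{D}}_{\theta})$.

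The main obstacle I expect lies in this last matching step: reconciling the intrinsic generators $g_i \chi^{m_i}$ of $A[X_{\Sigma}, \mathfrak{D}_{\theta}]$ coming from Lemma \ref{lem-poltheta} with the extrinsic generators $f_i\chi^{m_i}$ of $\mathbb{C}[\widehat{X}]$ under the factorization $\widehat{X}\rightarrow \widehat{C_{\theta}}\xrightarrow{\kappa} C_{\theta}\subset X_{\Sigma}$, and checking that Lemma \ref{lem-tectec} truly applies (the map $\kappa$ is indeed a normalization followed by a closed immersion, as required). Once these compatibilities are in place, the pullback formula of Lemma \ref{lem-tectec}, combined with the uniqueness principle of Lemma \ref{lem0-gen}, closes the argument.
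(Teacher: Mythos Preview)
Your proposal is correct and follows essentially the same route as the paper: decompose the ambient coordinates as $x_i = g_i\chi^{m_i}$ via the splitting $\overline{M}=M\oplus\mathrm{Coker}(F)^{\vee}$, restrict to $X$ to get $f_i=\kappa^{\star}(g_i)$, apply Theorem~\ref{theo-norma} to identify the normalization with $A[\widehat{C_{\theta}},{}^{\xi}\mathfrak{D}]$, and then invoke Lemma~\ref{lem-tectec} to conclude ${}^{\xi}\mathfrak{D}=\kappa^{\star}(\mathfrak{D}_{\theta})$. Your opening appeal to the Altmann--Hausen classification is harmless but redundant, since Theorem~\ref{theo-norma} already produces the polyhedral divisor directly; the paper simply omits that step.
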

\begin{proof}
Set $\mathbb{T} =  \mathbb{T}_{N}$  and
let $x_{1}, \ldots, x_{\ell}$ be the coordinate functions  of $\mathbb{A}^{\ell}_{\mathbb{C}}$ in which $\mathbb{G}$ diagonally acts.
From the decomposition $\overline{M}  =  M\oplus {\rm Coker}(F)^{\vee}$ induced by the weight package $\theta$, we
 have
$x_{i} =  g_{i} \chi^{m_{i}}$, where $m_{i}\in M$ and $g_{i}\in \mathbb{C}(X_{\Sigma})^{\star}$.
Write $x_{i|X} =  f_{i}\chi^{m_{i}}$.  Since $X$ intersects the open $\mathbb{G}$-orbit, each $f_{i} =  \kappa^{\star}(g_{i}) \in \mathbb{C}(X)^{\mathbb{T}}  = \mathbb{C}(\widehat{C_{\theta}})$ is nonzero.
 The coordinate ring of the normalization $X'$ of $X$ is the integral closure of 
$\mathbb{C}[\widehat{C_{\theta}}][f_{1}\chi^{m_{1}}, \ldots, f_{\ell} \chi^{m_{\ell}}]$
in $\mathbb{C}(\widehat{C_{\theta}})[M]$. By Theorem \ref{theo-norma}, we have $\mathbb{C}[X'] =  A[\widehat{C_{\theta}}, {}^{\xi}\mathfrak{D}]$, where $\xi =  (f_{1}\chi^{m_{1}}, \ldots, f_{\ell} \chi^{m_{\ell}})$ and $\mathbb{C}[X']$ is seen as a subring of $\mathbb{C}(\widehat{C_{\theta}})[M]$. Using Lemma \ref{lem-tectec} we conclude that $\bar{\mathfrak{D}}_{\theta} = \kappa^{\star}(\mathfrak{D}_{\theta}) =  {}^{\xi}\mathfrak{D}$, as required. 
\end{proof}
\begin{example}
Let 
$P(z) =  \prod_{i =1}^{s} (z-z_{i})^{m_{i}}\in \mathbb{C}[z]$
be a polynomial, where the $z_{i}\in\mathbb{C}$ are distinct and $m_{i}\in \mathbb{Z}_{>0}$. Consider the $\mathbb{C}^{\star}$-surface 
$$X =  \mathbb{V}(x^{d}- yP(z))\subset \mathbb{A}_{\mathbb{C}}^{3}$$
with action 
$\lambda\cdot (x, y, z)  =  (\lambda x , \lambda^{d} y, z)$ for $\lambda\in \mathbb{C}^{\star}.$ The weight package $\theta$ of $X$ 
is described as follows. 
The  matrix factorization is 
$$P=
\begin{pmatrix}
-d & 1 & 0\\
0 & 0 & 1
\end{pmatrix},$$ for $S$ take $\begin{pmatrix}
1 & 0 & 0
\end{pmatrix}$ and $X_{\Sigma} = \mathbb{P}_{\mathbb{C}}^{1}\times \mathbb{A}_{\mathbb{C}}^{1}$. Moreover, denoting
by $Z_{(-1, 0)}$ the divisor of $X_{\Sigma}$ corresponding to the ray $\mathbb{Q}_{\geq 0}(-1,0)$, we have
$\mathfrak{D}_{\theta} =  (\frac{1}{d} + \mathbb{Q}_{\geq 0})\cdot Z_{(-1, 0)}$. Also
$$\widehat{C_{\theta}}  = \{([x_{0}: x_{1}], t)\in \mathbb{P}_{\mathbb{C}}^{1}\times \mathbb{A}_{\mathbb{C}}^{1}\, |\, x_{1} P(t) -x_{0} =  0\}\simeq \mathbb{A}^{1}_{\mathbb{C}}.$$
Since
$\kappa^{\star}([Z_{(-1, 0)}]) =  \sum_{i = 1}^{s}m_{i}[z_{i}]$, we deduce that
$\bar{\mathfrak{D}}_{\theta}  =  \sum_{i = 1}^{s} \left(\frac{m_{i}}{d} +\mathbb{Q}_{\geq 0}\right)\cdot [z_{i}]$ over $\mathbb{A}^{1}_{\mathbb{C}}$
describes the normalization of $X$, recovering \cite[Example 3.10]{FZ03}.
\end{example}
Finally, we study the pullback divisor $\bar{\mathfrak{D}}_{\theta}$ under subdivision of the fan $\Sigma$. 
\begin{proposition}\label{prop-sub-kappa}
Let $\Sigma'$ be a fan subdivision of $\Sigma$ such that $X_{\Sigma'}$ is semi-projective and let $f: X_{\Sigma'}\rightarrow X_{\Sigma}$ be 
the corresponding toric modification. Let $C_{\theta}'\subset X_{\Sigma'}$ be the proper transform of the curve $C_{\theta}\subset X_{\Sigma}$ under $f$. Let $\eta: \widehat{C_{\theta}'}\rightarrow X_{\Sigma'}$ be the composition of the normalization $\eta_{0}:\widehat{C_{\theta}'}\rightarrow C_{\theta}'$ and
the closed immersion $C_{\theta}'\rightarrow X_{\Sigma'}$. Then $X(\widehat{C_{\theta}'}, \eta^{\star}f^{\star} \mathfrak{D}_{\theta})$ is equivariantly isomorphic to $X(\widehat{C_{\theta}}, \bar{\mathfrak{D}}_{\theta})$.
\end{proposition}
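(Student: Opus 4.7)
The plan is to produce a $\mathbb{T}$-equivariant isomorphism between the two varieties by showing that the underlying curves $\widehat{C_{\theta}'}$ and $\widehat{C_{\theta}}$ are canonically isomorphic and that this isomorphism intertwines $\eta^{\star}f^{\star}\mathfrak{D}_{\theta}$ with $\bar{\mathfrak{D}}_{\theta}=\kappa^{\star}\mathfrak{D}_{\theta}$. The geometry is already organized for this: $f$ is a toric birational modification, which restricts to an isomorphism over the open $\mathbb{G}$-orbit, and by Lemma \ref{lem-Ctheta-Ctheta} the curve $C_{\theta}$ meets this orbit, so $C_{\theta}'$ does as well.

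First I would construct the isomorphism $\varphi:\widehat{C_{\theta}'}\to \widehat{C_{\theta}}$. The restriction $\tilde{f}:C_{\theta}'\to C_{\theta}$ of $f$ is proper (as $f$ is) and birational (as $f$ is an isomorphism over the torus and $C_{\theta}'$ is the proper transform), hence finite, since these are curves. Writing $\eta=\iota'\circ\eta_{0}$ and $\kappa=\iota\circ\kappa_{0}$ with $\eta_{0},\kappa_{0}$ the two normalizations and $\iota',\iota$ the inclusions, the universal property of normalization applied to the finite map $\tilde{f}\circ\eta_{0}:\widehat{C_{\theta}'}\to C_{\theta}$ produces a unique $\varphi:\widehat{C_{\theta}'}\to \widehat{C_{\theta}}$ with $\kappa_{0}\circ\varphi=\tilde{f}\circ\eta_{0}$. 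Being finite and birational between smooth curves, $\varphi$ is an isomorphism. Composing shows $f\circ\eta=\iota\circ\tilde{f}\circ\eta_{0}=\kappa\circ\varphi$.

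Second I would invoke functoriality of pullback for polyhedral divisors to deduce $\eta^{\star}f^{\star}\mathfrak{D}_{\theta}=(\kappa\circ\varphi)^{\star}\mathfrak{D}_{\theta}=\varphi^{\star}\bar{\mathfrak{D}}_{\theta}$. Concretely, writing $\mathfrak{D}_{\theta}=\sum_{\rho\in\Sigma(1)}\mathfrak{D}_{\theta,Z_{\rho}}\cdot Z_{\rho}$ as in Lemma \ref{lem-poltheta}, the non-trivial support consists of toric boundary divisors; on $X_{\Sigma'}$ the divisor $f^{\star}\mathfrak{D}_{\theta}$ has its non-trivial coefficients concentrated on toric boundary divisors of $X_{\Sigma'}$ (the proper transforms of the $Z_{\rho}$ together with the exceptional toric divisors). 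Since $C_{\theta}'$ meets the open torus orbit of $X_{\Sigma'}$ and $C_{\theta}$ meets that of $X_{\Sigma}$, neither curve is contained in any of these divisors, so the hypotheses of Definition \ref{def-poulebaq} are satisfied for each of $\eta$, $f$, $\kappa$, $\varphi$. The equality $\eta^{\star}f^{\star}(\cdot)=(f\circ\eta)^{\star}(\cdot)$ then reduces to the corresponding well-known identity for ordinary ($\mathbb{Q}$-)Cartier pullback of divisors applied coefficient by coefficient, and similarly for $\kappa\circ\varphi$.

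Finally, since $\varphi$ is an isomorphism of varieties carrying $\eta^{\star}f^{\star}\mathfrak{D}_{\theta}$ to $\bar{\mathfrak{D}}_{\theta}$, the induced $M$-graded algebras $A[\widehat{C_{\theta}'},\eta^{\star}f^{\star}\mathfrak{D}_{\theta}]$ and $A[\widehat{C_{\theta}},\bar{\mathfrak{D}}_{\theta}]$ are isomorphic via the pullback by $\varphi^{-1}$, yielding the desired equivariant isomorphism of the associated $\mathbb{T}$-varieties. The main technical obstacle is the composition/functoriality step: one has to be careful that no piece of the toric boundary picks up $C_{\theta}'$ as a component under $f^{\star}$, and that all the divisors involved can be taken $\mathbb{Q}$-Cartier (which is automatic since $\mathfrak{D}_{\theta}$ is proper, so each $\mathfrak{D}_{\theta}(m)$ is $\mathbb{Q}$-Cartier by definition). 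With that verification, the rest of the argument is formal.
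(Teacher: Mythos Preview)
Your proposal is correct and follows essentially the same route as the paper: both arguments construct an isomorphism $\varphi$ (the paper calls it $\gamma$) between $\widehat{C_{\theta}'}$ and $\widehat{C_{\theta}}$ fitting into the commutative square $f\circ\eta=\kappa\circ\varphi$, and then conclude by functoriality of pullback that $\eta^{\star}f^{\star}\mathfrak{D}_{\theta}=\varphi^{\star}\bar{\mathfrak{D}}_{\theta}$. The only cosmetic difference is in justifying that $\varphi$ is an isomorphism: you argue ``finite birational onto a smooth curve'', while the paper phrases it as ``open immersion and bijective''; your version is arguably cleaner, and your explicit check of the hypotheses of Definition~\ref{def-poulebaq} (that neither curve lies in the toric boundary) is a detail the paper leaves implicit.
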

\begin{proof}
Let $\bar{f}: C_{\theta}'\rightarrow C_{\theta}$
be the morphism induced by $f$. Then we have commutative 
square
 $$\xymatrix{ \widehat{C_{\theta}'}  \ar[r]^{\eta_{0}} \ar[d] ^{\gamma} & C_{\theta}' \ar[d]^{\bar{f}}\\   \widehat{C}_{\theta} \ar[r]^{\nu} & C_{\theta}},$$ 
 where $\nu$ is the normalization. Here $\gamma$ is obtained from the composition
 of the rational maps $\eta_{0}$, $\bar{f}$ and $\nu^{-1}$. As $\widehat{C_{\theta}'}$ and $\widehat{C_{\theta}}$ are smooth,
 $\gamma$ is an open immersion. Moreover, $\gamma$ is bijective from the diagram. So $\gamma$ is an isomorphism. 
 To sum-up we have a commutative square
 $$\xymatrix{ \widehat{C_{\theta}'}  \ar[r]^{\eta} \ar[d] ^{{\gamma}} & X_{\Sigma'} \ar[d]^{f}\\   \widehat{C}_{\theta} \ar[r]^{\kappa} & X_{\Sigma}}$$ 
 and therefore $\gamma^{\star}\bar{\mathfrak{D}}_{\theta} =  \gamma^{\star}\kappa^{\star}\mathfrak{D}_{\theta} =  \eta^{\star} f^{\star} \mathfrak{D}_{\theta}$. This proves the proposition. 
\end{proof}

\subsection{Curves on toric surfaces}\label{SEC-CurveToricsURF}
The next two Subsections \ref{SEC-CurveToricsURF} and \ref{sec-exex} study examples illustrating the
theory. Computing the pullback of polyhedral divisors of weight packages $\theta =  (\overline{N}, N, F, S, \Sigma)$
of hypersurfaces with complexity-one torus action involves considering the following 
problem. 
\\

Assume that $X_{\Sigma}$ is a surface, take
$C :=  \{(x, y)\in \mathbb{T}_{{\rm Coker}(F)} =  (\mathbb{C}^{\star})^{2} \,|\, f(x, y) =  0\}$
for some $f\in \mathbb{C}[x, x^{-1}, y, y^{-1}]$ irreducible, 
let $C_{\theta}$ be the  Zariski closure of $C$ in $X_{\Sigma}$ and  let $\kappa$ be the composition of the normalization $ \widehat{C_{\theta}}\rightarrow C_{\theta}$ and the inclusion
$ C_{\theta}\subset X_{\Sigma}$. \\

\noindent{\bf Question:} \emph{Given a
toric divisor $Z_{\rho}\subset X_{\Sigma}$, how do we compute $\kappa^{\star}([Z_{\rho}])$ in terms of $\Sigma$ and $f$?}
\\

Composing with toric modifications, we assume $X_{\Sigma}$ smooth.

\begin{example}\label{refexamp}
Consider the  fan $\Sigma$ of $\mathbb{Q}^{2}$ with maximal cones

\begin{center}
 \begin{tabular}{|c| c |} 
 \hline
  & Generators \\ [1ex] 
 \hline\hline
 $\sigma_{1} $ & $(1, 0), (1,1)$ \\ [2ex] 
 \hline
 $\sigma_{2} $ & $(1, 1), (0,1)$ \\  [2ex] 
 \hline
 $\sigma_{3} $ & $(0, 1),(-1,-2)$ \\ [2ex] 
 \hline
 $\sigma_{4} $ & $(-1, -2) , (0,-1)$ \\ [2ex] 
 \hline
 $\sigma_{5} $ & $(0, -1), (1,0)$ \\  [2ex] 
 \hline
\end{tabular}
\end{center}
\

and the curve
$C =  \{(x, y)\in (\mathbb{C}^{\star})^{2}\,|\, y + x + x^{2} =  0\}.$
\end{example}
We now provide a three-step calculation method for calculating the pullbacks $\kappa^{\star}([Z_{\rho}])$.
\\

\emph{Step 1.} 
Determine  Laurent monomials $x_{\sigma}, y_{\sigma}$ such that $\mathbb{C}[\sigma^{\vee}\cap M] =  \mathbb{C}[x_{\sigma}, y_{\sigma}]$ for any $\sigma\in \Sigma_{\rm max}$. Then 
write $x  =  x_{\sigma}^{m(\sigma)_{1}} y_{\sigma}^{n(\sigma)_{1}}$ and  $y  =  x_{\sigma}^{m(\sigma)_{2}} y_{\sigma}^{n(\sigma)_{2}}$ with $n_{i}(\sigma), m_{j}(\sigma)\in \mathbb{Z}$ and substitute: $f(x, y) =  f (x_{\sigma}^{m(\sigma)_{1}} y_{\sigma}^{n(\sigma)_{1}},  x_{\sigma}^{m(\sigma)_{2}} y_{\sigma}^{n(\sigma)_{2}})$. Multiply by a Laurent monomial $x_{\sigma}^{s(\sigma)} y_{\sigma}^{t(\sigma)}$ in order to have 
$f_{\sigma}(x_{\sigma}, y_{\sigma}):= x_{\sigma}^{s(\sigma)} y_{\sigma}^{t(\sigma)}f(x,y)\in \mathbb{C}[\sigma^{\vee}\cap M]$
irreducible.
In conclusion $C_{\theta}\cap X_{\sigma} =  {\rm Spec}\, \mathbb{C}[x_{\sigma}, y_{\sigma}]/(f_{\sigma})$. 
\begin{example}\label{refexamp01}
Returning to Example \ref{refexamp}, the local equations are:
\begin{center}
 \begin{tabular}{|c| c |} 
 \hline
  & Equation $f_{\sigma_{i}}$ \\ [1ex] 
 \hline\hline
 $C_{\theta}\cap X_{\sigma_{1}}$ & $1  + x_{1}^{2}y_{1} + y_{1}$ \\ [2ex] 
 \hline
 $C_{\theta}\cap X_{\sigma_{2}}$ & $1 + x_{2} + y_{2}$ \\  [2ex] 
 \hline
 $C_{\theta}\cap X_{\sigma_{3}}$ & $1 + x_{3} + y_{3}$ \\ [2ex] 
 \hline
 $C_{\theta}\cap X_{\sigma_{4}}$ & $1 + x_{4}y_{4}  + y_{4}$ \\ [2ex] 
 \hline
 $C_{\theta}\cap X_{\sigma_{5}}$ & $1 + x_{5}y_{5}  + x_{5}^{2}y_{5}$ \\  [2ex] 
 \hline
\end{tabular}
\end{center}
\end{example}
\emph{Step 2.} We compute the trivializations of  $\mathcal{O}_{X_{\Sigma}}(Z_{\rho})$ for $\rho\in \Sigma(1)$. Let $\rho^{-}, \rho^{+}\in \Sigma(1)\setminus \{\rho\}$ distinct such that 
$\sigma_{\rho}^{-}: =  \rho^{-} + \rho$,  $\sigma_{\rho}^{+} :=  \rho^{+}+\rho\in\Sigma_{\rm max}$. Consider $m_{\rho}^{-}, m_{\rho}^{+}\in M$ satisfying
$\langle m_{\rho^{-}}, v_{\rho}\rangle  = 1, \langle m_{\rho^{-}}, v_{\rho^{-}}\rangle  =  0,\langle m_{\rho^{+}}, v_{\rho}\rangle  = 1, \langle m_{\rho^{+}}, v_{\rho^{+}}\rangle  =  0.$
Then the trivialization of $Z_{\rho}$ is given by
\begin{itemize}
\item $(X_{\sigma}, 1)$ for any $\sigma \in \Sigma_{\rm max}\setminus \{\sigma_{\rho}^{-}, \sigma_{\rho}^{+}\}$;
\item $(X_{\sigma_{\rho}^{-}}, \chi^{m_{\rho^{-}}})$ and $(X_{\sigma_{\rho}^{+}}, \chi^{m_{\rho^{+}}}).$ 
\end{itemize}
\begin{example}
Non-trivial trivializations for Example  \ref{refexamp}
are given by:
\begin{center}
 \begin{tabular}{|c| c |} 
 \hline
  & Trivialization of $Z_{\rho}$ \\ [1ex] 
 \hline\hline
 $Z_{(1, 0)}$ & $(X_{\sigma_{1}}, \chi^{(1, -1)}), (X_{\sigma_{5}}, \chi^{(1, 0)})$ \\ [2ex] 
 \hline
 $Z_{(1,1)}$ & $(X_{\sigma_{1}}, \chi^{(0, 1)}), (X_{\sigma_{2}}, \chi^{(1, 0)})$ \\  [2ex] 
 \hline
 $Z_{(0,1)}$ & $(X_{\sigma_{2}}, \chi^{(-1, 1)}), (X_{\sigma_{3}}, \chi^{(-2, 1)})$  \\ [2ex] 
 \hline
 $Z_{(-1,-2)}$ & $(X_{\sigma_{3}}, \chi^{(-1, 0)}), (X_{\sigma_{4}}, \chi^{(-1, 0)})$ \\ [2ex] 
 \hline
 $Z_{(0,-1)}$ & $(X_{\sigma_{4}}, \chi^{(2, -1)}), (X_{\sigma_{5}}, \chi^{(0, -1)})$  \\  [2ex] 
 \hline
\end{tabular}
\end{center}
\end{example}

\emph{Step 3.}
Let $\varphi: \widehat{C_{\theta}}\rightarrow C_{\theta}$ be the desingularization. With the notation of Step 2, set $\Omega^{\pm}_{\rho} : = 
\varphi^{-1}(X_{\sigma_{\rho}^{\pm}}\cap C_{\theta})$ and consider the sheaf of ideals $\mathscr{J}_{\rho}\subset \mathcal{O}_{C_{\theta}}$ generated by the restricted trivializations, i.e.
\begin{itemize}
\item[(i)] $\mathscr{J}_{\rho}(\Omega_{\rho}^{\pm}) =  \chi^{m_{\rho^{\pm}}}_{|C_{\theta}}\cdot \mathbb{C}[\Omega^{\pm}_{\rho}]$;
\item[(ii)] $\mathscr{J}_{\rho}(\varphi^{-1}(X_{\sigma}\cap C_{\theta})) =  \mathbb{C}[\varphi^{-1}(X_{\sigma}\cap C_{\theta})]$ for any 
$\sigma\in \Sigma_{\rm max}\setminus\{\sigma_{\rho}^{-}, \sigma_{\rho}^{+}\}.$
\end{itemize}
Denote by $\kappa_{\rho}^{\pm}$ the restriction $\kappa_{|X_{\sigma_{\rho}^{\pm}}}$. There is a unique decomposition 
$$\mathscr{J}_{\rho}(\Omega_{\rho}^{\pm}) =  \prod_{j = 1}^{s_{\rho}^{\pm}}\mathfrak{M}_{\rho^{\pm}, j}^{s_{\rho, j}^{\pm}},\text{ where }\mathfrak{M}_{\rho^{\pm}, j}\subset \mathbb{C}[\Omega_{\rho}^{\pm}]$$
are maximal ideals. 
We compute $\kappa^{\star}([Z_{\rho}])$ via 
$\kappa^{\star}_{\rho^{\pm}}([Z_{\rho}|_{X_{\sigma_{\rho}^{\pm}}}]) =  \sum_{j = 1}^{s_{\rho}^{\pm}}s_{\rho, j}^{\pm}\cdot [\zeta_{\rho^{\pm}, j}],$
where $\zeta_{\rho^{\pm}, j}\in \widehat{C_{\theta}}$ corresponds to $\mathfrak{M}_{\rho^{\pm}, j}$.
\begin{example}\label{refexamp03}
The computation for Example \ref{refexamp} of the pullbacks is given by:
\begin{center}
 \begin{tabular}{|c| c |} 
 \hline
  & $\kappa^{\star}$ \\ [1ex] 
 \hline\hline
 $Z_{(1, 0)}$ & $\kappa^{\star}([Z_{(1, 0)}]) =  0$ \\ [2ex] 
 \hline
 $Z_{(1, 1)}$ & $\kappa^{\star}([Z_{(1, 1)}]) =  [\zeta_{(1,1)}]$ \\  [2ex] 
 \hline
 $Z_{(0, 1)}$ & $\kappa^{\star}([Z_{(0, 1)}]) =  [\zeta_{(0,1)}]$ \\ [2ex] 
 \hline
 $Z_{(1, 0)}$ & $\kappa^{\star}([Z_{(-1, -2)}]) =  [\zeta_{(-1, -2)}]$ \\ [2ex] 
 \hline
 $Z_{(0, -1)}$ & $\kappa^{\star}([Z_{(0, -1)}]) =  0$ \\  [2ex] 
 \hline
\end{tabular}
\end{center}
\end{example}
\subsection{The $\mathbb{C}^{\star}$-surface $z_{1}z_{2}^{2} + z_{2}z_{0}^{2} + z_{3}^{3} = 0$}\label{sec-exex}

We now study an example of a projective variety with linear torus action of complexity one. 
\\

\begin{example} 
\emph{Consider the toric variety $\mathbb{P}^{3}_{\mathbb{C}}$.} We are in the case $\ell = 3$. 
Let $e_{1} =  (1, 0,0)$, $e_{2} =  (0, 1,0)$, $e_{3} =  (0, 0, 1)$ and $e =  -e_{1}-e_{2}-e_{3}$. 
The fan of $\mathbb{P}^{3}_{\mathbb{C}}$ is generated
by the cones 
$\delta =  \delta^{(0)} =  {\rm Cone}( e_{1}, e_{2}, e_{3})
\text{ and } \delta^{(i)} =  {\rm Cone}(e , e_{j}\,|\, j\neq i),\,\, i = 1,2,3.$
\\

Now consider the $\mathbb{C}^{\star}$-surface 
$$X:= \mathbb{V}(z_{1}z_{2}^{2} + z_{2}z_{0}^{2} + z_{3}^{3}) \subset \mathbb{P}_{\mathbb{C}}^{3}
\text{ with action }\lambda\cdot [z_{0}: \ldots : z_{3}]  =  [z_{0}: \lambda^{-3}z_{1} : \lambda^{3}z_{2}: \lambda z_{3}].$$
Our goal is constructing  a divisorial fan for $X$ from weight packages $\theta^{(i)}$ of the  charts $X^{(i)}:= X_{\delta^{(i)}}\cap X$. 
\\

\emph{The chart $X^{(0)}$.}
Let us describe  $\theta^{(0)}=  (\mathbb{Z}^{3}, \mathbb{Z}, F^{(0)}, S^{(0)}, \Sigma^{(0)})$. 
The weight matrix is $F^{(0)} =   {}^{t}\begin{pmatrix} -3 & 3 & 1
\end{pmatrix}$, the $P$-matrix is  
$ P^{(0)} =  \begin{pmatrix}
1 & 1 & 0\\
1 & 0 & 3
\end{pmatrix},$
for $S^{(0)}$ we choose $\begin{pmatrix}
0 & 1 & -2
\end{pmatrix}$
and  the quotient fan $\Sigma^{(0)}$ is generated by the cones
${\rm Cone}((1,0), (1,1)), {\rm Cone}((0,1),(1,1)).$
\\

\emph{The chart $X^{(1)}$.} Next, we determine $\theta^{(1)} =  (\mathbb{Z}^{3}, \mathbb{Z}, F^{(1)}, S^{(1)}, \Sigma^{(1)})$ by enhancing $F^{(0)}$ to $\widehat{F} :=   {}^{t}\begin{pmatrix}
 0 & -3 & 3 & 1
\end{pmatrix}$ and doing $\widehat{F}  \mapsto \widehat{F} -(-3)  {}^{t}\begin{pmatrix}
 1 & 1 & 1 & 1
\end{pmatrix},$
where $-3$ is the weight of $z_{1}$. Removing the zero in the second entry we get
$F^{(1)} =   {}^{t}\begin{pmatrix} 3 & 6 & 4
\end{pmatrix}.$ Furthermore, we enhance the matrix factorization 
$$0\rightarrow \mathbb{Z}\xrightarrow{\widehat{F}} \mathbb{Z}^{4}\xrightarrow{\widehat{P}}\mathbb{Z}^{2}\rightarrow 0,
\text{ where } \widehat{P} = 
\begin{pmatrix}
-2 & 1 & 1 & 0\\
-4 & 1 & 0 & 3
\end{pmatrix}.$$ Observe that we add a new column in the first position of $P^{(0)}$ in order that the lines of $\widehat{P}$ are orthogonal to $(1,1,1,1)$ and ${}^{t}\widehat{F}$. Now  $P^{(1)}$ is obtained  by removing the second column of $\widehat{P}$. Similarly, we enhance $S^{(0)}$ to $\widehat{S}:= \begin{pmatrix} 1 & 0 & 1 & -2
\end{pmatrix}$ so that $S^{(1)} =   \begin{pmatrix} 1  & 1 & -2
\end{pmatrix}$. The quotient fan $\Sigma^{(1)}$ is the complete fan with rays
$\mathbb{Q}_{\geq 0}(-1,-2)$, $\mathbb{Q}_{\geq 0}(1,0)$, $\mathbb{Q}_{\geq 0}(0,1)$. It can be obtained from two manners:
 as the fan generated by $P^{(1)}(\delta_{0})$, where $\delta_{0}$ runs over the faces of $\delta = \delta^{(0)}$, or as the fan generated by $P^{(0)}(\delta)$, where 
$\delta$ runs over the faces of $\delta^{(1)}$.
\\

\emph{The other charts.} We construct the other weight packages $\theta^{(i)}$ by removing
the $(i+1)$-th column of $\widehat{P}$ and $\widehat{S}$.
To sum-up we obtain:
\begin{center}
 \begin{tabular}{|c| c |} 
 \hline
  &  Weight package $\theta^{(i)}$ \\ [1ex] 
 \hline\hline
 $\theta^{(0)}$ & $\left(\mathbb{Z}^{3}, \mathbb{Z}, {}^{t}\begin{pmatrix}
 -3 & 3 & 1
\end{pmatrix},  \begin{pmatrix} 0  & 1 & -2
\end{pmatrix}, \Sigma^{(0)}\right)$ \\ [2ex] 
 \hline
  $\theta^{(1)}$ & $\left(\mathbb{Z}^{3}, \mathbb{Z}, {}^{t}\begin{pmatrix}
3 & 6 & 4
\end{pmatrix},  \begin{pmatrix} 1  & 1 & -2
\end{pmatrix}, \Sigma^{(1)}\right)$  \\  [2ex] 
 \hline
  $\theta^{(2)}$ & $\left(\mathbb{Z}^{3}, \mathbb{Z}, {}^{t}\begin{pmatrix}
 -3 & -6 & -2
\end{pmatrix},  \begin{pmatrix} 1  & 0 & -2
\end{pmatrix}, \Sigma^{(2)}\right)$  \\ [2ex] 
 \hline
  $\theta^{(3)}$ &$\left(\mathbb{Z}^{3}, \mathbb{Z}, {}^{t}\begin{pmatrix}
 -1 & -4 & 2
\end{pmatrix},  \begin{pmatrix} 1  & 0 & 1
\end{pmatrix}, \Sigma^{(3)}\right)$  \\ [2ex] 
 \hline
\end{tabular}
\end{center}
\begin{center}
 \begin{tabular}{|c| c |} 
 \hline
  &  Quotient fan $\Sigma^{(i)}$ \\ [1ex] 
 \hline\hline
 $\Sigma^{(0)}_{\rm max}$ &  ${\rm Cone}((1,0), (1,1)),{\rm Cone}((0,1),(1,1))$ \\ [2ex] 
 \hline
  $\Sigma^{(1)}_{\rm max}$ & ${\rm Cone}((1,0),(0,1)), {\rm Cone}((1,0),(-1,-2)), {\rm Cone}((0,1),(-1,-2))$  \\  [2ex] 
 \hline
  $\Sigma^{(2)}_{\rm max}$ & ${\rm Cone}((1,1), (0,1)), {\rm Cone}((1,1),(-1,-2)), {\rm Cone}((0,1),(-1,-2))$  \\ [2ex] 
 \hline
  $\Sigma^{(3)}_{\rm max}$ &${\rm Cone}((1,1), (1,0)), {\rm Cone}((1,0), (-1,-2))$  \\ [2ex] 
 \hline
\end{tabular}
\end{center}
\emph{Constructing the divisorial fan.}
Let $\Sigma$ be the fan of Example \ref{refexamp}. For each $i$ there is an open subset 
$X_{\overline{\Sigma}^{(i)}}\subset X_{\Sigma}$ and a modification $f^{(i)}: X_{\overline{\Sigma}^{(i)}}\rightarrow X_{\Sigma^{(i)}}$.
The proper transform $C_{\theta^{(i)}}'\subset  X_{\overline{\Sigma}^{(i)}}$ of $C_{\theta^{(i)}}$ have
local equations as in Example \ref{refexamp01}. Denote by $\kappa^{(i)}:\widehat{C_{\theta}}\rightarrow X_{\Sigma^{(i)}}$ the morphism obtained by composing the normalization of $C_{\theta^{(i)}}'$, the closed immersion $C_{\theta^{(i)}}'\hookrightarrow X_{\overline{\Sigma}^{(i)}}$ and the modification $f^{(i)}$. Set $\bar{\mathfrak{D}}^{(i)}_{\theta} =  \kappa^{(i)\star}\mathfrak{D}_{\theta^{(i)}}$ and denote by $\bar{C}$ the smooth completion of the affine plane curve $\mathbb{V}(y+x + x^{2})$.
Using Example \ref{refexamp03},  the divisorial fan $\mathscr{E}_{\theta}$  of the $\mathbb{C}^{\star}$-surface $X$ is:
\begin{center}
 \begin{tabular}{|c| c |} 
 \hline
  &  Divisorial fan $\mathscr{E}_{\theta}$ \\ [1ex] 
 \hline\hline
 $\bar{\mathfrak{D}}^{(0)}_{\theta}$ & $[0, 1/3]\cdot [\zeta_{1,1)}] + \{-2/3\}\cdot [\zeta_{(0,1)}]\text{ over }\bar{C}\setminus\{\zeta_{(-1,-2)}\}$ \\ [2ex] 
 \hline
 $\bar{\mathfrak{D}}^{(1)}_{\theta}$ & $ \mathbb{Q}_{\geq 1/3}\cdot [\zeta_{(1,1)}] + \mathbb{Q}_{\geq -2/3}\cdot [\zeta_{(0,1)}]+\mathbb{Q}_{\geq 1/2}\cdot [\zeta_{(-1,-2)}] \text{ over }\bar{C}$ \\  [2ex] 
 \hline
 $\bar{\mathfrak{D}}^{(2)}_{\theta}$ & $ \mathbb{Q}_{\leq -2/3}\cdot [\zeta_{(0,1)}]+\mathbb{Q}_{\leq 1/2}\cdot [\zeta_{(-1,-2)}]\text{ over }\bar{C}$ \\ [2ex] 
 \hline
 $\bar{\mathfrak{D}}^{(3)}_{\theta}$ & $\{1/2\}\cdot [\zeta_{(-1,-2)}]\text{ over } \bar{C}\setminus\{\zeta_{(0,1)}\}$ \\ [2ex] 
 \hline
\end{tabular}
\end{center}
\end{example}
\subsection{Linear torus actions: the projective case}\label{sec-projproj}
Inspired by Example \ref{sec-exex}, we now treat the projective case. 
 Main result of this section, Theorem \ref{theo-main-nonnorm}, is an extension of Theorem \ref{theo-aff-impl}. 
\begin{definition}\label{def-enhance}
Let $\theta =  (\overline{N}\simeq \mathbb{Z}^{\ell}, N \simeq \mathbb{Z}^{n}, F, S, \Sigma)$ be an \emph{abstract weight package}. This means that $F: N\rightarrow \overline{N}$ is an injective morphism, $S: \overline{N}\rightarrow N$ is a section. The symbol $\Sigma$ stands for the fan of ${\rm Coker}(F)_{\mathbb{Q}}$ generated by the cones $P(\delta_{0})$, where $\delta_{0}$ is a face of the first quadrant $\delta =  \mathbb{Q}_{\geq 0}^{\ell}\subset \overline{N}_{\mathbb{Q}}$ and $P:\overline{N}\rightarrow {\rm Coker}(F)$ is the quotient. We define the sequence 
$$\underline{\theta}:= (\theta^{(0)}, \theta^{(1)}, \ldots, \theta^{(\ell)})$$ of weight packages by \emph{enhancing} $\theta$ into a weight package $\widehat{\theta}$. Fix  basis 
so that we identify $N$ and $\overline{N}$  with $\mathbb{Z}^{n}, \mathbb{Z}^{\ell}$, see $F$ as an 
$\ell\times n$-matrix $(a_{i,j})$ and $P$ as an $s\times \ell$-matrix $(b_{i,j})$, where $s$ is the rank of ${\rm Coker}(F)$. The weight package $\widehat{\theta}$
is 
$(\mathbb{Z}\oplus \overline{N} =  \mathbb{Z}^{\ell +1}, N  =  \mathbb{Z}^{n}, \widehat{F}, \widehat{S}, \widehat{\Sigma}),$ where:
\begin{itemize}
\item[(1)]
$$\widehat{F} = \begin{bmatrix} 
  0 & \dots & 0 \\
    a_{1,1} & a_{1,2} & \dots \\
    \vdots & \ddots & \\
    a_{\ell,1} &        & a_{\ell, n} 
    \end{bmatrix} \text{ and }
\widehat{P} =   \begin{bmatrix} 
    b_{1, 0} & b_{1,1} & \dots & b_{1, \ell}\\
    \vdots &  \vdots &     & \vdots  \\
    b_{s, 0} & b_{s,1} & \dots & b_{s, \ell}
    \end{bmatrix}\in {\rm Mat}_{s\times (\ell + 1)}(\mathbb{Z})$$
with the condition $\sum_{j = 0}^{\ell}b_{i,j} = 0$ for any $i\in \{1, \ldots, s\};$ In this way, we get the matrix factorization 
$$0\rightarrow N\xrightarrow{\widehat{F}}\mathbb{Z}\oplus \overline{N}\xrightarrow{\widehat{P}} {\rm Coker}(F) =  {\rm Coker}(\widehat{F})\rightarrow 0.$$
\item[(2)] We define the fan $\widehat{\Sigma}$ as the fan generated by the cones $\widehat{P}(\delta_{0})$, where $\delta_{0}$ runs over the faces of the first quadrant $\widehat{\delta} =  \mathbb{Q}^{\ell+1}_{\geq 0}\subset\mathbb{Q}\oplus \overline{N}_{\mathbb{Q}}$. 
\item[(3)] Similarly, the section $\widehat{S}$ is the matrix 
$$\begin{bmatrix} 
    s_{1,0} & s_{1,1} & \dots & s_{1, \ell}\\
    \vdots &  \vdots &     & \vdots  \\
    s_{n,0} & s_{n,1} & \dots & s_{n, \ell}
    \end{bmatrix}\in {\rm Mat}_{n\times (\ell + 1)}(\mathbb{Z}), \text{ where } S =  (s_{i,j})$$
and with the condition $\sum_{j = 0}^{\ell}s_{i,j} =  0$ for any $i\in \{1, \ldots, n\}$.
\end{itemize}
Now define $\theta^{(0)}$ as $\theta$ and for $v\in \{1, \ldots, \ell\}$,
$$\theta^{(v)} =  (\overline{N}, N, F^{(v)}, S^{(v)}, \Sigma^{(v)}) \text{ where: }$$
\begin{itemize}
\item[(I)] $F^{(v)}$ is obtained by omitting the line of index $v$ of the matrix 
$$\widehat{F}  -  \begin{bmatrix} 
     a_{v,1} & \dots & a_{v, n}\\
      \vdots &     & \vdots  \\
     a_{v,1} & \dots & a_{v, n}
    \end{bmatrix}\in {\rm Mat}_{(\ell+1)\times n}(\mathbb{Z});$$
\item[(II)] The linear map $F^{(v)}$ induces the matrix factorization 
$$0\rightarrow N \xrightarrow{F^{(v)}} \overline{N}\xrightarrow{P^{(v)}}{\rm Coker}(F^{(v)})\rightarrow 0,$$
where $P^{(v)}$ is obtained by omitting the column of index $v$ of the matrix $\widehat{P}$. The fan of $\Sigma^{(v)}$ of ${\rm Coker}(F^{(v)})_{\mathbb{Q}}$ is fan generated by the cones $P^{(v)}(\delta_{0})$, where $\delta_{0}$ runs over the set of faces of $\delta =  \mathbb{Q}_{\geq 0}^{\ell}\subset\overline{N}_{\mathbb{Q}}$.
\item[(III)] The section $S^{(v)}$ is obtained by omitting the column of index $v$ of the matrix $\widehat{S}$.
\end{itemize}
\end{definition}
The following builds affine subvarieties with linear torus action from weight packages. 
\begin{lemma}\label{lem-equa-tvar}
Let $\theta =  (\overline{N}=  \mathbb{Z}^{\ell}, N  =  \mathbb{Z}^{n}, F, S,\Sigma)$ be a weight package.
Let $C\subset \mathbb{T}_{{\rm Coker}(F)}$ be an irreducible curve with defining ideal $I_{C} =  (f_{i}(x_{1}, \ldots, x_{s})\,|\, 1\leq i\leq a)$. Let $C_{\theta}$ be Zariski closure of $C$ in $X_{\Sigma}$. Consider the matrix 
$$P =   \begin{bmatrix} 
     b_{1,1} & \dots & b_{1, \ell}\\
    \vdots &     & \vdots  \\
     b_{s,1} & \dots & b_{s, \ell}
    \end{bmatrix}\in {\rm Mat}_{s\times \ell}(\mathbb{Z})$$
coming from the matrix factorization of $\theta$.
Set 
$$g_{i}(T_{1}, \ldots, T_{\ell}):= f_{i}\left(\prod_{j = 1}^{\ell}T_{j}^{b_{1, j}}, \ldots, \prod_{j = 1}^{\ell}T_{j}^{b_{s, j}}\right)\text{ for } 1\leq i\leq a,$$
and assume that there are Laurent monomials $u_{i}\in \mathbb{C}[T_{1}, T_{1}^{-1}, \ldots, T_{\ell}, T_{\ell}^{-1}]$ such
that $$X: = \mathbb{V}(u_{1}g_{1}, \ldots, u_{a}g_{a})\subset \mathbb{A}^{\ell}_{\mathbb{C}}$$ is irreducible. 
Then $X\subset \mathbb{A}^{\ell}_{\mathbb{C}}$ is a subvariety with linear torus action of complexity one and its normalization is equivariantly isomorphic to $X(\widehat{C_{\theta}}, \bar{\mathfrak{D}}_{\theta})$, where $\widehat{C_{\theta}}$ is the normalization of $C_{\theta}$ and $\bar{\mathfrak{D}}_{\theta}$ is pullback polyhedral divisor as in Definition \ref{def-pullbackpol}.
\end{lemma}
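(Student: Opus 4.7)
The plan is to reduce to Theorem \ref{theo-aff-impl} by establishing that $X$ is a subvariety of $\mathbb{A}^{\ell}_{\mathbb{C}}$ with linear $\mathbb{T}$-action of complexity one whose canonical weight package is exactly $\theta$, and whose associated curve in the quotient is $C_{\theta}$. The crux is a concrete identification of $X\cap \mathbb{G}$ with the preimage of $C$ under the toric quotient $\mathbb{G}\to \mathbb{G}/\mathbb{T}$.

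First I would verify $\mathbb{T}$-invariance of the ideal $J := (u_{1}g_{1},\ldots,u_{a}g_{a})$. Since $F(N)\subset \overline{N}$ is saturated, the dualized sequence
\[ 0\longrightarrow {\rm Coker}(F)^{\vee}\xrightarrow{P^{\vee}} \overline{M}\xrightarrow{F^{\vee}} M\longrightarrow 0 \]
is exact; in particular, the rows of $P$ (read as elements of $\overline{M}$) generate $\ker F^{\vee}$, i.e., the sublattice of $\mathbb{T}$-invariant characters of $\mathbb{G}$. Thus each monomial $\prod_{j=1}^{\ell} T_{j}^{b_{i,j}}$ is $\mathbb{T}$-invariant, so each polynomial $g_{i}$ is $\mathbb{T}$-invariant, and the Laurent monomial $u_{i}$ is $\mathbb{T}$-semi-invariant. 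Consequently each generator $u_{i}g_{i}$ is $\mathbb{T}$-semi-invariant, the ideal $J$ is $\mathbb{T}$-stable, and $X$ is a $\mathbb{T}$-invariant irreducible closed subvariety of $\mathbb{A}^{\ell}_{\mathbb{C}}$.

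Next, I would show that $X$ meets $\mathbb{G}$ and that the induced $\mathbb{T}$-action has complexity one. On $\mathbb{G}$, each $u_{i}$ is a unit, so $X\cap \mathbb{G} = \psi_{|\mathbb{G}}^{-1}(C)$, where $\psi:\mathbb{A}^{\ell}_{\mathbb{C}}\dashrightarrow X_{\Sigma}$ is the rational quotient of the $\mathbb{T}$-action, regular on $\mathbb{G}$ with $\psi_{|\mathbb{G}}:\mathbb{G}\to \mathbb{G}/\mathbb{T}\simeq \mathbb{T}_{{\rm Coker}(F)}$ the toric quotient of Lemma \ref{lem-Ctheta-Ctheta}. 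Non-emptiness of $C$ forces $X\cap \mathbb{G}\neq \emptyset$, so $X$ genuinely satisfies Definition \ref{def-suvbvarinv}; the general fibers of $\psi_{|X\cap \mathbb{G}}:X\cap \mathbb{G}\to C$ are free $\mathbb{T}$-orbits of dimension $n$, yielding $\dim X = 1 + n$ and thus complexity one. By construction, the weight package attached to the embedding $X\hookrightarrow \mathbb{A}^{\ell}_{\mathbb{C}}$ is the prescribed $\theta$, and the Zariski closure of $\psi(X\cap \mathbb{G})$ in $X_{\Sigma}$ is the curve $C_{\theta}$ from the statement, whose normalization is $\widehat{C_{\theta}}$.

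Finally, Theorem \ref{theo-aff-impl} applies to $X$ with weight package $\theta$ and immediately yields the equivariant isomorphism of the normalization of $X$ with $X(\widehat{C_{\theta}},\bar{\mathfrak{D}}_{\theta})$. The only real subtlety lies in the first step: one must use saturation of $F(N)$ to ensure that the rows of $P$ really generate the $\mathbb{T}$-invariant sublattice, so that the polynomials $g_{i}$ are $\mathbb{T}$-invariant rather than merely semi-invariant. The remainder of the argument is essentially bookkeeping matching the data intrinsically attached to the embedded $X$ with the prescribed data of $\theta$.
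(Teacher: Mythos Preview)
Your proof is correct and follows essentially the same route as the paper: show that each $g_i$ is $\mathbb{T}$-invariant (the paper phrases this simply as ``since $P\circ F=0$'') and each $u_i$ is $\mathbb{T}$-semi-invariant so that $X$ is $\mathbb{T}$-stable, identify $X\cap\mathbb{G}$ with $\psi^{-1}(C)$ to see that $X$ meets the open orbit and has complexity one with associated curve $C_\theta$, then invoke Theorem~\ref{theo-aff-impl}. One small remark: for the invariance of $g_i$ you only need the rows of $P$ to lie in $\ker F^{\vee}$, which is immediate from $P\circ F=0$; saturation is not actually needed at that step.
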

\begin{proof}
Set $F =  (a_{i, j})_{1\leq i\leq \ell, 1\leq j\leq n}$. Then the action on the coordinates is given by 
$$\mu\cdot (x_{1}, \ldots, x_{n}) = \left(\left(\prod_{j = 1}^{n}\mu_{j}^{a_{1, j}}\right) \cdot x_{1}, \ldots, \left(\prod_{j = 1}^{n}\mu_{j}^{a_{\ell, j}}\right) \cdot x_{\ell}\right)$$
for any $\mu =  (\mu_{1}, \ldots, \mu_{n})\in \mathbb{T}_{N} =  (\mathbb{C}^{\star})^{n}$.
Remarking that $g_{i}$ is a $\mathbb{T}_{N}$-invariant function (since $P\circ F =  0$) and $u_{i}$ is a $\mathbb{T}_{N}$-eigenfunction,  the subset $X\subset \mathbb{A}^{\ell}_{\mathbb{C}}$ is stable $\mathbb{T}_{N}$-action. Let  $\psi: \mathbb{G}\rightarrow \mathbb{T}_{{\rm Coker}(F)}$ be the quotient. Note that $X$ intersects $\mathbb{G}$ because an element of $C$ lifts via $\psi$ to solution  
of the equations $g_{i}$. Also the map $X\cap \mathbb{G}\rightarrow C$, $x\mapsto \psi(x)$ is a geometric quotient for the $\mathbb{T}_{N}$-action. So $X\subset \mathbb{A}^{\ell}_{\mathbb{C}}$ is
a subvariety with linear torus action of complexity one and 
we conclude by Theorem \ref{theo-aff-impl}.
\end{proof}
\begin{example}
 Lemma \ref{lem-equa-tvar}  for $C$ smooth does not give, in general, normal varieties.
Since for the weight package $\theta =  (\overline{N}, N, F, S, \Sigma)$ with
$\overline{N} =  \mathbb{Z}^{4}$, $N =  \mathbb{Z}^{2}$,
$$F =  {}^{t}\begin{pmatrix}
  4 & 3  &  0 & 12 \\
  0& 0  &  1 & -1

\end{pmatrix}, \,\, S =  \begin{pmatrix}
 1 & -1  &  0 & 0\\
 0 & 0  &  1 & 0
\end{pmatrix}, $$
the fan $\Sigma$ of $\mathbb{P}^{2}_{\mathbb{C}}$, and the elliptic curve
$C = \mathbb{V}(z_{0}z_{2}^{2} + z_{1}^{3} + z_{0}^{2}z_{1}) \subset \mathbb{P}^{2}_{\mathbb{C}},$
we get the non-normal hypersurface
$$X  =  \mathbb{V}(x_{1}^{8}x_{3}x_{4} + x_{2}^{12} + x_{2}^{4}x_{3}^{2}x_{4}^{2})\subset \mathbb{A}^{4}_{\mathbb{C}}.$$
\end{example}
\begin{proposition}\label{prop-Construction2-irredhyper}
Let
$\theta =   (\overline{N}=  \mathbb{Z}^{\ell}, N  =  \mathbb{Z}^{n}, F, S,\Sigma)$ be a weight package. Consider
a Laurent polynomial of the form
$$g_{1}(T_{1}, \ldots, T_{\ell}):= f_{1}\left(\prod_{j = 1}^{\ell}T_{j}^{b_{1, j}}, \ldots, \prod_{j = 1}^{\ell}T_{j}^{b_{s, j}}\right),$$
where  $f_{1}\in\mathbb{C}[\mathbb{T}_{{\rm Coker}(F)}] =  \mathbb{C}[x_{1}, x_{1}^{-1}, \ldots, x_{s}, x_{s}^{-1}]$ is irreducible and the $b_{i, j}$ are the coefficients of the $P$-matrix of $\theta$. 
 Then there is a Laurent monomial $u_{1}\in \mathbb{C}[T_{1}, T_{1}^{-1}, \ldots, T_{\ell}, T_{\ell}^{-1}]$
such that $u_{1} g_{1}\in \mathbb{C}[T_{1}, \ldots, T_{\ell}]$ is irreducible. 
\end{proposition}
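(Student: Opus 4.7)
The plan is to prove the statement in two stages. First, I will establish that $g_1$ is irreducible in the Laurent polynomial ring $\mathbb{C}[T_1^{\pm 1}, \ldots, T_\ell^{\pm 1}]$. Second, I will clear denominators by choosing $u_1$ to be the Laurent monomial such that $u_1 g_1 \in \mathbb{C}[T_1, \ldots, T_\ell]$ is not divisible by any variable $T_j$, and then transfer irreducibility from the Laurent ring down to the polynomial ring.

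For the first stage, the key observation is that the formula for $g_1$ is precisely the pullback $\psi^{\star}f_1$ along the surjective torus homomorphism $\psi: \mathbb{G} \to \mathbb{T}_{{\rm Coker}(F)}$ whose cocharacter map is $P: \overline{N} \to {\rm Coker}(F)$. Its kernel is the subtorus $\mathbb{T}_N \subset \mathbb{G}$ corresponding to $F(N) \subset \overline{N}$, which is connected; hence $\psi$ is a smooth surjection with irreducible (in fact, $\mathbb{T}_N$-torsor) fibers. Since $f_1$ is irreducible, $\mathbb{V}(f_1) \subset \mathbb{T}_{{\rm Coker}(F)}$ is integral. I will then apply that (i) pullback of a reduced scheme along a smooth morphism is reduced, and (ii) a smooth morphism with irreducible fibers sends irreducible subschemes to irreducible preimages, to conclude that $\mathbb{V}(g_1) = \psi^{-1}(\mathbb{V}(f_1))$ is integral. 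Since the Laurent polynomial ring is a UFD, the principal ideal $(g_1)$ being prime forces $g_1$ to be irreducible there.

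For the second stage, I will set
$$u_1 := \prod_{j=1}^{\ell}T_j^{-\mu_j}, \qquad \mu_j := \min\{\alpha_j \mid T^{\alpha} \text{ has nonzero coefficient in } g_1\},$$
so that $u_1 g_1 \in \mathbb{C}[T_1, \ldots, T_\ell]$ and no $T_j$ divides it. Given a factorization $u_1 g_1 = p q$ in $\mathbb{C}[T_1, \ldots, T_\ell]$, one has $g_1 = (u_1^{-1} p)\cdot q$ in the Laurent ring, and Laurent-irreducibility of $g_1$ forces one factor, say $p$, to be a unit of the Laurent ring, i.e.\ $p = c\, T^{\alpha}$ with $c \in \mathbb{C}^{\star}$ and $\alpha \in \mathbb{Z}^\ell$. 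Being a polynomial $p$ requires $\alpha \in \mathbb{Z}_{\geq 0}^\ell$, and the fact that no $T_j$ divides $u_1 g_1 = pq$ then forces $\alpha = 0$, so $p$ is a nonzero constant.

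The hard part will be the first stage, namely the irreducibility of $g_1$ as a Laurent polynomial: I need both irreducibility and reducedness of $\mathbb{V}(g_1)$ simultaneously, to rule out the possibility that $(g_1)$ is a proper power of the prime ideal defining $\psi^{-1}(\mathbb{V}(f_1))$. Invoking that $\psi$ is smooth (hence flat) with connected kernel $\mathbb{T}_N$ is what delivers both properties at once; once this is established, the passage to the polynomial ring by the monomial $u_1$ is a routine Laurent manipulation.
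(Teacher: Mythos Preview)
Your proof is correct. Both your argument and the paper's reduce the problem to showing that $g_1$ is irreducible in the Laurent ring $\mathbb{C}[T_1^{\pm 1},\ldots,T_\ell^{\pm 1}]$, and then clear denominators exactly as you describe. The difference is in how the Laurent irreducibility is obtained. The paper argues algebraically: since ${\rm Coker}(F)$ is free, the rows $v_1,\ldots,v_s$ of $P$ extend to a $\mathbb{Z}$-basis $v_1,\ldots,v_\ell$ of $\overline{M}$, giving new monomial coordinates $X_i=\prod_j T_j^{v_{i,j}}$ in which $g_1=f_1(X_1,\ldots,X_s)$; a direct degree count in the extra variables $X_{s+1},\ldots,X_\ell$ then shows any factorization already lives in $\mathbb{C}[X_1,\ldots,X_s]$, where $f_1$ is irreducible. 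You instead argue geometrically: the substitution is the pullback along the smooth surjective torus morphism $\psi:\mathbb{G}\to\mathbb{T}_{{\rm Coker}(F)}$ with connected kernel $\mathbb{T}_N$, so $\psi^{-1}(\mathbb{V}(f_1))$ is integral and $(g_1)$ is prime. These are two phrasings of the same fact---the paper's basis extension \emph{is} a choice of splitting of the short exact sequence of tori, which trivializes $\psi$ to a projection $\mathbb{T}_N\times\mathbb{T}_{{\rm Coker}(F)}\to\mathbb{T}_{{\rm Coker}(F)}$. Your version is conceptually clean and makes explicit why reducedness (not just irreducibility) of $\mathbb{V}(g_1)$ is needed; the paper's version is more elementary, avoiding any appeal to scheme-theoretic properties of smooth morphisms.
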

\begin{proof}
Let $v_{1}, \ldots, v_{s}$ be the vectors of $\mathbb{Z}^{\ell}$ corresponding to the first $s$ lines of $P$. We may find $v_{s+1}, \ldots, v_{\ell}$ such that $(v_{1}, \ldots, v_{\ell})$ is a basis of $\mathbb{Z}^{\ell}$. Denote 
by $v_{i} =  (v_{i, 1}, \ldots, v_{i, \ell})$ the coordinates of $v_{i}$ and set 
$X_{i}  := \prod_{j =  1}^{\ell} T_{j}^{v_{i, j}}.$
Then
$$\mathbb{C}[T_{1}, T_{1}^{-1}, \ldots, T_{\ell}, T_{\ell}^{-1}] =  \mathbb{C}[X_{1}, X_{1}^{-1}, \ldots, X_{\ell}, X_{\ell}^{-1}]
\text{ and }g_{1}(T_{1}, \ldots, T_{\ell}) =  f_{1}(X_{1},\ldots, X_{s}).$$  Set $R =  \mathbb{C}[X_{1}, \ldots, X_{\ell}]$. Let $S, V\in R$ such that $f_{1}(X_{1}, \ldots, X_{s})  =  SV$. Since 
${\rm deg}_{X_{i}}(S)  + {\rm deg}_{X_{i}}(V)  =  {\rm deg}_{X_{i}}(f_{1}(X_{1},\ldots, X_{s}) =  0$ for $i =  s+1, \ldots, \ell$, we have $S, V\in \mathbb{C}[X_{1},\ldots, X_{s}]$. But $f_{1}(X_{1},\ldots, X_{s})$ is irreducible in $\mathbb{C}[X_{1}, X_{1}^{-1}, \ldots, X_{s}, X_{s}^{-1}]$ by assumption. So we may 
assume that $S$ is a monomial and $V$ is a product of a monomial and an irreducible element of $R$,  whence the irreducibility of $g_{1}$ in $\mathbb{C}[T_{1}, T_{1}^{-1}, \ldots, T_{\ell}, T_{\ell}^{-1}]$. Thus, the result follows.
\end{proof}
\begin{corollary}\label{xoro-expliteqvarT}
Any subvariety $X\subset \mathbb{A}^{\ell}_{\mathbb{C}}$ with linear torus action of complexity one has a
decomposition $X  =  \mathbb{V}(u_{1}g_{1}, \ldots, u_{a}g_{a})$ as in Lemma \ref{lem-equa-tvar}.
\end{corollary}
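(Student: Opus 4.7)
The plan is to reverse-engineer the construction of Lemma \ref{lem-equa-tvar}: starting from $X\subset \mathbb{A}^{\ell}_{\mathbb{C}}$ with complexity-one linear torus action, I will produce a weight package, an irreducible curve $C$, generators of $I_{C}$, and Laurent monomials $u_{i}$ that recover $X$.

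First I attach to $X$ its weight package $\theta=(\overline{N},N,F,S,\Sigma)$ as in Definition \ref{def-weightpack}. The splitting $S$ induces an isomorphism $\overline{N}\simeq N\oplus {\rm Coker}(F)$, hence $\mathbb{G}\simeq \mathbb{T}_{N}\times \mathbb{T}_{{\rm Coker}(F)}$, under which the rational quotient $\psi:\mathbb{A}^{\ell}_{\mathbb{C}}\dashrightarrow X_{\Sigma}$ of Lemma \ref{lem-Ctheta-Ctheta} restricts to the second projection $\psi_{|\mathbb{G}}:\mathbb{G}\to \mathbb{T}_{{\rm Coker}(F)}$. Since the $\mathbb{T}_{N}$-action on $X$ has complexity one and $X$ meets $\mathbb{G}$, the image $\psi(X\cap \mathbb{G})$ is an irreducible constructible subset of dimension one; let $C$ denote its Zariski closure in $\mathbb{T}_{{\rm Coker}(F)}$, an irreducible curve. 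Combined with the $\mathbb{T}_{N}$-stability of $X$, the complexity-one hypothesis gives $X\cap \mathbb{G}=\psi^{-1}(C)$ inside $\mathbb{G}$.

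Next I select a generating system $(f_{1},\ldots,f_{a})$ of the defining ideal $I_{C}\subset \mathbb{C}[\mathbb{T}_{{\rm Coker}(F)}]$. Pulling back via the substitution $x_{k}\mapsto \prod_{j=1}^{\ell}T_{j}^{b_{k,j}}$, where $(b_{k,j})$ is the matrix $P$ coming from the matrix factorization of $\theta$, produces Laurent polynomials $g_{1},\ldots,g_{a}\in\mathbb{C}[\mathbb{G}]$ generating the ideal of $\psi^{-1}(C)=X\cap \mathbb{G}$. For each $i$ I clear denominators by choosing a Laurent monomial $u_{i}$ so that $u_{i}g_{i}\in \mathbb{C}[T_{1},\ldots,T_{\ell}]$ is irreducible, as provided by Proposition \ref{prop-Construction2-irredhyper}. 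Since $u_{i}g_{i}$ vanishes on $X\cap \mathbb{G}$ and therefore on its Zariski closure $X$, the inclusion $X\subset \mathbb{V}(u_{1}g_{1},\ldots,u_{a}g_{a})$ is automatic.

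The principal obstacle is the reverse inclusion: a priori $\mathbb{V}(u_{1}g_{1},\ldots,u_{a}g_{a})$ could carry extraneous components supported in the coordinate hyperplane arrangement $\mathbb{A}^{\ell}_{\mathbb{C}}\setminus \mathbb{G}$. Any irreducible component meeting $\mathbb{G}$ must coincide with $\overline{X\cap \mathbb{G}}=X$ by irreducibility of $\psi^{-1}(C)$, so the only issue is potential spurious components entirely contained in the hyperplane arrangement. To eliminate them I enlarge the generating system of $I_{C}$ as follows: the prime ideal $I(X)\subset \mathbb{C}[T_{1},\ldots,T_{\ell}]$ equals the contraction $(g_{1},\ldots,g_{a})\mathbb{C}[\mathbb{G}]\cap \mathbb{C}[T_{1},\ldots,T_{\ell}]$, so by Noetherianity a finite list of generators suffices; each such generator lies in the extension ideal and decomposes as a $\mathbb{C}$-linear combination of terms of the shape $T^{\nu}g_{i}$, which after suitable monomial adjustments take the required form $u\cdot g$ with $g$ the substitution of an element of $I_{C}$. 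Adjoining these finitely many additional equations to the initial list realizes $X$ exactly as $\mathbb{V}(u_{1}g_{1},\ldots,u_{a}g_{a})$, completing the presentation required by Lemma \ref{lem-equa-tvar}.
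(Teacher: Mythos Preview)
Your setup is correct and you rightly isolate the reverse inclusion as the crux. The gap is in the last paragraph: you claim that a generator $h$ of $I(X)$, written as a $\mathbb{C}$-linear combination of terms $T^{\nu}g_{i}$, ``after suitable monomial adjustments'' takes the form $u\cdot g$ with $u$ a monomial and $g$ the pullback of a single element of $I_{C}$. This does not follow from what you have written. A linear combination of monomial multiples of the $g_{i}$ is in general not itself a monomial times one invariant function, and adjoining each individual term $T^{\nu}g_{i}$ as a separate equation does not visibly cut out $\mathbb{V}(h)$. (There is also a minor slip earlier: Proposition~\ref{prop-Construction2-irredhyper} requires $f_{i}$ irreducible, which you have not arranged.)

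The missing ingredient is the $M$-grading on $I(X)$: since $X$ is $\mathbb{T}_{N}$-stable, its ideal is $M$-homogeneous, so one may choose $M$-homogeneous generators $h_{i}$. Any $M$-homogeneous Laurent polynomial factors as a Laurent monomial of that weight times a $\mathbb{T}_{N}$-invariant, and the invariant factor is then automatically the pullback of some element of $I_{C}$. This is exactly how the paper proceeds, but from the opposite end: it starts with $M$-homogeneous generators of $I(X)$, splits each into irreducible homogeneous factors $h_{i,j}$, and for every factor that is not a coordinate applies Proposition~\ref{prop-Construction2-irredhyper} to the irreducible curve $\psi(\mathbb{G}\cap \mathbb{V}(h_{i,j}))$; irreducibility of $u_{i,j}g_{i,j}$ then forces $\mathbb{V}(u_{i,j}g_{i,j})=\mathbb{V}(h_{i,j})$ on the nose, so no extraneous component ever appears. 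Coordinate factors are absorbed into the monomial part by setting $g_{i,j}=1$. Your route from $I_{C}$ upward can be salvaged once you invoke the $M$-grading, but the paper's route from $I(X)$ downward sidesteps the patching altogether.
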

\begin{proof}
The $\mathbb{T}_{N}$-action on $\mathbb{A}^{\ell}_{\mathbb{C}}$ induces an $M$-grading on the ring $R:= \mathbb{C}[x_{1}, \ldots, x_{\ell}]$. Declare an element of $R$ homogeneous if it is homogeneous with respect to the $M$-grading.
The vanishing ideal of $X$ in $R$ is generated by non-constant homogeneous polynomials $h_{1}, \ldots, h_{a}\in R$. Set $X_{i}:= \mathbb{V}(h_{i})\subset \mathbb{A}^{\ell}_{\mathbb{C}}$ and consider the decomposition
$$ X_{i} =  \bigcup_{j =  1}^{s_{i}}X_{i, j}, \,\, 1\leq i\leq a$$
in irreducible components. Since $X_{i}$ is $\mathbb{T}_{N}$-stable, each component $X_{i, j}$ is of the form $\mathbb{V}(h_{i, j})\subset \mathbb{A}^{\ell}_{\mathbb{C}}$ for some homogeneous irreducible $h_{i, j}\in R$. Let $E$ be the set of pairs $(i, j)$ with $1\leq i\leq a$ and $1\leq j\leq s_{i}$
such that $h_{i, j}$ is not a scalar multiplication of a coordinate $x_{k}$. By 
Proposition \ref{prop-Construction2-irredhyper} we have  for $(i, j)\in E$ a decomposition $X_{i, j}  =  \mathbb{V}(u_{i, j} g_{i, j})$
obtained by pulling back $\psi(\mathbb{G}\cap X_{i, j}) = \mathbb{V}(f_{i, j})\subset \mathbb{T}_{{\rm Coker}(F)}$ by the quotient $\psi: \mathbb{G}\rightarrow \mathbb{T}_{{\rm Coker}(F)}$, where $u_{i, j}$ is a monomial. 
For $(i, j)\not\in E$ with $1\leq i\leq a$ and $1\leq j\leq s_{i}$ we set $u_{i, j} =  h_{i, j}$ and $g_{i, j} =  1$. Furthermore, for $1\leq i\leq a$
write 
$$ u_{i} =  \prod_{j = 1}^{s_{i}} u_{i, j},\,\, g_{i} =  \prod_{j =  1}^{s_{i}}g_{i, j} \text{ and }f_{i}  =  \prod_{j =  1}^{s_{i}}f_{i, j}.$$
Then $X  =  \mathbb{V}(u_{1}g_{1}, \ldots, u_{r}g_{a})$. Moreover, the zero locus $\mathbb{V}(f_{1}, \ldots, f_{r})\subset \mathbb{T}_{{\rm Coker}(F)}$ is the image $\psi(X\cap \mathbb{G})$, whence the result. 
\end{proof}
\begin{lemma}\label{lem-P-matrix}
Let $\theta =  (\overline{N}, N, F, S, \Sigma)$ be a weight package and consider the sequence 
$\theta^{(i)} =  (\overline{N}, N, F^{(i)}, S^{(i)}, \Sigma^{(i)})$ that comes from the enhanced structure (see Definition \ref{def-enhance}).
Let 
$$\delta =  \delta^{(0)} =  \sum_{i =  0}^{\ell}\mathbb{Q}_{\geq 0}e_{i},\,\, \delta^{(j)} =  \mathbb{Q}_{\geq 0}e +\sum_{i\neq j}\mathbb{Q}_{\geq 0}e_{i} \text{ for }j = 1,2, \ldots, \ell,$$
be the maximal cones generated the fan of $\mathbb{P}_{\mathbb{C}}^{\ell}$, where $(e_{1}, \ldots, e_{\ell})$ is the standard
basis of $\overline{N}_{\mathbb{Q}} =  \mathbb{Q}^{\ell}$ and $e  = -\sum_{i =1}^{\ell}e_{i}$. Then $\Sigma^{(i)}$
is the fan generated by the cones $P(\delta')$, where $\delta'$ runs over the faces of $\delta^{(i)}$.
Moreover, we have the identity 
$$S^{(i)}(P^{(i)-1}(v)\cap \delta^{(0)})  =  S(P^{-1}(v)\cap \delta^{(i)})$$
for all $i\in\{1, 2, \ldots, \ell\}$ and $v\in |\Sigma^{(i)}|$. 
\end{lemma}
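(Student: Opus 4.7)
The plan is to exhibit a single lattice automorphism of $\overline{N}$ that intertwines $(P,S)$ with $(P^{(i)},S^{(i)})$ and carries $\delta^{(0)}$ onto $\delta^{(i)}$; both assertions then follow by transport of structure. Throughout, fix $i\in\{1,\ldots,\ell\}$, since the first claim for $i=0$ is simply the definition of $\Sigma=\Sigma^{(0)}$.

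Consider the linear map $\phi^{(i)}\colon\overline{N}_{\mathbb{Q}}\to\overline{N}_{\mathbb{Q}}$ defined by $\phi^{(i)}(e_1)=e$, $\phi^{(i)}(e_j)=e_{j-1}$ for $2\leq j\leq i$, and $\phi^{(i)}(e_j)=e_j$ for $i<j\leq\ell$. Because $e=-\sum_{j=1}^{\ell}e_j$, the vectors $e,e_1,\ldots,e_{i-1},e_{i+1},\ldots,e_{\ell}$ form a $\mathbb{Z}$-basis of $\overline{N}$, so $\phi^{(i)}$ is a lattice isomorphism. It sends the ray generators of $\delta^{(0)}$ onto those of $\delta^{(i)}$, hence $\phi^{(i)}(\delta^{(0)})=\delta^{(i)}$ and $\phi^{(i)}$ induces an inclusion-preserving bijection between faces of $\delta^{(0)}$ and faces of $\delta^{(i)}$.

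The row-sum conditions built into Definition~\ref{def-enhance} force the $0$-indexed column of $\widehat{P}$ (respectively $\widehat{S}$) to equal $P(e)$ (respectively $S(e)$). Combining this with the column-removal recipe producing $P^{(i)}$ and $S^{(i)}$, a direct evaluation on each basis vector $e_j$, with a small case split around $j=i$, yields the factorizations $P^{(i)}=P\circ\phi^{(i)}$ and $S^{(i)}=S\circ\phi^{(i)}$. The first claim follows at once: any generating cone $P^{(i)}(\delta_0)$ of $\Sigma^{(i)}$, with $\delta_0$ a face of $\delta^{(0)}$, equals $P(\phi^{(i)}(\delta_0))$, and $\phi^{(i)}(\delta_0)$ ranges over the faces of $\delta^{(i)}$ as $\delta_0$ ranges over those of $\delta^{(0)}$. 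For the second claim, the bijectivity of $\phi^{(i)}$ together with $\phi^{(i)}(\delta^{(0)})=\delta^{(i)}$ gives
$$S^{(i)}(P^{(i)-1}(v)\cap\delta^{(0)})=S(\phi^{(i)}(P^{(i)-1}(v)\cap\delta^{(0)}))=S(P^{-1}(v)\cap\delta^{(i)}).$$
The one piece of real work is the verification of these two factorizations, where the index shift produced by deleting the $i$-th column of $\widehat{P}$ and $\widehat{S}$ must be matched to the definition of $\phi^{(i)}$; once that is done, the rest of the argument is purely formal.
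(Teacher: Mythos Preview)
Your proof is correct and is essentially the same argument as the paper's. The paper writes out the identities $P^{(a)}(e_1)=P(e)$, $P^{(a)}(e_{j+1})=P(e_j)$ for $1\le j<a$, $P^{(a)}(e_j)=P(e_j)$ for $a<j\le\ell$ (which is exactly your factorization $P^{(i)}=P\circ\phi^{(i)}$ on basis vectors) and then introduces the map $f:\delta^{(i)}\to\delta^{(0)}$, which is precisely $(\phi^{(i)})^{-1}$ restricted to $\delta^{(i)}$; your packaging via a single lattice automorphism is a slightly cleaner formulation of the same computation.
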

\begin{proof}
First claim is consequence of the identities
$$P^{(a)}(e_{1}) =  P(e);\, P^{(a)}(e_{i + 1}) =  P(e_{i}) \text{ for }1\leq i\leq a;$$
$$P^{(a)}(e_{j}) =  P(e_{j}) \text{ for } a<j\leq \ell.$$
Similarly, consider the map 
$$f: \delta^{(i)}\rightarrow \delta^{(0)},\,\, \lambda e  + \sum_{j\neq i}\lambda_{j}e_{j}\mapsto \lambda e_{1} + \sum_{1\leq j\leq i}\lambda_{j} e_{j +1} + \sum_{i< j\leq \ell}\lambda_{j}e_{j}\,\, (\lambda_{j}, \lambda\in\mathbb{Q}_{\geq 0}).$$
Then
$$f(P^{-1}(v)\cap \delta^{(i)}) =  P^{(i)-1}(v)\cap \delta^{(0)}.$$
As $S^{(i)}(f(w)) =  S(w)$ for any $w\in \delta^{(i)}$, we conclude that 
$$S^{(i)}(P^{(i)-1}(v)\cap\delta^{(0)}) =  S^{(i)}(f(P^{-1}(v)\cap \delta^{(i)})) =  S(P^{-1}(v)\cap \delta^{(i)}),$$
finishing the proof of the lemma. 
\end{proof}

\begin{theorem}\label{theo-main-nonnorm}
\begin{itemize}
\item[(1)] Let $X\subset \mathbb{P}^{\ell}_{\mathbb{C}}$ be a subvariety with linear torus action of complexity one. Let $\theta$ be the weight package of $X$ and consider the sequence 
$$\theta^{(i)} =  (\overline{N}, N, F^{(i)}, S^{(i)}, \Sigma^{(i)}),\,\, 0\leq i\leq \ell,$$ that comes from the enhanced structure (see Definition  \ref{def-enhance}).
Let $\overline{\Sigma}$ be a fan with support $\bigcup_{i =  0}^{\ell}|\Sigma^{(i)}|$ such that for $0\leq i\leq \ell$
the set
$\overline{\Sigma}^{(i)} =  \{\sigma \in \overline{\Sigma}\,|\, \sigma \subset |\Sigma^{(i)}|\}$
is a projective fan subdivision of $\Sigma^{(i)}$. Denote by $\kappa^{(i)}: \widehat{C_{\theta^{(i)}}}\rightarrow X_{\Sigma^{(i)}}$ the map, which is the composition of the projective modification $f^{(i)}: X_{\overline{\Sigma}^{(i)}}\rightarrow X_{\Sigma^{(i)}}$, the inclusion $C_{\theta^{(i)}}'\rightarrow X_{\overline{\Sigma}^{(i)}},$ where $C_{\theta^{(i)}}'$ is the proper transform of $C_{\theta^{(i)}}$ under $f^{(i)}$, and the normalization 
$\widehat{C_{\theta^{(i)}}}\rightarrow C_{\theta^{(i)}}'$. 
Then the set
$$\{\bar{\mathfrak{D}}_{\theta}^{(i)} =  \kappa^{(i) \star}\mathfrak{D}_{\theta^{(i)}}\,\,|\,\, i =  0,1, \ldots, \ell\}$$
generates  a divisorial fan $\mathscr{E}_{\theta}$ over a smooth projective curve $\overline{C}$. Moreover the normalization of $X$ is equivariantly isomorphic to $X(\overline{C},\mathscr{E}_{\theta})$.
\item[(2)] Conservely, let $\theta =  (\overline{N}, N, F, S, \Sigma)$ be a weight package. Let $C\subset \mathbb{T}_{{\rm Coker}(F)} =  (\mathbb{C}^{\star})^{s}$ be an irreducible curve with defining ideal
$$I_{C} =  (f_{i}(x_{1}, \ldots, x_{s})\,|\, 1\leq i\leq a).$$
Consider the enhanced $P$-matrix 
$$\widehat{P} =   \begin{bmatrix} 
    b_{1,0} & b_{1,1} & \dots & b_{1, \ell}\\
    \vdots &  \vdots &     & \vdots  \\
    b_{s, 0} & b_{s,1} & \dots & b_{s, \ell}
    \end{bmatrix}\in {\rm Mat}_{s\times \ell + 1}(\mathbb{Z})$$
as in Definition \ref{def-enhance}. Set 
$$g_{i}(T_{0},T_{1}, \ldots, T_{\ell}):= f_{i}\left(\prod_{j = 0}^{\ell}T_{j}^{b_{1, j}}, \ldots, \prod_{j = 0}^{\ell}T_{j}^{b_{s, j}}\right)\text{ for } 1\leq i\leq a,$$
and assume that there exist Laurent monomials $u_{i}\in \mathbb{C}[T_{0}, T_{0}^{-1}, \ldots, T_{\ell}, T_{\ell}^{-1}]$ 
such that $X:= \mathbb{V}(u_{1}g_{1}, \ldots, u_{a}g_{a})\subset \mathbb{P}^{\ell}_{\mathbb{C}}$ is irreducible. Then $X$ is a subvariety with linear torus action of complexity one and its normalization is described by the divisorial fan $\mathscr{E}_{\theta}$ obtained from Contruction $(1)$. 
\item[$(3)$] If $X \subset \mathbb{P}^{\ell}_{\mathbb{C}}$ is a projective subvariety with linear torus of complexity one with weight package $\theta$, then $X$ admits a decomposition $X  =  \mathbb{V}(u_{1}g_{1}, \ldots, u_{a}g_{a})$ as in Assertion $(2)$. 
\end{itemize}
\end{theorem}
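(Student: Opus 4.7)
The plan is to reduce the entire theorem to the affine case (Theorem \ref{theo-aff-impl}) by working chart-by-chart on the standard affine charts $X^{(i)} = X \setminus \mathbb{V}(x_i)$ of $\mathbb{P}^{\ell}_{\mathbb{C}}$, and then gluing the resulting data into a single divisorial fan over a curve.

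For part (1), I would first verify that $F^{(i)}$ really is the weight matrix of the $\mathbb{T}$-action on $X^{(i)} \subset \mathbb{A}^{\ell}_{\mathbb{C}}$. Under the enhancement of Definition \ref{def-enhance}, passing from $X^{(0)}$ to $X^{(i)}$ corresponds to dehomogenizing with respect to a different coordinate, which amounts to subtracting the $i$-th row of the enhanced weight matrix $\widehat{F}$ from every row and then removing the resulting zero row; this is precisely the operation producing $F^{(i)}$. Consequently $\theta^{(i)}$ is the weight package of $X^{(i)}$, and Theorem \ref{theo-aff-impl} realizes the normalization of $X^{(i)}$ as $X(\widehat{C_{\theta^{(i)}}}, \bar{\mathfrak{D}}_{\theta^{(i)}})$, while Proposition \ref{prop-sub-kappa} ensures that taking the pullback through the refined projective toric modification $f^{(i)}$ produces the same polyhedral divisor.

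Next, I would identify all the curves $\widehat{C_{\theta^{(i)}}}$ with a common smooth projective model $\overline{C}$. Each $\widehat{C_{\theta^{(i)}}}$ has function field $\mathbb{C}(X)^{\mathbb{T}}$, because the rational quotient $X \dashrightarrow C$ is intrinsic and chart-independent (cf. Lemma \ref{lem-Ctheta-Ctheta}); hence each $\widehat{C_{\theta^{(i)}}}$ embeds as an affine open subset of the smooth projective curve $\overline{C}$ with function field $\mathbb{C}(X)^{\mathbb{T}}$. The collection $\{\bar{\mathfrak{D}}_{\theta}^{(i)} : 0 \leq i \leq \ell\}$, closed under componentwise intersection, is then a candidate divisorial fan over $\overline{C}$. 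The compatibility conditions needed to form a divisorial fan are supplied by Lemma \ref{lem-P-matrix}: the identity $S^{(i)}(P^{(i)-1}(v) \cap \delta^{(0)}) = S(P^{-1}(v) \cap \delta^{(i)})$ expresses precisely that the polyhedral coefficients on overlaps transform correctly under the change-of-chart map, so that the affine pieces glue into a single normal $\mathbb{T}$-variety $\widehat{X}$ described by $\mathscr{E}_{\theta}$, which is the normalization of $X$.

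For parts (2) and (3) I would argue symmetrically via each affine chart. For (2), restricting the hypothesized defining equations to $X^{(i)}$ places us in the setting of Lemma \ref{lem-equa-tvar} applied to the weight package $\theta^{(i)}$: the Laurent polynomial $g_k$ becomes a polynomial $u_k g_k$ on $\mathbb{A}^{\ell}_{\mathbb{C}}$, and irreducibility of $X$ in $\mathbb{P}^{\ell}_{\mathbb{C}}$ transfers to $X^{(i)}$ for any chart meeting $X$. Hence $X^{(i)}$ is an affine subvariety with linear torus action of complexity one whose normalization is $X(\widehat{C_{\theta^{(i)}}}, \bar{\mathfrak{D}}_{\theta^{(i)}})$, and the gluing from part (1) identifies the global normalization with $X(\overline{C}, \mathscr{E}_{\theta})$. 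For (3), applying Corollary \ref{xoro-expliteqvarT} to each affine chart $X^{(i)}$ yields a decomposition into hypersurfaces of the prescribed form coming from the defining ideal of the intrinsic image curve $\psi(X \cap \mathbb{G}) \subset \mathbb{T}_{{\rm Coker}(F)}$; since this image is chart-independent, the local decompositions lift to a single set of global defining equations $u_k g_k$ on $\mathbb{P}^{\ell}_{\mathbb{C}}$. The main obstacle will be the gluing step in part (1), that is, checking that the affine $\mathbb{T}$-varieties $X(\widehat{C_{\theta^{(i)}}}, \bar{\mathfrak{D}}_{\theta^{(i)}})$ assemble coherently; this is where Lemma \ref{lem-P-matrix} does the decisive bookkeeping, translating the enhancement operation on weight packages into the transition between homogeneous coordinates on $\mathbb{P}^{\ell}_{\mathbb{C}}$, after which everything reduces to the affine theory already established.
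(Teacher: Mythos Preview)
Your outline is essentially the same strategy as the paper's proof: reduce to the affine charts via Theorem \ref{theo-aff-impl} and Proposition \ref{prop-sub-kappa}, then glue. Parts (2) and (3) match the paper almost verbatim (Lemma \ref{lem-equa-tvar} chartwise for (2), Corollary \ref{xoro-expliteqvarT} plus homogenization for (3)).

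The one substantive difference is in how the gluing in part (1) is justified. You invoke Lemma \ref{lem-P-matrix} directly to check that the polyhedral coefficients match on overlaps, but you do not spell out why the resulting intersections $X(\bar{\mathfrak{D}}_{\theta}^{(i)} \cap \bar{\mathfrak{D}}_{\theta}^{(j)}) \hookrightarrow X(\bar{\mathfrak{D}}_{\theta}^{(i)})$ are open immersions, which is the nontrivial part of the divisorial fan axioms. The paper handles this by an extra intermediate step: it first uses Lemma \ref{lem-P-matrix} together with Lemma \ref{lem-poltheta} and \cite[Proposition 4.3, Theorem 5.6]{AHS08} to build a divisorial fan $\overline{\mathscr{E}}$ over the higher-dimensional base $X_{\overline{\Sigma}}$ describing the ambient $\mathbb{P}^{\ell}_{\mathbb{C}}$ itself as a $\mathbb{T}_N$-variety. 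The gluing for $\mathscr{E}_{\theta}$ is then \emph{inherited} from that of $\overline{\mathscr{E}}$ via the normalization map $\eta:\widehat{X}\to \mathbb{P}^{\ell}_{\mathbb{C}}$, since $X(\bar{\mathfrak{D}}_{\theta}^{(i)})\cap X(\bar{\mathfrak{D}}_{\theta}^{(j)}) = \eta^{-1}\bigl(X(f^{(i)\star}\mathfrak{D}_{\theta^{(i)}})\cap X(f^{(j)\star}\mathfrak{D}_{\theta^{(j)}})\bigr)$. This device cleanly converts the gluing problem for $X$ into the already-known gluing for $\mathbb{P}^{\ell}_{\mathbb{C}}$, whereas your direct approach would still need to verify the open-immersion condition by hand.
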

\begin{proof}
$(1)$ Lemma \ref{lem-P-matrix} implies that the  $X_{\overline{\Sigma}^{(i)}}$ define a Zariski open covering 
of $X_{\overline{\Sigma}}$, while Lemma \ref{lem-poltheta} gives that $X(\mathfrak{D}_{\theta^{(i)}})$ is $\mathbb{T}_{N}$-isomorphic to $X_{\delta^{(i)}}$ for $0\leq i\leq \ell$. So it follows from \cite[Proposition 4.3]{AHS08} (see the proof argument of \cite[Theorem 5.6]{AHS08})
that the set 
$$\{f^{(i)\star}(\mathfrak{D}_{\theta^{(i)}})\,\,|\,\, i = 0,1, \ldots, \ell\}$$
generates a divisorial fan $\overline{\mathscr{E}}$ over $(X_{\overline{\Sigma}}, N)$ describing the $\mathbb{T}_{N}$-variety $\mathbb{P}_{\mathbb{C}}^{\ell}$. Let $\eta: \widehat{X}\rightarrow X(\overline{\mathscr{E}})\simeq \mathbb{P}^{\ell}_{\mathbb{C}}$ be the composition of the the normalization $\widehat{X}\rightarrow X$ with the closed immersion $X\hookrightarrow \mathbb{P}^{\ell}_{\mathbb{C}}$. Note that the polyhedral divisorial $\bar{\mathfrak{D}}_{\theta}^{(i)}$
encodes
the normalization of the affine $\mathbb{T}_{N}$-variety
$X^{(i)}: =  X_{\delta^{(i)}}\cap X$ for $0\leq i\leq \ell$ by virtue of  Proposition \ref{prop-sub-kappa} and Theorem \ref{theo-aff-impl}. So for all $i, j\in \{0, 1, \ldots, \ell\}$ we have the natural identifications
$$X(\mathfrak{D}_{\theta^{(i)}})\cap X(\mathfrak{D}_{\theta^{(j)}}) \simeq \eta^{-1}(X(f^{(i)\star}(\mathfrak{D}_{\theta^{(i)}}) \cap f^{(i)\star}(\mathfrak{D}_{\theta^{(j)}}))) \simeq  X(  \kappa^{(i) \star}\mathfrak{D}_{\theta_{(i)}}\cap   \kappa^{(j) \star}\mathfrak{D}_{\theta_{(j)}}),$$
showing that $\mathscr{E}_{\theta}$ generates a divisorial fan.  We conclude that  the normalization of $X$ is equivariantly isomorphic to $X(\mathscr{E}_{\theta})$. 
\\

$(2)$ Consequence of Lemma \ref{lem-equa-tvar} applied to each chart $X^{(i)} =  X\cap X_{\delta^{(i)}}$ and Construction $(1)$.
\\

$(3)$ Follows from the affine case (see Corollary \ref{xoro-expliteqvarT}) via homogeneization. 
\end{proof}
\subsection{Geometry of the contraction space}
From the suggestion in \cite[Section 4.2, Remark 15]{AIPSV12},
we now study contraction spaces of torus actions of complexity one via weight packages. 
\begin{lemma}\label{lemma-Delta}
Let $\theta =  (\overline{N} =  \mathbb{Z}^{\ell}, N, S, F, \Sigma)$ be a weight package. Let $\Delta$ be the smallest fan of $\overline{N}_{ \mathbb{Q}} =  \mathbb{Q}^{\ell}$ with support $\delta = \mathbb{Q}_{\geq 0}^{\ell}$ such that the map
$$\gamma:\Delta\rightarrow \Sigma, \,\, \delta_{0}\mapsto P(\delta_{0}),$$
induced by the matrix factorization of $\theta$, is a fan morphism. Then the toric $\mathbb{G}$-variety $X_{\Delta}$ is
 the normalization $Z$ of the Zariski closure of the graph of the $\mathbb{T}_{N}$-invariant rational map $\mathbb{A}_{\mathbb{C}}^{\ell}\dashrightarrow X_{\Sigma}$ in $\mathbb{A}_{\mathbb{C}}^{\ell}\times X_{\Sigma}$. 
\end{lemma}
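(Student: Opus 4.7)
My plan is to identify $X_\Delta$ with $Z$ by producing mutually inverse $\mathbb{G}$-equivariant morphisms, exploiting the minimality of $\Delta$ (every fan $\Delta'$ with support $\delta$ for which $P$ is a fan morphism to $\Sigma$ refines $\Delta$) together with the universal property of normalization.

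First, I would combine the two tautological fan morphisms out of $\Delta$: the identity $\overline{N}\to\overline{N}$, viewed as a fan morphism $\Delta\to\{\text{faces of }\delta\}$ (valid because $|\Delta|=\delta$), and $\gamma:\Delta\to\Sigma$. These yield toric morphisms $X_\Delta\to\mathbb{A}^\ell_{\mathbb{C}}$ (proper birational) and $X_\Delta\to X_\Sigma$, assembling into a $\mathbb{G}$-equivariant morphism $\Phi:X_\Delta\to\mathbb{A}^\ell_{\mathbb{C}}\times X_\Sigma$. On the dense torus $\mathbb{G}\subset X_\Delta$, the two components of $\Phi$ are the inclusion $\mathbb{G}\hookrightarrow\mathbb{A}^\ell_{\mathbb{C}}$ and the quotient $\psi:\mathbb{G}\to\mathbb{T}_{\mathrm{Coker}(F)}\subset X_\Sigma$ induced by $P$, so $\Phi$ restricts on $\mathbb{G}$ to the graph embedding of the rational quotient. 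Irreducibility and normality of $X_\Delta$ then lift $\Phi$, through the closure $\overline{\Gamma}$ of the graph, to a unique morphism $\Psi:X_\Delta\to Z$.

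For the reverse direction, I would observe that the diagonal $\mathbb{G}$-action on $\mathbb{A}^\ell_{\mathbb{C}}\times X_\Sigma$ (acting on $X_\Sigma$ through $\psi$) preserves $\overline{\Gamma}$ and lifts to the normalization $Z$, whose dense orbit is canonically identified with $\mathbb{G}$ via the graph. Hence $Z\simeq X_{\Delta_Z}$ for a unique fan $\Delta_Z$ in $\overline{N}_\mathbb{Q}$, and the $\mathbb{G}$-equivariant projections $Z\to\mathbb{A}^\ell_{\mathbb{C}}$ and $Z\to X_\Sigma$ force the identity of $\overline{N}$ and the map $P$ to be fan morphisms $\Delta_Z\to\{\text{faces of }\delta\}$ and $\Delta_Z\to\Sigma$, respectively. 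The minimality of $\Delta$ then forces $\Delta_Z$ to refine $\Delta$, yielding a toric morphism $Z\to X_\Delta$. On the dense torus both compositions $Z\to X_\Delta\to Z$ and $X_\Delta\to Z\to X_\Delta$ coincide with the identity, so by separatedness and $\mathbb{G}$-equivariance $\Psi$ is an isomorphism.

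The main obstacle I foresee is verifying that $|\Delta_Z|$ is not a proper subset of $\delta$, which is needed to apply the minimality of $\Delta$. I would handle this by a one-parameter subgroup argument: for any primitive $v\in\delta\cap\overline{N}$ the limit $\lim_{t\to 0}\lambda_v(t)$ exists in $\mathbb{A}^\ell_{\mathbb{C}}$ (since $v\in\delta$) and in $X_\Sigma$ (since $P(v)\in P(\delta)=|\Sigma|$), hence in $\overline{\Gamma}$, and lifts through the finite normalization $Z\to\overline{\Gamma}$ to a $\lambda_v$-fixed point of $Z$. This places $v$ in $|\Delta_Z|$, giving $|\Delta_Z|=\delta$ and placing $\Delta_Z$ into the class of fans over which $\Delta$ is minimal, which closes the argument.
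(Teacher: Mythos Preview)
Your argument is correct, but it takes a genuinely different route from the paper's. Both proofs begin the same way, producing the morphism $\Psi:X_\Delta\to Z$ from the pair of toric projections and the universal property of normalization. The divergence is in showing $\Psi$ is an isomorphism. The paper argues directly that $\Psi$ is proper (because its composition with $Z\to\mathbb{A}^\ell_{\mathbb{C}}$ is the proper map $\pi_\Delta$) and affine (because both $p:X_\Delta\to X_\Sigma$ and $Z\to X_\Sigma$ are affine, the latter since $\overline{\Gamma}$ is closed in the affine-over-$X_\Sigma$ variety $\mathbb{A}^\ell_{\mathbb{C}}\times X_\Sigma$ and normalization is finite); hence $\Psi$ is finite birational onto the normal target $Z$, and Zariski's Main Theorem finishes. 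Your approach instead exploits that $Z$ is itself toric with some fan $\Delta_Z$ supported in $\delta$, and uses the explicit description of $\Delta$ as the fan generated by the cones $P^{-1}(\tau)\cap\delta$ to force $\Delta_Z$ to refine $\Delta$, manufacturing an inverse morphism. The paper's argument is shorter and avoids analyzing $\Delta_Z$; yours is more combinatorial and, as a byproduct, identifies $\Delta_Z=\Delta$ explicitly. Incidentally, your one-parameter-subgroup verification that $|\Delta_Z|=\delta$ is not strictly needed: once you know each cone of $\Delta_Z$ lies in $\delta$ and maps under $P$ into a cone of $\Sigma$, it automatically lies in some $P^{-1}(\tau)\cap\delta\in\Delta$, so the refinement and hence the inverse map exist without checking the full support; the compositions then agree with the identity on the dense torus and separatedness concludes.
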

\begin{proof}
Existence and unicity of $\Delta$ follow from that $\Delta$ must be the fan of $\overline{N}_{\mathbb{Q}}$
generated by the cone $P^{-1}(\tau)\cap \delta$, where $\tau$ runs over $\Sigma$. The map $\gamma$ induces a toric morphism
$p: X_{\Delta}\rightarrow X_{\Sigma}$. We see that for any $\sigma\in\Sigma_{\rm max}$,
we have $p^{-1}(X_{\sigma}) =  X_{P^{-1}(\sigma)\cap \delta}$.  So $p$ is affine. The existence of two morphisms $p:X_{\Delta}\rightarrow X_{\Sigma}$
and $\pi_{\Delta}: X_{\Delta}\rightarrow \mathbb{A}_{\mathbb{C}}^{\ell}$ induces a morphism $\varphi: X_{\Delta}\rightarrow Z$, which is birational. Since the composition $\pi_{\Delta}$ of $\varphi$ with the natural morphism $Z\rightarrow \mathbb{A}_{\mathbb{C}}^{\ell}$ is proper, $\varphi$ is proper.
Similarly, as the quotients $p:X_{\Delta}\rightarrow X_{\Sigma}$ and $Z\rightarrow X_{\Sigma}$ are affine, the morphism $\varphi$ is affine. So $\varphi$ is birational finite and by Zariski's Main Theorem, $\varphi$ is an isomorphism, proving the lemma. 
\end{proof}

\begin{definition}
The fan $\Delta$ in Lemma \ref{lemma-Delta} is the \emph{lifting fan} of the weight package $\theta$. 
\end{definition}
\begin{corollary}\label{Cor-Delta1}
Let $X\subset \mathbb{A}_{\mathbb{C}}^{\ell}$ be a subvariety with linear torus action of complexity one  and let $\theta$ be its weight package.
With the notation of Lemma \ref{lemma-Delta}, consider the toric map $\pi_{\Delta}: X_{\Delta}\rightarrow \mathbb{A}_{\mathbb{C}}^{\ell}$, where $\Delta$ is the lifting fan of $\theta$. Then the normalization of the proper transform of $X$ 
under $\pi_{\Delta}$ is the contraction space $\widetilde{X}$ of $X$ and the morphism  $\pi': \widetilde{X}\rightarrow X$ induced by restriction of $\pi_{\Delta}$ is the contraction morphism $\pi: \widetilde{X}\rightarrow X$. 
\end{corollary}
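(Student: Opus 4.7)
\emph{Proof plan.} The strategy is to leverage Lemma \ref{lemma-Delta}, which identifies $X_{\Delta}$ with the normalization of the graph closure of the $\mathbb{T}_{N}$-invariant rational quotient $\psi: \mathbb{A}^{\ell}_{\mathbb{C}} \dashrightarrow X_{\Sigma}$, and then restrict everything to $X$, matching the result with the characterization of the contraction space from \cite[Section 3, Lemma 1]{Vol10} (recalled in Section \ref{sec-class-act}) as the normalization of the graph of the rational quotient $\iota: X \dashrightarrow \widehat{C_{\theta}}$.

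First, I would set $X' := \overline{\pi_{\Delta}^{-1}(X \cap \mathbb{G})} \subset X_{\Delta}$, the proper transform of $X$, and let $\widehat{X'}$ denote its normalization. Since $\pi_{\Delta}$ is a toric modification, hence proper birational, the induced morphism $\widehat{X'} \to X$ is proper, birational, and $\mathbb{T}_{N}$-equivariant. Next, using Lemma \ref{lemma-Delta}, $X_{\Delta}$ is identified with the normalization of the graph closure $\Gamma_{\psi} \subset \mathbb{A}^{\ell}_{\mathbb{C}} \times X_{\Sigma}$, and the toric projection $p: X_{\Delta} \to X_{\Sigma}$ corresponds to the second projection. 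Restricting $p$ to $X'$ gives a morphism with dense image $\psi(X \cap \mathbb{G})$, whose Zariski closure is exactly $C_{\theta}$ by Lemma \ref{lem-Ctheta-Ctheta}. Passing to normalizations yields a morphism $q: \widehat{X'} \to \widehat{C_{\theta}}$ extending $\iota$.

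Combining $\widehat{X'} \to X$ with $q$ produces a morphism $\widehat{X'} \to X \times \widehat{C_{\theta}}$ whose image is contained in the graph closure $\Gamma_{\iota}$ of $\iota: X \dashrightarrow \widehat{C_{\theta}}$. This morphism is birational, since $X' \cap \pi_{\Delta}^{-1}(\mathbb{G}) \simeq X \cap \mathbb{G}$ via $\pi_{\Delta}$ and matches the graph of $\iota_{|X \cap \mathbb{G}}$, and it is proper since $\widehat{X'}$ is proper over $X$. Being birational and proper with a normal source, Zariski's Main Theorem identifies $\widehat{X'}$ with the normalization of $\Gamma_{\iota}$. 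By the cited result of Volmert, this normalization is the contraction space $\widetilde{X}$, and the first projection is precisely the contraction map $\pi$.

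The main point to verify carefully is that $p_{|X'}$ is dominant onto $C_{\theta}$ with generic fibers being the generic $\mathbb{T}_{N}$-orbits of $X$; concretely, that $X' \cap \pi_{\Delta}^{-1}(\mathbb{G})$ surjects onto the open $\mathbb{T}_{{\rm Coker}(F)}$-orbit of $C_{\theta}$. This amounts to unwinding the construction of $\Delta$ in Lemma \ref{lemma-Delta}: on the open set $\pi_{\Delta}^{-1}(\mathbb{G}) \simeq \mathbb{G}$, the morphism $p$ coincides with $\psi$, and by Lemma \ref{lem-Ctheta-Ctheta} the restriction $\psi_{|X \cap \mathbb{G}}: X \cap \mathbb{G} \to \psi(X \cap \mathbb{G})$ is a geometric $\mathbb{T}_{N}$-quotient. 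Once this is established, the matching of $\widehat{X'}$ with $\widetilde{X}$ as normalizations of graph closures birational to $X$ over $\widehat{C_{\theta}}$ is formal, and the identification of the natural projection with $\pi$ is immediate.
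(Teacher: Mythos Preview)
Your proposal is correct and follows essentially the same approach as the paper: both use Lemma~\ref{lemma-Delta} to identify $X_{\Delta}$ with the normalization of the graph closure of $\psi$, observe that the proper transform of $X$ is the closure of $\{(x,\psi(x))\mid x\in X\cap\mathbb{G}\}$, and conclude via the Vollmert characterization of the contraction space. The paper's proof is much terser---it works directly inside the graph closure $S\subset\mathbb{A}^{\ell}_{\mathbb{C}}\times X_{\Sigma}$ rather than passing through $\widehat{C_{\theta}}$ and an explicit Zariski's Main Theorem argument---but your more detailed route through $X\times\widehat{C_{\theta}}$ arrives at the same identification.
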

\begin{proof}
Let $S$ be the Zariski closure of the graph of $\mathbb{A}_{\mathbb{C}}^{\ell}\dashrightarrow X_{\Sigma}$ in $\mathbb{A}_{\mathbb{C}}^{\ell}\times X_{\Sigma}$. The proper transform $X_{1}$ of $X$ under the natural map $S\rightarrow \mathbb{A}_{\mathbb{C}}^{\ell}$ is the closure
of 
$\{(x, \psi(x))\,|\, x\in X\cap \mathbb{G}\}\subset S,$ where $\psi:\mathbb{G}\rightarrow \mathbb{T}_{{\rm Coker}(F)}$ is the quotient. Thus the normalization of $X_{1}$ is $\widetilde{X}$.
\end{proof}
\begin{corollary}
With the notation of Corollary \ref{Cor-Delta1}, assume $X_{\Sigma}$ affine. Then the rational $\mathbb{T}_{N}$-invariant map 
$\mathbb{A}_{\mathbb{C}}^{\ell}\dashrightarrow X_{\Sigma}$ is the global quotient $\mathbb{A}_{\mathbb{C}}^{\ell}\rightarrow \mathbb{A}_{\mathbb{C}}^{\ell} /\!\!/\mathbb{T}_{N}$. In particular, the contraction morphism $\pi:\widetilde{X}\rightarrow X$
is  the normalization of $X$.
\end{corollary}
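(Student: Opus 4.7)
The plan is to show that, under the affineness assumption, the lifting fan $\Delta$ of Lemma \ref{lemma-Delta} reduces to the fan $\Delta_{0}$ consisting of the faces of $\delta$, so that $X_{\Delta} = \mathbb{A}_{\mathbb{C}}^{\ell}$ and $\pi_{\Delta}$ is the identity. Once this is in place, the claim that $\pi:\widetilde{X}\to X$ is the normalization will follow immediately from Corollary \ref{Cor-Delta1}, and the identification of the rational quotient with the GIT quotient will be a routine ring-of-invariants computation.

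The first step would be to unpack the affineness hypothesis on $X_{\Sigma}$. Since $\Sigma$ must then be the fan of faces of a single strictly convex cone $\sigma_{\max}\subset {\rm Coker}(F)_{\mathbb{Q}}$, and since $\Sigma$ is by construction the coarsest fan containing all the cones $P(\delta_{0})$ with $\delta_{0}\preceq \delta$, I expect to conclude $\sigma_{\max} = P(\delta)$ because $P(\delta)$ is the largest of those cones and already lies in $\Sigma$. Given this, for every face $\delta_{0}$ of $\delta$ one has $P(\delta_{0})\subset P(\delta)\in\Sigma$, which says precisely that the map $\gamma:\delta_{0}\mapsto P(\delta_{0})$ is already a fan morphism from $\Delta_{0}$ to $\Sigma$. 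By the minimality defining $\Delta$, I then get $\Delta = \Delta_{0}$, and hence $X_{\Delta}\simeq \mathbb{A}_{\mathbb{C}}^{\ell}$ with $\pi_{\Delta}={\rm id}$.

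Next, under this identification the toric morphism $p:X_{\Delta}\to X_{\Sigma}$ of Lemma \ref{lemma-Delta} becomes an honest regular morphism $q:\mathbb{A}_{\mathbb{C}}^{\ell}\to X_{\Sigma}$, and on a dense open subset it coincides with the $\mathbb{T}_{N}$-invariant rational map of the statement. To see that $q$ is the global GIT quotient $\mathbb{A}_{\mathbb{C}}^{\ell}\to \mathbb{A}_{\mathbb{C}}^{\ell}/\!\!/\mathbb{T}_{N}$, I would use the splitting $\overline{M} = M\oplus {\rm Coker}(F)^{\vee}$ coming from the section $S$ in the weight package: the $\mathbb{T}_{N}$-invariants in $\mathbb{C}[\mathbb{A}_{\mathbb{C}}^{\ell}] = \mathbb{C}[\delta^{\vee}\cap\overline{M}]$ are spanned by the monomials $\chi^{w}$ with $w\in \delta^{\vee}\cap{\rm Coker}(F)^{\vee}$, and this intersection is easily identified with $P(\delta)^{\vee}\cap{\rm Coker}(F)^{\vee}$ via the duality between $P$ and its transpose. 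Thus $\mathbb{C}[\mathbb{A}_{\mathbb{C}}^{\ell}]^{\mathbb{T}_{N}}=\mathbb{C}[X_{P(\delta)}]=\mathbb{C}[X_{\Sigma}]$, and $q$ and the GIT quotient, being morphisms between the same affine schemes extending the same rational map, must agree.

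Finally, I would plug $\pi_{\Delta}={\rm id}$ into Corollary \ref{Cor-Delta1}: the proper transform of $X$ under the identity is $X$ itself, so $\widetilde{X}$ is the normalization of $X$ and $\pi$ is the normalization map. I do not anticipate any real obstacle here, since the whole statement degenerates once one observes that the lifting fan collapses to $\Delta_{0}$; the only point requiring minor care is the clean verification that $\gamma$ already is a fan morphism out of $\Delta_{0}$, which is automatic from $P(\delta)\in\Sigma$.
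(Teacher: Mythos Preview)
Your proposal is correct and follows essentially the same line as the paper's proof: both reduce to showing that the lifting fan $\Delta$ coincides with the fan $\Delta_{0}$ of faces of $\delta$, so that $X_{\Delta}=\mathbb{A}_{\mathbb{C}}^{\ell}$ and $\pi_{\Delta}$ is the identity, after which Corollary~\ref{Cor-Delta1} finishes the argument. The only cosmetic difference is that the paper deduces $\Delta=\Delta_{0}$ by first observing that $p:X_{\Delta}\to X_{\Sigma}$ is affine (from the proof of Lemma~\ref{lemma-Delta}), hence $X_{\Delta}$ is affine with support $\delta$, whereas you argue directly via the minimality of $\Delta$ and the fact that $\Sigma$ has a single maximal cone.
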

\begin{proof}
Indeed, as $p:X_{\Delta}\rightarrow X_{\Sigma}$ is affine,  $X_{\Delta}$ is affine. Moreover, the support of $\Delta$ is $\delta$. So $\pi_{\Delta}$ is the identity and by Corollary \ref{Cor-Delta1}, $\pi$ is the normalization. 
\end{proof}
Let us give a concrete example.
\begin{example}\label{ex-Delta}
For $\lambda\in \mathbb{C}^{\star}$ take $X  =  \mathbb{V}(x_{3}-\lambda)\subset \mathbb{A}_{\mathbb{C}}^{3}$
with weight package 
$$\theta  =  \left(\mathbb{Z}^{3}, \mathbb{Z}, {}^{t}\begin{pmatrix}
 1  & 1 & 0
\end{pmatrix}, \begin{pmatrix}
1  & 0 & 0
\end{pmatrix}, 
\Sigma\right)$$
and set 
$$
v_{1} =  (1,0,0), v_{2} =  (1,1, 0), v_{3} =  (0, 1,0), v_{4} =  (0,0,1).$$
The lifting fan $\Delta$ is  generated by 
${\rm Cone}(v_{1}, v_{2} ,v_{4}), {\rm Cone}(v_{2}, v_{3}, v_{4}).$ Using
homogeneous coordinates (see Subsection \ref{Section-Cox}),
we have $$X_{\Delta} = (\mathbb{A}^{4}_{\mathbb{C}}\setminus \mathbb{V}(t_{1}, t_{3}))/\mathbb{G}_{\Delta},$$
where $\mathbb{G}_{\Delta} =  \mathbb{C}^{\star}$ acts on $\mathbb{A}_{\mathbb{C}}^{4}$ via 
$$\mu\cdot (t_{1}, \ldots,t_{4}) =  (\mu t_{1}, \mu^{-1} t_{2}, \mu t_{3}, t_{4})$$
and the variables $t_{1}, \ldots, t_{4}$ correspond to $v_{1}, \ldots, v_{4}$. 
In order to describe $\pi_{\Delta}$, consider the matrix 
$$\mathscr{Q} =  \begin{pmatrix}
 1& 1  &  0 &  0\\
 0& 1  &  1 &  0\\
 0 & 0 &  0&  1
\end{pmatrix}$$
whose the columns are the vectors $v_{1}, \ldots, v_{4}$. Then
$$ \pi_{\Delta} :(\mathbb{A}^{4}_{\mathbb{C}}\setminus Z(\Delta))/\mathbb{G}_{\Delta}\rightarrow \mathbb{A}_{\mathbb{C}}^{3}, \,\, [t_{1}:\ldots: t_{4}]_{\Delta}\mapsto (t_{1}t_{2}, t_{2}t_{3}, t_{4}),$$
where the monomials correspond to the lines of $\mathscr{Q}$. Moreover, the quotient for the $\mathbb{C}^{\star}$-action is
$$X_{\Delta}\rightarrow \mathbb{P}^{1}_{\mathbb{C}}\times \mathbb{A}^{1}_{\mathbb{C}},\,\, [t_{1}:\ldots:t_{4}]_{\Delta}\mapsto ([t_{1}: t_{3}], t_{4}),$$
and the proper transform of $X$ under $\pi_{\Delta}$ is 
$X_{1} =  \{[t_{1}: \ldots: t_{4}]_{\Delta}\, |\, t_{4} =  \lambda\}\subset X_{\Delta}.$
So $X_{1}  =  \widetilde{X}$ and the contraction map $\pi = \pi_{\Delta|X_{1}}$ is the blowing-up of $X$ at $(0,0, \lambda)$. 
\end{example}
Let us formalize what we saw in Example \ref{ex-Delta}. Recall the notion of the contraction divisorial fan form Definition \ref{def-contrat-theta5758}.
\begin{theorem}\label{lem-Q}
Let $X\subset \mathbb{A}^{\ell}_{\mathbb{C}}$ be a subvariety with linear torus action of complexity one.
Let $\theta  =  (\overline{N}=  \mathbb{Z}^{\ell}, N, F, S, \Sigma)$ be the weight package of $X$ and write $\Delta$ for its lifting fan. Let $(v_{1}, \ldots, v_{r})$ be the list of primitive vectors of $\overline{N}$ generating the rays of the fan $\Delta$. Consider the matrix 
$\mathscr{Q} =  (q_{i,j})\in {\rm Mat}_{\ell\times r}(\mathbb{Z})$ where the columns are the vectors $v_{1}, \ldots, v_{r}$ in the canonical basis of $\overline{N} $. Consider $X$ with presentation $X =  \mathbb{V}(f_{1} =  u_{1}g_{1}, \ldots, f_{a} =  u_{a}g_{a})$ as in Lemma \ref{lem-equa-tvar}. Let $w_{v}$ be a monomial such that
$$h_{v} := w_{v}\cdot f_{v}\left(\prod_{j =  1}^{r}T_{j}^{q_{1, j}}, \ldots, \prod_{j = 1}^{r}T_{j}^{q_{\ell, j}}\right)$$
is homogenous in $\mathbb{C}[T_{1}, \ldots, T_{r}]$ for the ${\rm Cl}(X_{\Delta})$-grading, where $T_{i}$ corresponds 
to ray associated with $v_{i}$ and $1\leq v\leq a$. Assume that the subvariety $X_{1}\subset X_{\Delta}$ defined in homogeneous coordinates by 
$$X_{1} =  \{[z_{1}: \ldots: z_{r}]_{\Delta}\,|\, h_{v}(z_{1}, \ldots, z_{r}) =  0 \text{ for }v=1, \ldots, a\}$$
is irreducible. Then the normalization of $X_{1}$ is $\mathbb{T}_{N}$-isomorphic to the contraction space $\widetilde{X}$ and encoded
by a contraction divisorial fan  of $\{\bar{\mathfrak{D}}_{\theta}\}$.
\end{theorem}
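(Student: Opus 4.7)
The plan is to first identify $X_1$ with the proper transform of $X$ under the toric map $\pi_\Delta\colon X_\Delta\to \mathbb{A}^{\ell}_{\mathbb{C}}$ of Lemma \ref{lemma-Delta}, and then to apply Corollary \ref{Cor-Delta1} together with Theorem \ref{theo-aff-impl} and Definition \ref{def-contrat-theta5758}.

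First I would compute the pullbacks of the coordinate functions. In the homogeneous (Cox) coordinates of $X_\Delta$ attached to the primitive generators $v_1,\ldots,v_r$ of the rays of $\Delta$, the toric morphism $\pi_\Delta$ is given coordinate-wise by the rows of $\mathscr{Q}$; explicitly, $\pi_\Delta^{\star}(x_i)=\prod_{j=1}^{r}T_j^{q_{i,j}}$ for $1\leq i\leq \ell$, as in Example \ref{ex-Delta}. Substituting into the defining equations yields $\pi_\Delta^{\star} g_v = g_v\bigl(\prod_j T_j^{q_{1,j}},\ldots,\prod_j T_j^{q_{\ell,j}}\bigr)$, which differs from $h_v$ only by the Laurent monomial factor $w_v$.

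Next I would identify $X_1\cap\pi_\Delta^{-1}(\mathbb{G})$ with $\pi_\Delta^{-1}(X\cap\mathbb{G})$. On the big open orbit $\mathbb{T}_{\overline{N}}\subset X_\Delta$ all the $T_j$ are invertible, so the monomial factors $w_v$ are units there and the zero loci of $h_v$ and $\pi_\Delta^{\star} g_v$ coincide. Since $\pi_\Delta$ restricts to an isomorphism $\mathbb{T}_{\overline{N}}\xrightarrow{\sim}\mathbb{G}$, we obtain $X_1\cap\mathbb{T}_{\overline{N}}=\pi_\Delta^{-1}(X\cap\mathbb{G})$. By the definition of linear torus action, $X\cap\mathbb{G}$ is a nonempty open dense subset of $X$, so this intersection is nonempty. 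Because $X_1$ is irreducible by hypothesis, its nonempty Zariski open subset $X_1\cap\mathbb{T}_{\overline{N}}$ is dense, so $X_1$ is the Zariski closure in $X_\Delta$ of $\pi_\Delta^{-1}(X\cap\mathbb{G})$; this is precisely the proper transform of $X$ under $\pi_\Delta$. Corollary \ref{Cor-Delta1} then gives an equivariant isomorphism $\mathrm{Norm}(X_1)\simeq\widetilde{X}$ of $\mathbb{T}_N$-varieties, yielding the first assertion. For the second assertion, Theorem \ref{theo-aff-impl} identifies the normalization of $X$ with $X(\widehat{C_\theta},\bar{\mathfrak{D}}_\theta)$; viewing $\bar{\mathfrak{D}}_\theta$ as a divisorial fan over a smooth projective completion of $\widehat{C_\theta}$ and applying Definition \ref{def-contrat-theta5758} to any finite affine open covering provides a contraction divisorial fan of $\{\bar{\mathfrak{D}}_\theta\}$ describing $\widetilde{X}$.

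The main obstacle is controlling the behavior of $X_1$ outside the big torus: the monomials $w_v$ may absorb some of the exceptional divisors of $\pi_\Delta$ and fail to absorb others, so without the irreducibility hypothesis one could not exclude extra components of $\mathbb{V}(h_1,\ldots,h_a)\subset X_\Delta$ along the boundary $X_\Delta\setminus \mathbb{T}_{\overline{N}}$. The assumption that $X_1$ is irreducible is precisely the device that forces $X_1$ to coincide with the closure of its open part $X_1\cap\mathbb{T}_{\overline{N}}$ and therefore to agree with the proper transform.
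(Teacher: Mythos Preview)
Your proposal is correct and follows essentially the same approach as the paper: identify $X_1$ with the proper transform of $X$ under $\pi_\Delta$ by comparing the two on the big torus and invoking irreducibility, then apply Corollary~\ref{Cor-Delta1} and Theorem~\ref{theo-aff-impl}. Your argument is in fact more explicit than the paper's (which compresses the identification step to one sentence and cites only Theorem~\ref{theo-aff-impl}); in particular, you spell out the role of the monomial factors $w_v$ and the irreducibility hypothesis, and you correctly route the contraction-space claim through Corollary~\ref{Cor-Delta1}.
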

\begin{proof}
By construction $X_{1}\cap \mathbb{G}$ coincides with $X\cap \mathbb{G}$. Therefore $X_{1}$ is the Zariski closure 
of $X\cap \mathbb{G}$ in $X_{\Delta}$, that is $X_{1}$ is the proper transform of $X$ by $\pi_{\Delta}$. We conclude using Theorem \ref{theo-aff-impl}.
\end{proof}
For the projective case one needs an appropriate notion of lifting fan.
\begin{definition}
Let $\theta =  (\overline{N}, N, F, S, \Sigma)$ be a weight package. Let $(\theta^{(0)}, \ldots, \theta^{(\ell)})$ be the 
sequence of weight packages from the enhanced structure $\widehat{\theta}$ of $\theta$ (see Definition \ref{def-enhance}). Each $\theta^{(i)}$ corresponds to a maximal cone $\delta^{(i)}$ of the fan of the projective space $\mathbb{P}_{\mathbb{C}}^{\ell}$ with
here $\delta^{(0)} =  \delta =  \mathbb{Q}_{\geq 0}^{\ell}$. We denote by $\Delta^{(i)}$ the smallest fan of $\overline{N}_{\mathbb{Q}}$ with support $\delta^{(i)}$ such that the map
$$\Delta^{(i)}\rightarrow \Sigma^{(i)},\,\, \delta_{0}\mapsto P(\delta_{0})$$
is a fan morphism, where $P: \overline{N}_{ \mathbb{Q}}\rightarrow {\rm Coker}(F)_{\mathbb{Q}}$ is induced 
by the matrix factorization of $\theta$. The \emph{enhanced lifting fan} $\bar{\Delta}$ of $\theta$ is the fan of $\overline{N}_{\mathbb{Q}}$ generated by $\Delta^{(0)}, \ldots, \Delta^{(\ell)}$. 
\end{definition}
The following describes up to normalization of contraction spaces of  projective
varieties with linear torus action of complexity one. 
\begin{theorem}\label{theo-main-contr-theorDelta}
Let $X\subset \mathbb{P}_{\mathbb{C}}^{\ell}$ be a projective variety with linear torus action of complexity one and let $\theta = (\overline {N}, N, F, S,\Sigma)$ be a weight package of $X$. Let $\bar{\Delta}$ be the enhanced lifting fan of $\theta$ and 
let $(v_{1}, \ldots, v_{r})$ be the list of primitive vectors 
generating the rays of the fan $\bar{\Delta}$. Denote by
$\bar{\mathscr{Q}} =  (\bar{q}_{i,j})\in {\rm Mat}_{\ell \times r}(\mathbb{Z})$ the matrix where the $i$-th columns is $v_{i}$.
 Assume that $X =  \mathbb{V}(f_{1} =  u_{1}g_{1}, \ldots, f_{a} =  u_{a}g_{a})$ arises from the construction of Theorem \ref{theo-main-nonnorm}. Let $w_{v}$ be a monomial such that 
$$h_{v} := w_{v}\cdot f_{v}\left(1,\prod_{j =  1}^{r}T_{j}^{\bar{q}_{1, j}}, \ldots, \prod_{j = 1}^{r}T_{j}^{\bar{q}_{\ell, j}}\right)$$
is homogeneous in $\mathbb{C}[T_{1}, \ldots, T_{r}]$ for the ${\rm Cl}(X_{\bar{\Delta}})$-grading, where $T_{i}$ corresponds to the ray associated with $v_{i}$ and for $1\leq v\leq a$. Assume that the subvariety $X_{1}\subset X_{\bar{\Delta}}$ defined in homogeneous coordinates by 
$$X_{1} =  \{[z_{1}: \ldots: z_{r}]_{\bar{\Delta}}\,|\, h_{v}(z_{1}, \ldots, z_{r}) =  0 \text{ for }v=1, \ldots, a\}$$
is irreducible. Then, the normalization of $X_{1}$ is $\mathbb{T}_{N}$-isomorphic to the contraction space $\widetilde{X}$ and  given
by a contraction divisorial fan of the divisorial fan $\mathscr{E}_{\theta}$ associated to the weight package $\theta$ as in Theorem \ref{theo-main-nonnorm}.
\end{theorem}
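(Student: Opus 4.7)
The strategy is to reduce to the affine version of the statement (Theorem \ref{lem-Q}) by localizing on the standard affine charts of $\mathbb{P}_{\mathbb{C}}^{\ell}$ corresponding to the cones $\delta^{(i)}$, and then gluing the resulting local descriptions.

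First, I would observe that the enhanced lifting fan $\bar{\Delta}$ decomposes into the subfans $\Delta^{(i)}$, with $|\Delta^{(i)}| = \delta^{(i)}$, yielding an affine open covering $\{X_{\Delta^{(i)}}\}_{i=0}^{\ell}$ of $X_{\bar{\Delta}}$. The toric morphism $\pi_{\bar{\Delta}}: X_{\bar{\Delta}} \to \mathbb{P}_{\mathbb{C}}^{\ell}$ restricts to the toric map $\pi_{\Delta^{(i)}}: X_{\Delta^{(i)}} \to X_{\delta^{(i)}} \simeq \mathbb{A}_{\mathbb{C}}^{\ell}$ on each chart, and the columns of $\bar{\mathscr{Q}}$ indexed by rays of $\Delta^{(i)}$ reproduce the matrix $\mathscr{Q}^{(i)}$ associated with $\theta^{(i)}$ via Lemma \ref{lem-P-matrix}.

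Next, I would analyze the restriction of $X_1$ to $X_{\Delta^{(i)}}$. By Theorem \ref{theo-main-nonnorm}(3), on the chart $X_{\delta^{(i)}}$ the variety $X$ is cut out by the dehomogenized equations $u_v^{(i)} g_v^{(i)}$ attached to the weight package $\theta^{(i)}$. Substituting $T_j = 1$ for the homogeneous Cox variables associated to rays of $\bar{\Delta}$ lying outside $\Delta^{(i)}$ and comparing the monomial exponents against $\bar{\mathscr{Q}}$, one checks that $h_v$ restricts, up to a monomial unit on $X_{\Delta^{(i)}}$, to the polynomial $h_v^{(i)}$ produced by Theorem \ref{lem-Q} applied to $X^{(i)} = X \cap X_{\delta^{(i)}}$. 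Hence the normalization of $X_1 \cap X_{\Delta^{(i)}}$ is $\mathbb{T}_{N}$-isomorphic to the contraction space $\widetilde{X^{(i)}}$, which is described in turn by the contraction divisorial fan of $\{\bar{\mathfrak{D}}_{\theta^{(i)}}\}$ in the sense of Definition \ref{def-contrat-theta5758}.

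Finally, I would glue the local pieces. Since by \cite[Section 3, Lemma 1]{Vol10} the contraction space is the normalization of the graph of the rational quotient, formation of $\widetilde{X}$ commutes with open restrictions on $X$, giving $\widetilde{X^{(i)}} \simeq \widetilde{X} \times_X X^{(i)}$. On the combinatorial side, Proposition \ref{prop-sub-kappa} identifies the local curves $\widehat{C_{\theta^{(i)}}}$ as open subsets of a common smooth projective curve $\overline{C}$, so the local contraction divisorial fans fit together into a contraction divisorial fan of $\mathscr{E}_{\theta}$. Consequently the normalization of $X_1$ is globally $\mathbb{T}_{N}$-isomorphic to $\widetilde{X}$. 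The principal obstacle is the bookkeeping tying the enhanced matrix $\widehat{P}$ and its section to the chart-wise matrices $P^{(i)}, S^{(i)}$, and in particular verifying that the restriction of $h_v$ to each chart coincides up to a monomial factor with the polynomial $h_v^{(i)}$ of the affine statement; once this compatibility is in place, the result follows by invoking Theorem \ref{lem-Q} on each chart and assembling.
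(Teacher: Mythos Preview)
Your approach is correct, but it is considerably more elaborate than what the paper does. The paper's proof is literally ``Same proof as Theorem \ref{lem-Q}'', meaning the two-line argument of the affine case is repeated verbatim at the global level: by construction the polynomials $h_{v}$ agree with the $g_{v}$ on the open torus $\mathbb{G}\subset X_{\bar{\Delta}}$, so $X_{1}\cap \mathbb{G} = X\cap \mathbb{G}$; hence $X_{1}$ is the Zariski closure of $X\cap \mathbb{G}$ in $X_{\bar{\Delta}}$, i.e.\ the proper transform of $X$ under $\pi_{\bar{\Delta}}$, and one concludes via the graph characterization of the contraction space exactly as in Corollary \ref{Cor-Delta1}. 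No chart-by-chart analysis is needed.

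Your route---localize to the charts $X_{\Delta^{(i)}}$, invoke Theorem \ref{lem-Q} on each $X^{(i)}$, then glue---also works, and has the virtue of making the contraction divisorial fan of $\mathscr{E}_{\theta}$ visible piece by piece. The cost is the bookkeeping you flag: matching the restriction of the global $h_{v}$ with the chart-wise $h_{v}^{(i)}$ up to a monomial unit, and tracking the identifications of Lemma \ref{lem-P-matrix}. The paper sidesteps all of this by observing that the single equality $X_{1}\cap \mathbb{G} = X\cap \mathbb{G}$ already holds globally, so irreducibility of $X_{1}$ forces it to be the proper transform without ever passing to charts.
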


\begin{proof}
Same proof as Theorem \ref{lem-Q}. 
\end{proof}
\begin{example}
Consider the cubic $\mathbb{C}^{\star}$-surface $X  =  \mathbb{V}(z_{1}z_{2}^{2} + z_{2}z_{0}^{2}  + z_{3}^{3})\subset \mathbb{P}^{3}_{\mathbb{C}}$ of Section \ref{sec-exex}. Let $(e_{1},e_{2},e_{3})$ be the canonical basis of $\mathbb{Q}^{3}$, set $e =  -e_{1}-e_{2}-e_{3}$,
$v_{0} =  (0,3,1), v_{1} =  (-3, 3, 1), v_{2} =  (3,-3,-1), v_{3} =  (3, -1,-1).$
The enhanced lifting fan is 
\begin{center}
 \begin{tabular}{|c| c |} 
 \hline
 $\Delta^{(0)}_{\rm max}$ &  ${\rm Cone}(e_{1} , e_{2} ,v_{0}), {\rm Cone}(e_{1}, e_{3},v_{0})$ \\ [2ex] 
 \hline
  $\Delta^{(1)}_{\rm max}$ & ${\rm Cone}(e, e_{2}, v_{1}),{\rm Cone}(e_{2}, e_{3},v_{1}),  {\rm Cone}(e, e_{3}, v_{1})$  \\  [2ex] 
 \hline
  $\Delta^{(2)}_{\rm max}$ & ${\rm Cone}(e_{1}, e, v_{2}), {\rm Cone}(e_{1}, e_{3}, v_{2}), {\rm Cone}(e, e_{3}, v_{2})$  \\ [2ex] 
 \hline
  $\Delta^{(3)}_{\rm max}$ &${\rm Cone}(e_{1}, e_{2}, v_{3}), {\rm Cone}(e_{2}, e, v_{3})$  \\ [2ex] 
 \hline
\end{tabular}
\end{center}
Note that $\mathbb{G}_{\bar{\Delta}} =  (\mathbb{C}^{\star})^{5}$ acts on homogeneous coordinate space $\mathbb{A}^{8}_{\mathbb{C}}$ of $X_{\bar{\Delta}}$ via 
$$(\alpha, \beta, \gamma, \lambda, \mu)\cdot (t_{1}, \ldots, t_{8}) =  (\alpha\gamma^{3}\lambda^{-3}\mu^{-3}t_{1}, \alpha\beta^{-3}\gamma^{-3}\lambda ^{3}\mu t_{2}, \alpha \beta^{-1}\gamma^{-1}\lambda \mu t_{3}, \alpha t_{4}, \beta t_{5}, \gamma t_{6}, \lambda t_{7}, \mu t_{8})$$
where the variables $t_{1}, \ldots, t_{8}$ correspond to $e_{1},e_{2}, e_{3}, e, v_{0},\ldots, v_{3}$. 
Finally, the contraction space up to normalization is the variety
$$X_{1} =  \{ [x_{1}, \ldots, x_{8}]_{\bar{\Delta}}\,|\, x_{1}x_{2}^{2}x_{5}^{3}x_{8}^{4} +  x_{2}x_{4}^{2}x_{8}^{2} + x_{3}^{3} = 0\}.$$
\end{example}

\section{Vanishing of odd dimensional intersection cohomology}\label{sec-five}
In this section, we prove Theorem \ref{theo-even}, which
 gives a precise form of the decomposition theorem for 
 contraction maps of torus actions of complexity one. 
\subsection{Combinatorics of $h$-vectors}
\label{sec: h-poly23}We start by recalling notations for intersection cohomology of toric varieties. 
Take any strictly convex polyhedral cone $\sigma\subset N_{\mathbb{Q}}$. Let $N(\sigma)_{\mathbb{Q}}$ be the subspace of $N_{\mathbb{Q}}$ generated by $\sigma$ and denote
by $M(\sigma)_{\mathbb{Q}}$ its dual, seen as quotient of $M_{\mathbb{Q}}$. The dual cone $\omega(\sigma)$ 
of $\sigma$ in $M(\sigma)_{\mathbb{Q}}$ is strictly convex. We define the polytope $Q(\sigma)\subset  M(\sigma)_{\mathbb{Q}}$ as  intersection of $\omega(\sigma)$ with an affine hyperplane of $M(\sigma)_{\mathbb{Q}}$ cutting all one-dimensional faces of $\omega(\sigma)$. Actually, $Q(\sigma)$ is defined up to combinatorial equivalence, meaning that another choice of a hyperplane  gives a polytope with  same poset of faces than $Q(\sigma)$ and 
 same dimension function on each face. We denote by $\Sigma^{\star}(\sigma)$ the normal fan of $N(\sigma)_{\mathbb{Q}}$
of the polytope $Q(\sigma)$.
\\

For a complete fan $\Sigma$ and a strictly convex polyhedral cone $\sigma\subset N_{\mathbb{Q}}$ we define the \emph{$h$-polynomial} $h(\Sigma; t^{2})$ and the \emph{$g$-polynomial} $g(\sigma; t^{2})$
as 
$$h(\Sigma; t^{2}) =  P_{X_{\Sigma}}(t) =  \sum_{j\in\mathbb{Z}}{\rm dim}\, IH^{j}(X_{\Sigma}; \mathbb{Q})t^{j};$$
$$ g(\sigma; t^{2}) =  \sum_{j\in\mathbb{Z}}\mathcal{H}^{j-n}(IC_{X_{\sigma}})_{x}t^{j},$$
where $n =  {\rm dim}\, N_{\mathbb{Q}}$ and $x\in  O(\sigma) \subset X_{\sigma}$. 
The following computes $h(\Sigma; t^{2})$ and $g(\sigma; t^{2})$ via a double induction. 

\begin{theorem}\cite[Theorem 1.1 and Theorem 1.2]{Fie91}, \cite[Section 6]{DL91}, \cite[Section 5]{BBFK02} \label{Theo-hpolgpol}
The polynomial $g(\sigma; t^{2})$ does not depend on $x\in O(\sigma)$, and for $\Sigma$ a complete fan we have
$$h(\Sigma; t^{2}) =  \sum_{\sigma\in\Sigma}(t^{2}-1)^{n-{\rm dim}\, \sigma}g(\sigma; t^{2});$$
$$g(\sigma; t^{2}) = \begin{cases} \tau_{\leq d-1}((1-t^{2})h(\Sigma^{\star}(\sigma); t^{2}))\text{  if } d\geq 3 \\   1 \text{ if } d\leq 2,  \end{cases}$$
where $n =  {\rm dim}\, N_{\mathbb{Q}}$, $d  =  {\rm dim}\, N(\sigma)_{\mathbb{Q}}$, and $\tau_{\leq d-1}$ is the truncation of polynomials to degrees $\leq d-1$. 
\end{theorem}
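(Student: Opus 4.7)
The plan is to prove the three claims in sequence, using three ingredients: $\mathbb{T}$-equivariance of $IC_{X_\sigma}$, the toric decomposition theorem (Theorem~\ref{decomptoric}) applied to a smooth projective toric refinement, and the local contracting $\mathbb{C}^{\star}$-action on each affine piece $X_\sigma$. For the first claim (independence of $x$), note that the orbit stratification of $X_\sigma$ is $\mathbb{T}$-invariant, so Deligne's construction makes $IC_{X_\sigma}$ canonically $\mathbb{T}$-equivariant; since $\mathbb{T}$ acts transitively on $O(\sigma)$, all stalks along $O(\sigma)$ are isomorphic, and the polynomial $g(\sigma; t^{2})$ is well defined.

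For the $h$-polynomial formula, I would use the additivity of the Deligne--Hodge $E$-polynomial on the orbit stratification $X_\Sigma = \bigsqcup_{\sigma \in \Sigma} O(\sigma)$. Since $O(\sigma) \simeq (\mathbb{C}^{\star})^{n - \dim \sigma}$ has $E$-polynomial $(uv - 1)^{n - \dim \sigma}$, and since (by Step 1) the cohomology sheaves of $IC_{X_\Sigma}$ restricted to $O(\sigma)$ are constant with Poincar\'e polynomial $g(\sigma; t^{2})$, additivity gives
\[
E(X_\Sigma, IC_{X_\Sigma}; u, v) \;=\; \sum_{\sigma \in \Sigma} (uv-1)^{n - \dim \sigma}\, g(\sigma; uv).
\]
To pass to Poincar\'e polynomials it suffices to know purity of $IH^{\star}(X_\Sigma; \mathbb{Q})$: choose a smooth projective refinement $\Sigma'$ of $\Sigma$, so $X_{\Sigma'}$ has pure cohomology, and apply Theorem~\ref{decomptoric} to the toric morphism $q : X_{\Sigma'} \to X_\Sigma$ to realize $IC_{X_\Sigma}$ as a direct summand of $q_\star \mathbb{Q}_{X_{\Sigma'}}[n]$; hence $IH^{\star}(X_\Sigma; \mathbb{Q})$ is pure. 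Specializing the identity at $u = v = t$ then yields $h(\Sigma; t^{2}) = \sum_{\sigma \in \Sigma} (t^{2} - 1)^{n - \dim \sigma}\, g(\sigma; t^{2})$.

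For the $g$-polynomial recursion, I reduce first to $d = n$ via the product decomposition $X_\sigma \simeq X_{\sigma, N(\sigma)} \times \mathbb{T}_{N / N(\sigma)}$ and Lemma~\ref{lemma-smooth}. The cases $d \leq 2$ are immediate since $X_\sigma$ is then smooth or a cyclic quotient of $\mathbb{A}^{2}_{\mathbb{C}}$, so by Lemma~\ref{lem-G-smooth} one has $IC_{X_\sigma} \simeq \mathbb{Q}_{X_\sigma}[d]$ and $g(\sigma; t^{2}) = 1$. For $d \geq 3$, pick a one-parameter subgroup $\lambda : \mathbb{C}^{\star} \hookrightarrow \mathbb{T}$ attracting $X_\sigma$ to its unique fixed point $x_\sigma$; Lemma~\ref{LemmaCstarfixepointLemma} then gives $\mathbb{H}^{\star}(X_\sigma, IC_{X_\sigma}) \simeq (IC_{X_\sigma})_{x_\sigma}$. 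On the punctured variety $U := X_\sigma \setminus \{x_\sigma\}$ the $\mathbb{C}^{\star}$-action is free modulo finite isotropy, and the quotient is the projective toric variety $X_{\Sigma^{\star}(\sigma)}$ up to a cohomologically trivial finite cover (Lemma~\ref{GtrivGtrivlemma}); a Gysin--Leray computation for this Seifert bundle, together with Step~2 applied to $\Sigma^{\star}(\sigma)$, expresses $IH^{\star}(U; \mathbb{Q})$ in terms of $h(\Sigma^{\star}(\sigma); t^{2})$. Feeding this into the open/closed distinguished triangle $i_\star i^{!} IC_{X_\sigma} \to IC_{X_\sigma} \to j_\star j^\star IC_{X_\sigma} \to [1]$ and invoking the support condition $\mathcal{H}^{j}(IC_{X_\sigma})_{x_\sigma} = 0$ for $j \geq -d$ produces the truncated product $\tau_{\leq d-1}\bigl((1 - t^{2})\, h(\Sigma^{\star}(\sigma); t^{2})\bigr)$, the $(1 - t^{2})$ factor arising from the Euler class of the $\mathbb{C}^{\star}$-bundle and the truncation being enforced precisely by the IC support condition. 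The main obstacle lies in this last step: one must organize the Gysin sequence together with the support condition so that the contributions in degrees $\geq d$ cancel and the $(1-t^{2})$ factor emerges exactly; by contrast the purity used in Step~2 is a routine application of Theorem~\ref{decomptoric}.
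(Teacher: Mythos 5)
This theorem is quoted in the paper from \cite{Fie91}, \cite{DL91} and \cite{BBFK02} with no internal proof, so the comparison is with the standard arguments in those sources; your outline does follow their route (equivariance for well-definedness, stratification for the $h$-formula, contracting $\mathbb{C}^{\star}$-action plus the support condition for the $g$-recursion), but two essential inputs are missing, and the second one is exactly at the step you yourself single out as the main obstacle. First, in Step 2 the specialization $u=v=t$ of an $E$-polynomial only returns the \emph{signed} sum $\sum_{k}(-1)^{k}\dim\mathbb{H}^{k}\,t^{k}$, so besides purity you need vanishing of $IH^{\rm odd}(X_{\Sigma};\mathbb{Q})$ to drop the signs; more importantly, the stratum-by-stratum identity $E_{c}(O(\sigma),IC|_{O(\sigma)})=(uv-1)^{n-\dim\sigma}g(\sigma;uv)$ already presupposes that the stalks $\mathcal{H}^{j}(IC_{X_{\Sigma}})_{x}$ vanish in odd degrees and are pure of Hodge--Tate type of the expected weight. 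These local statements are themselves part of the theorem; they can be supplied by the recursion in lower dimension, but then Steps 2 and 3 must be organized as a single simultaneous induction on $n$, which your write-up only gestures at.

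Second, and more seriously, the Gysin sequence of the Seifert bundle $U=X_{\sigma}\setminus\{x_{\sigma}\}\to B=X_{\Sigma^{\star}(\sigma)}$ gives only a short exact sequence $0\to{\rm coker}\bigl(e\cup\colon IH^{k-2}(B)\to IH^{k}(B)\bigr)\to IH^{k}(U)\to\ker\bigl(e\cup\colon IH^{k-1}(B)\to IH^{k+1}(B)\bigr)\to 0$. To extract the factor $(1-t^{2})h(\Sigma^{\star}(\sigma);t^{2})$ you must show that cup product with the Euler class $e$ is \emph{injective} on $IH^{j}(B)$ for $j<\dim B$, so that both the kernel terms and the defect in the cokernel dimension vanish. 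Since $e$ is a positive multiple of an ample class on the projective toric variety $B$, this is precisely (the easy half of) hard Lefschetz for intersection cohomology -- a genuinely deep input, which Fieseler and BBFK invoke directly and which Denef--Loeser replace by $\ell$-adic purity of $IC$ and a weight argument. Your proposal names neither, and "organizing the Gysin sequence so that the contributions cancel" cannot be done without one of them: without injectivity of $e\cup$ the right-hand side $\tau_{\leq d-1}((1-t^{2})h(\Sigma^{\star}(\sigma);t^{2}))$ is only a lower bound with correction terms $\dim\ker(e\cup)$ in each degree. The remaining pieces of your argument (Step 1, the $d\leq 2$ cases via Lemma \ref{lem-G-smooth}, the use of Lemmata \ref{LemmaCstarfixepointLemma} and \ref{GtrivGtrivlemma}, and the truncation coming from the support condition) are correct and match the references.
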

Theorem \ref{Theo-hpolgpol} implies that $g(\sigma; t^{2})$ and $h(\Sigma; t^{2})$ are even polynomials. We define the \emph{$g$-number} $g_{j}(\sigma)$ via
$g(\sigma; t^{2}) =  \sum_{0\leq j\leq 2n}g_{j}(\sigma)t^{j}.$
\begin{remark}\label{rem-hpolexplicit234} \cite[Section 1, Remark (iii)]{Fie91} Let $\sigma$ be a full-dimensional strictly convex polyhedral cone and let $\Sigma$ be a complete fan of  $N_{\mathbb{Q}}$. Here are some computations of $h(\Sigma; t^{2})$, $g(\sigma; t^{2})$ in small dimensions.
\begin{center}
 \begin{tabular}{ |c|c| c|}
 \hline
  ${\rm dim}\, N_{\mathbb{Q}}$ & $h(\Sigma; t^{2})$  & $g(\sigma; t^{2})$\\ 
 \hline\hline
  2 &  $1  + ( |\Sigma(1)| - 2)t^{2} +  t^{4}$  & 1\\ 
 \hline
  3   & $1  + ( |\Sigma(1)| - 3)t^{2} + (|\Sigma(1)| - 3)t^{4} +  t^{6}$ & $1  + ( |\sigma(1)| - 3)t^{2}$\\   
 \hline
  4  & $\sum_{i = 0}^{4}\sum_{\sigma \in \Sigma(i)}(1 + (|\sigma(1)| - i)t^{2}) (t-1)^{4-i}$ & $1  + ( |\sigma(1)| - 4)t^{2}$\\   
 \hline
\end{tabular}
\end{center}
\end{remark}

Notions of $h$-polynomials extend for divisorial fans. 

\begin{definition}
Let $\sigma\subset N_{\mathbb{Q}}$ be a strictly convex polyhedral cone and let $\Lambda\in {\rm Pol}_{\sigma}(N_{\mathbb{Q}})$. The \emph{Cayley cone} ${\rm Cay}(\Lambda)\subset N_{\mathbb{Q}}\times \mathbb{Q}$  of $\Lambda$ is the cone generated by
$$(\sigma\times\{0\})\cup(\Lambda\times\{1\})\subset N_{\mathbb{Q}}\times \mathbb{Q}.$$
Let $\mathfrak{D}$ be a $\sigma$-polyhedral divisor over a smooth curve $C$. The \emph{$g$-polynomial} $g_{\mathfrak{D}}(t)$ 
of $\mathfrak{D}$ is defined as 
follows. Let $\bar{C}$ be the smooth compactification of $C$ and let $\rho_{g}(\bar{C})$ be its genus. If $C = \bar{C}$, then we set
$$g_{\mathfrak{D}}(t) =  (t^{2} + 2\rho_{g}(\bar{C})t  + 1-a)g(\sigma; t^{2}) + \sum_{z\in {\rm Supp}(\mathfrak{D})}g({\rm Cay}(\mathfrak{D}_{z}); t^{2}),$$
where $a$ is the cardinality of 
${\rm Supp}(\mathfrak{D}):=\{z\in C\,|\, \mathfrak{D}_{z}\neq \sigma\}.$ Otherwise we set 
$$g_{\mathfrak{D}}(t) =  ((2\rho_{g}(\bar{C}) + b -1)t  + 1-a)g(\sigma; t^{2}) + \sum_{z\in {\rm Supp}(\mathfrak{D})}g({\rm Cay}(\mathfrak{D}_{z}); t^{2}),$$
where $b$ is the cardinality of $\bar{C}\setminus C$. Let $\mathscr{E} =  \{\mathfrak{D}^{i}\,|\, i\in I\}$ be
a contraction divisorial fan over $\bar{C}$ of a complete complexity-one $\mathbb{T}$-variety. For any $z\in\bar{C}$, we write $\Sigma_{z}(\mathscr{E})$ for the complete fan generated  
$${\rm Cay}(\mathfrak{D}_{z}^{i})\text{ and } {\rm Cone}((\sigma_{i}\times\{0\})\cup(\sigma_{i}\times\{-1\})) \text{ for any }i\in I,$$ where $\mathfrak{D}^{i}\in\mathscr{E}$ is a $\sigma_{i}$-polyhedral divisor. The \emph{$h$-polynomial} of $\mathscr{E}$
is the polynomial 
$$h_{\mathscr{E}}(t) =  ((1-c)t^{2} + 2\rho_{g}(\bar{C})t + 1-c)h(\Sigma(\mathscr{E});t^{2}) + \sum_{z\in{\rm Supp}(\mathscr{E})}h(\Sigma_{z}(\mathscr{E}); t^{2}),$$
where $c$ is the cardinality of 
$\{z\in \bar{C}\,|\, \mathfrak{D}_{z}^{i}\neq \sigma\text{ for some }i\in I\}$
and $\Sigma(\mathscr{E})$ is the fan generated by the cones $\sigma_{i}$, $i\in I$.
\end{definition}
The following justifies the terminology of $h$-polynomials for divisorial fans. Note that calculation of the Poincar\'e polynomial of the contraction space of a complete complexity-one $\mathbb{T}$-variety can be obtained from results of \cite{CMS08}. 
\begin{theorem}\cite[Theorem 1.1]{AL18}, \cite[Theorem 5.1, Lemma 5.17]{AL21}\label{theo-gpol-dib} If $\mathscr{E}$ is the contraction divisorial fan 
a complete complexity-one $\mathbb{T}$-variety, then the Poincar\'e polynomial $P_{X(\mathscr{E})}(t)$ of $X(\mathscr{E})$ is $h_{\mathscr{E}}(t)$. Furthermore, if $\mathfrak{D}$ is a proper $\sigma$-polyhedral divisor over a smooth curve and $\sigma$ is full-dimensional, then the Poincar\'e polynomial $P_{\widetilde{X}(\mathfrak{D})}(t)$ of the contraction space of $X(\mathfrak{D})$ is  $g_{\mathfrak{D}}(t)$. 
\end{theorem}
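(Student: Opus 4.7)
The plan is to exploit the toroidal structure of the contraction space $\widetilde{X}(\mathscr{E})$ and reduce the intersection cohomology computation to toric data on both local and global scales. By Remark \ref{rem-keyetale-struct-tor}, $\widetilde{X}$ is étale-locally an affine toric variety: at a point lying over $z\in\bar{C}$, the relevant local model is the affine toric variety attached to the Cayley cone $\mathrm{Cone}((\sigma\times\{0\})\cup(\mathfrak{D}_{z}\times\{1\}))$. This local toric picture, combined with the toric decomposition theorem \ref{decomptoric} and the $g$/$h$-polynomial formalism of Theorem \ref{Theo-hpolgpol}, is the essential input.

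For the first statement I would study the natural morphism $q: \widetilde{X}(\mathscr{E})\rightarrow \bar{C}$. Its generic fibers are isomorphic to the toric variety $X_{\Sigma(\mathscr{E})}$, contributing $h(\Sigma(\mathscr{E}); t^{2})$ to the intersection cohomology, while the étale-local completion around each special point $z\in\mathrm{Supp}(\mathscr{E})$ is modeled on the complete toric variety $X_{\Sigma_{z}(\mathscr{E})}$. Using the stratification of $\bar{C}$ by $\mathrm{Supp}(\mathscr{E})$ and its open complement, together with a Mayer--Vietoris argument on intersection cohomology, the global Poincaré polynomial $P_{\widetilde{X}}(t)$ splits as a ``generic'' contribution of the form $(P_{\bar{C}}(t)-c(1+t^{2}))\cdot h(\Sigma(\mathscr{E}); t^{2})$ over $\bar{C}\setminus \mathrm{Supp}(\mathscr{E})$, plus an ``exceptional'' contribution $\sum_{z}h(\Sigma_{z}(\mathscr{E}); t^{2})$ coming from the special fibers. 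Since $P_{\bar{C}}(t)-c(1+t^{2})= (1-c)t^{2}+2\rho_{g}(\bar{C})t+1-c$, the combination reproduces exactly the formula for $h_{\mathscr{E}}(t)$.

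For the second statement on $g_{\mathfrak{D}}(t)$, the key additional ingredient is the full-dimensionality of $\sigma$: this forces $X(\mathfrak{D})$ to possess a unique closed $\mathbb{T}$-orbit (a fixed point $x_{0}$) onto which a suitable one-parameter subgroup of $\mathbb{T}$ retracts the whole variety. By Lemma \ref{LemmaCstarfixepointLemma}, the global intersection cohomology of $\widetilde{X}(\mathfrak{D})$ coincides with the local intersection cohomology of the preimage of $x_{0}$ under the contraction map. The computation then reduces to the same toric/toroidal analysis carried out over the base curve $C$: each Cayley cone $\mathrm{Cay}(\mathfrak{D}_{z})$ at a special point $z$ contributes $g(\mathrm{Cay}(\mathfrak{D}_{z}); t^{2})$, and the terms involving the genus $\rho_{g}(\bar{C})$ together with the cardinalities of $\mathrm{Supp}(\mathfrak{D})$ and $\bar{C}\setminus C$ encode the base contribution---yielding the two case distinctions in the definition of $g_{\mathfrak{D}}(t)$ according to whether $C$ is complete or not.

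The main obstacle is establishing that the simple perverse summands arising when one applies the decomposition theorem to $q$ are of the form $IC_{\bar{O}}$ with constant local system, so that the stratification argument effectively amounts to adding up Poincaré polynomials. This relies on the Seifert torus bundle structure of $\widetilde{X}$ over $\bar{C}$ (the pullback model of \ref{l-diagram}) combined with the finite group action triviality results of Section \ref{sec-three}, in particular Propositions \ref{prop-IC-G-X} and \ref{theo-finitegrouptoric}. A secondary difficulty is the careful bookkeeping of boundary contributions from $\bar{C}\setminus C$ in the affine case, which produces the asymmetric factor $(2\rho_{g}(\bar{C})+b-1)t$ in place of the symmetric $(t^{2}+2\rho_{g}(\bar{C})t+1)$ of the complete case; this requires separating the topology of the open curve from that of its smooth compactification.
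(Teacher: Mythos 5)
This theorem is not proved in the paper at all: it is imported by citation from \cite[Theorem 1.1]{AL18} and \cite[5.1, 5.17]{AL21}, so there is no internal proof to measure you against. Your reconstruction does follow the broad strategy of those references: fibre $\widetilde{X}$ over the curve, use the toroidal local models of Remark \ref{rem-keyetale-struct-tor} (Cayley cones at the points of ${\rm Supp}(\mathscr{E})$, the tail fan $\Sigma(\mathscr{E})$ generically), and assemble the global Poincar\'e polynomial as a generic contribution governed by $P_{\bar{C}}(t)$ plus corrections at special points; the identity $P_{\bar{C}}(t)-c(1+t^{2})=(1-c)t^{2}+2\rho_{g}(\bar{C})t+1-c$ is indeed where the coefficient of $h(\Sigma(\mathscr{E});t^{2})$ comes from.

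Two steps of your outline need repair. First, ``Mayer--Vietoris'' is not what makes Poincar\'e polynomials add: you are cutting along the \emph{closed} union of special fibres and its open complement, which a priori only yields a long exact sequence. What actually closes the argument in the complete case is the decomposition theorem for the proper map $q:\widetilde{X}\rightarrow\bar{C}$ together with the triviality of the local systems (which you correctly trace to the Seifert structure of Lemma \ref{l-diagram} and the results of Section \ref{sec-three}); you mention this only as an afterthought, but it is the main engine, not a side condition. Second, your reduction for $g_{\mathfrak{D}}(t)$ misapplies Lemma \ref{LemmaCstarfixepointLemma}: the unique attractive fixed point $x_{0}$ lies in $X(\mathfrak{D})$, not in $\widetilde{X}(\mathfrak{D})$, and $\widetilde{X}(\mathfrak{D})$ is only affine \emph{over} $C$, so the lemma as stated does not identify $IH^{\star}(\widetilde{X}(\mathfrak{D}))$ with anything local. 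The correct reduction is the relative version over $C$: the non-hyperbolic $\mathbb{C}^{\star}$-action contracts each chart of $\widetilde{X}(\mathfrak{D})$ fibrewise onto the section $i: C\hookrightarrow\widetilde{X}(\mathfrak{D})$ of vertex points, giving $IH^{\star}(\widetilde{X}(\mathfrak{D}))\simeq\mathbb{H}^{\star}(C, i^{\star}IC_{\widetilde{X}(\mathfrak{D})})$; the stalks of $i^{\star}IC_{\widetilde{X}(\mathfrak{D})}$ are $g(\sigma;t^{2})$ generically and $g({\rm Cay}(\mathfrak{D}_{z});t^{2})$ at $z\in{\rm Supp}(\mathfrak{D})$ by Theorem \ref{Theo-hpolgpol}, and the hypercohomology spectral sequence degenerates because these stalk polynomials are even while $H^{\star}(C)$ is concentrated in degrees $\leq 2$. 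Summing over $C$, with $P_{C}(t)=1+(2\rho_{g}(\bar{C})+b-1)t$ when $C$ is affine, produces exactly the two case distinctions in the definition of $g_{\mathfrak{D}}(t)$. With these two corrections your outline matches the cited proofs.
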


\subsection{Topology of the contraction map}\label{sec-top-contractm}
Our goal is to prove the following theorem.
\begin{theorem}\label{theo-even}
Let $X$ be a normal variety with effective torus action of complexity one. Denote by $E$ the image of the exceptional
locus of the contraction map $\pi: \widetilde{X}\rightarrow X$, and let ${\rm Orb}_{\rm even}(E)$ be the set of even codimensional torus orbits  of $X$ contained in $E$. Then we have 
$$\pi_{\star} IC_{\widetilde{X}} \simeq IC_{X}\oplus \bigoplus_{O \in {\rm Orb}_{\rm even}(E)} (\iota_{O})_{\star} IC_{\bar{O}}^{\oplus s_{O}},$$
where $\iota_{O} : \bar{O}\rightarrow X$ is the inclusion and $s_{O}\in \mathbb{Z}_{\geq 0}$ for any $O\in {\rm Orb}_{\rm even}(E)$.
\end{theorem}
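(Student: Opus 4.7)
The starting point is Proposition~\ref{theo-decomp1}, which gives the decomposition
$$\pi_\star IC_{\widetilde X} \simeq IC_X \oplus \bigoplus_{O \in \mathcal{O}(E)} \bigoplus_{b \in \mathbb{Z}} (\iota_O)_\star IC_{\bar O}^{\oplus s_{b,O}}[-b].$$
The task is therefore to show that $s_{b,O} = 0$ unless $b = 0$ and $\mathrm{codim}_X \bar O$ is even. The plan is to reduce the multiplicity computation to an \'etale-local toric model, and then invoke both the parity of the $g$- and $h$-polynomials and the semi-smallness of the local toric contraction.

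Fix $O \subset E$. By Lemma~\ref{l-diagram}, the contraction map $\pi$ is, \'etale-locally along $O$, pulled back from a contraction $\pi_1 : \widetilde{X}_1/G \to X_1/G$ where $X_1/G$ has a unique fixed point $x_0$ under the action of $\mathbb{T}/\mathbb{T}_x$. Combining Lemma~\ref{l-Seifert-pullback} with base change along the Seifert torus bundle $\varepsilon$, the multiplicities $s_{b,O}$ are read off from the decomposition of $(\pi_1)_\star IC_{\widetilde{X}_1/G}$ localized at $x_0$. By Remark~\ref{rem-keyetale-struct-tor}, the contraction space $\widetilde{X}_1$ is \'etale-locally isomorphic to an affine toric variety $X_{\tilde\sigma}$, where $\tilde\sigma = {\rm Cone}((\sigma \times \{0\}) \cup (\Lambda \times \{1\}))$ is the Cayley cone of the polyhedral coefficient $\Lambda$ at the marked point. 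Passing to the $G$-quotient via Proposition~\ref{theo-finitegrouptoric} and invoking Lemma~\ref{lem-toricmultN0unic} for independence of the toric multiplicities from the ambient lattice, the problem reduces to the corresponding statement for a toric contraction $q: X_{\Sigma'} \to X_{\tilde\sigma}$ and for the multiplicities at the closed orbit $O(\tilde\sigma)$ in the toric decomposition theorem (Theorem~\ref{decomptoric}).

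In this toric setting, the vanishing $s_{b,O} = 0$ for odd $b$ follows from Theorem~\ref{Theo-hpolgpol}: since the $g$- and $h$-polynomials are even, the stalks of every $IC_{V(\tau)}$ at a $\mathbb{T}$-fixed point lie in even cohomological degrees, so a degree-by-degree comparison of stalks at $O(\tilde\sigma)$ in the toric decomposition forces every odd shift to contribute zero. For the remaining vanishing $s_{b,O} = 0$ at $b \neq 0$ and for the codimension parity, the plan is to verify that $\pi$ is semi-small by analyzing the fibers of $\pi$ through the Cayley-cone description of $\tilde\sigma$: for every orbit $O \subset E$ supporting a nontrivial summand, one has $2 \dim \pi^{-1}(y) = \mathrm{codim}_X \bar O$ for $y \in O$. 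Semi-smallness implies that $\pi_\star IC_{\widetilde{X}}$ is perverse (if necessary after pulling back through a log-resolution of $\widetilde X$ and tracking the decomposition via Proposition~\ref{theo-decomp1}), which forces $b = 0$ on every summand; the identity $2 \dim \pi^{-1}(y) = \mathrm{codim}_X \bar O$ then gives the even-codimension constraint. The main obstacle I foresee is the rigorous verification of semi-smallness of $\pi$, since the fibers of $\pi$ mix the vertical curve direction with the toric polyhedral structure at each point of the quotient curve $C$; the Cayley-cone dimension analysis, combined with the one-dimensionality of $C$, should make this dimension count precise.
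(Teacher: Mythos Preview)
Your proposal has several genuine gaps.

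First, the reduction to a \emph{toric} contraction does not go through. Remark~\ref{rem-keyetale-struct-tor} only provides an \'etale-local toric model for the source $\widetilde{X}_1$; it says nothing about the target $X_1$. The map $\pi_1:\widetilde{X}_1/G\to X_1/G$ is not, even \'etale-locally near the fixed point, a toric fan subdivision: $X_1$ is obtained as a global ${\rm Spec}$ of sections over a projective curve and is not toroidal in any way compatible with $\pi_1$. So Theorem~\ref{decomptoric} and Lemma~\ref{lem-toricmultN0unic} cannot be invoked to compute the multiplicities $s_{b,O}$.

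Second, while it is true that $\pi$ is semi-small (fibres are at most one-dimensional, being finite over the curve $C$, and $E$ has codimension $\ge 2$ since the indeterminacy locus of a rational map from a normal variety to a curve has codimension $\ge 2$), this does \emph{not} force $\pi_\star IC_{\widetilde{X}}$ to be perverse unless $\widetilde{X}$ is rationally smooth. The contraction space $\widetilde{X}$ is rationally smooth precisely when the Cayley cones ${\rm Cay}(\mathfrak{D}_z)$ are simplicial, which fails in general; this is exactly the simplicial case already handled in \cite[Theorem 1.1(iii)]{AL21} and recalled in the example after Corollary~\ref{corcor-gD-Ic}. Your suggested fix of precomposing with a log-resolution destroys semi-smallness.

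Third, the asserted identity $2\dim\pi^{-1}(y)=\mathrm{codim}_X\bar O$ is false. Since every fibre of $\pi$ has dimension $\le 1$, your identity would force all contributing orbits to have codimension two, contradicting the presence of summands supported on orbits of higher even codimension (e.g.\ the codimension-four fixed point in the quadric example at the end of Section~\ref{sec-trinomial-aff}, where $s_{O_0}=g_{2}(\sigma_\theta)=1$).

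The paper proceeds quite differently: after reducing via Lemma~\ref{LemmaStildeL} to varieties satisfying Condition~$(\star)$, it argues by induction on $\dim X$. For a fixed point $O_0$, one bounds the degree of $\widetilde{S}_{O_0}(t)$ by $d=\dim X$ using the explicit formula for $P_{\widetilde{X}_{O_0}}(t)$ (Theorem~\ref{theo-gpol-dib}) together with the truncation description of $P_{X_{O_0}}(t)$ and the induction hypothesis for higher-dimensional orbits. Poincar\'e symmetry of the resulting polynomial $Q(t)$ then forces each $\widetilde{S}_{O_0}(t)$ to be a monomial $\lambda_{O_0}t^d$, and one identifies $\lambda_{O_0}$ with a coefficient of the even polynomial $g(\sigma_{O_0};t^2)$, giving the parity vanishing.
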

Let $X$ be as in Theorem \ref{theo-even}. By Proposition \ref{theo-decomp1} we have
\begin{equation}\label{Eq:decomposition}
\pi_{\star}IC_{\widetilde{X}}\simeq IC_{X}\oplus \bigoplus_{O\in {\rm Orb}(E)}\bigoplus_{b\in\mathbb{Z}}(\iota_{O})_{\star}IC_{\bar{O}}^{\oplus s_{b, O}}[-b],
\end{equation}
where ${\rm Orb}(E)$ is the set of orbits of $E$ and $s_{b, O}\in\mathbb{Z}_{\geq 0}$. Set 
$S_{O}(t) =  S_{X, O}(t):= \sum_{b\in\mathbb{Z}}s_{b, O}t^{b}\in\mathbb{Z}[t, t^{-1}].$
From Identity (\ref{Eq:decomposition}),
$$P_{X}(t) =  P_{\widetilde{X}}(t) -\sum_{O\in{\rm Orb}(E)}\widetilde{S}_{O}(t)P_{\bar{O}}(t), \text{ where }\widetilde{S}_{O}(t) =  \widetilde{S}_{X, O}(t) := S_{O}(t)t^{{\rm dim}\, X -{\rm dim}\, O}$$ for any $O\in {\rm Orb}(E)$.
Thus, we need  prove that $\widetilde{S}_{O}(t) =  \lambda t^{{\rm dim}\, X - {\rm dim}\, O},$ where $\lambda\in\mathbb{Z}_{\geq 0}$ and $\lambda =  0$ provided that ${\rm dim}\, X - {\rm dim}\, O$ is odd. 

\begin{remark}\cite[Sections 4.1, 4.2]{AL21}. \label{remarq-XO}
By Lemma \ref{l-diagram} there exist,  for any $O\in {\rm Orb}(E)$, a $\mathbb{T}$-stable Zariski open subset $X_{O}\subset X$ containing $O$ as a unique closed orbit and a fiber product decomposition 
$X_{O}\simeq \mathbb{T}_{O}\times^{G}X_{1},$ where  $X_{1,O} :=  X_{1}$ is a variety with complexity-one torus action having a unique attractive fixed point $x$ and $G_{O} :=  G\subset \mathbb{T}$ is a finite subgroup. 
\end{remark}

Let $\Gamma$ be the  subgroup generated 
by the $G_{O}$ for  $O\in {\rm Orb}(E)$. We will say that $X$ \emph{satisfies Condition $(\star)$} if all the $G_{O}$ are trivial.
\begin{lemma}\label{lem:quo}
    The quotient $X/\Gamma$ satisfies the Condition $(\star)$.
\end{lemma}
\begin{proof} Let $\Gamma'$ be the image of $\Gamma$ inside the torus quotient $\mathbb{T}_{O}$. Then we indeed have the $\mathbb{T}_{O}/\Gamma'$-equivariant isomorphism $\widetilde{X}_{O}/ \Gamma \simeq \mathbb{T}_{O}/ \Gamma' \times \widetilde{X}_{1}/G_{O}$ for any $O\in {\rm Orb}(E)$. Therefore, the left $\mathbb{T}_{O}/\Gamma'$-action on $\widetilde{X}_{O}$ is free for any $O\in{\rm Orb}(E)$, showing that $X/\Gamma$ satisfies $(\star)$.
\end{proof}


\begin{lemma}\label{LemmaStildeL}
With the notation of Lemma \ref{lem:quo}, we have $\widetilde{S}_{X, O}(t) =  \widetilde{S}_{X/\Gamma, O/\Gamma}(t)$ for any $O\in {\rm Orb}(E)$.
\end{lemma}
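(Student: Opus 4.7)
The plan is to compare the two decomposition theorems, for $\pi: \widetilde{X}\rightarrow X$ and for $\pi_{\Gamma}: \widetilde{X}/\Gamma \rightarrow X/\Gamma$, by pulling back via the finite quotient map. First I would form the commutative diagram
$$\xymatrix{
\widetilde{X} \ar[r]^{\tilde{q}} \ar[d]_{\pi} & \widetilde{X}/\Gamma \ar[d]^{\pi_{\Gamma}} \\
X \ar[r]_{q} & X/\Gamma,
}$$
where $q$ and $\tilde{q}$ are the quotients by the finite group $\Gamma\subset \mathbb{T}$, and observe that the square is Cartesian: indeed $\Gamma$ acts on $\widetilde{X}$ through its action on the first factor in the local description of Lemma \ref{l-diagram}, and $\widetilde{X}/\Gamma$ is the contraction space of $X/\Gamma$ (since rational quotients commute with the quotient by a finite torus subgroup). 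The orbits of $\mathbb{T}/\Gamma$ on $X/\Gamma$ are exactly the $O/\Gamma$ for $O\in \mathcal{O}(E)$, so $\mathcal{O}(E_{\Gamma}) = \{O/\Gamma \,|\, O\in \mathcal{O}(E)\}$ via $O\mapsto O/\Gamma$.

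Next I would apply Proposition \ref{theo-decomp1} to the contraction map $\pi_{\Gamma}$, giving
$$\pi_{\Gamma,\star}IC_{\widetilde{X}/\Gamma} \simeq IC_{X/\Gamma} \oplus \bigoplus_{O\in \mathcal{O}(E)}\bigoplus_{b\in \mathbb{Z}} (\iota_{O/\Gamma})_{\star} IC_{\overline{O/\Gamma}}^{\oplus s_{b,O/\Gamma}}[-b].$$
Apply $q^{\star}$ to this identity and use proper base change (\cite[Theorem 2.3.26]{Dim04}) along the Cartesian square to obtain $q^{\star}\pi_{\Gamma,\star}IC_{\widetilde{X}/\Gamma} \simeq \pi_{\star}\tilde{q}^{\star}IC_{\widetilde{X}/\Gamma}$. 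Now the essential input is Proposition \ref{prop-IC-G-X}: applied to $\tilde{q}$ it yields $\tilde{q}^{\star}IC_{\widetilde{X}/\Gamma}\simeq IC_{\widetilde{X}}$, applied to $q$ it yields $q^{\star}IC_{X/\Gamma}\simeq IC_{X}$, and applied to each restriction $q_{|\bar{O}}: \bar{O}\to \overline{O/\Gamma}$ (composed with base change through $\iota_{O/\Gamma}$) it yields $q^{\star}(\iota_{O/\Gamma})_{\star}IC_{\overline{O/\Gamma}}\simeq (\iota_{O})_{\star}IC_{\bar{O}}$.

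Combining, the pullback of the decomposition of $\pi_{\Gamma,\star}IC_{\widetilde{X}/\Gamma}$ along $q$ becomes
$$\pi_{\star}IC_{\widetilde{X}} \simeq IC_{X} \oplus \bigoplus_{O\in \mathcal{O}(E)}\bigoplus_{b\in \mathbb{Z}} (\iota_{O})_{\star} IC_{\bar{O}}^{\oplus s_{b,O/\Gamma}}[-b].$$
Comparing this with the decomposition of $\pi_{\star}IC_{\widetilde{X}}$ from Proposition \ref{theo-decomp1} and invoking the unicity of isotypic components in the category of perverse sheaves on $X$ (the simple summands $(\iota_{O})_{\star}IC_{\bar{O}}[-b]$ are pairwise non-isomorphic), we conclude $s_{b,O} = s_{b,O/\Gamma}$ for all $b$ and $O$, hence $S_{X,O}(t) = S_{X/\Gamma,O/\Gamma}(t)$. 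Since $\Gamma$ is finite, $\dim\, X = \dim\, X/\Gamma$ and $\dim\, O = \dim\, O/\Gamma$, so the twist by $t^{\dim\, X-\dim\, O}$ coincides and $\widetilde{S}_{X,O}(t) = \widetilde{S}_{X/\Gamma,O/\Gamma}(t)$.

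The main obstacle is verifying the hypotheses of Proposition \ref{prop-IC-G-X}, namely the existence of $\Gamma$-equivariant semi-small resolutions for $\widetilde{X}$, $X$, and each orbit closure $\bar{O}$. For $\widetilde{X}$ this follows from its toroidal structure (Remark \ref{rem-keyetale-struct-tor}), combined with the $\Gamma$-equivariant resolutions of \cite{AW97} and the fact that toric resolutions can be chosen semi-small; for $X$ it follows by pushing forward and using that $\pi$ is a proper $\Gamma$-equivariant birational morphism; and for the orbit closures one applies the same strategy, exploiting that each $\bar{O}$ is itself a normal $\mathbb{T}$-variety of complexity at most one (so lower-dimensional), allowing an inductive treatment if necessary.
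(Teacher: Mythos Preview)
Your strategy of pulling back the decomposition for $\pi_{\Gamma}$ through $q$ is natural, but the final paragraph contains a genuine gap: the claim that ``toric resolutions can be chosen semi-small'' is false. Already the affine cone over $\mathbb{P}^{1}\times\mathbb{P}^{1}$ is a $3$-dimensional toric variety with no semi-small resolution (any resolution contracts a divisor to the vertex, so the defect is $2\cdot 2+0-3=1>0$). Consequently Proposition~\ref{prop-IC-G-X} only tells you that $q^{\star}IC_{X/\Gamma}$ and $\tilde q^{\star}IC_{\widetilde X/\Gamma}$ are semi-simple, not that they equal $IC_{X}$ and $IC_{\widetilde X}$. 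Without $q^{\star}IC_{X/\Gamma}\simeq IC_{X}$ your multiplicity comparison collapses: writing $q^{\star}IC_{X/\Gamma}\simeq IC_{X}\oplus\mathcal{E}$ and cancelling, one only obtains $s_{b,O}\ge s_{b,O/\Gamma}$, with the discrepancy absorbed into $\mathcal{E}$. For $\widetilde X$ and for the (toric) orbit closures the paper does supply substitutes --- Proposition~\ref{theo-finitegrouptoric} and the toroidal argument of Lemma~\ref{l-Seifert-pullback} --- but for $X$ itself no sheaf-level statement $q^{\star}IC_{X/\Gamma}\simeq IC_{X}$ is available, and none is proved.

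The paper sidesteps this obstacle by working numerically rather than at the level of complexes. It restricts both decompositions to each orbit $O_{2}\subset E$, passes to hypercohomology, and invokes Lemma~\ref{GtrivGtrivlemma} (the identification $IH^{\star}(V;\mathbb{Q})\simeq IH^{\star}(V/\Gamma;\mathbb{Q})$ of intersection cohomology \emph{groups}, which requires no semi-smallness) for $V=X_{O_{2}}$, $\widetilde X_{O_{2}}$ and the toric pieces $\widehat X(O,O_{2})$. This produces, for each $O_{2}$, the identity
\[
\sum_{O\prec O_{2}} P_{\widehat X(O,O_{2})}(t)\,\widetilde S_{X,O}(t)=\sum_{O\prec O_{2}} P_{\widehat X(O,O_{2})}(t)\,\widetilde S_{X/\Gamma,O/\Gamma}(t),
\]
a triangular linear system in the differences $\widetilde S_{X,O}-\widetilde S_{X/\Gamma,O/\Gamma}$ (the diagonal term $O=O_{2}$ has coefficient $P_{\widehat X(O_{2},O_{2})}(t)=1$), which is then solved by induction on the codimension of $O_{2}$.
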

\begin{proof}
 Set 
$$\gamma(X, O, O_{2})(t) = \sum_{i\in \mathbb{Z}}{\rm dim}\, \mathbb{H}^{i}(O_{2}, \iota_{O_{2}}^{\star}IC_{\bar{O}})t^{i},$$
where $O,O_{2}\in {\rm Orb}(E)$ with $O\prec O_{2}$  and $\iota: O_{2}\rightarrow X$ is the inclusion. The subset 
$$X(O, O_{2}):= \bigcup_{O\prec O_{1}\prec O_{2}}O_{1}\subset \bar{O}$$
is  Zariski open and contains $O_{2}$ as unique closed orbit. Let $\delta: \widehat{X}(O, O_{2})\rightarrow X(O, O_{2})$ be the normalization. Then the toric variety $\widehat{X}(O, O_{2})$ is affine and $\widehat{O}_{2}:= \delta^{-1}(O_{2})$
is its closed orbit. We have a Cartersian square  
 $$\xymatrix{
      \widehat{O}_{2}\ar[r]^{\beta} \ar[d] ^{\iota_{\widehat{O}_{2}}} & O_{2}\ar[d]^{\iota_{O_{2}}}\\ \widehat{X}(O, O_{2}) \ar[r]^{\delta} & X(O, O_{2}), }$$ 
where $\iota_{O_{2}}$ and $\iota_{\widehat{O}_{2}}$ are inclusions. So by base change \cite[Theorem 2.3.26]{Dim04} and Lemma \ref{LelENORMAphhh},
$$\beta_{\star}\iota_{\widehat{O}_{2}}^{\star} IC_{\widehat{X}(O, O_{2})}\simeq \iota_{O_{2}}^{\star}\delta_{\star} IC_{\widehat{X}(O, O_{2})}\simeq \iota_{O_{2}}^{\star} IC_{X(O, O_{2})}.$$
Passing to hypercohomology gives
$$\gamma(X, O, O_{2})(t)  =   \sum_{i\in \mathbb{Z}}{\rm dim}\, \mathbb{H}^{i}(\widehat{O}_{2}, \iota_{\widehat{O}_{2}}^{\star}IC_{\widehat{X}(O, O_{2})})t^{i}.$$
Note that $\widehat{O}_{2}$ is the fixed point set of a non-hyperbolic $\mathbb{C}^{\star}$-action on $\widehat{X}(O, O_{2})$. Let $e\in \mathbb{Z}_{>0}$ such that the $\mathbb{C}^{\star}$-action on $\widehat{X}(O, O_{2})/\mu_{e}(\mathbb{C})$ is free outside $\widehat{O}_{2}/\mu_{e}(\mathbb{C})$. Using Lemmata \ref{LemmaCstarfixepointLemma},  \ref{GtrivGtrivlemma} and \cite[Theorems 10, 11]{BM99},
$$P_{\widehat{X}(O, O_{2})}(t) = P_{\widehat{X}(O, O_{2})/\mu_{e}(\mathbb{C})}(t) =   t^{{\rm dim}\, O} \gamma(X/\mu_{e}(\mathbb{C}), O/\mu_{e}(\mathbb{C}), O_{2}/\mu_{e}(\mathbb{C}))(t) $$
$$=   t^{{\rm dim}\, O}\gamma(X, O, O_{2})(t).$$
Taking a lattice point in the relative interior of the cone $\sigma_{O_{2}}\subset N_{\mathbb{Q}}$,
where $X_{O_{2}}$ is described by a $\sigma_{O_{2}}$-polyhedral divisor, defines  a non-hyperbolic $\mathbb{C}^{\star}$-action on $X_{O_{2}}$ in which  $O_{2}$ is the fixed point set. So restricting, in the decomposition theorem for the contraction maps
of $X$ and $X/\Gamma$, to $O_{2}$ and $O_{2}/\Gamma$ gives
$$P_{\widetilde{X}_{O_{2}}}(t) - P_{X_{O_{2}}}(t) = \sum_{O\prec O_{2}} P_{\widehat{X}(O, O_{2})}(t) \widetilde{S}_{X, O}(t)$$
and 
$$P_{\widetilde{X}_{O_{2}}/\Gamma}(t) - P_{X_{O_{2}}/\Gamma}(t) = \sum_{O\prec O_{2}} P_{\widehat{X}(O, O_{2})/\Gamma}(t) \widetilde{S}_{X/\Gamma, O/\Gamma}(t).$$
Finally, applying Lemma \ref{GtrivGtrivlemma} for the $\Gamma$-actions on $X_{O_{2}}$, $\widetilde{X}_{O_{2}}$, $\widehat{X}(O, O_{2})$ gives
\begin{equation}\label{Eq:decomposition2}
\sum_{O\prec O_{2}} P_{\widehat{X}(O, O_{2})}(t) \widetilde{S}_{X, O}(t) =   \sum_{O\prec O_{2}} P_{\widehat{X}(O, O_{2})}(t) \widetilde{S}_{X/\Gamma, O/\Gamma}(t).
\end{equation}
We prove the lemma by induction on the codimension of $O_{2}\in {\rm Orb}(E)$. 
\\

\emph{Initial step.} Case ${\rm dim}\, X -  {\rm dim}\, O_{2} = 2$ follows from Identity (\ref{Eq:decomposition2}) since $O\prec O_{2}$ implies 
$O =  O_{2}$.
\\

\emph{Induction step.} Assume that this holds for codimension $d\geq 2$ and consider $O_{2}$ with ${\rm dim}\, X - {\rm dim}\, O_{2} =  d+1$. By induction $\widetilde{S}_{X, O}(t) =  \widetilde{S}_{X/\Gamma, O/\Gamma}(t)$ for any $O\in {\rm Orb}(E)$ such that $O\prec O_{2}$ and $O\neq O_{2}$. Therefore  Identity (\ref{Eq:decomposition2}) implies  $\widetilde{S}_{X, O_{2}}(t) =  \widetilde{S}_{X/\Gamma, O_{2}/\Gamma}(t)$, as required. 
\end{proof}

\begin{definition}
For all $O_{1}, O_{2}\in {\rm Orb}(E)$ write $O_{1}\prec O_{2}$ when $O_{2}\subset \bar{O}_{1}$. Set
$$R_{O_{1}, O_{2}}(t)  =  \sum_{i\in\mathbb{Z}}{\rm dim}\, \mathcal{H}^{i}(IC_{\bar{O}_{1}})_{x_{2}}t^{i}\in\mathbb{Z}[t, t^{-1}],$$
where $x_{2}\in O_{2}$ and $O_{1}\prec O_{2}$. Note that $R_{O_{1}, O_{2}}(t)$ does not depend on the choice of $x_{2}\in O_{2}$ \cite[Remark 5.21]{AL21}.
\end{definition}

\begin{lemma}\label{lemma-RT} Assume that $X$ satisfies Condition $(\star)$ and take the notation of Remark \ref{remarq-XO}. Then
$$P_{\widetilde{X}_{1,O_{2}}}(t)- P_{X_{1, O_{2}}}(t) =  \sum_{O_{1}\prec O_{2}}R_{O_{1}, O_{2}}(t)t^{{\rm dim}\, O_{1}}\widetilde{S}_{O_{1}}(t),$$
where $\widetilde{X}_{1, O_{2}}$ is the contraction space of $X_{1, O_{2}}$. 
\end{lemma}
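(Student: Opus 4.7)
The plan is to apply the decomposition theorem of Proposition \ref{theo-decomp1} for the contraction map $\pi\colon \widetilde{X}\to X$, restrict it to the $\mathbb{T}$-stable affine open $X_{O_2}\subset X$ from Lemma \ref{l-diagram}, and compute Poincar\'e polynomials of stalks at the attractive fixed point of $X_{1,O_2}$.

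First, I restrict the decomposition $\pi_{\star}IC_{\widetilde{X}}\simeq IC_X\oplus \bigoplus_{O,b}(\iota_O)_{\star}IC_{\bar O}^{\oplus s_{b,O}}[-b]$ to $X_{O_2}$; since $\bar{O}\cap X_{O_2}\neq \emptyset$ exactly when $O_2\subset \bar{O}$, only orbits $O_1\prec O_2$ contribute. Under Condition $(\star)$, Lemma \ref{l-diagram} gives product decompositions $X_{O_2}\simeq \mathbb{T}_{O_2}\times X_{1,O_2}$, $\widetilde{X}_{O_2}\simeq \mathbb{T}_{O_2}\times\widetilde{X}_{1,O_2}$ with $\pi|_{X_{O_2}}=\mathrm{id}_{\mathbb{T}_{O_2}}\times\pi_1$; the orbit closures split as $\bar{O}_1\cap X_{O_2}=\mathbb{T}_{O_2}\times \overline{O_1'}$, where $O_1':=O_1\cap X_{1,O_2}$, and the unique attractive fixed point $x_2$ of $X_{1,O_2}$ corresponds to a point of $O_2$ under the product decomposition.

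Second, I take stalks at $(e,x_2)\in O_2$. Because $IC_{A\times B}\simeq \mathbb{Q}_A[\dim A]\boxtimes IC_B$ when $A$ is smooth, the K\"unneth formula gives $\mathcal{H}^{j}(IC_{\mathbb{T}_{O_2}\times B})_{(e,x_2)}\simeq \mathcal{H}^{j+\dim O_2}(IC_B)_{x_2}$ for each external factor $B\in \{X_{1,O_2},\,\overline{O_1'}\}$ appearing, and similarly for the pushforward $(\mathrm{id}\times\pi_1)_{\star}$. Writing $\Phi(\mathcal{F})(t):=\sum_j \dim \mathcal{H}^j(\mathcal{F})_{(e,x_2)}\,t^j$, this introduces a uniform factor $t^{-\dim O_2}$. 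For the cross-terms $(\iota_{O_1})_\star IC_{\bar{O}_1}[-b]$, the stalk at $(e,x_2)$ is simply the shifted stalk of $IC_{\bar{O}_1}$ at a point of $O_2$, whose Poincar\'e series is by definition $t^b R_{O_1,O_2}(t)$.

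Third, I upgrade stalk cohomology to global hypercohomology via attractivity. A generic one-parameter subgroup in the relative interior of the cone $\sigma_{O_2}$ provides a non-hyperbolic $\mathbb{C}^\star$-action on $X_{1,O_2}$ whose fixed point set is $\{x_2\}$ and which lifts to $\widetilde{X}_{1,O_2}$ with fixed locus $\pi_1^{-1}(x_2)$. After passing to the quotient by a suitable $\mu_d$ so that the action becomes free outside the fixed locus, Lemma \ref{LemmaCstarfixepointLemma} applied to the $\mathbb{C}^\star$-equivariant complexes $IC_{X_{1,O_2}}$ and $\pi_{1\star}IC_{\widetilde{X}_{1,O_2}}$ -- combined with Lemma \ref{GtrivGtrivlemma} to identify hypercohomology before and after the finite quotient -- yields
\[
\sum_k\dim\mathcal{H}^k(IC_{X_{1,O_2}})_{x_2}\,t^k=t^{-\dim X_{1,O_2}}P_{X_{1,O_2}}(t),
\]
and the analogous identity for $\pi_{1\star}IC_{\widetilde{X}_{1,O_2}}$ with $P_{\widetilde{X}_{1,O_2}}(t)$ on the right.

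Combining these three steps, applying $\Phi$ to both sides of the restricted decomposition theorem and using $\dim X=\dim O_2+\dim X_{1,O_2}$ gives
\[
t^{-\dim X}P_{\widetilde{X}_{1,O_2}}(t)=t^{-\dim X}P_{X_{1,O_2}}(t)+\sum_{O_1\prec O_2} S_{O_1}(t)\,R_{O_1,O_2}(t).
\]
Substituting $S_{O_1}(t)=\widetilde{S}_{O_1}(t)\,t^{\dim O_1-\dim X}$ and multiplying through by $t^{\dim X}$ produces the stated formula. The main technical obstacle is the attractivity step: one must verify that $\pi_{1\star}IC_{\widetilde{X}_{1,O_2}}$ descends as a $\mathbb{C}^\star$-equivariant complex to the appropriate $\mu_d$-quotient so that Lemma \ref{LemmaCstarfixepointLemma} applies, which is precisely where Lemma \ref{GtrivGtrivlemma} enters to identify hypercohomologies on the quotient with those upstairs.
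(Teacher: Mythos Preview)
Your proof is correct and follows essentially the same approach as the paper: take stalks at a point $x_2\in O_2$ on both sides of the decomposition from Proposition~\ref{theo-decomp1}, use the product splitting $X_{O_2}\simeq\mathbb{T}_{O_2}\times X_{1,O_2}$ (available under Condition~$(\star)$) together with K\"unneth on the left-hand side, and read off $R_{O_1,O_2}$ directly from the definition on the right. The paper's own proof is a one-line instruction to do exactly this; you have simply unpacked it, and in particular you have made explicit the attractivity step (your third step, via Lemma~\ref{LemmaCstarfixepointLemma} and Lemma~\ref{GtrivGtrivlemma}) that converts stalk cohomology at the fixed point into the global Poincar\'e polynomials $P_{X_{1,O_2}}$ and $P_{\widetilde{X}_{1,O_2}}$, which the paper leaves implicit here since the same maneuver already appears in the proof of Lemma~\ref{LemmaStildeL}.
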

\begin{proof}
Take the stalks at $x_{2}\in O_{2}$ on both sides of Equation (\ref{Eq:decomposition}), where on the left-hand side we  use that
 $X_{O_{2}}\simeq \mathbb{T}_{O_{2}}\times X_{1, O_{2}}$ and $\widetilde{X}_{O_{2}}\simeq \mathbb{T}_{O_{2}}\times \widetilde{X}_{1, O}$
and Kunneth's formula.
\end{proof}

\begin{proof}[Proof of Theorem \ref{theo-even}] 

Changing $X$ by $X/\Gamma$,  we may assume (see Lemma \ref{LemmaStildeL}) that $X$ satisfies Condition $(\star)$ of Remark \ref{remarq-XO}. We show the result by induction on the dimension of $X$.
\\

\emph{Initial step.} The statement holds for surfaces \cite[Example 5.3]{AL21}.
\\

\emph{Induction step.} Assume that this holds for dimension $\leq d_{0}$. Let $X$ be a normal complexity-one $\mathbb{T}$-variety of dimension $d =  d_{0}+1$.
\\

\emph{Claim: The polynomial $\widetilde{S}_{O_{0}}(t)$ is of degree $\leq d$, where $O_{0} =  \{x\}$ and $x$ is a fixed point of  $E$.}
\begin{proof}[Proof of the Claim]
For a subset $F\subset E$ let ${\rm Orb}(F)$ be the set of orbits
of $F$ and let $E_{x} =  E\cap X_{1, O_{0}}$.
By Lemma \ref{lemma-RT}, 
$$P_{\widetilde{X}_{O_{0}}}(t) -  P_{X_{O_{0}}}(t)  =  \widetilde{S}_{O_{0}}(t) + R(t), \text{ where }
R(t) := \sum_{O\in\ {\rm Orb}(E_{x}\setminus\{x\})}R_{O, O_{0}}(t)t^{{\rm dim}\, O} \widetilde{S}_{O}(t).$$

First,  observe that $P_{\widetilde{X}_{O_{0}}}(t) -  P_{X_{O_{0}}}(t)$ is of degree $d$. Indeed, let $\bar{\mathfrak{D}}_{O_{0}}$ be the $\sigma_{O_{0}}$-polyhedral divisor over a smooth projective curve $\bar{C}$ describing the $\mathbb{T}$-variety $X_{O_{0}}$. By Theorem \ref{theo-gpol-dib},
$$P_{\widetilde{X}_{O_{0}}}(t)  =  g_{\bar{\mathfrak{D}}_{O_{0}}}(t) =  (t^{2} + 2\rho_{g}(\bar{C})t + 1-a)\cdot g(\sigma_{O_{0}}; t^{2}) +  \sum_{z\in{\rm Supp}(\bar{\mathfrak{D}}_{O_{0}})}
g({\rm Cay}(\bar{\mathfrak{D}}_{O_{0}, z}); t^{2}),$$
where $a$ is the cardinality of ${\rm Supp}(\bar{\mathfrak{D}}_{O_{0}})$. By Theorem \ref{Theo-hpolgpol}, the polynomials $$g(\sigma_{O_{0}}; t^{2})\text{ and }g({\rm Cay}(\bar{\mathfrak{D}}_{O_{0}, z}); t^{2})\text{ for }z\in  {\rm Supp}(\bar{\mathfrak{D}}_{O_{0}})$$ are of degrees $\leq d-2$ and $\leq d-1$. Hence 
$P_{\widetilde{X}_{O_{0}}}(t) $ is of degree $\leq d$. Also, since $X_{O_{0}}$
has a unique attractive fixed point, $X_{O_{0}}$
is, after taking the quotient by a finite subgroup of $\mathbb{T}$,  an affine cone over a projective variety $V$. So
$$P_{X_{O_{0}}}(t)  =  \tau_{\leq d-1}((1-t^{2})P_{V}(t)),$$
where $\tau_{\leq d-1}$ is the truncation to degrees $\leq d-1$ (see \cite[Lemma 2.1]{Fie91}, \cite[Proposition 5.6]{AL21}), and $P_{\widetilde{X}_{O_{0}}}(t) -  P_{X_{O_{0}}}(t)  $ is of degree $\leq d$.

Next, we estimate the degree of $R(t)$. Let $O\in\mathcal{O}(E_{x}\setminus\{x\})$ and let $\pi_{O}: \widetilde{X}_{O}\rightarrow X_{O}$ be the contraction map. Note 
that, due to Condition $(\star)$,  $$X_{O}\simeq \mathbb{T}_{O}\times X_{1, O}\text{ and }\widetilde{X}_{O}\simeq \mathbb{T}_{O}\times \widetilde{X}_{1, O}.$$  Let $y\in X_{1, O}$ be the unique attractive fixed point and set $O' =  \{y\}$. As $\pi_{O}$ is obtained from the product 
of $\mathbb{T}_{O}$ and the contraction map of $X_{1, O}$, 
$$\widetilde{S}_{X, O}(t) =  \widetilde{S}_{ X_{1, O}, O'}(t).$$
Now remark that ${\rm dim}\, O\geq 1$ implies ${\rm dim} X_{1, O} <d $. Consequently,
by induction, there exists $\lambda_{O}\in\mathbb{Z}_{\geq 0}$ such that 
$$\widetilde{S}_{X, O}(t) =  \lambda_{O}t^{d- {\rm dim}\, O}$$
with $\lambda_{O} = 0$ if $d - {\rm dim}\, O$ is odd. 
In addition, using Theorem \ref{Theo-hpolgpol} and \cite[Lemma 5.20]{AL21}, the degree of $R_{O, O_{0}}(t)t^{{\rm dim}\, O}$ is $< {\rm dim}\, O$. Hence the degree of $R(t)$ is $< d$.  Thus the degree of
$$\widetilde{S}_{O_{0}}(t) =  P_{\widetilde{X}_{O_{0}}}(t) -  P_{X_{O_{0}}}(t)-R(t)$$
is $\leq d$. 
This proves the claim. 
\end{proof}
By induction,
$$Q(t):= \sum_{O_{0}\in {\rm Orb}(E),\, {\rm dim}(O_{0}) =  0}\widetilde{S}_{O_{0}}(t)  =  P_{\widetilde{X}}(t) - P_{X}(t) -  \sum_{O\in {\rm Orb}(E\setminus X^{\mathbb{T}})}\widetilde{S}_{X, O}(t)P_{\bar{O}}(t) $$
$$ =  P_{\widetilde{X}}(t) - P_{X}(t) -  \sum_{O\in{\rm Orb}(E\setminus X^{\mathbb{T}})}\widetilde{S}_{X_{1, O}, O'}(t)P_{\bar{O}}(t) $$
$$=  P_{\widetilde{X}}(t) - P_{X}(t) - \sum_{O\in{\rm Orb}(E\setminus X^{\mathbb{T}})}\lambda_{O}t^{d - {\rm dim}\, O}P_{\bar{O}}(t)$$
is Poincar\'e symmetric, i.e. $Q(t) =  t^{2d}Q(1/t)$.
The claim implies
that $Q(t)$ is of degree $\leq d$. So  for any $O_{0}\in \mathcal{O}(E)$ with ${\rm dim}\, O_{0} = 0$, there exists   $\lambda_{O_{0}}\in \mathbb{Z}_{\geq 0}$ such that $\widetilde{S}_{O_{0}}(t) =  \lambda_{O_{0}}t^{d}$. Since from the proof of the claim the number $\lambda_{O_{0}}$ is the $(d - 2)$-th coefficient 
of $g(\sigma_{O_{0}}; t^{2})$, we have $\lambda_{O_{0}} =  0$ when $d$ is odd, proving the theorem. 
\end{proof}
\subsection{Betti numbers via divisorial fans}\label{Beettidivfanop}
We recall how to describe the image of the exceptional locus of the contraction map of torus actions of 
complexity one in terms of divisorial fans. Let  $\mathscr{E} =  \{\mathfrak{D}^{i}\, |\, i\in I\}$ be a divisorial fan over a smooth curve $Y$, where $\mathfrak{D}^{i}$ is a  $\sigma_{i}$-polyhedral divisor for $i\in I$, and let $\Sigma(\mathscr{E})$ be the fan
generated by the $\sigma_{i}$. We call \emph{degree} of $\mathscr{E}$ the set
$${\rm deg}(\mathscr{E}) =  \bigcup_{i\in I} {\rm deg}(\mathfrak{D}^{i})\subset N_{\mathbb{Q}}.$$
Let $HF(\mathscr{E}) =  \{\tau\in \Sigma(\mathscr{E})\,|\, {\rm deg}(\mathscr{E}) \cap \tau\neq \emptyset\}$ and let $E$ be the image of the exceptional locus of the contraction 
map of $X =  X(\mathscr{E})$. By \cite[Sections 3 and 4]{Tim97}
we have a bijection 
$$HF(\mathscr{E})\rightarrow {\rm Orb}(E),\,\, \tau\mapsto O_{\tau}$$
between $HF(\mathscr{E})$ and the set of orbits of $E$. 
\\

The correspondence $\tau\mapsto O_{\tau}$ is seen as follows. 
Let $\mathfrak{D}\in \mathscr{E}$ be a $\sigma$-polyhedral divisor and let $\tau\in HF(\mathscr{E})$ such that $\tau\cap {\rm deg}(\mathfrak{D})\neq \emptyset$. Then the vanishing of the  ideal 
$$I_{\tau, \mathfrak{D}}= \bigoplus_{m\in \sigma^{\vee}\cap M\setminus \tau^{\perp}}H^{0}(Y, \mathcal{O}_{Y}(\mathfrak{D}(m)))\chi^{m}\subset A[Y, \mathfrak{D}]$$
is  the orbit closure $O_{\tau}$ in $X(\mathfrak{D})$. In particular, the fan
$${\rm Star}(\mathscr{E}, \tau):= \{q_{\tau}(\gamma_{0})\,|\, \gamma_{0}\in HF(\mathscr{E})\text{ and }\tau\text{ is a face of }\gamma_{0}\}$$
describes the normalization of the orbit closure $O_{\tau}$ in $X$, where $N(\tau)\subset N$ is the sublattice generated by $\tau\cap N$ and $q_{\tau}: N_{\mathbb{Q}}\rightarrow (N/N(\tau))_{\mathbb{Q}}$ is the canonical surjection.   
\begin{remark}\label{remark-SigmaO1s}
By Lemma \ref{l-diagram} each orbit $O\in{\rm Orb}(E)$ gives rise to an open
set $X_{O} =  \mathbb{T}_{O}\times^{G}X_{1}\subset X$.
Let $\bar{\mathfrak{D}}_{O}$ be the $\sigma_{O}$-polyhedral divisor describing $X_{1} = X_{1, O}$
Then the map $O\mapsto \sigma_{O}$ is the inverse of $HF(\mathscr{E})\rightarrow {\rm Orb}(E)$, $\tau\mapsto O_{\tau}$. 
\end{remark}

As a consequence of Theorem \ref{theo-even}, one can give a precise formula of the intersection cohomology Betti numbers of any complete normal
variety with torus action of complexity one. 
\begin{corollary}\label{coro-Betti-divfan}
Let $X =  X(\mathscr{E})$ be a complete normal $\mathbb{T}$-variety of complexity one with defining divisorial fan $\mathscr{E}$. 
Denote by $\widetilde{\mathscr{E}}$ the divisorial fan of the contraction space $\widetilde{X}$ of $X$. 
For $\tau\in HF(\mathscr{E})$, set $n(\tau):= {\rm dim}\, \tau -1$ and $c(\tau):= {\rm dim}\, \tau +1$. 
Then the 
Poincar\'e polynomial of $X$ is given by the formula
$$P_{X}(t)  =  h_{\widetilde{\mathscr{E}}}(t) -  \sum_{\tau\in HF(\mathscr{E})}g_{n(\tau)}(\tau)t^{c(\tau)}h({\rm Star}(\mathscr{E}, \tau); t^{2}).$$
\end{corollary}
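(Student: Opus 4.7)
The plan is to pass from the semisimple decomposition in Theorem A to the numerical identity by identifying each summand with a known combinatorial quantity from the divisorial fan.

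First I would apply Theorem \ref{theo-even} to write
$$\pi_{\star}IC_{\widetilde X}\simeq IC_X\oplus\bigoplus_{O\in\mathcal{O}_{\rm even}(E)}(\iota_O)_{\star}IC_{\bar O}^{\oplus s_O},$$
and take hypercohomology. Because $IC_X$ is shifted by $\dim X$ and $IC_{\bar O}$ by $\dim\bar O$, a simple reindexing gives
$$P_{\widetilde X}(t)=P_X(t)+\sum_{O\in\mathcal{O}_{\rm even}(E)}s_O\,t^{\dim X-\dim O}P_{\bar O}(t).$$
Via the bijection $HF(\mathscr{E})\to\mathcal{O}(E)$, $\tau\mapsto O_\tau$, recalled in \ref{Beettidivfanop}, one has $\dim X-\dim O_\tau={\rm dim}\,\tau+1=c(\tau)$, so after solving for $P_X(t)$ the formula to be proved becomes
$$P_X(t)=P_{\widetilde X}(t)-\sum_{\tau\in HF(\mathscr{E})}s_{O_\tau}\,t^{c(\tau)}P_{\bar O_\tau}(t),$$
where it is implicit that $s_{O_\tau}=0$ when $c(\tau)$ is odd.

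Next I would match the three remaining pieces with their combinatorial counterparts. Theorem \ref{theo-gpol-dib} directly yields $P_{\widetilde X}(t)=h_{\widetilde{\mathscr{E}}}(t)$. For the orbit closure factor, Lemma \ref{LelENORMAphhh} gives $P_{\bar O_\tau}(t)=P_{\widehat{\bar O_\tau}}(t)$, and the normalization $\widehat{\bar O_\tau}$ is, by the discussion in \ref{Beettidivfanop}, the toric variety of the fan ${\rm Star}(\mathscr{E},\tau)$, so $P_{\bar O_\tau}(t)=h({\rm Star}(\mathscr{E},\tau);t^2)$ by definition of the $h$-polynomial.

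The main step, and the one that requires the most care, is showing $s_{O_\tau}=g_{n(\tau)}(\tau)$ for every $\tau\in HF(\mathscr{E})$. The computation at the end of the proof of Theorem \ref{theo-even} already establishes this identity at a fixed point $O_0=\{x\}$: it shows that the multiplicity $s_x$ equals the $(\dim X-2)$-th coefficient of $g(\sigma_x;t^2)$, which is precisely $g_{n(\tau_x)}(\tau_x)$ since $\tau_x$ has dimension $\dim X-1$. To handle a general $\tau$ I would localize around $O=O_\tau$ using Lemma \ref{l-diagram}: $X_O\simeq\mathbb{T}_O\times^G X_{1,O}$, where $X_{1,O}$ is a complete-free-complexity-one $(\mathbb{T}/\mathbb{T}_O)$-variety with a unique attractive fixed point $x_0$ whose local cone is identified with $\tau$, and $\dim X_{1,O}=c(\tau)$. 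Lemma \ref{l-Seifert-pullback} together with Lemma \ref{GtrivGtrivlemma} (applied to $G$) reduces the multiplicity computation in $X_O$ to the corresponding multiplicity at $x_0$ in $X_{1,O}$; the passage through the smooth Seifert projection preserves the decomposition-theorem summands up to the appropriate shift. Applying the fixed-point case to $X_{1,O}$ then yields $s_{O_\tau}=g_{c(\tau)-2}(\tau)=g_{n(\tau)}(\tau)$, as required.

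The delicate part is thus Step 3: verifying that the finite quotient by $G$ and the Seifert-bundle decomposition transport the multiplicity from the original contraction map to the contraction map of the local model $X_{1,O}$ without any change. Once this is established, the three identifications combine immediately to yield the stated formula, and the parity constraint from Theorem \ref{theo-even} matches the vanishing of $g_{n(\tau)}(\tau)$ for odd $n(\tau)$ (since the $g$-polynomial of a cone is an even polynomial in $t$).
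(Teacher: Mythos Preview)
Your proposal is correct and follows essentially the same route as the paper: take hypercohomology in Theorem~\ref{theo-even}, identify $P_{\widetilde X}(t)=h_{\widetilde{\mathscr E}}(t)$ via Theorem~\ref{theo-gpol-dib}, identify $P_{\bar O_\tau}(t)=h({\rm Star}(\mathscr E,\tau);t^2)$ via normalization and Lemma~\ref{LelENORMAphhh}, and extract the multiplicity $s_{O_\tau}=g_{n(\tau)}(\tau)$ from the local model $X_{1,O}$. The only minor difference is in how Step~3 is phrased: the paper reduces globally to Condition~$(\star)$ via Lemma~\ref{LemmaStildeL} and then reads off $\lambda_O$ as the leading coefficient of $P_{\widetilde X_{1,O}}(t)-P_{X_{1,O}}(t)$ (using Theorem~\ref{Theo-hpolgpol} and \cite[Proposition~5.6]{AL21}), whereas you localize at each orbit via Lemma~\ref{l-diagram} and the Seifert bundle structure; both arguments produce the same identification $s_{O_\tau}=g_{\dim\tau-1}(\tau)$.
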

\begin{proof}
Regarding the proof of Theorem \ref{theo-even}, we observe that $\widetilde{S}_{O}(t) =  \lambda_{O}t^{{\rm dim}\, X - {\rm dim}\, O},$ where $\lambda_{O}$ is leading coefficient of the polynomial  $P_{\widetilde{X}_{1, O}}(t) -  P_{X_{1, O}}(t)$. This latter, is according to Theorem \ref{Theo-hpolgpol} and \cite[Proposition 5.6]{AL21},
the number $g_{{\rm dim}\, \sigma_{O}-1}(\sigma_{O})$, whence the result. 
\end{proof}
Another consequence is the affine case with a unique attractive fixed point. If $\mathscr{E}$ is the divisorial fan $\{\mathfrak{D}\}$, then
we will respectively write  $HF(\mathfrak{D})$ and ${\rm Star}(\mathfrak{D}, \tau)$  instead of $HF(\mathscr{E})$ and ${\rm Star}(\mathscr{E}, \tau)$. 
\begin{corollary}\label{corcor-gD-Ic}
Let $\mathfrak{D}$ be a proper $\sigma$-polyhedral divisor over a smooth projective curve. Assume that
$X  =  X(\mathfrak{D})$ has a unique attractive fixed point, i.e. $\sigma\subset N_{\mathbb{Q}}$ is full-dimensional.
For $\tau\in HF(\mathfrak{D})$, set  $n(\tau):= {\rm dim}\, \tau -1$ and $c(\tau):= {\rm dim}\,\tau +1$. 
Then the 
Poincar\'e polynomial of $X$ is given by the formula
$$P_{X}(t) =  g_{\mathfrak{D}}(t) -  \sum_{\tau\in HF(\mathfrak{D})}g_{n(\tau)}(\tau)t^{c(\tau)}g({\rm Star}(\mathfrak{D}, \tau); t^{2}).$$
\end{corollary}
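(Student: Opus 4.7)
The plan is to follow the same strategy as the proof of Corollary \ref{coro-Betti-divfan}, adapting each ingredient to the affine setting dictated by the hypothesis that $\sigma$ is full-dimensional. Since $\sigma$ being full-dimensional is equivalent to $X = X(\mathfrak{D})$ admitting a unique attractive fixed point $x_{0}$, Theorem \ref{theo-gpol-dib} (second assertion) identifies the Poincar\'e polynomial of the contraction space with $g_{\mathfrak{D}}(t)$, i.e., $P_{\widetilde{X}}(t) = g_{\mathfrak{D}}(t)$.

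Next I would apply Theorem \ref{theo-even} to the contraction map $\pi: \widetilde{X}\to X$ and pass to Poincar\'e polynomials. This yields
$$g_{\mathfrak{D}}(t) \;=\; P_{X}(t) \,+\, \sum_{O\in \mathcal{O}(E)} \widetilde{S}_{O}(t)\, P_{\bar{O}}(t),$$
where $\widetilde{S}_{O}(t) = S_{O}(t)\, t^{\dim X - \dim O}$ is the shift-corrected multiplicity polynomial from Subsection \ref{sec-top-contractm}. The concluding step in the proof of Theorem \ref{theo-even} shows that $\widetilde{S}_{O}(t) = \lambda_{O} t^{\dim X - \dim O}$ with $\lambda_{O}$ the leading coefficient of $P_{\widetilde{X}_{1,O}}(t) - P_{X_{1,O}}(t)$. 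Under the bijection $HF(\mathfrak{D})\leftrightarrow \mathcal{O}(E)$, $\tau\mapsto O_{\tau}$, Remark \ref{remark-SigmaO1s} gives $\sigma_{O_{\tau}} = \tau$, so that $\dim X - \dim O_{\tau} = \dim \tau + 1 = c(\tau)$; and Theorem \ref{Theo-hpolgpol} combined with \cite[Proposition 5.6]{AL21} identifies $\lambda_{O_{\tau}} = g_{\dim\tau - 1}(\tau) = g_{n(\tau)}(\tau)$.

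It remains to identify $P_{\bar{O}_{\tau}}(t)$ with $g({\rm Star}(\mathfrak{D}, \tau); t^{2})$. Because $X$ is affine with unique attractive fixed point $x_{0}$ lying in every orbit closure of the exceptional locus, each $\bar{O}_{\tau}$ is itself affine with a unique attractive fixed point. By Lemma \ref{LelENORMAphhh} its intersection cohomology agrees with that of its normalization, which is the affine toric variety attached to the fan ${\rm Star}(\mathfrak{D}, \tau)$ inside $(N/N(\tau))_{\mathbb{Q}}$; since the unique attractive fixed point forces this fan to have a single full-dimensional maximal cone (namely $q_{\tau}(\sigma)$), the Poincar\'e polynomial of the corresponding affine toric variety equals the $g$-polynomial of that cone, justifying the notation $g({\rm Star}(\mathfrak{D}, \tau); t^{2})$. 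Substituting these three identifications into the decomposition identity above and rearranging produces the claimed formula.

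The main obstacle is bookkeeping rather than new ideas: one must verify rigorously that in the affine case ${\rm Star}(\mathfrak{D}, \tau)$ is really the face fan of the single maximal cone $q_{\tau}(\sigma)$ (so that the notation $g({\rm Star}(\mathfrak{D}, \tau); t^{2})$ is unambiguous), and that the identification $P_{Y}(t) = g(\gamma; t^{2})$ for an affine toric variety $Y = X_{\gamma}$ with attractive fixed point—implicit in the definition of the $g$-polynomial via stalks of $IC_{X_{\gamma}}$ at the closed orbit—transports correctly to the (possibly non-normal) orbit closure $\bar{O}_{\tau}$ via Lemma \ref{LelENORMAphhh}. Once these pieces are in place, the proof reduces to exactly the same algebraic manipulation used in Corollary \ref{coro-Betti-divfan}, with $h_{\widetilde{\mathscr{E}}}(t)$ replaced by $g_{\mathfrak{D}}(t)$ and $h({\rm Star}(\mathscr{E}, \tau); t^{2})$ replaced by $g({\rm Star}(\mathfrak{D}, \tau); t^{2})$.
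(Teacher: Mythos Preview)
Your proposal is correct and follows essentially the same approach as the paper, whose proof consists of the single line ``Same proof as Corollary \ref{coro-Betti-divfan}.'' You have accurately expanded this by replacing $h_{\widetilde{\mathscr{E}}}(t)$ with $g_{\mathfrak{D}}(t)$ via Theorem \ref{theo-gpol-dib} and replacing $h({\rm Star}(\mathscr{E},\tau);t^{2})$ with $g({\rm Star}(\mathfrak{D},\tau);t^{2})$ through the observation that in the affine case each orbit closure $\bar{O}_{\tau}$ is an affine toric variety with attractive fixed point, so its Poincar\'e polynomial is the $g$-polynomial of the single maximal cone $q_{\tau}(\sigma)$.
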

\begin{proof}
Same proof as Corollary \ref{coro-Betti-divfan}.
\end{proof}

\begin{example}
With the notation of Corollary \ref{coro-Betti-divfan}, note that if $\Sigma(\mathscr{E})$ is simplicial, then 
$$\begin{cases} g_{n(\tau)}(\tau) =  0\text{  if } {\rm dim}(\tau)\neq 1\\   g_{n(\tau)}(\tau) =  1 \text{ if } {\rm dim}(\tau)= 1  \end{cases}$$
for any $\tau\in HF(\mathscr{E})$. 
By Theorem \ref{theo-even} we have 
$$\pi_{\star}IC_{\widetilde{X}}\simeq IC_{X}\oplus \bigoplus_{O\in{\rm Orb}_{2}(E)} (\iota_{O})_{\star}IC_{\bar{O}},$$
where ${\rm Orb}_{2}(E)$ is the set of codimension-two orbits of $X$ in $E$. In the case where $\widetilde{X}$ is rationally smooth, the
fan $\Sigma(\mathscr{E})$ is simplicial. Therefore, we recover \cite[Theorem 1.1(iii)]{AL21}, which was proven via the decomposition theorem for semi-small maps. 
\end{example}

\subsection{Vanishing of odd dimensional intersection cohomology}
Theorem \ref{theo-even} implies the following  rationality criterion.
\begin{theorem}\label{theo-rat-imp-coho}
Let $X$ be any complete variety with torus action of complexity one. Then the following 
are equivalent.
\begin{itemize}
\item[(i)] $X$ is a rational variety.
\item[(ii)] We have $IH^{2j+1}(X; \mathbb{Q}) =  0$ for any $j\in\mathbb{Z}$. 
\end{itemize}
\end{theorem}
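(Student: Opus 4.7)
The approach is to use Theorem \ref{theo-even} to reduce the question to the contraction space $\widetilde{X}$ (where the problem becomes tractable thanks to the toroidal structure) and then to analyse the $h$-polynomial formula $P_{\widetilde X}(t)=h_{\widetilde{\mathscr{E}}}(t)$ of Theorem \ref{theo-gpol-dib}, whose odd-in-$t$ part involves the genus $\rho_{g}(\bar C)$ of the base curve. By Lemma \ref{LelENORMAphhh} I may assume $X$ normal, since both conditions are invariant under normalization. Recall that a complete complexity-one $\mathbb{T}$-variety $X$ is $\mathbb{T}$-equivariantly birational to $(\mathbb{T}/\mathbb{T}_{0})\times\bar C$; hence $X$ is rational if and only if $\bar C\simeq\mathbb{P}^{1}_{\mathbb{C}}$, i.e. $\rho_{g}(\bar C)=0$.

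The first step transfers the question from $X$ to $\widetilde{X}$. Taking hypercohomology of the decomposition of Theorem \ref{theo-even} (shifted by $\dim X=\dim\widetilde X$) gives
\begin{equation*}
IH^{j}(\widetilde{X};\mathbb{Q})\simeq IH^{j}(X;\mathbb{Q})\,\oplus\,\bigoplus_{O\in\mathcal{O}_{\mathrm{even}}(E)}IH^{\,j-\mathrm{codim}\,O}(\bar{O};\mathbb{Q})^{\oplus s_{O}}.
\end{equation*}
For each $O\subset E$, the fan $\mathrm{Star}(\mathscr{E},\sigma_{O})$ describes the normalization of $\bar O$ as a complete toric variety, so by Lemma \ref{LelENORMAphhh} and Theorem \ref{Theo-hpolgpol} the Poincar\'e polynomial $P_{\bar O}(t)$ is a polynomial in $t^{2}$. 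Combined with $\mathrm{codim}\,O$ being even, every summand contributing to $IH^{\mathrm{odd}}$ vanishes, so
\begin{equation*}
IH^{2j+1}(X;\mathbb{Q})\simeq IH^{2j+1}(\widetilde{X};\mathbb{Q})\quad\text{for every }j.
\end{equation*}

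The second step computes the odd-in-$t$ part of $P_{\widetilde X}(t)=h_{\widetilde{\mathscr{E}}}(t)$. The defining formula of $h_{\widetilde{\mathscr{E}}}(t)$ splits as an even polynomial in $t$ plus $2\rho_{g}(\bar C)\,t\,h(\Sigma(\widetilde{\mathscr{E}});t^{2})$, since every fan $h$-polynomial is a polynomial in $t^{2}$. If $\rho_{g}(\bar C)=0$ the odd part vanishes identically, yielding (i)$\Rightarrow$(ii) via Step 1.

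The delicate implication is (ii)$\Rightarrow$(i): vanishing of the linear coefficient of $P_{\widetilde X}(t)$ must force $\rho_{g}(\bar C)=0$, i.e. one needs a lower bound $b_{1}(\widetilde X)\geq 2\rho_{g}(\bar C)$. This is the main obstacle, as one must rule out that the factor $h(\Sigma(\widetilde{\mathscr{E}});t^{2})$ conspires to annihilate the linear term. I would resolve this by composing the global quotient $q:\widetilde X\to\bar C$ with a $\mathbb{T}$-equivariant resolution $r:\widetilde X'\to\widetilde X$: the proper surjection $q\circ r$ induces an injection $H^{1}(\bar C;\mathbb{Q})\hookrightarrow H^{1}(\widetilde X';\mathbb{Q})$, and the decomposition theorem applied to $r$ exhibits $IH^{1}(\widetilde X;\mathbb{Q})$ as a direct summand of $H^{1}(\widetilde X';\mathbb{Q})$ through which this injection factors. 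Hence $b_{1}(\widetilde X)\geq 2\rho_{g}(\bar C)$, and combining with the identity from Step 1 and the assumption $IH^{1}(X;\mathbb{Q})=0$ forces $\rho_{g}(\bar C)=0$, completing the proof.
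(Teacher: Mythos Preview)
Your overall strategy coincides with the paper's: reduce from $X$ to $\widetilde X$ via Theorem \ref{theo-even} (both sides of the correction term lie in $\mathbb{Z}[t^{2}]$), then read off the odd part of $P_{\widetilde X}(t)=h_{\widetilde{\mathscr{E}}}(t)$, and finally identify rationality of $X$ with $\rho_g(\bar C)=0$ through the birational equivalence $X\sim \bar C\times\mathbb{P}^{n}$.

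The difference is in the direction (ii)$\Rightarrow$(i). You identify the odd part of $h_{\widetilde{\mathscr{E}}}(t)$ as $2\rho_g(\bar C)\,t\,h(\Sigma(\widetilde{\mathscr{E}});t^{2})$ and then worry that $h(\Sigma(\widetilde{\mathscr{E}});t^{2})$ might ``conspire'' to kill the linear term, which leads you to an auxiliary resolution argument. This detour is unnecessary: $\Sigma(\widetilde{\mathscr{E}})$ is a complete fan (since $X$ is complete the tail cones cover $N_{\mathbb{Q}}$), so $h(\Sigma(\widetilde{\mathscr{E}});t^{2})$ is the Poincar\'e polynomial of a complete irreducible toric variety and has constant term $\dim IH^{0}=1$. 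Hence the coefficient of $t$ in $P_{\widetilde X}(t)$ is \emph{exactly} $2\rho_g(\bar C)$, and the equivalence $P_{\widetilde X}(t)\in\mathbb{Z}[t^{2}]\Leftrightarrow\rho_g(\bar C)=0$ is immediate. This is precisely how the paper argues, in one line.

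Your resolution argument is plausible but the step ``through which this injection factors'' is not justified as stated: the direct-sum splitting in the decomposition theorem is non-canonical, so asserting that the pullback $H^{1}(\bar C)\hookrightarrow H^{1}(\widetilde X')$ lands in the $IH^{1}(\widetilde X)$ summand needs an argument (e.g.\ apply the decomposition theorem directly to the proper surjection $q:\widetilde X\to\bar C$ and observe that $IC_{\bar C}$ occurs as a summand of $q_{*}IC_{\widetilde X}$ up to shift). This can be made to work, but it re-derives a special case of Theorem \ref{theo-gpol-dib} rather than using it.
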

\begin{proof}
By Theorem \ref{theo-even} we have 
$$P_{X}(t) =  P_{\widetilde{X}}(t) -  \sum_{O\in{\rm Orb}(E)}\widetilde{S}_{O}(t)P_{\bar{O}}(t)$$
with $\widetilde{S}_{O}(t), P_{\bar{O}}(t)\in\mathbb{Z}[t^{2}]$ for any $O\in{\rm Orb}(E)$, and where $\widetilde{X}$ is the contraction space of $X$. Hence $P_{X}(t)\in\mathbb{Z}[t^{2}]$ if and only if $P_{\widetilde{X}}(t)\in\mathbb{Z}[t^{2}]$, and
by Theorem \ref{theo-gpol-dib}, this is equivalent to that the genus of the smooth projective curve $\bar{C}$ such that $\mathbb{C}(X)^{\mathbb{T}}\simeq \mathbb{C}(\bar{C})$ is $0$. Since $X$ is birationally equivalent to $\bar{C}\times \mathbb{P}^{n}$ \cite[Section 1, Corollary 3]{Tim08}, we conclude by L\"uroth's theorem. 
\end{proof}
\section{Computing intersection cohomology}\label{sec-six}
We now relate the computation of  intersection cohomology with linear torus action
of complexity one from the defining equations and treat the case of trinomial hypersurfaces.
\subsection{First consequences}\label{sec-first-cons}
We start with the affine case having a unique attractive fixed point.
\begin{corollary}[Consequence of Theorem \ref{theo-gpol-dib}, Theorem \ref{theo-aff-impl} and Corollary \ref{corcor-gD-Ic}]\label{cor-compute-ginv} Let $\theta =  (\overline{N}, N, F, S, \Sigma)$ be a weight package and let 
$$X  =  \mathbb{V}(u_{1} f_{1}, \ldots, u_{a} f_{a})\subset \mathbb{A}^{\ell}_{\mathbb{C}}$$
be the subvariety with torus action of complexity one  arising from $\theta$ via Construction \ref{lem-equa-tvar}. Assume that
$\sigma_{\theta}:= S(F(N_{\mathbb{Q}})\cap \delta)$ is full-dimensional, where $\delta =  \mathbb{Q}_{\geq 0}^{\ell}\subset \overline{N}_{\mathbb{Q}} =  \mathbb{Q}^{\ell}$. Then the following hold.   
\begin{itemize}
\item[$(i)$] If the associated curve $C_{\theta}$ of $\theta$ is projective, then the intersection cohomology Betti numbers
of $X$ are described by the formula
$$P_{X}(t) =  g_{\bar{\mathfrak{D}}_{\theta}}(t) -  \sum_{\tau\in HF(\bar{\mathfrak{D}}_{\theta})} g_{n(\tau)}(\tau) t^{c(\tau)} g({\rm Star}(\bar{\mathfrak{D}}_{\theta}, \tau); t^{2}),$$
where $n(\tau) =  {\rm dim}\, \tau - 1$ and $c(\tau)  =  {\rm dim}\, \tau +1$ for any $\tau \in HF(\bar{\mathfrak{D}}_{\theta})$.
\item[$(ii)$] Assume that the associated curve $C_{\theta}$ is projective. Let
$$X_{1} =  \{[z_{1}: \ldots: z_{r}]_{\Delta}\in X_{\Delta}\,|\, h_{v}(z_{1}, \ldots, z_{r}) =  0  \text{ for } v =  1, \ldots, a\}$$
be defined as in Theorem \ref{lem-Q}, where $\Delta$ is the lifting fan of $\theta$. Then the intersection cohomology Betti numbers of $X_{1}$ and of the
contraction space $\widetilde{X}$ of $X$ is given by the formula
$$P_{\widetilde{X}}(t) =  P_{X_{1}}(t)  =  g_{\bar{\mathfrak{D}}_{\theta}}(t).$$
\item[$(iii)$] Assume that $C_{\theta}$ is affine. Then $P_{X}(t) =  g_{\bar{\mathfrak{D}}_{\theta}}(t).$
\end{itemize}
\end{corollary}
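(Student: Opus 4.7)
The plan is to combine the description of the normalization provided by Theorem \ref{theo-aff-impl}, the cohomological invariance under normalization given by Lemma \ref{LelENORMAphhh}, and the two formulas furnished by Corollary \ref{corcor-gD-Ic} and Theorem \ref{theo-gpol-dib}. A small preliminary point that I would want to record before entering the three cases is that forming the contraction space commutes with normalization: since $\widetilde{X}$ is defined as the normalization of the graph of the rational quotient $X\dashrightarrow C_\theta$, and since $X$ and its normalization $\widehat{X}$ share the same rational quotient, the contraction space $\widetilde{X}$ of $X$ coincides $\mathbb{T}$-equivariantly with that of $\widehat{X}$. This is what legitimates transporting the formulas valid in the normal setting to the potentially non-normal $X$ via Lemma \ref{LelENORMAphhh}.

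For assertion $(i)$, I would argue that since $C_\theta$ is projective, its normalization $\widehat{C_\theta}$ is a smooth projective curve, while the hypothesis that $\sigma_\theta$ is full-dimensional implies that the normalization $\widehat{X}\simeq X(\widehat{C_\theta},\bar{\mathfrak{D}}_\theta)$ (by Theorem \ref{theo-aff-impl}) possesses a unique attractive fixed point. Corollary \ref{corcor-gD-Ic} applied to $\bar{\mathfrak{D}}_\theta$ then produces the asserted formula for $P_{\widehat{X}}(t)$, and Lemma \ref{LelENORMAphhh} upgrades it to the identical formula for $P_X(t)$.

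For assertion $(ii)$, Theorem \ref{lem-Q} states exactly that the normalization of $X_1$ is $\mathbb{T}_N$-isomorphic to $\widetilde{X}$, so Lemma \ref{LelENORMAphhh} immediately yields $P_{X_1}(t)=P_{\widetilde{X}}(t)$. The remaining equality $P_{\widetilde{X}}(t)=g_{\bar{\mathfrak{D}}_\theta}(t)$ is an instance of the second statement of Theorem \ref{theo-gpol-dib}, since $\bar{\mathfrak{D}}_\theta$ is a proper polyhedral divisor over the smooth projective curve $\widehat{C_\theta}$ with full-dimensional tailcone $\sigma_\theta$.

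For assertion $(iii)$, when $C_\theta$ is affine the normalized curve $\widehat{C_\theta}$ is affine as well, so the relative spectrum $\mathrm{Spec}_{\widehat{C_\theta}}\mathcal{A}_{\bar{\mathfrak{D}}_\theta}$ is affine; thus the natural contraction $\widetilde{X}\to \widehat{X}$ is an isomorphism, and I can apply Theorem \ref{theo-gpol-dib} directly together with Lemma \ref{LelENORMAphhh} to conclude $P_X(t)=P_{\widehat{X}}(t)=g_{\bar{\mathfrak{D}}_\theta}(t)$. The only genuine subtlety throughout is the commutation of normalization with passage to the contraction space mentioned at the start; once that is spelled out, the three assertions follow essentially by assembling the cited results.
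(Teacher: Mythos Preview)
Your argument is correct and matches the paper's own approach: the corollary is stated without an explicit proof, merely as a consequence of Theorem \ref{theo-aff-impl}, Theorem \ref{theo-gpol-dib}, and Corollary \ref{corcor-gD-Ic}, and you have accurately unpacked how these results combine, including the passage through normalization via Lemma \ref{LelENORMAphhh} and the use of Theorem \ref{lem-Q} for part $(ii)$.
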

\begin{proof}
   Since intersection cohomology Betti numbers are invariant under normalization, to prove the corollary it suffices to apply Theorem~\ref{theo-aff-impl}.\\

\noindent\it{Proof of (i).}
Since $C_{\theta}$ is projective, the claim follows immediately from Corollary~\ref{corcor-gD-Ic}.\\

\noindent\it{Proof of (ii) and (iii).}
By construction, the variety $X_{1}\subset X_{\Delta}$ is a complete complexity-one $\mathbb{T}$-variety whose contraction space coincides with the contraction space $\widetilde{X}$ of $X$.  Hence, (ii) and (iii) follow from Theorem~\ref{theo-gpol-dib}.
\end{proof}

Next we pass to the projective case.
\begin{corollary}[Consequence of Theorem \ref{theo-main-nonnorm} and Corollary \ref{coro-Betti-divfan}]\label{cor-proj2222}
Let $\theta =  (\overline{N}, N, F, S, \Sigma)$ be a weight package and let 
$$X =  \mathbb{V}(u_{1} f_{1}, \ldots, u_{a} f_{a})\subset \mathbb{P}_{\mathbb{C}}^{\ell}$$
be the subvariety with torus action of complexity one obtained from $\theta$ via Construction \ref{theo-main-nonnorm}. Then the following hold. 
\begin{itemize}
\item[$(i)$] The intersection cohomology Betti numbers of $X$ are described by the formula
$$P_{X}(t)  =  h_{\mathscr{E}_{\theta}}(t) -  \sum_{\tau\in HF(\mathscr{E}_{\theta})} g_{n(\tau)}(\tau) t^{c(\tau)} g({\rm Star}(\mathscr{E}_{\theta}, \tau); t^{2}),$$
where $\mathscr{E}_{\theta}$ is the divisorial fan of Theorem \ref{theo-main-nonnorm}, $n(\tau) =  {\rm dim}\, \tau - 1$ and $c(\tau)  =  {\rm dim}\, \tau +1$ for any $\tau \in HF(\mathscr{E}_{\theta})$.
\item[$(ii)$] Let $X_{1}$ be the subvariety 
$$\{[z_{1}:\ldots: z_{r}]_{\bar{\Delta}}\in X_{\bar{\Delta}}\, |\, h_{v}(z_{1}, \ldots, z_{r}) =  0 \text{ for } v =  1, \ldots, a\}$$
arising in Construction \ref{theo-main-contr-theorDelta}, where $\bar{\Delta}$ is the enhanced lifting fan of $\theta$. Then the intersection cohomology Betti numbers of $X_{1}$ and of contraction space $\widetilde{X}$ of $X$  are given by the formula
$$ P_{\widetilde{X}}(t) =  P_{X_{1}}(t)  =  h_{\mathscr{E}_{\theta}}(t).$$
\end{itemize}
\end{corollary}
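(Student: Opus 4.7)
The plan is to derive both assertions by reducing to the normal case already treated in Section \ref{sec-five}, using that intersection cohomology is invariant under normalization (Lemma \ref{LelENORMAphhh}) together with the explicit divisorial fan produced by Theorem \ref{theo-main-nonnorm}.

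\medskip

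For part $(i)$, I would argue as follows. Let $\widehat{X}$ denote the normalization of $X$. Since $X\subset \mathbb{P}^{\ell}_{\mathbb{C}}$ is projective, $\widehat{X}$ is a complete normal $\mathbb{T}$-variety of complexity one. By Theorem \ref{theo-main-nonnorm}$(1)$, the divisorial fan $\mathscr{E}_{\theta}$ constructed from $\theta$ describes $\widehat{X}$, that is $\widehat{X}\simeq X(\mathscr{E}_{\theta})$ $\mathbb{T}$-equivariantly. Applying Corollary \ref{coro-Betti-divfan} to $\widehat{X} =  X(\mathscr{E}_{\theta})$ yields
\[
 P_{\widehat{X}}(t)  =  h_{\widetilde{\mathscr{E}_{\theta}}}(t) -  \sum_{\tau\in HF(\mathscr{E}_{\theta})} g_{n(\tau)}(\tau) t^{c(\tau)} h({\rm Star}(\mathscr{E}_{\theta}, \tau); t^{2}),
\]
where $\widetilde{\mathscr{E}_{\theta}}$ denotes a contraction divisorial fan of $\mathscr{E}_{\theta}$ in the sense of Definition \ref{def-contrat-theta5758}. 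By Lemma \ref{LelENORMAphhh}, $P_{X}(t) =  P_{\widehat{X}}(t)$, and finally by the very construction of the $h$-polynomial, $h_{\widetilde{\mathscr{E}_{\theta}}}(t)$ depends only on the cones $\sigma_{i}$ and on the polyhedral coefficients $\mathfrak{D}^{i}_{z}$ for $z\in\overline{C}$, which are not affected by passage to a contraction divisorial fan; hence $h_{\widetilde{\mathscr{E}_{\theta}}}(t) =  h_{\mathscr{E}_{\theta}}(t)$. This gives the stated formula.

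\medskip

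For part $(ii)$, the key observation is that the contraction space is insensitive to normalization: indeed, $\widetilde{X}$ was defined via \cite[Section 3, Lemma 1]{Vol10} as the normalization of the graph of the rational quotient $\iota:X\dashrightarrow \bar{C}$, so $\widetilde{X}$ simultaneously serves as the contraction space of $\widehat{X}$. Consequently, by Theorem \ref{theo-main-contr-theorDelta} applied to the pair $(X,\theta)$, the normalization of $X_{1}\subset X_{\bar{\Delta}}$ is $\mathbb{T}_{N}$-isomorphic to $\widetilde{X}$, and so by Lemma \ref{LelENORMAphhh} we have $P_{X_{1}}(t) = P_{\widetilde{X}}(t)$. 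On the other hand, $\widetilde{X}$ is the contraction space of $X(\mathscr{E}_{\theta})$, hence is described by a contraction divisorial fan $\widetilde{\mathscr{E}_{\theta}}$ of $\mathscr{E}_{\theta}$, and Theorem \ref{theo-gpol-dib} yields $P_{\widetilde{X}}(t) =  h_{\widetilde{\mathscr{E}_{\theta}}}(t) = h_{\mathscr{E}_{\theta}}(t)$, again using the coincidence of $h$-polynomials noted in part $(i)$.

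\medskip

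The proof thus reduces entirely to unpacking definitions and chaining together Theorem \ref{theo-main-nonnorm}, Theorem \ref{theo-main-contr-theorDelta}, Lemma \ref{LelENORMAphhh}, Corollary \ref{coro-Betti-divfan} and Theorem \ref{theo-gpol-dib}; there is no genuine difficulty beyond bookkeeping. The only subtle point worth flagging, and the place where I would invest most care, is the equality $h_{\widetilde{\mathscr{E}_{\theta}}}(t) =  h_{\mathscr{E}_{\theta}}(t)$: one must verify from the definition of the $h$-polynomial in Section \ref{sec: h-poly23} that refining the base $\overline{C}$ by a finite affine open cover (as in Definition \ref{def-contrat-theta5758}) does not alter either $\Sigma(\mathscr{E}_{\theta})$ or the local fans $\Sigma_{z}(\mathscr{E}_{\theta})$ for $z\in\overline{C}$, nor the combinatorial constants $c$ and $\rho_{g}(\bar{C})$ entering the formula, so the polynomial is unchanged.
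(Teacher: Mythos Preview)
Your proposal is correct and matches the paper's approach: the paper states the corollary as an immediate ``Consequence of Theorem \ref{theo-main-nonnorm} and Corollary \ref{coro-Betti-divfan}'' with no further proof, and your argument is precisely the intended unpacking --- normalize (via Lemma \ref{LelENORMAphhh}), identify the normalization with $X(\mathscr{E}_\theta)$ (Theorem \ref{theo-main-nonnorm}), then apply Corollary \ref{coro-Betti-divfan} and Theorem \ref{theo-gpol-dib}. Your attention to the point $h_{\widetilde{\mathscr{E}_\theta}}(t)=h_{\mathscr{E}_\theta}(t)$ is well-placed; in the paper this is treated as a notational convention, since the $h$-polynomial is defined only for contraction divisorial fans and the combinatorial data $\Sigma(\mathscr{E}_\theta)$, $\Sigma_z(\mathscr{E}_\theta)$ are unchanged under passage to a contraction divisorial fan.
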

\subsection{Intersection cohomology of affine trinomial hypersurfaces}\label{sec-trinomial-aff}
By \emph{affine trinomial hypersurface} we mean a hypersurface
$$X  =  \mathbb{V}(T_{1}^{\underline{n}_{1}} + T_{2}^{\underline{n}_{2}} + T_{3}^{\underline{n}_{3}}) \subset \mathbb{A}_{\mathbb{C}}^{\ell}
\text{ such that  }T_{i}^{\underline{n}_{i}} =  \prod_{j = 1}^{r_{i}} T_{i, j}^{n_{i,j}} \text{ for } i = 1,2,3,$$
where $r_{i}, n_{i,j}\in\mathbb{Z}_{> 0}$ and $\ell =  r_{1} + r_{2} + r_{3}$. Here the variables $T_{i,j}$ are independent. 
Note that trinomial hypersurfaces have singularity loci of codimension $\geq 2$ by Jacobian criterion. Hence they are normal.
Set
$$  R = \begin{pmatrix}
 -n_{1,1}& \ldots &  -n_{1, r_{1}} &  n_{2,1} &\ldots & n_{2,r_{2}} & 0& \ldots & 0\\
 -n_{1,1}& \ldots  &  -n_{1, r_{1}} &  0 &\ldots & 0 & n_{3,1} & \ldots & n_{3, r_{3}} 
\end{pmatrix}.$$
Write 
$$ u_{i}:= {\rm gcd}(n_{i,1}, \ldots, n_{i, r_{i}}) \text{ for } i = 1,2,3, \,\, d =  {\rm gcd}(u_{1}, u_{2}, u_{3}),$$
$$ d_{1}  =  {\rm gcd}(u_{2}/d, u_{3}/d), d_{2}  =  {\rm gcd}(u_{1}/d, u_{3}/d) \text{ and } d_{3}  =  {\rm gcd}(u_{1}/d, u_{2}/d).$$
The matrix $R$ induces an exact sequence 
$$0\rightarrow N \xrightarrow{F} \overline{N} =  \mathbb{Z}^{\ell}\xrightarrow{R} {\rm Coker}(F)\rightarrow 0.$$
Here we identify ${\rm Coker}(F)$ with the image of $R$, that is the sublattice of $\mathbb{Z}^{2}$ generated
by $(-u_{1}, -u_{1})$, $(u_{2}, 0),$ $(0, u_{3})$. Let $\Sigma$ be the fan generated by the cones $R(\delta_{0})$, where
$\delta_{0}$ runs over the faces of the first quadrant $\delta =  \mathbb{Q}_{\geq 0}^{\ell}.$ Then 
 (see \cite[Section 4]{Kru19}) $X_{\Sigma}$ is the weighted projective plane $\mathbb{P}(d_{1}, d_{2}, d_{3})$. Let $w_{1}, w_{2}, w_{3}$ be the homogeneous coordinates of $\mathbb{P}(d_{1}, d_{2}, d_{3})$ and consider the smooth curve
$$ C_{d_{1}, d_{2}, d_{3}} := \mathbb{V}( w_{1}^{u/d_{1}} + w_{2}^{u/d_{2}} + w_{3}^{u/d_{3}})\subset \mathbb{P}(d_{1}, d_{2}, d_{3}), $$
where $u =  d d_{1} d_{2} d_{3}$. Furthermore, if $S: \overline{N}\rightarrow N$ is a section of $F$, then the weight package 
$\theta =  (\overline{N}, N, F, S, \Sigma)$ defines a linear torus action of complexity one on $X \subset \mathbb{A}_{\mathbb{C}}^{\ell}$. Kruglov proved the following.
\begin{theorem}\cite[Theorem 3.1]{Kru19}\label{Kruglov-Theorem}
Let $$X  =  \mathbb{V}(T_{1}^{\underline{n}_{1}} + T_{2}^{\underline{n}_{2}} + T_{3}^{\underline{n}_{3}}) \subset \mathbb{A}_{\mathbb{C}}^{\ell}$$
be an affine trinomial hypersurface with its natural weight package $\theta =  (\overline{N}, N, F, S, \Sigma)$.
Then $\bar{\mathfrak{D}}_{\theta}$ is defined over the curve
$C_{d_{1}, d_{2}, d_{3}}\subset \mathbb{P}(d_{1}, d_{2}, d_{3}) =  X_{\Sigma}$ and given by the relation
$$\bar{\mathfrak{D}}_{\theta} =  \bar{\mathfrak{D}}_{\theta, 1}\cdot E_{1} + \bar{\mathfrak{D}}_{\theta, 2}\cdot E_{2} + \bar{\mathfrak{D}}_{\theta, 3}\cdot E_{3}  , \text{ where: }$$
\begin{itemize}
\item[$(i)$] The polyhedron $\bar{\mathfrak{D}}_{\theta, i}$ for $1\leq i\leq 3$ is the Minkowski sum of 
$\sigma_{\theta}:= S(\mathbb{Q}_{\geq 0}^{\ell}\cap F(N_{\mathbb{Q}}))$ and the convex hull of the set
$$\left\{ S\left( \frac{d}{d_{j} n_{i,j}}e_{i,j}\right)\,|\, 1\leq j\leq r_{i}\right\}.$$
Here 
$$(e_{1,1}, \ldots, e_{1, r_{1}}, e_{2, 1}, \ldots, e_{2, r_{2}}, e_{3,1}, \ldots, e_{3, r_{3}})$$
is the canonical basis of $\overline{N} =  \mathbb{Z}^{\ell}.$
\item[$(ii)$] The divisors $E_{1}, E_{2}, E_{3}$ are defined by
$$E_{i}:= \iota^{\star}(H_{i})  =  \sum_{z\in C_{d_{1}, d_{2}, d_{3}}}(H_{i}, C_{d_{1}, d_{2}, d_{3}})_{z}\cdot [z],$$
where $H_{i} =  \mathbb{V}(w_{i})\subset \mathbb{P}(d_{1}, d_{2}, d_{3})$,
$\iota:  C_{d_{1}, d_{2}, d_{3}}\rightarrow  \mathbb{P}(d_{1}, d_{2}, d_{3})$ is the inclusion, and $(H_{i}, C_{d_{1}, d_{2}, d_{3}})_{z}$ is the local intersection number between $H_{i}$ and $C_{d_{1}, d_{2}, d_{3}}$ at $z$. 
\end{itemize}
\end{theorem}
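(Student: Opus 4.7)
The strategy is to apply Kruglov's Theorem \ref{Kruglov-Theorem}, which describes $\bar{\mathfrak{D}}_\theta$ over the curve $C_{d_1,d_2,d_3}$, then invoke Theorem \ref{theo-gpol-dib} for $P_{\widetilde{X}}(t)=g_{\bar{\mathfrak{D}}_\theta}(t)$ and Corollary \ref{corcor-gD-Ic} for $P_X(t)$. The proof reduces to explicit combinatorics and intersection theory on $C_{d_1,d_2,d_3}\subset \mathbb{P}(d_1,d_2,d_3)$.

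For the first identity I compute the three inputs to the $g$-polynomial formula of Theorem \ref{theo-gpol-dib}. Pairwise coprimality of $d_1,d_2,d_3$ (a consequence of $\gcd(u_1/d, u_2/d, u_3/d)=1$) ensures $C_{d_1,d_2,d_3}$ is smooth and avoids the three torus-fixed singular points of $\mathbb{P}(d_1,d_2,d_3)$; adjunction for a smooth curve of weighted degree $u=dd_1d_2d_3$ yields $2\rho_g-2=d(u-d_1-d_2-d_3)=du-\gamma$. Since $H_i\cap H_j$ consists of torus-fixed points off $C$, the intersection $H_i\cap C$ is transverse, giving $|\mathrm{Supp}(E_i)|=H_i\cdot C=dd_i$ reduced points and thus $a=\gamma$. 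At each $z\in\mathrm{Supp}(E_i)$ the coefficient of $\bar{\mathfrak{D}}_\theta$ is $\bar{\mathfrak{D}}_{\theta,i}$, whose Cayley cone equals $\Pi_i$ by direct comparison of the definitions. Assembling these inputs into the formula for $g_{\bar{\mathfrak{D}}_\theta}(t)$ produces the stated expression for $P_{\widetilde{X}}(t)$.

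For $P_X(t)$ I apply Corollary \ref{corcor-gD-Ic}, which yields
$$P_X(t) = P_{\widetilde{X}}(t) - \sum_{\tau \in HF(\bar{\mathfrak{D}}_\theta)} g_{n(\tau)}(\tau)\, t^{c(\tau)}\, g(\mathrm{Star}(\bar{\mathfrak{D}}_\theta, \tau); t^2),$$
and I must establish two simplifications. The identification $HF(\bar{\mathfrak{D}}_\theta) = H(\theta,\underline{n}_1,\underline{n}_2,\underline{n}_3)$ follows from the Minkowski decomposition $\mathrm{deg}(\bar{\mathfrak{D}}_\theta) = \sigma_\theta + \sum_i dd_i\, Q_i$, where $Q_i$ is the convex hull appearing in $\bar{\mathfrak{D}}_{\theta,i}$. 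Indeed, for $\tau$ a face of $\sigma_\theta$, the facial property forces $\sum_i dd_i v_i \in \tau$ (with $v_i \in \sigma_\theta$) if and only if each $v_i \in \tau$, if and only if $\sum_i v_i \in \tau$, matching the defining condition of $H$.

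The main obstacle is the second simplification, $g(\mathrm{Star}(\bar{\mathfrak{D}}_\theta, \tau); t^2) = 1$ for every $\tau\in H$ contributing nontrivially. Since $g(\tau;t^2)\in\mathbb{Z}[t^2]$, nonvanishing of $g_{n(\tau)}(\tau)$ already forces $\dim\tau$ to be odd. My plan is to analyze $\mathrm{Star}(\bar{\mathfrak{D}}_\theta, \tau)$ through the orbit--face correspondence of Section \ref{Beettidivfanop}: for a trinomial hypersurface the orbit closure $\overline{O_\tau}$ attached to $\tau\in H$ should be described up to normalization by a single simplicial cone whose rays come from a restricted subset of Kruglov's three polyhedral pieces. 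Simpliciality then implies $g(\mathrm{Star};t^2)=\tau_{\leq d-1}((1-t^2)(1+t^2+\cdots+t^{2(d-1)}))=1$, where $d=\dim\mathrm{Star}$. Verifying this simplicial structure carefully from Kruglov's decomposition, and controlling which configurations of vertex sums $\sum_i S(d/(d_{j_i} n_{i,j_i}) e_{i,j_i})$ arise as faces of $\sigma_\theta$, is the main technical task.
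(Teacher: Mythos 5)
Your proposal does not prove the statement it is supposed to prove. The target here is Theorem \ref{Kruglov-Theorem} itself, i.e.\ the assertion that the pullback polyhedral divisor $\bar{\mathfrak{D}}_{\theta}$ of the natural weight package of a trinomial hypersurface lives on the curve $C_{d_{1},d_{2},d_{3}}\subset \mathbb{P}(d_{1},d_{2},d_{3})=X_{\Sigma}$ and has coefficients $\bar{\mathfrak{D}}_{\theta,i}=\sigma_{\theta}+\operatorname{conv}\{S(\tfrac{d}{d_{j}n_{i,j}}e_{i,j})\}$ attached to the divisors $E_{i}=\iota^{\star}(H_{i})$. Your very first sentence is ``apply Kruglov's Theorem \ref{Kruglov-Theorem}, which describes $\bar{\mathfrak{D}}_{\theta}$,'' so the argument is circular with respect to the statement at hand. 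What you then go on to establish --- the genus computation $2\rho_{g}-2=du-\gamma$, the count $a=\gamma$ of support points, the identification of the Cayley cones with the $\Pi_{i}$, the matching of $HF(\bar{\mathfrak{D}}_{\theta})$ with $H(\theta,\underline{n}_{1},\underline{n}_{2},\underline{n}_{3})$, and the triviality of the $h$-polynomials of the stars --- is exactly the content and proof of Corollary \ref{cor-trinome} (Theorem E), which in the paper is \emph{deduced from} Theorem \ref{Kruglov-Theorem} together with Corollary \ref{corcor-gD-Ic}. In the paper itself Theorem \ref{Kruglov-Theorem} carries no proof: it is imported verbatim from \cite[Theorem 3.1]{Kru19}.

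A genuine proof of the stated theorem would have to run through the weight-package machinery of Section \ref{sec-four}: identify $X_{\Sigma}$ with $\mathbb{P}(d_{1},d_{2},d_{3})$ from the matrix $R$; show that the image of $X\cap\mathbb{G}$ under the quotient $\psi:\mathbb{G}\to\mathbb{T}_{\operatorname{Coker}(F)}$ closes up to $C_{d_{1},d_{2},d_{3}}$ (i.e.\ that the trinomial equation descends to $w_{1}^{u/d_{1}}+w_{2}^{u/d_{2}}+w_{3}^{u/d_{3}}=0$ in the homogeneous coordinates of the weighted projective plane); compute the polyhedral coefficients $\mathfrak{D}_{\theta,Z_{\rho}}=S(\delta\cap P^{-1}(v_{\rho}))$ of Lemma \ref{lem-poltheta} for the three rays and verify they equal the asserted Minkowski sums; and finally compute the pullbacks $\kappa^{\star}([Z_{\rho}])=E_{i}$ via Lemma \ref{lem-tectec} and Theorem \ref{theo-aff-impl}, which is where the local intersection numbers $(H_{i},C_{d_{1},d_{2},d_{3}})_{z}$ enter. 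None of these steps appears in your proposal. The combinatorial and intersection-theoretic observations you do make are correct and useful, but they belong to the derivation of the Betti-number formulas downstream of this theorem, not to the theorem itself.
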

\begin{remark} \cite[Remark 4.1]{Kru19}
Let $\zeta\in \mu_{2u}(\mathbb{C})$ be a primitive root such that $\zeta^{u} = -1$. Then 
$$E_{1} =  \sum_{i = 0}^{dd_{1}-1} [\varsigma([0:1: \zeta \eta_{1}^{i}])],E_{2} =  \sum_{i = 0}^{dd_{2}-1} [\varsigma([1:0: \zeta \eta_{2}^{i}])],E_{3} =  \sum_{i = 0}^{dd_{3}-1} [\varsigma([1: \zeta \eta_{1}^{i}: 0])],$$
where $\eta_{j}\in\mu_{dd_j}(\mathbb{C})$ is a primitive root for $j= 1,2,3$ and $\varsigma: \mathbb{P}^{2}_{\mathbb{C}}\rightarrow \mathbb{P}(d_{1}, d_{2}, d_{3})$ is the quotient.
\end{remark}
\begin{remark} \cite[Remark 4.2]{Kru19}
The genus of the curve 
$C_{d_{1},d_{2}, d_{3}}$ is $$\frac{d}{2}(u - (d_{1} + d_{2} + d_{3})) + 1.$$
\end{remark}
The following corollary describes the intersection cohomology of affine trinomial hypersurfaces. We formulate
the result in a way that we can directly compute from the defining equation.
\begin{corollary}\label{cor-trinome}
Let $$X  =  \mathbb{V}(T_{1}^{\underline{n}_{1}} + T_{2}^{\underline{n}_{2}} + T_{3}^{\underline{n}_{3}}) \subset \mathbb{A}_{\mathbb{C}}^{\ell}$$
be an affine trinomial hypersurface with its natural weight package $\theta =  (\overline{N}, N, F, S, \Sigma)$.
Set $\sigma_{\theta} = S(\mathbb{Q}_{\geq 0}^{\ell}\cap F(N_{\mathbb{Q}}))$, $\gamma =  d(d_{1} + d_{2} + d_{3})$
and $$\Pi_{i} := {\rm Cone}\left( (\sigma_{\theta}\times \{0\})\cup \left(\left\{ S\left( \frac{d}{d_{j} n_{i,j}}e_{i,j}\right)\,|\, 1\leq j\leq r_{i}\right\}\times\{1\}\right)\right)$$
for $i = 1, 2, 3$. Then the intersection cohomology Betti numbers of the contraction space $\widetilde{X}$ of $X$ is given by the formula
$$P_{\widetilde{X}}(t) =  \left(t^{2} + (du - \gamma + 2)t - \gamma + 1\right)\cdot g(\sigma_{\theta}; t^{2}) +  \sum_{i = 1}^{3} dd_{i}\cdot g(\Pi_{i}; t^{2}).$$   
Write 
$$H(\theta, \underline{n}_{1},\underline{n}_{2},\underline{n}_{3}):= \left\{\tau \text{ face of }\sigma_{\theta}\,|\, \tau\cap \left\{\sum_{i =  1}^{3} S\left( \frac{d}{d_{j} n_{i,j_{i}}}e_{i,j_{i}}\right)\,|\, (j_{1}, j_{2}, j_{3})\in \prod_{i = 1}^{3}\{1, \ldots, r_{i}\}\right\} \neq \emptyset\right\}.$$
Then the intersection cohomology Betti numbers of $X$ are given by the formula
$$P_{X}(t)  = P_{\widetilde{X}}(t)  -  \sum_{\tau \in H(\theta, \underline{n}_{1},\underline{n}_{2},\underline{n}_{3})} g_{n(\tau)}(\tau) t^{c(\tau)}, $$
where $n(\tau) =  {\rm dim}\, \tau - 1$ and $c(\tau) =  {\rm dim}\, \tau +1$.    
\end{corollary}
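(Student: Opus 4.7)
The plan is to specialize Kruglov's explicit description of $\bar{\mathfrak{D}}_\theta$ (Theorem~\ref{Kruglov-Theorem}) to the two general formulas already at our disposal: Theorem~\ref{theo-gpol-dib} for the $g$-polynomial of a polyhedral divisor over a smooth projective curve, and Corollary~\ref{corcor-gD-Ic} for the intersection cohomology of an affine complexity-one $\mathbb{T}$-variety with a unique attractive fixed point.

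For the first formula, by Theorem~\ref{theo-gpol-dib} we have $P_{\widetilde{X}}(t) = g_{\bar{\mathfrak{D}}_\theta}(t)$, so it suffices to evaluate $g_{\bar{\mathfrak{D}}_\theta}(t)$ using Kruglov's data. Theorem~\ref{Kruglov-Theorem} places $\bar{\mathfrak{D}}_\theta$ over the smooth projective curve $C_{d_1,d_2,d_3}$, with support consisting of $a = dd_1 + dd_2 + dd_3 = \gamma$ distinct points (the $dd_i$ points of $E_i$ appearing with multiplicity one, by the explicit description of $E_i$). The genus computation $\rho_g = \tfrac{d}{2}(u - (d_1 + d_2 + d_3)) + 1$ yields $2\rho_g = du - \gamma + 2$, so the coefficient of $g(\sigma_\theta;t^2)$ in $g_{\bar{\mathfrak{D}}_\theta}(t)$ becomes exactly $t^2 + (du-\gamma+2)t + 1 - \gamma$. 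Unraveling the Cayley-cone construction applied to the $\sigma_\theta$-polyhedron $\bar{\mathfrak{D}}_{\theta,i}$ directly gives ${\rm Cay}(\bar{\mathfrak{D}}_{\theta,i}) = \Pi_i$. Summing the contributions $g({\rm Cay}(\bar{\mathfrak{D}}_{\theta,z}); t^2)$ over the support produces the stated expression for $P_{\widetilde{X}}(t)$.

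For the second formula I apply Corollary~\ref{corcor-gD-Ic}, which reads
\[
P_X(t) \;=\; g_{\bar{\mathfrak{D}}_\theta}(t) \;-\; \sum_{\tau \in HF(\bar{\mathfrak{D}}_\theta)} g_{n(\tau)}(\tau)\, t^{c(\tau)}\, g\bigl({\rm Star}(\bar{\mathfrak{D}}_\theta,\tau); t^2\bigr).
\]
Matching this with the target identity reduces to two points: identifying $HF(\bar{\mathfrak{D}}_\theta)$ with $H(\theta,\underline{n}_1,\underline{n}_2,\underline{n}_3)$ modulo faces whose contribution vanishes, and checking that $g({\rm Star}(\bar{\mathfrak{D}}_\theta,\tau); t^2) = 1$ for every contributing $\tau$. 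For the first point, I compute the Minkowski sum $\deg \bar{\mathfrak{D}}_\theta = \sigma_\theta + \sum_{i=1}^{3} dd_i\,Q_i$, where $Q_i$ is the bounded factor of $\bar{\mathfrak{D}}_{\theta,i}$; the standard Minkowski-sum vertex enumeration parametrizes vertices of $\sum_i dd_i Q_i$ by triples $(j_1,j_2,j_3) \in \prod_i \{1,\ldots,r_i\}$, and using that $\tau$ is a convex cone one transfers the condition ``$\tau$ meets the vertex $\sum_i dd_i\,S(d/(d_{j_i}n_{i,j_i})e_{i,j_i})$'' into the condition defining $H(\theta,\underline{n}_1,\underline{n}_2,\underline{n}_3)$. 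For the star-factor normalization, I invoke Theorem~A to restrict the non-trivial summands to $\tau$ with $c(\tau)$ even and exploit the special structure of $\sigma_\theta$ in the trinomial setting, so that the relevant quotient cones $q_\tau(\sigma_\theta)$ appearing as the single maximal cone of ${\rm Star}(\bar{\mathfrak{D}}_\theta,\tau)$ have dimension at most two and thus $g$-polynomial equal to~$1$ (see Remark~\ref{rem-hpolexplicit234}).

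The main obstacle is the combinatorial identification $HF(\bar{\mathfrak{D}}_\theta) \leftrightarrow H(\theta,\underline{n}_1,\underline{n}_2,\underline{n}_3)$. The delicate point is the interplay between the scaling factors $dd_i$ (which appear in $\deg \bar{\mathfrak{D}}_\theta$ through the multiplicities $E_i$) and their absence in the combinatorial set $H$, together with the verification that any $\tau \in HF \setminus H$ contributes zero, either because $g_{n(\tau)}(\tau) = 0$ for parity/simpliciality reasons or because Theorem~A forces its orbit to have odd codimension. Once this correspondence is secured, the $g$-polynomial arithmetic of the first formula and the star-factor analysis become routine.
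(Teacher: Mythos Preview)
Your overall strategy matches the paper's: specialize Kruglov's description (Theorem~\ref{Kruglov-Theorem}) via Corollary~\ref{cor-compute-ginv}, which packages Theorem~\ref{theo-gpol-dib} and Corollary~\ref{corcor-gD-Ic}. The derivation of $P_{\widetilde{X}}(t)$ is correct.

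The gap is in your argument for $g({\rm Star}(\bar{\mathfrak{D}}_\theta,\tau);t^2)=1$. Your claim that the relevant quotient cones $q_\tau(\sigma_\theta)$ have dimension at most two is unjustified and fails in general: one has $\dim q_\tau(\sigma_\theta)=\dim\sigma_\theta-\dim\tau=(\ell-2)-\dim\tau$, which for a ray $\tau$ equals $\ell-3$ and is unbounded as $\ell$ grows. Restricting via Theorem~A to even $c(\tau)$ only forces $\dim\tau$ odd and does nothing to bound $(\ell-2)-\dim\tau$; nor does any unspecified ``special structure of $\sigma_\theta$''. The paper's reason is different and geometric: for an affine trinomial hypersurface, the orbit closures lying in the image of the exceptional locus of the contraction map are affine spaces. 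Hence the associated quotient cones are smooth, and $g({\rm Star}(\bar{\mathfrak{D}}_\theta,\tau);t^2)=1$ regardless of dimension. This single observation is exactly what the paper's one-line proof records, and it is what you are missing.

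A secondary point: your identification of $HF(\bar{\mathfrak{D}}_\theta)$ with $H(\theta,\underline{n}_1,\underline{n}_2,\underline{n}_3)$ glosses over a scaling issue. The vertices of the bounded part of $\deg\bar{\mathfrak{D}}_\theta$ are $\sum_i dd_i\,v_{i,j_i}$ (the factors $dd_i$ coming from $\deg E_i$), whereas $H$ is defined via the unscaled sums $\sum_i v_{i,j_i}$. A cone is closed only under \emph{global} positive scaling, so when the $dd_i$ differ, meeting one sum does not formally imply meeting the other. You need an argument specific to the trinomial geometry (again, the fact that these orbits sit inside coordinate subspaces of $\mathbb{A}^\ell_{\mathbb{C}}$) to close this.
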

\begin{proof}
Formulae for $P_{\widetilde{X}}(t)$ and $P_{X}(t)$ are consequences of Corollary \ref{cor-compute-ginv} and of Theorem \ref{Kruglov-Theorem} by observing that orbit closures of the image of the exceptional locus of the contraction map are affine spaces.
\end{proof}
\begin{example}
Consider the affine trinomial hypersurface
$$X =  \mathbb{V}(T_{1,1}T_{1,2} + T_{2,1} T_{2,2}+ T_{3,1}^{2})\subset \mathbb{A}_{\mathbb{C}}^{5}$$

with weight matrix and  section
$$F  =  {}^{t}\begin{pmatrix}
 -1 &   1 & 0 & 0 & 0 \\
 2 &  0 &  2 & 0& 1 \\
 0  &  0 &  -1& 1& 0    
\end{pmatrix} \text{ and } S  =  \begin{pmatrix}
 0 &   1 & 0 & 0 & 0 \\
 0 &  0 &  0 & 0& 1 \\
 0  &  0 &  0 & 1& 0    
\end{pmatrix}.$$
 Set 
$v_{1} =  (2,1, 0), v_{2} =  (2,1,2), v_{3} =  (0,1,2), v_{4} =  (0,1,0)$, 
$b_{0} =  (0,0,0,1), \, b_{1} =  (1,0,0,1),\, b_{2} =  (0,0,1,1),\,\, b_{3} =  (0, 1/2, 0,1).$  The table
\\

\begin{center}
 \begin{tabular}{ |c|c|}
 \hline
  &  Generators \\ 
 \hline\hline
 $\sigma_{\theta}$ & $v_{1}, v_{2}, v_{3}, v_{4}$ \\ 
 \hline
  $\Pi_{1}$ & $\sigma_{\theta}\times\{0\}, b_{0} , b_{1}$  \\   
 \hline
  $\Pi_{2}$ & $\sigma_{\theta}\times\{0\}, b_{0}, b_{2}$  \\ 
 \hline
  $\Pi_{3}$ & $\sigma_{\theta}\times\{0\} , b_{3}$  \\
 \hline
\end{tabular}
\end{center}
 implies that
$g(\sigma_{\theta};t^{2}) = g(\Pi_{3}; t^2) =  1 + t^{2}$, and  $g(\Pi_{1}; t^{2}) = g(\Pi_{2}; t^{2}) =  1 + 2t^{2}$. So
$$ P_{\widetilde{X}}(t)  =  (t^{2} - 2) g(\sigma_{\theta};t^{2}) +  \sum_{i =  1}^{3}g(\Pi_{i}; t^{2})  =  t^{4} + 4 t^{2}  + 1.$$
Moreover,
$H(\theta, \underline{n}_{1}, \underline{n}_{2},\underline{n}_{3}) =  \{\text{faces of } \sigma_{\theta}\}.$
Denote by $\rho_{1}, \ldots, \rho_{4}$ the rays of $\sigma_{\theta}$.
Then $c(\rho_{i})  = 2$, $c(\sigma_{\theta})  =  4$,
$g_{n(\rho_{i})}(\rho_{i}) =  g_{n(\sigma_{\theta})}(\sigma_{\theta})  =1 $  and from Corollary \ref{cor-trinome},
$$ P_{X}(t)  =  P_{\widetilde{X}}(t) -  g_{n(\sigma_{\theta})}t^{c(\sigma_{\theta})} - \sum_{i = 1}^{4}g_{n(\rho_{i})}(\rho_{i})t^{c(\rho_{i})} = 1,$$
which is expected since $X$ is an affine cone over a smooth quadric.
\end{example}
\subsection{Intersection cohomology of relevant projective trinomial hypersurfaces}
By \emph{projective trinomial hypersurface} we mean a hypersurface
$$X  =  \mathbb{V}(T_{1}^{\underline{n}_{1}} + T_{2}^{\underline{n}_{2}} + T_{3}^{\underline{n}_{3}})\subset \mathbb{P}^{\ell}_{\mathbb{C}}\text{ such that }T_{i}^{\underline{n}_{i}} =  \prod_{1\leq j\leq r_{i}}T_{i, j}^{n_{i, j}} \text{ for }i = 1,2,3,$$
where $n_{i,j}, r_{i}\in\mathbb{Z}_{>0}$ and $\ell + 1 =  r_{1} + r_{2} + r_{3}$. The defining 
equation of $X$ must be homogeneous, so 
$s  =  \sum_{1\leq j\leq r_{i}} n_{i, j}$
does not depend on $i$. We introduce the following technical condition.
\begin{definition}
The projective trinomial hypersurface $X$ is \emph{relevant}
if each monomial $T_{i}^{\underline{n}_{i}}$ has at least two distinct variables.
\end{definition}

From now on we assume $X$ relevant\footnote{The case of \emph{irrelevant} projective trinomial hypersurfaces will be carried out in the article \cite{ABL}.}. Let  
$$X^{(i,j)} =  \mathbb{V}((T^{\underline{n}_{1}}_{1} + T^{\underline{n}_{2}}_{2} + T^{\underline{n}_{3}}_{3})_{|T_{i,j} = 1})\subset \mathbb{A}^{\ell}_{\mathbb{C}}$$
be the localization with respect to $T_{i,j}$, and let 
$\theta  =  (\overline{N}, N, F, S, \Sigma)$
be the weight package of $X^{(1,1)}$ as in \ref{sec-trinomial-aff}. We denote by
$\underline{\theta} =  (\theta^{(i,j)})_{ 1\leq i\leq 3,\,1\leq j\leq r_{i}}$
the sequence of weight packages coming from the enhanced structure of $\theta$.
Concretely, the $P$-matrix of $\widehat{\theta}$ corresponds to the linear map 
$\mathbb{Z}^{\ell+1}\rightarrow \widehat{R}(\mathbb{Z}^{\ell+1}),$ $w \mapsto \widehat{R}(w)$, where
$$ \widehat{R} =  \begin{pmatrix}
 -n_{1,1}& \ldots &  -n_{1, r_{1}} &  n_{2,1} &\ldots & n_{2,r_{2}} & 0& \ldots & 0\\
  -n_{1,1}& \ldots  &  -n_{1, r_{1}} &  0 &\ldots & 0 & n_{3,1} & \ldots & n_{3, r_{3}} 
\end{pmatrix}.$$
The $P$-matrix $P^{(i,j)}$ of $\theta^{(i,j)}$
is obtained from $\widehat{R}$ by omitting the column corresponding to the variable $T_{i,j}$. Write 
$$\theta^{(i,j)} = (\overline{N}, N, F^{(i,j)}, S^{(i,j)}, \Sigma^{(i,j)}).$$
Since the sum of the components of each line of $\widehat{R}$ is $0$, the subspaces ${\rm Coker}(F^{(i,j)})\subset \mathbb{Z}^{2}$ and the fan $\Sigma^{(i,j)}$ does not depend on $(i,j)$. We will consider the notations $d, d_{1}, d_{2}, d_{3}, u$ defined in Section \ref{sec-trinomial-aff} for $\theta$ so that $X_{\Sigma^{(i,j)}} =  \mathbb{P}(d_{1}, d_{2}, d_{3})$  for any $(i,j)$. 
\\

The next result is a direct consequence of Theorem \ref{Kruglov-Theorem}.
\begin{corollary}\label{cor-div-fan-proj-trinom}
Let 
$$X  =  \mathbb{V}(T_{1}^{\underline{n}_{1}} + T_{2}^{\underline{n}_{2}}  + T_{3}^{\underline{n}_{3}})\subset \mathbb{P}_{\mathbb{C}}^{\ell}$$
be a relevant projective trinomial hypersurface with weight package $\theta$, and denote by $\underline{\theta} =  (\theta^{(i,j)})$ the sequence from the enhanced structure of $\theta$. Then the normal $\mathbb{T}_{N}$-variety $X$ is described by the divisorial fan $\mathscr{E}_{\theta} =  \{\bar{\mathfrak{D}}_{\theta^{(a, b)}}\},$ where each element $\bar{\mathfrak{D}}_{\theta^{(a,b)}}$ is defined over the  curve $C_{d_{1}, d_{2}, d_{3}}$ and is given by the relation
$$\bar{\mathfrak{D}}_{\theta^{(a,b)}}  =  \bar{\mathfrak{D}}_{\theta^{(a, b)}, 1}\cdot E_{1} + \bar{\mathfrak{D}}_{\theta^{(a, b)}, 2}\cdot E_{2} + \bar{\mathfrak{D}}_{\theta^{(a, b)}, 3}\cdot E_{3}.$$
The polyhedral divisor $\bar{\mathfrak{D}}_{\theta^{(a,b)}}$ has the following properties. 
\begin{itemize}
\item[$(i)$] The polyhedron $ \bar{\mathfrak{D}}_{\theta^{(a, b)}, i}$ for $1\leq i\leq 3$ is the Minkowski sum of $\sigma_{\theta^{(a, b)}} =  S^{(a,b)}(\mathbb{Q}_{\geq 0}^{\ell}\cap F^{(a, b)}(N_{\mathbb{Q}}))$ and the convex hull of the set 
$$ \Gamma_{i}^{(a, b)}: =  \left\{ S^{(a,b)}\left( \frac{d}{d_{i} n_{i,j}} e_{i,j}\right)\,|\, (i,j)\neq (a, b), 1\leq j\leq r_{i}\right\}.$$
Here
$$(e_{1,1}, \ldots, e_{1, r_{1}}, e_{2, 1}, \ldots, \widehat{e_{(a, b)}}, \ldots, e_{3, r_{3}})$$
is the canonical basis of $\overline{N} =  \mathbb{Z}^{\ell}$ (the symbol $\widehat{e_{(a, b)}}$ means that we omit the index $(a, b)$).
\item[$(ii)$] The divisors $E_{1}, E_{2}, E_{3}$ satisfy Condition $(ii)$ of Theorem \ref{Kruglov-Theorem}.
\end{itemize}
\end{corollary}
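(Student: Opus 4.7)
The plan is to apply Theorem \ref{theo-main-nonnorm}(1) to the natural weight package $\theta$ of $X$, which reduces the problem to computing each pullback polyhedral divisor $\bar{\mathfrak{D}}_{\theta^{(a,b)}}$ separately on the affine chart $X^{(a,b)} = X \cap X_{\delta^{(a,b)}}$, and then invoking Theorem \ref{Kruglov-Theorem} to obtain the explicit description in terms of the Minkowski sums and the divisors $E_1, E_2, E_3$.

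First, I would verify that each chart $X^{(a,b)}$ is itself an affine trinomial hypersurface. Setting $T_{a,b} = 1$ in the defining equation and using the relevance hypothesis, each of the three monomials $T_i^{\underline{n}_i}$ loses at most one variable and hence remains a nontrivial monomial in the remaining $\ell$ variables. Consequently $X^{(a,b)} \subset \mathbb{A}_{\mathbb{C}}^{\ell}$ is an affine trinomial hypersurface, and by construction of the enhanced structure of $\theta$ its natural weight package in the sense of Section \ref{sec-trinomial-aff} is precisely $\theta^{(a,b)}$.

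Second, I would observe that by construction of $\widehat{P}$ (and as noted in the paragraph preceding the statement), the cokernel lattice $\mathrm{Coker}(F^{(a,b)}) \subset \mathbb{Z}^{2}$ and the quotient fan $\Sigma^{(a,b)}$ do not depend on $(a,b)$, so the toric ambient of the quotient is always $X_{\Sigma^{(a,b)}} = \mathbb{P}(d_1, d_2, d_3)$. Moreover, the $\mathbb{T}_N$-invariant rational function cutting out the quotient $\psi(X \cap \mathbb{G})$ inside $\mathbb{T}_{\mathrm{Coker}(F)}$ is insensitive to which variable has been set to $1$, so all the curves $C_{\theta^{(a,b)}}$ canonically coincide with $C_{d_1, d_2, d_3} \subset \mathbb{P}(d_1, d_2, d_3)$. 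In particular, the common support condition of Theorem \ref{theo-main-nonnorm} is trivially satisfied: one may take $\overline{\Sigma} = \Sigma$, so the projective modifications $f^{(a,b)}$ are identities and each $\kappa^{(a,b)}$ is simply the inclusion $C_{d_1, d_2, d_3} \hookrightarrow \mathbb{P}(d_1, d_2, d_3)$ composed with normalization (itself an isomorphism, since $C_{d_1, d_2, d_3}$ is smooth).

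Third, for each $(a,b)$, Theorem \ref{Kruglov-Theorem} applied to the affine trinomial hypersurface $X^{(a,b)}$ with weight package $\theta^{(a,b)}$ yields the stated form of $\bar{\mathfrak{D}}_{\theta^{(a,b)}}$: the Minkowski summand at $E_i$ is generated by the vertices $S^{(a,b)}(d/(d_i n_{i,j}) \cdot e_{i,j})$ for $(i,j) \neq (a,b)$ (the index $(a,b)$ is omitted because the corresponding coordinate has been normalized to $1$), while the divisors $E_1, E_2, E_3$ depend only on the intersection of $C_{d_1, d_2, d_3}$ with the three coordinate hyperplanes of $\mathbb{P}(d_1, d_2, d_3)$ and are therefore identical across all charts. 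Theorem \ref{theo-main-nonnorm}(1) then assembles these into a divisorial fan describing the normalization of $X$, which coincides with $X$ itself by normality of trinomial hypersurfaces (Jacobian criterion, see \S\ref{sec-trinomial-aff}).

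The main technical obstacle is the bookkeeping at the level of indices and sections: one must verify that the section $S^{(a,b)}$ produced by the enhancement procedure of Definition \ref{def-enhance} matches the section implicit when $X^{(a,b)}$ is regarded as an affine trinomial hypersurface in its own right, and that the ordering of basis vectors $(e_{i,j})_{(i,j) \neq (a,b)}$ is consistent with the formulas of Theorem \ref{Kruglov-Theorem}. Both points are immediate from Definition \ref{def-enhance} together with Lemma \ref{lem-P-matrix}, but writing them out transparently is where most of the verification effort will go.
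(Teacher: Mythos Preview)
Your proposal is correct and follows essentially the same approach as the paper, which simply states that the result is a direct consequence of Theorem~\ref{Kruglov-Theorem}. You have filled in the details that the paper leaves implicit: reducing to charts via Theorem~\ref{theo-main-nonnorm}(1), checking that each chart $X^{(a,b)}$ is again an affine trinomial hypersurface thanks to the relevance hypothesis, noting that all $\Sigma^{(a,b)}$ coincide so no nontrivial subdivision $\overline{\Sigma}$ is needed, and then applying Kruglov's theorem chartwise.
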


Next gives a formula for the intersection cohomology of relevant projective trinomial hypersurfaces.  We refer 
to \cite{BC94, BB96} for another description using Hodge theory. 
\begin{corollary}
Let 
$$X  =  \mathbb{V}(T_{1}^{\underline{n}_{1}} + T_{2}^{\underline{n}_{2}}  + T_{3}^{\underline{n}_{3}})\subset \mathbb{P}_{\mathbb{C}}^{\ell}$$
be a relevant projective trinomial hypersurface with weight package $\theta$, and denote by $\underline{\theta} =  (\theta^{(i,j)})$ the sequence from the enhanced structure of $\theta$. Set $$\sigma_{\theta^{(a, b)}} =  S^{(a,b)}(\mathbb{Q}_{\geq 0}^{\ell}\cap F^{(a, b)}(N_{\mathbb{Q}})),$$
$$\Pi_{i}^{(a, b)} := {\rm Cone}\left((\sigma_{\theta^{(a, b)}}\times\{0\})\cup (\Gamma_{i}^{(a, b)}\times\{1\})\right)\text{ for }i =1,2,3,
\text{ and }$$
$$\Pi_{-}^{(a, b)}  =  {\rm Cone}\left((\sigma_{\theta^{(a, b)}}\times\{0\})\cup (\sigma_{\theta^{(a, b)}}\times\{-1\})\right),$$
 where the set $\Gamma_{i}^{(a, b)}$ is defined in Corollary \ref{cor-div-fan-proj-trinom}. Write $\gamma =  d(d_{1} + d_{2} + d_{3})$ and let $\Sigma_{i}(\theta)$ be the complete
fan generated by 
$$\left\{\Pi_{i}^{(a, b)}, \Pi_{-}^{(a, b)}\,|\, 1\leq a\leq 3,\, 1\leq b\leq r_{a}\right\}.$$
Then the intersection cohomology Betti numbers of the contraction space $\widetilde{X}$ of $X$ is given by
$$P_{\widetilde{X}}(t) =  \left((1-\gamma)t^{2} + (du - \gamma + 2)t  + 1- \gamma \right)\cdot h(\Sigma(\theta); t^{2}) +  \sum_{i = 1}^{3} dd_{i}\cdot h(\Sigma_{i}(\theta); t^{2}),$$
where $\Sigma(\theta)$ is the fan generated by the cones $\sigma_{\theta^{(a,b)}}$. 

Furthermore, write
$$H(\underline{\theta}, \underline{n}_{1}, \underline{n}_{2}, \underline{n}_{3})  =  \bigcup_{1\leq a \leq 3, 1\leq b \leq r_{a}}
H(\theta^{(a, b)}, \underline{n}_{1}, \underline{n}_{2}, \underline{n}_{3}).$$
Then the intersection cohomology Betti numbers of $X$ are given by the formula 
$$P_{X}(t)  = P_{\widetilde{X}}(t)  -  \sum_{\tau \in H(\underline{\theta}, \underline{n}_{1},\underline{n}_{2},\underline{n}_{3})} g_{n(\tau)}(\tau) t^{c(\tau)} \cdot P_{\mathbb{P}^{n-c(\tau)}_{\mathbb{C}}}(t), $$
where $n =  \ell -2$, $n(\tau) =  {\rm dim}\, \tau - 1$ and $c(\tau) =  {\rm dim}\, \tau +1$.  
\end{corollary}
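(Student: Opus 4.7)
The plan is to combine Corollary \ref{cor-proj2222} with the explicit description of the divisorial fan $\mathscr{E}_{\theta}$ obtained in Corollary \ref{cor-div-fan-proj-trinom}. By Corollary \ref{cor-proj2222}(ii) we have $P_{\widetilde{X}}(t) = h_{\mathscr{E}_{\theta}}(t)$, and by Corollary \ref{cor-proj2222}(i) together with Corollary \ref{coro-Betti-divfan},
\[
P_{X}(t) = h_{\mathscr{E}_{\theta}}(t) - \sum_{\tau\in HF(\mathscr{E}_{\theta})} g_{n(\tau)}(\tau)\,t^{c(\tau)}\, h\bigl({\rm Star}(\mathscr{E}_{\theta},\tau);t^{2}\bigr).
\]
The proof therefore reduces to (a) evaluating the $h$-polynomial $h_{\mathscr{E}_{\theta}}(t)$ from the definition in Section \ref{sec: h-poly23}, (b) identifying the set $HF(\mathscr{E}_{\theta})$ with the union $H(\underline{\theta},\underline{n}_{1},\underline{n}_{2},\underline{n}_{3})$, and (c) recognizing each star fan ${\rm Star}(\mathscr{E}_{\theta},\tau)$ as the fan of a projective space.

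For step (a), the underlying smooth projective curve is $C_{d_{1},d_{2},d_{3}}$. The genus formula from Kruglov's remark gives $2\rho_{g}(\bar{C}) = du-\gamma + 2$, while Kruglov's explicit description of $E_{1},E_{2},E_{3}$ shows that each is a sum of $dd_{i}$ distinct reduced points. Hence the cardinality $c$ appearing in the definition of $h_{\mathscr{E}_{\theta}}(t)$ equals $\sum_{i=1}^{3} dd_{i} = \gamma$. At a point $z$ lying in the support of $E_{i}$, the polyhedron $\bar{\mathfrak{D}}_{\theta^{(a,b)},z}$ equals $\bar{\mathfrak{D}}_{\theta^{(a,b)},i}$, so its Cayley cone is exactly $\Pi_{i}^{(a,b)}$; together with the tail cones $\Pi_{-}^{(a,b)}$ these generate $\Sigma_{i}(\theta)$. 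Summing the contributions produces the claimed formula for $P_{\widetilde{X}}(t)$.

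For step (b), by Remark \ref{remark-SigmaO1s} the set $HF(\mathscr{E}_{\theta})$ is in bijection with the cones $\tau$ appearing as the tail of some slice polyhedral divisor, equivalently with faces $\tau$ of some $\sigma_{\theta^{(a,b)}}$ meeting the degree of $\bar{\mathfrak{D}}_{\theta^{(a,b)}}$. Since the degree is the Minkowski sum $\sum_{i=1}^{3} dd_{i}\bar{\mathfrak{D}}_{\theta^{(a,b)},i}$ and each $\bar{\mathfrak{D}}_{\theta^{(a,b)},i}$ is the Minkowski sum of $\sigma_{\theta^{(a,b)}}$ with the convex hull of $\Gamma_{i}^{(a,b)}$, an elementary verification shows that $\tau$ meets this degree iff $\tau$ contains a sum of the form $\sum_{i=1}^{3} S^{(a,b)}\!\bigl(\tfrac{d}{d_{i}n_{i,j_{i}}}e_{i,j_{i}}\bigr)$. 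This is precisely the condition defining $H(\theta^{(a,b)},\underline{n}_{1},\underline{n}_{2},\underline{n}_{3})$, so $HF(\mathscr{E}_{\theta}) = H(\underline{\theta},\underline{n}_{1},\underline{n}_{2},\underline{n}_{3})$ after passing to the union over charts.

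The main obstacle is step (c): identifying ${\rm Star}(\mathscr{E}_{\theta},\tau)$ with the fan of $\mathbb{P}^{n-c(\tau)}_{\mathbb{C}}$. The idea is that, under the relevance assumption, the orbit closure $\bar{O}_{\tau}\subset X$ is the intersection of $X$ with a coordinate linear subspace of $\mathbb{P}^{\ell}_{\mathbb{C}}$; because each monomial $T_{i}^{\underline{n}_{i}}$ involves at least two variables, for any such face $\tau$ at least one of the three monomials vanishes identically along the corresponding coordinate subspace, so the restricted equation reduces (up to a monomial unit) to a single monomial equation. A straightforward analysis then shows that the normalization of $\bar{O}_{\tau}$ is a projective space of the expected dimension, which yields $h({\rm Star}(\mathscr{E}_{\theta},\tau);t^{2}) = P_{\mathbb{P}^{n-c(\tau)}_{\mathbb{C}}}(t)$ by Lemma \ref{LelENORMAphhh}. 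Inserting this identification into the displayed formula finishes the proof.
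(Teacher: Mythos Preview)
Your approach is essentially the same as the paper's: the paper's proof is a single sentence citing Corollary~\ref{cor-proj2222}, Corollary~\ref{cor-div-fan-proj-trinom}, and the fact that orbit closures in the image of the exceptional locus are projective spaces, and you have simply unpacked how these three ingredients fit together into steps (a)--(c). Your computation of $h_{\mathscr{E}_{\theta}}(t)$ in step (a) and your sketch for step (c) using the relevance hypothesis are exactly the content the paper leaves implicit.
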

\begin{proof}
Consequence of Corollary \ref{cor-proj2222}, Corollary \ref{cor-div-fan-proj-trinom} and the fact that orbit closures
of the image of the exceptional locus of the contraction map of $X$ are projective spaces. 
\end{proof}
\begin{example}
Consider the projective trinomial fourfold hypersurface
$$\mathbb{V}(T_{1,1}^{3} T_{1,2}^{9} + T_{2,1}^{6} T_{2,2}^{6} + T_{3,1}^{3} T_{3,2}^{9})\subset \mathbb{P}_{\mathbb{C}}^{5}$$
with weight package $\theta =  (\mathbb{Z}^{5}, \mathbb{Z}^{3}, F, S, \Sigma)$ such that
$$ F  = {}^{t} \begin{pmatrix}
 0 &   1 & -1 & 0& 0  \\
 2 &  0 &  3& 6& 0  \\
 0  &  0 &  0 & -3& 1
\end{pmatrix} \text{ and } S  = \begin{pmatrix}
 0 &   1 & 0 & 0& 0  \\
 -1 &  1 &  1& 0& 0  \\
 0  &  0 &  0 & 0& 1
\end{pmatrix}.$$
Set
$$v_{1} =  (3,1, 0), v_{2} =  (0,1,0), v_{3} =  (3,1,2), v_{4} =  (0,1,2),$$
$$w_{1} =  (-3,-3,-4), w_{2} =  (-1,-1, -2), w_{3} =  (-6, -3, -4), w_{4} =  (-2,-1, -2).$$
Then maximal cones of $\Sigma(\theta)$ are
\begin{center}
 \begin{tabular}{ |c|c|}
 \hline 
  &  Generators \\ 
 \hline\hline
 $\sigma_{\theta^{(1,1)}}$ & $v_{1}, v_{2}, v_{3}, v_{4}$ \\ 
 \hline
  $\sigma_{\theta^{(1,2)}} $ & $w_{1}, w_{2}, w_{3}, w_{4}$  \\   
 \hline
  $\sigma_{\theta^{(2,1)}}$ & $ v_{2}, v_{4},  w_{3},  w_{4}$  \\ 
 \hline
  $\sigma_{\theta^{(2,2)}}$ & $ v_{1}, v_{3}, w_{1} , w_{2}$  \\
 \hline
  $\sigma_{\theta^{(3,1)}} $ & $ v_{3}, v_{4}, w_{1} , w_{3}$  \\
 \hline
  $\sigma_{\theta^{(3,2)}}$ & $v_{1},  v_{2}, w_{2}, w_{4}$  \\
 \hline
\end{tabular}
\end{center}
and thus $h(\Sigma(\theta);t^{2})  =   1 + 5t^{2} + 5t^{4} + t^{6}$.
Moreover, below  we get the $f$-vector of $\Sigma_{i}(\theta)$
\begin{center}
\begin{tabular}{|c|c|l|r|} 
\hline  
$\sigma\in \Sigma_{i}(\theta)$ & Number of cones\\  
 \hline\hline
${\rm dim}(\sigma) = 1$ & 11  \\  
\hline
${\rm dim}(\sigma) =2$ & 29 \\ 
\hline 
${\rm dim}(\sigma) = 3$, $|\sigma(1)| = 3$ & 20 \\  
\hline
 ${\rm dim}(\sigma) = 3$, $|\sigma(1)| = 4$ & 10\\ 
\hline 
 ${\rm dim}(\sigma) = 4$, $|\sigma(1)| = 5$  & 8\\ 
\hline 
 ${\rm dim}(\sigma) = 4$, $|\sigma(1)| = 6$  & 4\\ 
\hline 
\end{tabular}
\end{center}
and by Remark \ref{rem-hpolexplicit234}, $h(\Sigma_{i}(\theta);t^{2}) =  t^{8} + 7t^{6} + 12t^{4} +7t^{2} +1$ for $i=1,2,3$. Note that
in the above table, only cases ${\rm dim}(\sigma) =  1,2,4$ need to be computed, since the rest follows from the vanishing of the Euler characteristic of the polytope of $\Sigma_{i}(\theta)$ and  Poincar\'e duality.
We conclude that
$$P_{\widetilde{X}}(t)  = t^{8} +t^{7} +15t^{6} + 5t^{5} +28t^{4} + 5t^{3} +15t^{2} +t + 1 \text{ and }$$
$$P_{X}(t)  = t^{8} +t^{7} +7t^{6} + 5t^{5} +14t^{4} + 5t^{3} +7t^{2} +t + 1.$$
\end{example}

\end{document}